\newtheorem{thm}{Theorem} [section]
\newtheorem{lemma}[thm]{Lemma}
\newtheorem{corollary}[thm]{Corollary}
\newtheorem{proposition}[thm]{Proposition}
\theoremstyle{definition}
\numberwithin{equation}{section}
\newtheorem*{thmA}{Theorem A}
\newtheorem*{thmB}{Theorem B}
\newtheorem*{thmC}{Theorem C}
\newtheorem*{thmD}{Theorem D}
\newcommand{\nc}{\newcommand}
\nc{\browntext}[1]{\textcolor{brown}{#1}}
\nc{\greentext}[1]{\textcolor{green}{#1}}
\nc{\redtext}[1]{\textcolor{red}{#1}}
\nc{\bluetext}[1]{\textcolor{blue}{#1}}
\nc{\brown}[1]{\browntext{ #1}}
\nc{\green}[1]{\greentext{ #1}}
\nc{\red}[1]{\redtext{ #1}}
\nc{\blue}[1]{\bluetext{ #1}}
\title[Affine quantum Schur algebras and $\imath$quantum groups]{Affine quantum Schur algebras and $\imath$quantum groups with three parameters}
\author[Li Luo]{Li Luo}
\author[Xirui Yu]{Xirui Yu}
\address{School of Mathematical Sciences, Key Laboratory of MEA (Ministry of Education) \& Shanghai Key Laboratory of PMMP, East China Normal University, Shanghai 200241, China}
\email{lluo@math.ecnu.edu.cn (Luo)\\51255500105@stu.ecnu.edu.cn (Yu)}
\begin{document}


\begin{abstract}
We study the affine quantum Schur algebras corresponding to the affine Hecke algebras of type C with three parameters. Multiplication formulas for semisimple generators are derived for these algebras. We prove that they admit a stabilization property in the sense of Beilinson-Lusztig-MacPherson, by which we construct quasi-split $\imath$quantum groups of affine type AIII with three parameters. Upon specialization of parameters, we recover some known multiplication formulas for affine quantum Schur algebras of types C and D with equal parameters. We take this opportunity to demonstrate in the appendix that different forms of multiplication formulas for affine quantum Schur algebras of types C with equal parameters across the literature are indeed equivalent. 
\end{abstract}

\maketitle


\section{Introduction}
\subsection{Background}
The quantum Schur algebra $\mathbf{S}_{n,d}$ of finite type A, introduced by Dipper and James \cite{DJ89}, is an endomorphism algebra of a sum of permutation modules of the Hecke algebra $\mathbf{H}(\mathfrak{S}_d)$ associated with the symmetric group $\mathfrak{S}_d$. It can also be interpreted as a convolution algebra on pairs of partial flag varieties \cite{BLM90}, thereby generalizing Iwahori-Matsumoto's geometric construction for Hecke algebras (cf. \cite{IM65}). Using this interpretation, Beilinson, Lusztig, and MacPherson (abbr. BLM) derived a stabilization property using some closed multiplication formulas for Chevalley generators of $\mathbf{S}_{n,d}$, which further provides a realization of the general linear quantum group $\mathbf{U}_q(\mathfrak{gl}_n)$ in the projective limit of Schur algebras $\mathbf{S}_{n,d}$ (as $d\to \infty$). This construction of $\mathbf{U}_q(\mathfrak{gl}_n)$ matches well the Schur-Jimbo duality given in \cite{Jim86}, indicating a natural surjective homomorphism $\mathbf{U}_q(\mathfrak{gl}_n)\to \mathbf{S}_{n,d}$, whose restriction $\mathbf{U}_q(\mathfrak{sl}_n)\to \mathbf{S}_{n,d}$ is also surjective.

Several works have also been performed on the affine type A Schur algebra $\widetilde{\mathbb{S}}_{n,d}$. In particular, Lusztig \cite{Lu99} observed that the natural homomorphism from the affine quantum group $\mathbf{U}_q(\widetilde{\mathfrak{sl}}_n)$ to the affine quantum Schur algebra $\widetilde{\mathbb{S}}_{n,d}$ is no longer surjective. In other words, the Chevalley generators do not generate the whole $\widetilde{\mathbb{S}}_{n,d}$, but a proper subalgebra $\widetilde{\mathbf{U}}_{n,d}$ named the Lusztig algebra. Consequently, merely providing multiplication formulas for Chevalley generators of $\widetilde{\mathbb{S}}_{n,d}$ is inadequate to reveal the stabilization procedure from $\widetilde{\mathbb{S}}_{n,d}$ to $\mathbf{U}_q(\widetilde{\mathfrak{gl}}_n)$. This difficulty was addressed by Du and Fu in \cite{DF15} using the multiplication formulas for semisimple generators, rather than those for Chevalley generators. Notably, in Du-Fu's work, some geometric quantities, such as dimensions of certain varieties, were converted into combinatorial quantities related to the length of Weyl group elements, among others. The advantage of this approach is that it allows the method to be formally generalized to many scenarios that defy geometric description in the sense of Iwahori, such as the cases of super version \cite{DG14}, Lusztig's unequal parameters \cite{LL21}, and so on.

Thanks to the influential work \cite{BW18} by Bao and Wang on $\imath$quantum groups, the finite (resp. affine) type B/C counterparts of BLM construction have also been established in \cite{BKLW18} (resp. \cite{FLLLW20,FLLLW23}). Furthermore, there have been some developments in the context of type D (see \cite{FL15, CF24, DLZ25}). In fact, type BCD can be consistently addressed within a framework of unequal parameters, which was first studied in \cite{BWW18} (resp. \cite{FLLLWW20}) on $\imath$Schur duality with multiparameter for finite (resp. affine) type. Since the BLM construction of finite type with two unequal parameters has been dealt with in \cite{LL21}, it is natural to study the case of affine type with three unequal parameters.   

The aim of this paper is to explore the
stabilization phenomenon for affine type C with three unequal parameters. Precisely, we provide multiplication formulas for semisimple generators of the affine Schur algebra $\mathbb{S}^{\mathfrak{c}}_{n,d}$ corresponding to the Hecke algebra $\mathbb{H}$ of affine type C with parameters $(q,q_0,q_1)$, by which the stabilization property is verified. Then we realize a coideal subalgebra $\mathbb{U}^\mathfrak{c}$ of the quantum group $\mathbb{U}=\mathbf{U}_q(\mathfrak{gl}_n)$ or $\mathbf{U}_q(\mathfrak{sl}_n)$ such that $(\mathbb{U}, \mathbb{U}^\mathfrak{c})$ forms a quantum symmetric pair in the sense of \cite{Le99,Ko14}. 

Besides Iwahori-Matsumoto's construction of the affine Hecke algebra associated with an algebraic group $G$, there is also an equivariant K-theoretic description of this affine Hecke algebra via $K^{G^L\times\mathbb{C}\setminus\{0\}}(Z)$, where $Z$ is the Steinberg variety of the Langlands dual $G^L$. The isomorphism of these two different constructions, called the Langlands reciprocity, plays an important role in the local geometric Langlands correspondence. Kato \cite{Ka09} obtained the equivariant K-theoretic realization of affine Hecke algebras of affine type C with three parameters using his exotic setup. In \cite{LXY}, the first author, together with Xu and Yang, applied Kato's method to construct the affine Schur algebra $\mathbb{S}^{\mathfrak{c}}_{n,d}$ with parameters $(q,q_0,q_1)$. So the present paper can be regarded as a companion of \cite{LXY} in the sense of Langlands reciprocity. 

\subsection{Main results}
\subsubsection{Multiplication formula}
The affine Schur algebra $\mathbb{S}^{\mathfrak{c}}_{n,d}$ has a basis $\{e_A~|~A\in \Xi_{n,d}\}$ indexed by a set $\Xi_{n,d}$ of some integer matrices. Each matrix in $\Xi_{n,d}$ determined by a triple $(\lambda,g,\mu)$, where $\lambda$ and $\mu$ are weak compositions of $d$, and $g$ is in the set $\mathscr{D}_{\lambda\mu}$ of minimal length double coset representatives of the Weyl group $W$ of affine type C. The semisimple generators of $\mathbb{S}^{\mathfrak{c}}_{n,d}$ are in the form of $e_A$ with tridiagonal matrices $A\in\Xi_{n,d}$. Similarly as in the affine type A case, the Chevalley generators of $\mathbb{S}^{\mathfrak{c}}_{n,d}$ do not generate the whole $\mathbb{S}^{\mathfrak{c}}_{n,d}$, either. Thus, the first main result is a multiplication formula for semisimple generators of $\mathbb{S}^{\mathfrak{c}}_{n,d}$. For notation, we refer to \S\ref{sec:3}.
\begin{thmA}[Theorem~\ref{1.15}]
Let $A$, $B\in\Xi_{n,d}$ with $B$ tridiagonal and $\mathrm{row}_{\mathfrak{c}}(A)=\mathrm{col}_{\mathfrak{c}}(B)$. Then
\begin{equation*}
e_{B}e_{A}
=\sum_{\scalebox{0.7}{$\substack{T\in\Theta_{B,A} \\ S\in \Gamma_{T}}$}}(q^{-2}-1)^{n(S)}q_0^{\alpha_0}q_1^{\alpha_1}q^{\alpha}\frac{[A^{(T-S)}]_\mathfrak{c}^!}{[A-T_{\theta}]_\mathfrak{c}^![S]^![T-S]^!}\llbracket  S\rrbracket e_{A^{(T-S)}}.
\end{equation*}
\end{thmA}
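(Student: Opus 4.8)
The plan is to prove this multiplication formula by reducing the computation in the Schur algebra $\mathbb{S}^{\mathfrak{c}}_{n,d}$ to an explicit combinatorial count inside the affine Hecke algebra $\mathbb{H}$ of type C. Since $e_B$ is a semisimple (tridiagonal) generator and $e_A$ is an arbitrary basis element, I would realize both as endomorphisms of the relevant permutation module $\bigoplus_\lambda x_\lambda \mathbb{H}$ and compute the composite $e_B e_A$ by tracking how the double cosets recombine. Concretely, writing $e_A$ in terms of the standard basis of the Hecke algebra via minimal double coset representatives $\mathscr{D}_{\lambda\mu}$, the product $e_B e_A$ decomposes over intermediate double cosets; the matrices $A^{(T-S)}$ appearing on the right-hand side should be exactly those obtained from $A$ by the ``matrix surgery'' that moves entries along the tridiagonal pattern dictated by $B$, with $T$ indexing the total mass transferred and $S$ the portion incurring the quadratic Hecke relation. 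The sets $\Theta_{B,A}$ and $\Gamma_T$ should parametrize, respectively, the admissible ways to distribute the shift prescribed by $B$ across the columns of $A$, and the sub-distributions on which the $(q^{-2}-1)$ factor is picked up.

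The key steps, in order, would be: (1) set up the action on the permutation module and express $e_B e_A$ as a sum over $(W_\mu, W_\nu)$-double cosets using the length-additivity of minimal representatives; (2) carry out the Hecke-algebra calculation for a single transfer of a box, isolating where the relation $T_s^2 = (q_s^{-2}-1)T_s + q_s^{-2}$ (with the appropriate parameter $q_0$, $q_1$, or $q$ depending on whether the reflection is a finite-type-A reflection or one of the two special affine-C reflections) produces the factors $q_0^{\alpha_0} q_1^{\alpha_1} q^{\alpha}$ and $(q^{-2}-1)^{n(S)}$; (3) assemble the single-box formula into the multi-box case, where the Gaussian-binomial-type factors $\frac{[A^{(T-S)}]_\mathfrak{c}^!}{[A-T_\theta]_\mathfrak{c}^!\,[S]^!\,[T-S]^!}$ and the bracket $\llbracket S\rrbracket$ arise from counting the relevant minimal coset representatives and the associated length statistics; (4) verify that the resulting coefficient and index matrix $A^{(T-S)}$ match the claimed combinatorial data, and that the three parameters are attached to the correct reflections throughout.

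I expect the main obstacle to be step (3), namely the bookkeeping that converts the geometric/length-theoretic quantities into the explicit $\mathfrak{c}$-decorated quantum factorials and the combinatorial sets $\Theta_{B,A}$, $\Gamma_T$. The philosophy here is that of Du and Fu: rather than computing dimensions of varieties (which is unavailable with three unequal parameters and the affine exotic setup), one must recast every geometric quantity as a Weyl-group length statistic and then verify the corresponding identities purely combinatorially. The delicate point is the interaction between the two ``boundary'' reflections carrying $q_0$ and $q_1$ and the bulk reflections carrying $q$: the transfer of boxes near the two ends of the affine type C diagram will produce contributions governed by the periodicity and the reflection symmetry, so I would need to carefully separate the boundary cases from the generic interior transfers and confirm that the unified formula specializes correctly. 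A useful sanity check, which I would build into the proof, is to impose the equal-parameter specialization $q_0 = q_1 = q$ and match against the known affine type C multiplication formulas cited in the introduction, thereby confirming the placement of every power of $q_0$ and $q_1$.
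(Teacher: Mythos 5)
Your high-level architecture --- realizing $e_Be_A$ on the permutation module, decomposing over double cosets, and converting every geometric quantity into Weyl-group length statistics in the Du--Fu spirit --- is exactly the paper's route (Lemmas~\ref{1.5}, \ref{1.6} and Proposition~\ref{3.3}). However, your step (2) contains a genuine conceptual error, located precisely at the point you flag as delicate. You propose to obtain the powers $q_0^{\alpha_0}q_1^{\alpha_1}$ from applications of the quadratic Hecke relations at the two special reflections $s_0,s_d$. In fact no such relation is ever applied in this computation: because $B$ is tridiagonal, $g_1$ admits a reduced expression containing no $s_0$ and no $s_d$ (in the paper's notation, $0\in R_0^{\delta}$ and $d+1\in R_{3r+3}^{\delta}$), so the expansion $T_{g_1}T_{wg_2}=\sum_{\sigma\in K_w}(q^{-1}-q)^{n(\sigma)}T_{g_1\sigma wg_2}$ involves only the parameter $q$. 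This is exactly why the final formula carries the clean factor $(q^{-2}-1)^{n(S)}$ rather than mixed factors such as $(q_0^{-1}-q_1)$, which your mechanism would inevitably produce. Carried out literally, your plan either finds no source at all for $\alpha_0,\alpha_1$ or inserts spurious $T_0/T_d$ relations and gets wrong coefficients.

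The missing idea is the tri-graded length decomposition $\ell=\ell_{\mathfrak{a}}+\ell_{\mathfrak{c}_0}+\ell_{\mathfrak{c}_d}$ of Lemma~\ref{length}: all $q_0,q_1$ dependence enters through bookkeeping of these partial lengths, via the normalizations $q_{g}^{-1},q_{w}^{-1}$, the absorption rules $x_\lambda T_i=q_i^{-1}x_\lambda$ of Lemma~\ref{2.5}, and the weighted fiber sums. Concretely, after Proposition~\ref{3.3} one groups the pairs $(w,\sigma)$ by the matrices $T=\phi(w)$ and $S=\psi_w(\sigma)$, and the exponents $\alpha_0,\alpha_1$ together with the $\mathfrak{c}$-decorated factorials emerge from evaluating $\sum_{w\in\phi^{-1}(T)}q_0^{-\ell_{\mathfrak{c}_0}(w)+\ell_{\mathfrak{c}_d}(w)}q_1^{-\ell_{\mathfrak{c}_0}(w)-\ell_{\mathfrak{c}_d}(w)}q^{-2\ell_{\mathfrak{a}}(w)}$ via the distinguished minimal elements $w_{A,T}$, $\sigma_{w,S}$ and the quantum binomial theorem --- not from assembling single-box transfers. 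Indeed your step (3) cannot work as an induction on boxes: a product of one-box (Chevalley-type) generators equals a semisimple generator only up to lower-order terms, which is the very reason semisimple generators must be treated directly. Finally, your proposed sanity check at $q_0=q_1=q$ compares against formulas (affine type B, equal parameter) that are new in this paper; the meaningful check against the literature is the specialization $(q,q_0,q_1)=(q,1,q^2)$ recovering the affine type C formulas of Fan--Lai--Li--Luo--Wang.
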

We emphasize that this multiplication formula is the heart of the present work. The computational and combinatorial details are much more challenging and tedious than in the case of a single parameter.

In the specialization $(q,q_0,q_1)=(q,1,q^2)$, we rediscover the multiplication formulas for affine type C (with a single parameter) shown in \cite{FLLLW23}. These multiplication formulas for Schur algebras of affine type C (with a single parameter) were written in a different form in \cite{FL19}. In the appendix, we take advantage of the opportunity to directly verify the equivalence of the formulas in \cite{FLLLW23} and \cite{FL19}; see Proposition~\ref{prop:compare}.

In the specialization $(q,q_0,q_1)=(q,q_0,q_0)$ (resp. $(q,q,q)$), we get multiplication formulas for affine type B with two parameters $(q,q_0)$ (resp. a single parameter $q$). These formulas are definitely new and are expected to be used in the geometric stabilization procedure by considering the partial flag varieties of affine type B.

In the specialization $(q,q_0,q_1)=(q,1,1)$, we obtain multiplication formulas for affine type D. In \cite{CF24}, multiplication formulas for Chevalley generators are provided. We prove that they are special cases of our formulas.

\subsubsection{Canonical bases at specialization}
There is still a bar involution on the affine Hecke algebra $\mathbb{H}$ for the case of unequal parameters. In order to give a compatible bar on $\mathbb{S}^{\mathfrak{c}}_{n,d}$, we have to prove in Proposition~\ref{bar} that the free $\mathbb{Z}[q^{\pm\frac{1}{2}},q_0^{\pm\frac{1}{2}},q_1^{\pm\frac{1}{2}}]$-module $\mathbb{H}_{\lambda\mu}$ is closed under the bar involution. 
In \cite{LL21}, a finite-type version of Proposition~\ref{bar} was presented, but the proof therein has a gap. We take this opportunity to provide a correct proof for the affine-type version, which applies similarly to the finite-type case in {\em loc. cit}.

The existence of a bar involution on $\mathbb{S}^{\mathfrak{c}}_{n,d}$ allows us to give a standard basis $\{[A]~|~A\in\Xi_{n,d}\}$ of $\mathbb{S}^{\mathfrak{c}}_{n,d}$ satisfying $\overline{[A]}=[A]+$ lower terms.
The multiplication formula given in Theorem A is written in terms of the standard basis in Theorem~\ref{standard}, by which we can construct a canonical basis $\{\{A\}^\mathbf{L}~|~A\in\Xi_{n,d}\}$ of $\mathbb{S}^{\mathfrak{c},\mathbf{L}}_{n,d}$, which is the specialization of $\mathbb{S}^{\mathfrak{c}}_{n,d}$ at $q=\boldsymbol{v}^{-\mathbf{L}(s_1)}$, $q_0=\boldsymbol{v}^{-\mathbf{L}(s_0)+\mathbf{L}(s_d)}$, $q_1=\boldsymbol{v}^{-\mathbf{L}(s_0)-\mathbf{L}(s_d)}$ for a weight function $\mathbf{L}$.

\begin{thmB}[Theorem~\ref{thm:canonicalatspe}]
For any weight function $\mathbf{L}$, the specialization $\mathbb{S}^{\mathfrak{c},\mathbf{L}}_{n,d}$ admits a canonical basis.
\end{thmB}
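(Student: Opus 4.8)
The plan is to obtain $\{A\}^{\mathbf{L}}$ by the standard Kazhdan--Lusztig/BLM canonicalization procedure, with the role of the weight function being to collapse the three Laurent variables $q,q_0,q_1$ into the single variable $\boldsymbol{v}$, which is precisely what makes the construction possible. First I would transport the bar involution to the specialization. By Proposition~\ref{bar} each $\mathbb{H}_{\lambda\mu}$ is stable under the bar involution of $\mathbb{H}$, so the bar descends to a $\mathbb{Z}$-linear involution on $\mathbb{S}^{\mathfrak{c}}_{n,d}$ that inverts each of $q^{\pm 1/2},q_0^{\pm 1/2},q_1^{\pm 1/2}$. Under the specialization $q=\boldsymbol{v}^{-\mathbf{L}(s_1)}$, $q_0=\boldsymbol{v}^{-\mathbf{L}(s_0)+\mathbf{L}(s_d)}$, $q_1=\boldsymbol{v}^{-\mathbf{L}(s_0)-\mathbf{L}(s_d)}$, all three inversions become $\boldsymbol{v}\mapsto\boldsymbol{v}^{-1}$ simultaneously, so the bar is compatible with the specialization and induces a well-defined semilinear involution on $\mathbb{S}^{\mathfrak{c},\mathbf{L}}_{n,d}$ over $\mathbb{Z}[\boldsymbol{v}^{\pm 1/2}]$. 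This is the crucial step: over the three-variable ring there is no natural ``positive part'' to pin down a canonical basis uniquely, whereas over $\mathbb{Z}[\boldsymbol{v}^{\pm1/2}]$ the submodule $\boldsymbol{v}^{-1}\mathbb{Z}[\boldsymbol{v}^{-1}]$ (with half-integral powers adjoined as needed) supplies the required ideal.

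Next I would record the bar-triangularity in the specialization. The standard basis satisfies $\overline{[A]}=[A]+(\text{lower terms})$ in $\mathbb{S}^{\mathfrak{c}}_{n,d}$ with coefficients in $\mathbb{Z}[q^{\pm 1/2},q_0^{\pm 1/2},q_1^{\pm 1/2}]$; applying the specialization yields $\overline{[A]}=[A]+\sum_{B<A}r^{\mathbf{L}}_{B,A}[B]$ with $r^{\mathbf{L}}_{B,A}\in\mathbb{Z}[\boldsymbol{v}^{\pm 1/2}]$, where $<$ is the partial order on $\Xi_{n,d}$ controlling the lower terms. I would then verify that this order is interval-finite, so that $\{B : B<A\}$ is finite for each $A$ and the canonicalization recursion terminates. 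In the affine setting this is not automatic, since $\Xi_{n,d}$ consists of periodic integer matrices; here I would lean on the explicit shape of the multiplication formula in Theorem~\ref{standard} (equivalently Theorem~\ref{1.15}), whose lower terms involve only finitely many $B$ and are bounded through the combinatorial length data entering the exponents $\alpha,\alpha_0,\alpha_1$.

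Finally I would invoke Lusztig's lemma on the existence of a canonical basis attached to a semilinear involution together with a bar-triangular standard basis over an interval-finite poset, exactly as used in the BLM and Du--Fu frameworks: given the involution of the first paragraph, the interval-finite order, and the triangularity just recorded, there is a unique family $\{A\}^{\mathbf{L}}\in[A]+\sum_{B<A}\boldsymbol{v}^{-1}\mathbb{Z}[\boldsymbol{v}^{-1}][B]$ fixed by the bar, and this family is a $\mathbb{Z}[\boldsymbol{v}^{\pm 1/2}]$-basis of $\mathbb{S}^{\mathfrak{c},\mathbf{L}}_{n,d}$. The main obstacle I anticipate is the triangularity-and-finiteness step carried out uniformly in $\mathbf{L}$: one must check that specializing at an \emph{arbitrary} weight function, including degenerate ones where two of $q,q_0,q_1$ coincide, neither collapses distinct standard basis elements nor destroys the leading term $[A]$, so that $<$ remains a genuinely lower-triangular order in $\boldsymbol{v}$. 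This is the delicate point where the three-parameter combinatorics of Theorem~A must be shown to survive the passage to a single variable for every $\mathbf{L}$.
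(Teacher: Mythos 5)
Your argument is sound, but it takes a genuinely different route from the paper. The paper never runs the bar-invariance recursion inside the Schur algebra: following Du \cite{Du92}, it \emph{defines} $\{A\}^{\mathbf{L}}$ by transporting the Kazhdan--Lusztig basis of the specialized Hecke algebra, namely by requiring $\{A\}^{\mathbf{L}}(C^{\mathbf{L}}_{w_0^{\mu}})=C^{\mathbf{L}}_{g^{+}_{\lambda\mu}}$, where the elements $C^{\mathbf{L}}_w$ exist for an arbitrary weight function by Lusztig's unequal-parameter theory \cite{Lu03}. Bar-invariance of $\{A\}^{\mathbf{L}}$ is then immediate from bar-invariance of $C^{\mathbf{L}}_{g^{+}_{\lambda\mu}}$, and the triangularity (\ref{can}) is quoted from \cite[(2.c), Lemma 3.8]{Du92}; note that Proposition~\ref{invariant} is not actually used for this theorem (the paper needs it later, for the monomial basis). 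Your route---specialize the bar involution of Proposition~\ref{bar}, specialize the triangularity of Proposition~\ref{invariant}, check interval-finiteness of $\leqslant_{\mathrm{alg}}$, and invoke the abstract Lusztig lemma---is a valid alternative: it stays entirely inside $\mathbb{S}^{\mathfrak{c},\mathbf{L}}_{n,d}$ and delivers existence together with the uniqueness characterization in one stroke, whereas what it loses is the explicit identification of $\{A\}^{\mathbf{L}}$ with Hecke-algebra KL elements, which the paper gets for free and which is what makes the compatibility statements for the $\imath\jmath$, $\jmath\imath$, $\imath\imath$ variants essentially automatic. Three corrections to your write-up: (i) interval-finiteness of $\leqslant_{\mathrm{alg}}$ follows directly from its definition---fixing $\mathrm{row}_{\mathfrak{c}}$, $\mathrm{col}_{\mathfrak{c}}$ and dominating all $\sigma_{ij}$ confines any $B<_{\mathrm{alg}}A$ to the band of $A$ with entries bounded by $D$---so no appeal to the multiplication formula or to the exponents $\alpha,\alpha_0,\alpha_1$ is needed; (ii) your worry that a degenerate $\mathbf{L}$ might collapse standard basis elements or destroy the leading term is vacuous, since $\mathbb{S}^{\mathfrak{c},\mathbf{L}}_{n,d}$ is defined by base change (so $\{[A]^{\mathbf{L}}\}$ is a basis by construction) and the leading coefficient in Proposition~\ref{invariant} is exactly $1$, hence survives every specialization; (iii) the lattice should be taken in powers of $\boldsymbol{v}^{\boldsymbol{c}}$ with $\boldsymbol{c}=\mathrm{gcd}(|\mathbf{L}(s_0)-\mathbf{L}(s_d)|,\mathbf{L}(s_0)+\mathbf{L}(s_d),\mathbf{L}(s_1))$, as in (\ref{can}), rather than $\boldsymbol{v}^{-1}\mathbb{Z}[\boldsymbol{v}^{-1}]$; your prescription differs from the paper's by a direction convention, and both are meaningful only because every specialized coefficient is a Laurent polynomial in $\boldsymbol{v}^{\boldsymbol{c}}$ (up to half-powers), a point worth stating explicitly.
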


In the spirit of Beilinson-Lusztig-MacPherson \cite{BLM90}, we employ the multiplication formula in Theorem A to establish a stabilization property of the family $\mathbb{S}^{\mathfrak{c}}_{n,d}$ as $d\to\infty$, which leads to the construction of a stabilization algebra $\dot{\mathbb{K}}^\mathfrak{c}_n$ and further a specialization $\dot{\mathbb{K}}^{\mathfrak{c},\mathbf{L}}_n$. The stabilization procedure allows us to provide a monomial basis and a stably canonical basis of $\dot{\mathbb{K}}^{\mathfrak{c},\mathbf{L}}_n$ from the ones of $\mathbb{S}^{\mathfrak{c},\mathbf{L}}_{n,d}$. Here we call it a stably canonical basis because of ‌the absence of the positivity property.

\begin{thmC}[Corollary~\ref{5.3} \& Theorem~\ref{thm:canL}]
There is an algebra $\dot{\mathbb{K}}_{n}^\mathfrak{c}$ arising from stabilization on the family of affine Schur algebras $\mathbb{S}^{\mathfrak{c}}_{n,d}$ as $d$ varies. Moreover, its specialization $\dot{\mathbb{K}}_{n}^{\mathfrak{c},\mathbf{L}}$ admits a stably canonical basis.
\end{thmC}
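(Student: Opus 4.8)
The plan is to realize $\dot{\mathbb{K}}_n^{\mathfrak{c}}$ as the stabilization algebra extracted from the family $\{\mathbb{S}_{n,d}^{\mathfrak{c}}\}_{d}$ in the manner of Beilinson–Lusztig–MacPherson, and then to descend everything to the specialization $\dot{\mathbb{K}}_n^{\mathfrak{c},\mathbf{L}}$. For the first assertion (Corollary~\ref{5.3}), I would first fix the index set of the stabilization algebra: matrices $A$ in some set $\widetilde{\Xi}_n$ that are no longer required to have prescribed row/column sums but only satisfy the tridiagonal/offdiagonal constraints of affine type C, equipped with the appropriate $\mathfrak{c}$-symmetry. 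The key input is Theorem~A. I would rewrite the structure constants appearing there, namely the coefficients $(q^{-2}-1)^{n(S)} q_0^{\alpha_0} q_1^{\alpha_1} q^{\alpha} [A^{(T-S)}]_{\mathfrak{c}}^! / \big([A-T_\theta]_{\mathfrak{c}}^! [S]^! [T-S]^!\big)\llbracket S\rrbracket$, as functions of the matrix entries and of the parameter $d$ (which enters through the row sums $\mathrm{row}_{\mathfrak{c}}(A)$). The central claim to verify is that after the standard shift $A \mapsto A + p\cdot\mathrm{Id}$ (adding a multiple of the identity to the diagonal, which increases $d$), each such coefficient becomes, for $p\gg 0$, a Laurent polynomial in $q,q_0,q_1$ and in $q^{p}$ (or becomes eventually stable), and that the set of target matrices $A^{(T-S)}$ stabilizes. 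This is exactly the mechanism by which one defines a multiplication on the free module spanned by $\{[A]\}_{A\in\widetilde{\Xi}_n}$ over $\mathbb{Z}[q^{\pm\frac12},q_0^{\pm\frac12},q_1^{\pm\frac12}]$, and associativity is inherited from that of each $\mathbb{S}_{n,d}^{\mathfrak{c}}$ in the limit.

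Concretely, I would carry out the following steps in order. \emph{Step 1}: isolate from the combinatorial data in Theorem~A the dependence on $d$; since $B$ is tridiagonal, the sets $\Theta_{B,A}$ and $\Gamma_T$ and the statistics $n(S)$, $\alpha_0$, $\alpha_1$, $\alpha$ depend only on local off-diagonal information of $A$ together with the diagonal entries, and the diagonal entries shift uniformly under $A\mapsto A+p\cdot\mathrm{Id}$. \emph{Step 2}: analyze the $\mathfrak{c}$-factorials $[A^{(T-S)}]_{\mathfrak{c}}^!$ and $[A-T_\theta]_{\mathfrak{c}}^!$, showing their quotient depends on the growing diagonal entries only through factors of the form $q^{2(\text{diag}+j)}-1$, so that after normalizing by suitable powers of $q$ the limit exists as a polynomial in an auxiliary variable, yielding the stabilized structure constants. \emph{Step 3}: define $\dot{\mathbb{K}}_n^{\mathfrak{c}}$ by these limiting constants and check associativity by comparing, for each fixed triple of basis elements, with the associative products in $\mathbb{S}_{n,d}^{\mathfrak{c}}$ for all large $d$. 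This realizes $\dot{\mathbb{K}}_n^{\mathfrak{c}}$ and gives Corollary~\ref{5.3}.

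For the second assertion, I would specialize the parameters according to the weight function $\mathbf{L}$ via $q=\boldsymbol{v}^{-\mathbf{L}(s_1)}$, $q_0=\boldsymbol{v}^{-\mathbf{L}(s_0)+\mathbf{L}(s_d)}$, $q_1=\boldsymbol{v}^{-\mathbf{L}(s_0)-\mathbf{L}(s_d)}$, obtaining $\dot{\mathbb{K}}_n^{\mathfrak{c},\mathbf{L}}$ together with the transported standard basis $\{[A]\}$. The construction of the stably canonical basis follows the BLM/Du–Fu template: I would first produce the monomial basis by taking products of the stabilized semisimple generators (the images under stabilization of the $e_B$ with $B$ tridiagonal), using the stabilized multiplication formula to show each monomial equals the desired $[A]$ plus a $\mathbb{Z}[\boldsymbol{v}^{\pm\frac12}]$-combination of $[A']$ with $A'$ strictly lower in the natural partial order (the Bruhat-type order refined by off-diagonal mass). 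Then, since the bar involution descends from $\mathbb{S}_{n,d}^{\mathfrak{c}}$ (guaranteed by Proposition~\ref{bar} and the standard-basis compatibility $\overline{[A]}=[A]+\text{lower}$) and stabilizes, a standard triangularity-plus-bar-invariance argument produces a unique bar-invariant basis $\{A\}$ congruent to $[A]$ modulo $\boldsymbol{v}^{-1}\mathbb{Z}[\boldsymbol{v}^{-\frac12}]$-lower terms. The main obstacle I anticipate is \emph{Step 2}: controlling the $d$-dependence of the three-parameter $\mathfrak{c}$-factorials $[\,\cdot\,]_{\mathfrak{c}}^!$ and the bracket $\llbracket S\rrbracket$ precisely enough to exhibit genuine Laurent-polynomial stabilization, because the interplay of $q_0$ and $q_1$ destroys the single-parameter cancellations and one must track the boundary (the $s_0$ and $s_d$ end nodes) separately from the interior; verifying that the would-be lower-order terms in the monomial basis expansion genuinely involve only strictly smaller matrices—so that triangularity is not spoiled by the extra parameters—is the delicate point on which the whole canonical-basis construction rests.
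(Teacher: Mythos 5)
Your overall route is the paper's route: stabilize the semisimple multiplication formula under $A\mapsto A+pI$, define $\dot{\mathbb{K}}_n^{\mathfrak{c}}$ by the limiting structure constants, then obtain the stably canonical basis at the specialization by bar invariance plus triangularity. But there are genuine gaps at exactly the two points where the three-parameter case differs from BLM. First, your Step 2 --- the heart of the construction --- is not carried out, and the claim you make there is false as stated: the quotient of $\mathfrak{c}$-factorials does \emph{not} depend on the growing diagonal entries only through factors of the form $q^{2(\mathrm{diag}+j)}-1$. Since $[m]^!_{\mathfrak{c}_0}=\prod_{k=1}^m[k]\big(1+q_0^{-1}q_1^{-1}q^{-2(k-1)}\big)$ (and similarly for $\mathfrak{c}_1$ with $q_0q_1^{-1}$), the boundary entries contribute, after the shift, mixed factors $\big(1+q_0^{\mp1}q_1^{-1}q^{-2(\cdot)}\big)$ whose $q$-exponent grows linearly in $p$, so that $q^{-p}$ appears glued to $q_0^{\mp1}q_1^{-1}$; these do not stabilize naively as Laurent polynomials in $q,q_0,q_1,q^{p}$. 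This is precisely why the paper introduces in \S\ref{sec:5.1} the coefficient rings $\mathcal{R}_1\subset\mathcal{R}_2$ with generators $r^{(1)}_{a,k},r^{(2)}_{a,k},r^{(3)}_{a,k}$ (and their bars), the last two designed to absorb exactly such mixed factors, and why Proposition~\ref{5.1} produces coefficients $\zeta_i(q_0,q_1,q,q^{-p})\in\mathcal{R}_2$ with the algebra of Corollary~\ref{5.3} defined at $\pi=1$. You flag this as your ``main obstacle,'' but flagging is not resolving. Two further points: the paper runs the stabilization on the renormalized standard basis via Theorem~\ref{standard} (where $\beta_0,\beta_1$ are $p$-independent and $\beta$ is affine-linear in $p$), not on the $e_A$ of Theorem~\ref{1.15}; and your Step 3 only yields structure constants for products with a tridiagonal left factor --- to define $[A_1][A_2]$ for arbitrary $A_1$ you need the semi-monomial basis (Proposition~\ref{monomial basis}) and the BLM-style induction on $\psi(A)$ \emph{inside} the proof of Proposition~\ref{5.1}, not merely later for the canonical basis.

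Second, you assert that the bar involution ``descends \ldots and stabilizes,'' but this is a theorem, not a formality, and it is where your plan would actually fail. Proposition~\ref{bar} gives bar-triangularity on each $\mathbb{S}^{\mathfrak{c}}_{n,d}$, but stabilization of $\overline{[{}_pA]}$ as $p\to\infty$ is not available over the full three-parameter ring $\mathbb{A}$: the paper (Proposition~\ref{prop:sta-bar}) obtains it only after specializing at the weight function $\mathbf{L}$, and even then must enlarge the coefficient ring to $\mathcal{R}_3=\mathcal{R}_2[\pi',\pi'^{-1}]$ with a \emph{second} stabilization variable $\pi'$ evaluated at $\boldsymbol{v}^{-p^2}$, because the coefficients of $\overline{[{}_pA]^{\mathbf{L}}}$ grow quadratically in $p$. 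Without this ingredient the bar involution on $\dot{\mathbb{K}}_n^{\mathfrak{c},\mathbf{L}}$ is not even defined, so your ``standard triangularity-plus-bar-invariance argument'' cannot start; it is also the structural reason the output is only a \emph{stably} canonical basis at the specialization (Theorem~\ref{thm:canL}) rather than a canonical basis over $\mathbb{A}$, a distinction your proposal blurs.
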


\subsubsection{Affine $\imath$quantum groups}
Similarly to the case of equal parameter, we expect that the algebra $\dot{\mathbb{K}}_{n}^\mathfrak{c}$ is the modified version of an affine $\imath$quantum group $\mathbb{K}_{n}^\mathfrak{c}$ with three parameters, which is a coideal subalgebra of the affine quantum $\mathfrak{gl}_n$ (over $\mathbb{Q}(q^{\frac{1}{2}},q_0^{\frac{1}{2}},q_1^{\frac{1}{2}})$). 
However, since we have not introduced a co-multiplication for $\mathbb{S}^{\mathfrak{c}}_{n,d}$ in this paper, we do not intend to verify this expectation by studying the properties of co-multiplication (as done in \cite{FLLLW23}), but leave this for subsequent work. Instead, we give the corresponding $\mathfrak{sl}$-counterpart via direct verification, namely, we directly show that the algebra $\dot{\mathcal{K}}_{n}^\mathfrak{c}$ obtained from the Lusztig algebra $\mathbb{U}^\mathfrak{c}_{n,d}$,  the subalgebra of $\mathbb{S}^\mathfrak{c}_{n,d}$ generated by Chevalley generators, through the stabilization process is precisely the modified version of the affine $\imath$quantum group $\mathbb{U}^\mathfrak{c}(\widetilde{\mathfrak{sl}}_n)$ with three parameters introduced in \cite{FLLLWW20}. 

\begin{thmD}[Theorem \ref{thm:isoUK}]
The algebra $\dot{\mathcal{K}}_{n}^\mathfrak{c}$ is isomorphic to the modified version of $\mathbb{U}^\mathfrak{c}(\widetilde{\mathfrak{sl}}_n)$. 
\end{thmD}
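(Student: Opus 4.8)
The plan is to prove Theorem D by exhibiting an explicit algebra isomorphism between the stabilization algebra $\dot{\mathcal{K}}_{n}^\mathfrak{c}$ and the modified affine $\imath$quantum group of type $\widetilde{\mathfrak{sl}}_n$ constructed in \cite{FLLLWW20}, and then checking that it respects the algebra structure. The natural strategy, following the Beilinson-Lusztig-MacPherson philosophy as adapted to the affine unequal-parameter setting, is to first pin down a generating set for $\dot{\mathcal{K}}_{n}^\mathfrak{c}$ coming from the images of the Chevalley generators under the stabilization process, together with the idempotents $1_\lambda$ that are the hallmark of the modified ($\dot{}$) form. I would begin by recalling the defining presentation of $\mathbb{U}^\mathfrak{c}(\widetilde{\mathfrak{sl}}_n)$ with three parameters from \cite{FLLLWW20} — its generators (the $e_i$, $f_i$, $k_i^{\pm1}$ for the ordinary nodes and the distinguished generators $t_0$, $t_n$ or their analogues at the two special ends of the affine type C diagram) and its relations (Serre-type and $\imath$Serre-type relations, the latter carrying the parameter dependence on $(q,q_0,q_1)$).

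The key steps, in order, would be as follows. First, using Theorem C, I would take the monomial basis and the generators of $\dot{\mathcal{K}}_{n}^\mathfrak{c}$ produced by stabilization and identify the images of the Chevalley-generator elements $e_A$ (with $B$ tridiagonal, in the notation of Theorem A) in the limit $d\to\infty$. The multiplication formula of Theorem A, specialized to tridiagonal $B$ and read off in the stabilized algebra, should yield \emph{precisely} the cross relations — the commutators $[e_i,f_j]$, the action of the $k_i$, and most importantly the $\imath$Serre relations at the two ends — with the three parameters $(q,q_0,q_1)$ entering through the factors $q_0^{\alpha_0}q_1^{\alpha_1}q^{\alpha}$ and the quantum integer symbols $[\,\cdot\,]_{\mathfrak{c}}^!$ and $\llbracket\,\cdot\,\rrbracket$. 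Second, I would define a candidate homomorphism $\Phi\colon \dot{\mathbb{U}}^\mathfrak{c}(\widetilde{\mathfrak{sl}}_n)\to \dot{\mathcal{K}}_{n}^\mathfrak{c}$ on generators and verify that every defining relation of the source is satisfied in the target; this verification is essentially a specialization of the stabilized multiplication formulas. Third, I would establish that $\Phi$ is surjective (the Chevalley-type generators generate $\dot{\mathcal{K}}_{n}^\mathfrak{c}$ by construction, since the latter arises from the Lusztig subalgebra $\mathbb{U}^\mathfrak{c}_{n,d}$) and injective, the latter by a dimension/basis comparison: I would match the monomial basis of $\dot{\mathcal{K}}_{n}^\mathfrak{c}$ from Theorem C against the known PBW-type or monomial basis of the modified $\imath$quantum group.

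The main obstacle, I expect, will be the verification of the $\imath$Serre relations at the two special nodes of the affine type C Dynkin diagram, where the three parameters genuinely interact and where the combinatorics of the length function on the affine Weyl group $W$ feeds into the exponents $\alpha_0,\alpha_1,\alpha$. In the equal-parameter case this computation is already delicate; with three independent parameters one must carefully track which root-length data controls $q_0$ versus $q_1$ and confirm that the stabilized structure constants agree on the nose with the $\imath$Serre relations of \cite{FLLLWW20} rather than merely up to an invertible scalar. A secondary technical point is ensuring that the comparison is robust under the chosen normalization of generators: since \cite{FLLLWW20} may use a different but equivalent presentation, part of the work is a \emph{rescaling} of generators to reconcile the two conventions, after which the relations become identical. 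Once these structure-constant identities are matched, injectivity follows formally from the basis correspondence, and the isomorphism is complete.
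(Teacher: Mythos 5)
Your overall strategy coincides with the paper's: the paper likewise defines a map $\aleph\colon \dot{\mathbb{U}}^{\mathfrak{c}}\to{}_\mathbb{F}\dot{\mathcal{K}}_n^{\mathfrak{c}}$ on the idempotents and Chevalley generators ($1_\lambda\mapsto[\lambda]$, $e_i1_\lambda\mapsto\mathbf{e}_i1_\lambda$, $f_i1_\lambda\mapsto\mathbf{f}_i1_\lambda$), verifies the defining relations of the modified $\imath$quantum group of \cite{FLLLWW20} using the stabilized Chevalley multiplication formulas (Theorem~\ref{stabilization formula}), with the only nontrivial work being exactly where you predict it — the $\imath$Serre relations at the special nodes $0$ and $r$, checked by explicit expansion of $\mathbf{e}_r\mathbf{f}_r^2$, $\mathbf{f}_r^2\mathbf{e}_r$, $\mathbf{f}_r\mathbf{e}_r\mathbf{f}_r$ — and then concludes by a basis comparison. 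Up to this point your proposal is a faithful blueprint of the actual proof.

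The genuine gap is in your injectivity step. You propose to match the monomial basis of $\dot{\mathcal{K}}_n^{\mathfrak{c}}$ against ``the known PBW-type or monomial basis of the modified $\imath$quantum group,'' but for the three-parameter affine coideal $\dot{\mathbb{U}}^{\mathfrak{c}}$ no such basis is available in the literature to cite; producing one is precisely the content of the paper's linear-isomorphism argument, not an input to it. The paper proceeds as follows: it restricts attention to the distinguished weight $\epsilon_{0,r+1}$, uses the triangular decomposition $\mathbb{U}=\mathbb{U}^-\mathbb{U}^0\mathbb{U}^+$ of quantum affine $\mathfrak{sl}_n$ together with the type~A stabilization isomorphism $\aleph^{\mathfrak{a}}\colon\dot{\mathbb{U}}\to\dot{\mathbb{K}}_n$ of \cite{DF15} and the semi-monomial basis of \cite{LL17} to get a basis of $\mathbb{U}^-D_{\epsilon_{0,r+1}}$, transfers it to a semi-monomial basis of $\dot{\mathbb{U}}^{\mathfrak{c}}1_{\epsilon_{0,r+1}}$ by the argument of \cite[Theorem 4.7]{BKLW18}, checks that $\aleph|_{\epsilon_{0,r+1}}$ carries this basis to the semi-monomial basis of ${}_\mathbb{Q}\dot{\mathcal{K}}_n^{\mathfrak{c}}[\epsilon_{0,r+1}]$, and finally propagates the bijectivity to every weight $\lambda$ via shift isomorphisms $\sharp_\lambda$ and a commutative square. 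Two further points your write-up passes over: (i) this basis argument is carried out at the specialization $(q,q_0,q_1)=(v^{-1},1,v^{-2})$ and then asserted to apply verbatim in general, a reduction one must justify (or at least state); and (ii) surjectivity is not a separate step in the paper — it falls out of the weight-by-weight basis matching — so your appeal to ``generation by Chevalley elements'' is fine but redundant once the basis correspondence is in place. Without supplying the construction of a basis for $\dot{\mathbb{U}}^{\mathfrak{c}}$ along these (or equivalent) lines, your injectivity argument does not close.
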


\subsubsection{Variants}
As the same as the case of equal parameter, there are three variants (called type $\imath\jmath$, $\jmath\imath$ and $\imath\imath$) of the algebras $\mathbb{S}_{n,d}^\mathfrak{c}$, $\mathbb{U}_{n,d}^\mathfrak{c}$ and $\dot{\mathbb{K}}_n^\mathfrak{c}$ (here we regard $\mathfrak{c}=\jmath\jmath$). The affine Schur algebras $\mathbb{S}_{n,d}^{\imath\jmath}$, $\mathbb{S}_{n,d}^{\jmath\imath}$ and $\mathbb{S}_{n,d}^{\imath\imath}$ are subalgebras of $\mathbb{S}_{n,d}^\mathfrak{c}$, while their stabilization algebras $\mathbb{K}_{n}^{\imath\jmath}$, $\mathbb{K}_{n}^{\jmath\imath}$ and $\mathbb{K}_{n}^{\imath\imath}$ are subalgebras of $\mathbb{K}_{n}^\mathfrak{c}$. We obtain conclusions about the (stably) canonical bases for these algebras parallel those for $\mathbb{S}_{n,d}^\mathfrak{c}$ of $\mathbb{K}_{n}^\mathfrak{c}$. Moreover, the (stably) canonical bases for them are compatible with the ones in $\mathbb{S}_{n,d}^\mathfrak{c}$ or $\mathbb{K}_{n}^\mathfrak{c}$ under inclusion.

\subsection{Organization}
Here is a layout of the paper. In Section 2, we introduce the affine Schur algebra $\mathbb{S}_{n,d}^\mathfrak{c}$ with three parameters, which has a basis labeled by a set $\Xi_{n,d}$ consisting of certain $\mathbb{Z}\times \mathbb{Z}$ integer matrices. Some quantum combinatorics is studied for the setup of three parameters. Section 3 is the heart of this paper, in which multiplication formulas for semisimple generators of $\mathbb{S}_{n,d}^\mathfrak{c}$ are achieved. The canonical basis at the specialization is established in Section 4. Section 5 is devoted to the construction of the stabilization algebra $\mathbb{K}^\mathfrak{c}_n$.  In Section 6 we prove that the stabilization algebra obtained from Lusztig algebras is just the modified version of the affine $\imath$quantum group $\imath$quantum group $\mathbb{U}^\mathfrak{c}(\widetilde{\mathfrak{sl}}_n)$ with three parameters. The other three variants of type $\imath\jmath$, $\jmath\imath$ and $\imath\imath$ are also studied in Sections 5 and 6. In the appendix, we show that our multiplication formulas specialize to those appearing in \cite{FLLLW23} and \cite{CF24} for affine type C and D with an equal parameter. In particular, we take advantage of the opportunity to directly verify the equivalence of the multiplication formulas provided in \cite{FLLLW23} and \cite{FL19}.

\vspace{0.3cm}
\noindent{\bf Acknowledgments}\quad
LL is partially supported by the National Key R\&D Program of China (No. 2024YFA1013802) and the NSF of China (No. 12371028).

\section{Preliminary}
\subsection{Notation}
Let $\mathbb{N}=\{0,1,2,\ldots\}$ and $\mathbb{Z}$ denote the sets of natural numbers and integers, respectively. Fix $r,d\in\mathbb{N}$ with $d\geqslant2$. Let
\begin{equation*}
n=2r+2,\quad D=2d+2.
\end{equation*}
For any $a\leqslant b\in\mathbb{Z}$, denote $[a..b]:=\{a,a+1,\ldots,b\}$, $(a..b)=\{a+1,a+2,\ldots,b-1\}$ and similar notation $[a..b)$ and $(a..b]$.

\subsection{Affine Weyl group}

Let $W$ be the affine Weyl group of type $\widetilde{C}_{d}$ generated by $S=\{s_0,s_1,\ldots,s_d\}$ with the affine Dynkin diagram
\begin{center}
\begin{tikzpicture}[scale=.4]
\node at (0,-1) {$0$};
\node at (4,-1) {$1$};
\node at (12,-1) {$d-1$};
\node at (16,-1) {$d$.}; 
    
\draw[thick, double distance=2pt, -{Implies}, shorten >=2pt] (0,0) ++(0.5,0) -- +(3,0);
\draw[thick, double distance=2pt, -{Implies}, shorten >=2pt] (15,0) -- (12.5,0);

\foreach \x in {2,4.5}
\draw[thick,xshift=\x cm] (\x,0) ++(0.5,0) -- +(2,0);
       
\node at (8,0) {$\dots$};
    
\foreach \x in {0,2,6,8} 
\draw[thick,xshift=\x cm] (\x, 0) circle (0.3); 

\end{tikzpicture}
\end{center}
We identify $W$ with the permutation group of $\mathbb{Z}$ consisting of the permutations $g$ satisfying
\begin{equation*}
g(i+D)=g(i)+D \quad\mbox{and}\quad g(-i)=-g(i)\quad\mbox{for $i\in\mathbb{Z}$}.
\end{equation*}
In particular,
\begin{equation*}
g(0)=0 \quad \mbox{and}\quad g(d+1)=d+1.
\end{equation*}
Denote by
$(i,j)_{\mathfrak{c}}\in W, (i\neq j)$
the transposition swapping $kD\pm i$ and $kD\pm j$ $(k\in\mathbb{Z})$ and fixing $\mathbb{Z}\setminus \{kD\pm i,kD\pm j~|~k\in\mathbb{Z}\}$ pointwise.
As permutations of $\mathbb{Z}$, the simple generators are identified by
\begin{equation*}
s_0=(1,-1)_{\mathfrak{c}},\quad
s_d=(d,d+2)_{\mathfrak{c}},\quad
s_i=(i,i+1)_{\mathfrak{c}}, \ (i=1,\ldots,d-1).
\end{equation*}

Let $\ell: W\rightarrow\mathbb{N}$ be the length function of $W$. We introduce $\ell_{\mathfrak{c}_0}:~W\rightarrow\mathbb{N}$ (resp. $\ell_{\mathfrak{c}_d}:~W\rightarrow\mathbb{N}$) such that $\ell_{\mathfrak{c}_0}(g)$ (resp. $\ell_{\mathfrak{c}_d}(g)$) equals the total number of $s_0$ (resp. $s_d$) appearing in a reduced expression of $g$. The functions $\ell_{\mathfrak{c}_0}$ and $\ell_{\mathfrak{c}_d}$ are independent of the choice of a reduced form. We set $\ell_{\mathfrak{a}}=\ell-\ell_{\mathfrak{c}_0}-\ell_{\mathfrak{c}_d}$.

\begin{lemma}\label{length}
The length of $g\in W$ is given by
\begin{align*}
\ell(g)
=&\frac{1}{2}\sharp\Bigg\{(i,j)\in[1..d]\times\mathbb{Z}~\Big|~
\begin{array}{c}
i>j\\
g(i)<g(j)
\end{array}\mbox{or}
\begin{array}{c}
i<j\\
g(i)>g(j)
\end{array}
\Bigg\},\\
\ell_{\mathfrak{c}_{0}}(g)
=&\frac{1}{2}\sharp\Bigg\{(i,j)\in\{0\}\times\mathbb{Z}~\Big|~
\begin{array}{c}
i>j\\
g(i)<g(j)
\end{array}\mbox{or}
\begin{array}{c}
i<j\\
g(i)>g(j)
\end{array}
\Bigg\},\\
\ell_{\mathfrak{c}_{d}}(g)
=&\frac{1}{2}\sharp\Bigg\{(i,j)\in\{d+1\}\times\mathbb{Z}~\Big|~
\begin{array}{c}
i>j\\
g(i)<g(j)
\end{array}\mbox{or}
\begin{array}{c}
i<j\\
g(i)>g(j)
\end{array}
\Bigg\}.
\end{align*}
\end{lemma}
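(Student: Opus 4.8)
The plan is to derive all three length formulas from the standard combinatorial description of lengths in terms of inversions, adapted to the signed/affine permutation realization of $W$. The key observation is that $W$ is realized as the group of permutations $g$ of $\mathbb{Z}$ satisfying $g(i+D)=g(i)+D$ and $g(-i)=-g(i)$, so every element is determined by its values on a fundamental domain, and the three statistics $\ell$, $\ell_{\mathfrak{c}_0}$, $\ell_{\mathfrak{c}_d}$ should each count a weighted version of the inversions. First I would recall that for a Coxeter group the length equals the number of positive roots sent to negative roots, and translate this into an inversion count for the present concrete model. Because of the periodicity relation $g(i+D)=g(i)+D$, an inversion involving indices $i,j$ automatically produces a whole $\mathbb{Z}$-family of inversions indexed by $k$ (the shifts by multiples of $D$), and because of the symmetry $g(-i)=-g(i)$ each inversion is paired with its negative; this is precisely what the factor $\frac{1}{2}$ and the restriction $i\in[1..d]$ (resp. $i\in\{0\}$, resp. $i\in\{d+1\}$) in the statement are recording.

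The natural strategy is an induction on $\ell(g)$ via multiplication by a simple reflection $s_k$. I would fix a candidate right-hand side, call it $N(g)$ (the half-cardinality of the stated inversion set), and show two things: that $N(e)=0$ for the identity, and that $N(s_k g) = N(g)\pm 1$ exactly when $\ell(s_k g)=\ell(g)\pm 1$, with the sign matching. Since $\ell_{\mathfrak{c}_0}$ counts occurrences of $s_0$ and $\ell_{\mathfrak{c}_d}$ counts occurrences of $s_d$ in a reduced word, the parallel inductive statements are that left-multiplication by $s_0$ changes the candidate $\{0\}\times\mathbb{Z}$ count by $\pm 1$ and leaves the other two candidates' increments concentrated appropriately, and similarly for $s_d$ against the $\{d+1\}\times\mathbb{Z}$ count. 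Concretely, each $s_k$ acts as an explicit transposition $(i,j)_{\mathfrak{c}}$ on $\mathbb{Z}$, so left-multiplication by $s_k$ swaps two families of values of $g$, and I would check directly how a single such swap alters the relevant inversion set, verifying that crossing the $0$-wall (for $s_0$) affects only the $i=0$ row, crossing the $(d{+}1)$-wall (for $s_d$) affects only the $i=d+1$ row, and the remaining simple reflections $s_1,\dots,s_{d-1}$ affect only rows $i\in[1..d]$. Summing these increments gives $\ell=\ell_{\mathfrak{a}}+\ell_{\mathfrak{c}_0}+\ell_{\mathfrak{c}_d}$ consistently with the definition $\ell_{\mathfrak{a}}=\ell-\ell_{\mathfrak{c}_0}-\ell_{\mathfrak{c}_d}$.

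The main obstacle I anticipate is bookkeeping the periodic-plus-signed inversions without double counting, precisely because the symmetry $g(-i)=-g(i)$ means that an inversion pair $(i,j)$ and its mirror $(-i,-j)$ both appear, and the periodicity means the index $j$ ranges over all of $\mathbb{Z}$ while $i$ is confined to a fundamental segment. I would handle this by carefully showing that the stated sets are invariant under the simultaneous negation $(i,j)\mapsto(-i,-j)$ (using $g(-i)=-g(i)$), which accounts for the global factor $\frac{1}{2}$, and that the restriction on the first coordinate picks out exactly one representative from each periodicity orbit for the relevant wall. A clean way to organize this is to first prove the formula for $\ell(g)$ by comparing with the known length formula for affine type $C$ (or by the inductive step above), and then observe that the $s_0$- and $s_d$-contributions can be isolated geometrically as the inversions that straddle the two boundary walls at $0$ and $d+1$; the terms $i\in\{0\}$ and $i\in\{d+1\}$ are exactly the indices fixed by every $g\in W$, namely $g(0)=0$ and $g(d+1)=d+1$, so the corresponding inversion counts measure how the two half-lines on either side of each wall are permuted, which is what the reduced-word counts $\ell_{\mathfrak{c}_0}$ and $\ell_{\mathfrak{c}_d}$ detect.
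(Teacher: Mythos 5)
Your proposal is correct and takes essentially the same route as the paper: the formula for $\ell(g)$ is quoted from the known affine type $C$ result, and the boundary statistics are handled by induction on a reduced expression. The paper's induction identifies $\ell_{\mathfrak{c}_0}(g)$ with $\sharp\{i\in\mathbb{N}\mid g(i)<0\}$ (and similarly for $\ell_{\mathfrak{c}_d}$), which, using $g(0)=0$, $g(d+1)=d+1$ and the symmetry $g(-i)=-g(i)$, is exactly your half-count of inversions in the rows $i=0$ and $i=d+1$, so your direct tracking of wall-crossing inversions under left multiplication by $s_k$ is just a rephrasing of the same argument.
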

\begin{proof}
The formula for $\ell(g)$ is proven in \cite[Lemma 2.1]{FLLLW23}.
By an induction on the length of a reduced expression of $g$, we have
\begin{align*}
\ell_{\mathfrak{c}_{0}}(g)
&=\sharp\{i\in\mathbb{N}~|~g(i)<0\}=\frac{1}{2}\sharp\{i\in\mathbb{Z}~|~i>0,g(i)<0~\text{or}~i<0,g(i)>0\}\\
&=\frac{1}{2}\sharp\Bigg\{(i,j)\in\{0\}\times\mathbb{Z}~\Bigg|~
\begin{array}{c}
i>j\\
g(i)<g(j)
\end{array}\text{or}
\begin{array}{c}
i<j\\
g(i)>g(j)
\end{array}
\Bigg\}.
\end{align*}
Similarly, $\ell_{\mathfrak{c}_{d}}(g)$ can be obtained. 
\end{proof}

Denote the set of weak compositions of $d$ into $r+2$ parts by
\begin{equation*}
\Lambda=\Lambda_{r,d}:=\{\lambda=(\lambda_0,\lambda_1,\ldots,\lambda_{r+1})\in\mathbb{N}^{r+2}~|~\sum_{i=0}^{r+1}\lambda_i=d\}.
\end{equation*}
For $\lambda\in\Lambda$, we shall denote by $W_{\lambda}$ the parabolic subgroup of W generated by $S\setminus \{s_{\lambda_0},s_{\lambda_0,1},\ldots,s_{\lambda_0,r}\}$, where $\lambda_{0,i}=\lambda_0+\lambda_1+\cdots+\lambda_i$ for $0\leqslant i\leqslant r$; note $\lambda_{0,0}=\lambda_0$ and $\lambda_{0,r}=d-\lambda_{r+1}$. 

Let
\begin{equation*}
\mathscr{D}_\lambda:=\{g\in W~|~\ell(wg)=\ell(w)+\ell(g),~\forall w\in W_{\lambda}\}.
\end{equation*}
Then $\mathscr{D}_{\lambda}$ (resp. $\mathscr{D}_{\lambda}^{-1}$) is the set of minimal length right (resp. left) coset representatives of $W_{\lambda}$ in $W$, and 
\begin{equation*}
\mathscr{D}_{\lambda\mu}:=\mathscr{D}_{\lambda}\cap\mathscr{D}_{\mu}^{-1}
\end{equation*}
is the set of minimal length double coset representatives for $W_{\lambda}\backslash W/W_{\mu}$.

In the following, we list some known results (see \cite[Propasition 4.16, Lemma 4.17 \& Theorem 4.18]{DDPW08}).
\begin{proposition}\label{2.2}
Let $\lambda,\mu\in\Lambda$ and $g\in\mathscr{D}_{\lambda\mu}$.
\begin{itemize}
\item[(a)] There exists $\delta\in\Lambda_{r',d}$ for some $r'$ such that $W_{\delta}=g^{-1}W_{\lambda}g\cap W_{\mu}$.
\item[(b)] The map $W_{\lambda}\times(\mathscr{D}_{\delta}\cap W_{\mu})\rightarrow W_{\lambda}gW_{\mu}$ sending $(x,y)$ to $xgy$ is a bijection; moreover, we have $\ell(xgy)=\ell(x)+\ell(g)+\ell(y)$.
\item[(c)] The map $W_{\lambda}\times(\mathscr{D}_{\delta}\cap W_{\mu})\rightarrow W_{\mu}$ sending $(x,y)$ to $xy$ is a bijection; moreover, we have $\ell(xy)=\ell(x)+\ell(y)$.
\end{itemize}
\end{proposition}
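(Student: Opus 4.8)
The statement to prove is Proposition~\ref{2.2}, a standard structural result about parabolic double cosets in Coxeter groups. Since the excerpt explicitly cites \cite{DDPW08} (Proposition 4.16, Lemma 4.17 & Theorem 4.18) as the source, the proof is really a matter of assembling known double-coset combinatorics rather than inventing something new. Let me sketch how I would approach each part.

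\begin{proof}[Proof proposal]
The plan is to derive all three statements from the single structural fact that $g\in\mathscr{D}_{\lambda\mu}$ is the unique minimal-length element of the double coset $W_\lambda g W_\mu$, together with the standard theory of minimal coset representatives. Throughout I use that for a parabolic subgroup $W_\lambda$ and any $g$, the deletion/exchange condition gives $\ell(wg)=\ell(w)+\ell(g)$ for all $w\in W_\lambda$ precisely when $g\in\mathscr{D}_\lambda$, and symmetrically for left cosets.

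\medskip
\noindent\emph{Part (a).} First I would set $W' := g^{-1}W_\lambda g\cap W_\mu$. The key point is that $W'$ is again a \emph{standard} parabolic subgroup of $W_\mu$ (hence of $W$): this is the classical fact that the intersection of a parabolic with a conjugate of a parabolic, when $g$ is the minimal double-coset representative, is generated by a subset of the simple reflections. Concretely, one shows $W'$ is generated by those $s\in S\cap W_\mu$ with $gsg^{-1}\in W_\lambda$. Once $W'$ is known to be standard parabolic, it has the form $W_\delta$ for a suitable subset of simple reflections; since $W_\mu$ sits inside $W$ as the affine type C Weyl group and its parabolic subgroups of the relevant form are indexed by weak compositions, I can read off a $\delta\in\Lambda_{r',d}$ with $W_\delta=W'$. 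The only thing to check carefully is that the resulting generating set of simple reflections really corresponds to a weak composition in the stated indexing set; this is bookkeeping against the definition of $W_\lambda$ given just above the proposition.

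\medskip
\noindent\emph{Part (b).} With $\delta$ from part (a), I would prove that every element of $W_\lambda g W_\mu$ has a \emph{unique} factorization $xgy$ with $x\in W_\lambda$ and $y\in\mathscr{D}_\delta\cap W_\mu$, and that lengths add. For existence: given $w\in W_\mu$, decompose $w=y'y$ along the parabolic pair $(W_\delta\le W_\mu)$, so $w$ has a minimal left-coset representative $y\in\mathscr{D}_\delta\cap W_\mu$ for $W_\delta\backslash W_\mu$ with $y'\in W_\delta$; since $W_\delta\subseteq g^{-1}W_\lambda g$, the factor $gy'$ can be rewritten as $x_0 g$ for some $x_0\in W_\lambda$, absorbing $y'$ into the left factor. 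For uniqueness: if $x_1gy_1=x_2gy_2$ with $y_i\in\mathscr{D}_\delta\cap W_\mu$, then $g^{-1}x_2^{-1}x_1g=y_2y_1^{-1}\in g^{-1}W_\lambda g\cap W_\mu=W_\delta$, forcing $y_2y_1^{-1}\in W_\delta$; combined with $y_i$ being minimal $W_\delta$-coset representatives this gives $y_1=y_2$ and then $x_1=x_2$. The length-additivity $\ell(xgy)=\ell(x)+\ell(g)+\ell(y)$ follows by combining $g\in\mathscr{D}_\lambda$ (giving $\ell(xg)=\ell(x)+\ell(g)$) with $g\in\mathscr{D}_\mu^{-1}$ and the fact that $y\in\mathscr{D}_\delta$ controls the right multiplication; a short computation using that $gy$ lies in $\mathscr{D}_\mu^{-1}$-type minimal form finishes it.

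\medskip
\noindent\emph{Part (c).} This is the ``shadow'' of (b) obtained by deleting the central $g$. Since the map in (b) is a bijection onto $W_\lambda g W_\mu$ and lengths add there, I would transport this to the map $(x,y)\mapsto xy$ onto $W_\mu$ by the same uniqueness argument: the decomposition $w=xy$ with $x\in W_\lambda$-part and $y\in\mathscr{D}_\delta\cap W_\mu$ is exactly the minimal-coset factorization of $W_\mu$ relative to the subgroup $W_\lambda\cap W_\mu$-analogue, and bijectivity plus $\ell(xy)=\ell(x)+\ell(y)$ is the standard parabolic factorization $W_\mu=(W_\lambda\cap\cdots)\cdot(\mathscr{D}_\delta\cap W_\mu)$. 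Here length-additivity is immediate because $y$ is a minimal representative.

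\medskip
\noindent The main obstacle I anticipate is Part (a): verifying that the intersection $g^{-1}W_\lambda g\cap W_\mu$ is \emph{standard} parabolic and that its simple generators assemble into a genuine weak composition $\delta\in\Lambda_{r',d}$ in the affine type C setting. In the abstract Coxeter-group setting this is Kilmoyer's theorem, but one must confirm it respects the specific indexing by weak compositions used here and the folding conventions of type $\widetilde C_d$. The remaining parts are then formal consequences of minimal-coset-representative uniqueness and length additivity, so the real content is concentrated in (a).
\end{proof}
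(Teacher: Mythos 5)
First, a point of reference: the paper contains no proof of this proposition at all — it is imported verbatim as a known result, with the citation to \cite{DDPW08} (Proposition 4.16, Lemma 4.17, Theorem 4.18). So the only thing to compare your proposal against is the standard argument in that reference, which is indeed the route you follow: Kilmoyer's theorem for (a), distinguished-coset combinatorics for (b) and (c). Your part (a) and your existence/uniqueness argument in (b) are correct and are exactly the standard ones.

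There are, however, two concrete gaps. (i) In (b), the entire nontrivial content is the length formula $\ell(xgy)=\ell(x)+\ell(g)+\ell(y)$. Since $y\in W_\mu$ and $g\in\mathscr{D}_\mu^{-1}$ already give $\ell(gy)=\ell(g)+\ell(y)$, what remains is $\ell(x\cdot gy)=\ell(x)+\ell(gy)$ for all $x\in W_\lambda$, i.e.\ the assertion that $gy\in\mathscr{D}_\lambda$ for every $y\in\mathscr{D}_\delta\cap W_\mu$; this is precisely the content of the cited Lemma 4.17/Theorem 4.18 and requires a Deodhar-type exchange argument. Your justification — ``a short computation using that $gy$ lies in $\mathscr{D}_\mu^{-1}$-type minimal form'' — names the wrong property: $gy$ is in general \emph{not} minimal with respect to right multiplication by $W_\mu$ (test it against $w=y^{-1}$, which gives $\ell(gy\cdot y^{-1})=\ell(g)<\ell(gy)+\ell(y^{-1})$ whenever $y\neq 1$); what is needed is left minimality over $W_\lambda$. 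So the crux of (b) is asserted rather than proved. (ii) Part (c) as printed in the paper cannot be literally true: for $x\in W_\lambda\setminus W_\mu$ and $y=1$ the product $xy$ does not even lie in $W_\mu$. The factor $W_\lambda$ is a typo for $W_\delta$, and the corrected statement is simply the parabolic factorization $W_\mu=W_\delta\cdot(\mathscr{D}_\delta\cap W_\mu)$ with additive lengths (this is the cited Proposition 4.16). You half-notice this — your hedged ``$W_\lambda\cap\cdots$'' — but never say it, and a referee would expect the typo to be flagged and repaired. Moreover, your plan to deduce (c) from (b) is circular as written: your existence argument for (b) begins by decomposing $w\in W_\mu$ as $w=y'y$ with $y'\in W_\delta$ and $y\in\mathscr{D}_\delta\cap W_\mu$, and that decomposition \emph{is} (c). The clean logical order is the reverse — establish (c) first as the standard parabolic coset decomposition, then use it (as you in fact do) to prove (b); an honest derivation of (c) from (b) would additionally require that conjugation by $g$ preserves length on $W_\delta$ (part of Kilmoyer's theorem), which you do not address.
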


\subsection{Affine Hecke algebra}

Denote $$\mathbb{A}=\mathbb{Z}[q^{\pm\frac{1}{2}},q_0^{\pm\frac{1}{2}},q_1^{\pm\frac{1}{2}}].$$ 
Let $\mathbb{H}=\mathbb{H}(W)$ be the affine Hecke algebra of type $\widetilde{C}_d$ with three parameters, which is an $\mathbb{A}$-algebra generated by
\begin{equation*}
T_i~(0\leqslant i\leqslant d),
\end{equation*}
subject to the following relations: for $0\leqslant i,j,k\leqslant d$,
\begin{gather*}
(T_0-q_0^{-1})(T_0+q_1)=0,\quad (T_i-q^{-1})(T_i+q)=0~(i\neq0),\quad (T_d-q_1^{-1})(T_d+q_0^{-1})=0,\\
T_iT_j=T_jT_i\qquad (|i-j|>1),\\
(T_0T_1)^2=(T_1T_0)^2,\quad T_kT_{k-1}T_k=T_{k-1}T_kT_{k-1} \quad (k\neq0,1,d),\quad (T_{d-1}T_{d})^2=(T_{d}T_{d-1})^2.
\end{gather*}

For any $w\in W$ with a reduced form $w=s_{i_1}\cdots s_{i_l}$, set
\begin{equation*}
T_w:=T_{i_1}\cdots T_{i_l} \quad\mbox{and}\quad q_w:=q_{s_{i_1}}\cdots q_{s_{i_l}},
\end{equation*}
where
\begin{equation*}
q_{s_i}:=\left\{
\begin{aligned}
&q_1,&&\text{if}~i=0,\\
&q,&&\text{if}~i\neq0,d,\\
&q_0^{-1},&&\text{if}~i=d.
\end{aligned}
\right.
\end{equation*}
It is known that $T_w$ and $q_w$ are both independent of the choice of a reduced form. 

\subsection{Affine quantum Schur algebra}

For any finite subset $X\subset W$ and $\lambda\in\Lambda$, set
\begin{equation*}
T_{X}:=\sum_{w\in X}q_w^{-1}T_w\quad \text{and}\quad x_{\lambda}:=T_{W_{\lambda}}.
\end{equation*}
\begin{lemma}
For $\lambda,\mu\in\Lambda$ and $g\in\mathscr{D}_{\lambda\mu}$, let $\delta$ be the element satisfying Proposition~\ref{2.2}. Then $T_{W_{\lambda} g W_{\mu}}=q_g^{-1} x_{\lambda}T_gT_{\mathscr{D}_{\delta}\cap W_{\mu}}$.
\end{lemma}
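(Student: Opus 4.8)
The plan is to reduce the identity to two standard multiplicativity properties of the basis elements $T_w$ and the scalars $q_w$ under length-additive factorizations, and then to exploit the bijection recorded in Proposition~\ref{2.2}(b).

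First I would establish the auxiliary fact that whenever $w=uv$ with $\ell(w)=\ell(u)+\ell(v)$, one has both $T_w=T_uT_v$ and $q_w=q_uq_v$. Both follow from the definitions: concatenating a reduced expression of $u$ with one of $v$ yields a reduced expression of $w$ precisely because the lengths add, and since $T_w$ and $q_w$ are independent of the chosen reduced expression, the products $T_{i_1}\cdots T_{i_l}$ and $q_{s_{i_1}}\cdots q_{s_{i_l}}$ factor accordingly. Applying this twice to the decomposition $w=xgy$ with $\ell(xgy)=\ell(x)+\ell(g)+\ell(y)$ supplied by Proposition~\ref{2.2}(b) gives $T_{xgy}=T_xT_gT_y$ and $q_{xgy}=q_xq_gq_y$ for every $x\in W_{\lambda}$ and $y\in\mathscr{D}_{\delta}\cap W_{\mu}$.

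Next I would rewrite the left-hand side using the same bijection. Since $(x,y)\mapsto xgy$ is a bijection from $W_{\lambda}\times(\mathscr{D}_{\delta}\cap W_{\mu})$ onto $W_{\lambda}gW_{\mu}$, the defining sum reindexes as
\begin{equation*}
T_{W_{\lambda}gW_{\mu}}=\sum_{w\in W_{\lambda}gW_{\mu}}q_w^{-1}T_w=\sum_{x\in W_{\lambda}}\sum_{y\in\mathscr{D}_{\delta}\cap W_{\mu}}q_{xgy}^{-1}T_{xgy}.
\end{equation*}
Substituting the factorizations of $T_{xgy}$ and $q_{xgy}$ and pulling the fixed factor $q_g^{-1}T_g$ out of the double sum, the expression separates as a product of a sum over $x\in W_{\lambda}$ and a sum over $y\in\mathscr{D}_{\delta}\cap W_{\mu}$, which are by definition $x_{\lambda}=T_{W_{\lambda}}$ and $T_{\mathscr{D}_{\delta}\cap W_{\mu}}$ respectively, giving the claimed identity $q_g^{-1}x_{\lambda}T_gT_{\mathscr{D}_{\delta}\cap W_{\mu}}$.

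The only genuine content lies in the multiplicativity step, and even that is routine once one invokes the stated well-definedness of $T_w$ and $q_w$ on reduced expressions; I do not anticipate a real obstacle beyond bookkeeping, the main point to watch being that $g$ is held fixed throughout the double sum so that $q_g^{-1}T_g$ may legitimately be extracted as a common central factor of the summation.
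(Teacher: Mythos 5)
Your proposal is correct and follows essentially the same route as the paper: reindex the sum over $W_{\lambda}gW_{\mu}$ via the bijection of Proposition~\ref{2.2}(b), use length-additivity to factor $T_{xgy}=T_xT_gT_y$ and $q_{xgy}=q_xq_gq_y$, and separate the double sum into $\bigl(\sum_{x}q_x^{-1}T_x\bigr)\bigl(q_g^{-1}T_g\bigr)\bigl(\sum_{y}q_y^{-1}T_y\bigr)$. One small wording caveat: $T_g$ is not central in $\mathbb{H}$, so it is not ``extracted as a common central factor'' but rather remains sandwiched between the two sums, which is exactly what the bilinearity of multiplication gives and what the paper writes.
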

\begin{proof}
It is a direct computation that  
\begin{align*}
T_{W_{\lambda} g W_{\mu}}
&=\sum_{w\in W_{\lambda} g W_{\mu}}q_w^{-1}T_w=\sum_{\scalebox{0.7}{$\substack{x\in W_{\lambda}\\ y\in\mathscr{D}_{\delta}\cap W_{\mu}}$}}q_{xgy}^{-1}T_x T_g T_y\quad\mbox{(by Prop.~\ref{2.2}(b))}\\&=(\sum_{x\in W_{\lambda}}q_{x}^{-1}T_x)(q_{g}^{-1}T_g)(\sum_{y\in\mathscr{D}_{\delta}\cap W_{\mu}}q_{y}^{-1}T_y)=q_g^{-1} x_{\lambda}T_gT_{\mathscr{D}_{\delta}\cap W_{\mu}}.
\end{align*}
\end{proof}

The affine quantum Schur algebra $\mathbb{S}_{n,d}^{\mathfrak{c}}$ is defined as the following $\mathbb{A}$-algebra
\begin{equation*}
\mathbb{S}_{n,d}^{\mathfrak{c}}:=\mathrm{End}_{\mathbb{H}}(\mathop{\oplus}\limits_{\lambda\in\Lambda}x_{\lambda}\mathbb{H})=\mathop{\bigoplus}\limits_{\lambda,\mu\in\Lambda}\mathrm{Hom}_{\mathbb{H}}(x_{\mu}\mathbb{H},x_{\lambda}\mathbb{H}).
\end{equation*}

\begin{lemma}[{\cite[Lemma 3.1]{FLLLWW20}}]\label{2.5}
For $\lambda\in\Lambda$ and $i\in[0..d]\backslash \{\lambda_0,\lambda_{0,1},\cdots,\lambda_{0,r}\}$, we have
\begin{equation*}
x_{\lambda}T_i=\left\{
\begin{aligned}
&q_0^{-1}x_{\lambda},&&\text{if}~i=0,\\
&q^{-1}x_{\lambda},&&\text{if}~i\neq0,d,\\
&q_1^{-1}x_{\lambda},&&\text{if}~i=d.
\end{aligned}
\right.
\end{equation*}
\end{lemma}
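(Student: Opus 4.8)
The plan is to prove Lemma~\ref{2.5} by reducing the computation to the defining quadratic relations of the Hecke algebra, exploiting the fact that the generator $T_i$ acts on the right of the distinguished element $x_\lambda = T_{W_\lambda}$. The key observation is that the condition $i \in [0..d] \setminus \{\lambda_0, \lambda_{0,1}, \ldots, \lambda_{0,r}\}$ means precisely that $s_i$ is one of the simple reflections generating the parabolic subgroup $W_\lambda$, so that $s_i \in W_\lambda$. First I would recall the standard structural fact about $x_\lambda$: since $s_i \in W_\lambda$, right multiplication by the corresponding simple generator permutes the elements of $W_\lambda$, and more precisely every $w \in W_\lambda$ with $\ell(ws_i) > \ell(w)$ pairs off with $ws_i$.

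The core step is to split the sum $x_\lambda = \sum_{w \in W_\lambda} q_w^{-1} T_w$ according to whether $\ell(ws_i) > \ell(w)$ or $\ell(ws_i) < \ell(w)$. Writing $W_\lambda = W_\lambda^+ \sqcup W_\lambda^-$ where $W_\lambda^+ = \{w \in W_\lambda \mid \ell(ws_i) > \ell(w)\}$, the map $w \mapsto ws_i$ is a bijection from $W_\lambda^+$ to $W_\lambda^-$. For $w \in W_\lambda^+$ I would use the braid-free multiplication rule $T_w T_i = T_{ws_i}$ together with the quadratic relation to handle the paired term $T_{ws_i} T_i$. The quadratic relation $(T_i - q_{s_i}^{-1})(T_i + q_{s_i}') = 0$ gives $T_i^2 = (q_{s_i}^{-1} - q_{s_i}')T_i + q_{s_i}^{-1}q_{s_i}' \cdot 1$; in each of the three cases ($i=0$, $i \neq 0,d$, $i = d$) one reads off $q_{s_i}$ from the definition and the relevant eigenvalue. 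Grouping the paired contributions $q_w^{-1}T_w T_i + q_{ws_i}^{-1} T_{ws_i} T_i$ and simplifying using $q_{ws_i} = q_w \cdot q_{s_i}$ should collapse each pair to a scalar multiple of the original pair $q_w^{-1}T_w + q_{ws_i}^{-1}T_{ws_i}$, with the scalar being exactly $q_0^{-1}$, $q^{-1}$, or $q_1^{-1}$ respectively.

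Concretely, for the generic case $i \neq 0, d$ one has $q_{s_i} = q$ and eigenvalues $q^{-1}, -q$, so the pairing yields the scalar $q^{-1}$; for $i = 0$ one has $q_{s_0} = q_1$ but the eigenvalue of $T_0$ relevant here is $q_0^{-1}$ (from the relation $(T_0 - q_0^{-1})(T_0 + q_1) = 0$), and the bookkeeping of the normalizing factor $q_w^{-1}$ versus $q_{s_0} = q_1$ must be tracked carefully to produce the stated scalar $q_0^{-1}$; the case $i = d$ is analogous with $q_{s_d} = q_0^{-1}$ and the relevant eigenvalue $q_1^{-1}$ from $(T_d - q_1^{-1})(T_d + q_0^{-1}) = 0$. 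I expect the main obstacle to be the asymmetry introduced by the three-parameter normalization: because $q_{s_i}$ (appearing in the definition of $q_w$) does not always equal the positive eigenvalue of $T_i$ in the quadratic relation, the scalar that emerges from the pairing is governed by a subtle interplay between the normalizing weight $q_{s_i}$ and the eigenvalue. Verifying that these combine to give exactly $q_0^{-1}$ (for $i=0$) and $q_1^{-1}$ (for $i=d$), rather than some other combination of the three parameters, is the delicate point and must be checked separately for the two boundary nodes. Since the statement is quoted from \cite[Lemma 3.1]{FLLLWW20}, I would finally note that the result is already established there and the argument above reproduces it in the present normalization.
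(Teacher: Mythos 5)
Your proposal is correct, and note that the paper itself offers no argument for this lemma: it is quoted directly from \cite[Lemma 3.1]{FLLLWW20}, so your pairing argument supplies a self-contained proof where the paper only supplies a citation. The plan goes through exactly as you describe, and the ``delicate point'' you flag — the interplay between the normalizing weight $q_{s_i}$ and the eigenvalues of $T_i$ — resolves uniformly: in all three cases the normalization satisfies $q_{s_i}=b_i$, where the quadratic relation is $(T_i-a_i)(T_i+b_i)=0$, i.e. $(a_0,b_0)=(q_0^{-1},q_1)$, $(a_i,b_i)=(q^{-1},q)$ for $0<i<d$, and $(a_d,b_d)=(q_1^{-1},q_0^{-1})$. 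Indeed, for $w\in W_\lambda$ with $\ell(ws_i)>\ell(w)$ one computes
\begin{align*}
q_w^{-1}T_wT_i+q_{ws_i}^{-1}T_{ws_i}T_i
&=q_w^{-1}T_{ws_i}+q_w^{-1}q_{s_i}^{-1}T_w\bigl((a_i-b_i)T_i+a_ib_i\bigr)\\
&=q_w^{-1}\bigl(1+b_i^{-1}(a_i-b_i)\bigr)T_{ws_i}+a_iq_w^{-1}T_w
=a_i\bigl(q_w^{-1}T_w+q_{ws_i}^{-1}T_{ws_i}\bigr),
\end{align*}
using $q_{ws_i}=q_wq_{s_i}=q_wb_i$; summing over the pairs gives $x_\lambda T_i=a_ix_\lambda$, which is exactly the claimed scalar $q_0^{-1}$, $q^{-1}$, $q_1^{-1}$ in the three respective cases. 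Had $q_{s_i}$ not equalled $b_i$, the pair would \emph{not} collapse to a scalar multiple of itself (the computation forces $(b_i-q_{s_i})(a_i+q_{s_i})=0$), so the identity $q_{s_i}=b_i$ is precisely the point to verify, as you anticipated; once it is checked against the paper's definitions of $q_{s_0}$, $q_{s_i}$, $q_{s_d}$ and the three quadratic relations, your argument is complete.
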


For $\lambda,\mu\in\Lambda$ and $g\in\mathscr{D}_{\lambda\mu}$, we define an $\mathbb{H}$-homomorphism 
$$\phi_{\lambda\mu}^g\in\mathrm{Hom}_{\mathbb{H}}(x_{\mu}\mathbb{H},x_{\lambda}\mathbb{H})\quad \mbox{by}\quad
x_{\nu}\mapsto \delta_{\mu\nu}T_{W_{\lambda} g W_{\mu}}=\delta_{\mu\nu}q_g^{-1} x_{\lambda}T_gT_{\mathscr{D}_{\delta}\cap W_{\mu}}.$$
The following lemma is obtained by a standard argument (cf. \cite{Du92}).
\begin{lemma}\label{lem:basis}
The set $\{\phi_{\lambda\mu}^g~|~\lambda$, $\mu\in\Lambda\ \mbox{and}\ g\in\mathscr{D}_{\lambda\mu}\}$ forms an $\mathbb{A}$-basis of $\mathbb{S}_{n,d}^{\mathfrak{c}}$.   
\end{lemma}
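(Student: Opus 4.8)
The plan is to reduce to a single graded piece and then run the standard endomorphism-ring argument of Dipper--James and Du (cf. \cite{Du92}), keeping track of the three unequal parameters through Lemma~\ref{2.5}. Since $\mathbb{S}_{n,d}^{\mathfrak{c}}=\bigoplus_{\lambda,\mu\in\Lambda}\mathrm{Hom}_{\mathbb{H}}(x_{\mu}\mathbb{H},x_{\lambda}\mathbb{H})$ and $\phi_{\lambda\mu}^{g}\in\mathrm{Hom}_{\mathbb{H}}(x_{\mu}\mathbb{H},x_{\lambda}\mathbb{H})$, it suffices to fix $\lambda,\mu\in\Lambda$ and show that $\{\phi_{\lambda\mu}^{g}\mid g\in\mathscr{D}_{\lambda\mu}\}$ is an $\mathbb{A}$-basis of $\mathrm{Hom}_{\mathbb{H}}(x_{\mu}\mathbb{H},x_{\lambda}\mathbb{H})$. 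As the right $\mathbb{H}$-module $x_{\mu}\mathbb{H}$ is cyclic with generator $x_{\mu}$, evaluation $\phi\mapsto\phi(x_{\mu})$ gives an $\mathbb{A}$-linear injection of $\mathrm{Hom}_{\mathbb{H}}(x_{\mu}\mathbb{H},x_{\lambda}\mathbb{H})$ into $x_{\lambda}\mathbb{H}$, and the first task is to describe its image.

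Write $p_{i}$ for the scalar with $x_{\mu}T_{i}=p_{i}x_{\mu}$ furnished by Lemma~\ref{2.5} for a simple generator $s_{i}$ of $W_{\mu}$ (so $p_{i}=q_0^{-1},q^{-1},q_1^{-1}$ according as $i=0$, $i\neq0,d$, $i=d$). For any $\phi$ and any such $s_{i}$ we get $\phi(x_{\mu})T_{i}=\phi(x_{\mu}T_{i})=p_{i}\phi(x_{\mu})$, so the image lies in the weight space
\[
M_{\lambda\mu}:=\{h\in x_{\lambda}\mathbb{H}\mid hT_{i}=p_{i}h\ \text{for every simple } s_{i}\in W_{\mu}\}.
\]
Conversely, to see that every $h\in M_{\lambda\mu}$ is realised, I would show that the right annihilator of $x_{\mu}$ in $\mathbb{H}$ is the right ideal $J$ generated by $\{T_{i}-p_{i}\mid s_{i}\in W_{\mu}\cap S\}$. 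The inclusion $J\subseteq\mathrm{Ann}_{r}(x_{\mu})$ is immediate from Lemma~\ref{2.5}; for equality, the reduction $T_{i}T_{w}\equiv p_{i}T_{w}\ (\mathrm{mod}\ J)$ together with the factorisation $w=xg$ ($x\in W_{\mu}$, $g\in\mathscr{D}_{\mu}$, $\ell(w)=\ell(x)+\ell(g)$) shows that $\mathbb{H}/J$ is spanned by $\{T_{g}\mid g\in\mathscr{D}_{\mu}\}$, so $\mathrm{rank}(\mathbb{H}/J)\leqslant|\mathscr{D}_{\mu}|=\mathrm{rank}(x_{\mu}\mathbb{H})$. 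Hence the canonical surjection $\mathbb{H}/J\twoheadrightarrow x_{\mu}\mathbb{H}$ onto a free $\mathbb{A}$-module of the same rank is an isomorphism, giving $\mathrm{Hom}_{\mathbb{H}}(x_{\mu}\mathbb{H},x_{\lambda}\mathbb{H})\cong\{h\in x_{\lambda}\mathbb{H}\mid hJ=0\}=M_{\lambda\mu}$.

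It then remains to check that $\{T_{W_{\lambda}gW_{\mu}}\mid g\in\mathscr{D}_{\lambda\mu}\}$, which under the above identification is precisely $\{\phi_{\lambda\mu}^{g}(x_{\mu})\}$, is an $\mathbb{A}$-basis of $M_{\lambda\mu}$. Membership holds because each double coset $W_{\lambda}gW_{\mu}$ is stable under right multiplication by $W_{\mu}$, so pairing $w$ with $ws_{i}$ and applying the quadratic relation for $T_{i}$ (using that its second eigenvalue is $-q_{s_i}$) yields $T_{W_{\lambda}gW_{\mu}}T_{i}=p_{i}T_{W_{\lambda}gW_{\mu}}$. Linear independence is clear, since distinct double cosets are disjoint and the corresponding sums therefore have disjoint support in the basis $\{T_{w}\}_{w\in W}$ of $\mathbb{H}$. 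For spanning, expand $h\in M_{\lambda\mu}$ in the basis $\{T_{W_{\lambda}g'}\mid g'\in\mathscr{D}_{\lambda}\}$ of $x_{\lambda}\mathbb{H}$ as $h=\sum_{g'}a_{g'}T_{W_{\lambda}g'}$; the eigenvalue equations $hT_{i}=p_{i}h$ then force the coefficient $a_{g'}$ to be constant as $g'$ runs over the minimal right-coset representatives contained in a single double coset, whence $h=\sum_{g\in\mathscr{D}_{\lambda\mu}}a_{g}T_{W_{\lambda}gW_{\mu}}$.

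The spanning step is where the real work lies, and the delicate point is the bookkeeping forced by the three unequal parameters. Evaluating $T_{W_{\lambda}g'}T_{i}$ for a simple $s_{i}\in W_{\mu}$ splits according to whether $g's_{i}\gtrless g'$ and whether $g's_{i}$ remains in $\mathscr{D}_{\lambda}$; in the case $g's_{i}\notin\mathscr{D}_{\lambda}$ one has $g's_{i}=s_{j}g'$ for a simple generator $s_{j}$ of $W_{\lambda}$, and the computation returns the scalar $p_{j}$ rather than $p_{i}$. Comparing coefficients then requires $p_{j}=p_{i}$, which I would deduce from the fact that $s_{i}$ and $s_{j}=g's_{i}g'^{-1}$ are $W$-conjugate: in type $\widetilde{C}_{d}$ the simple reflections form exactly three conjugacy classes $\{s_{0}\}$, $\{s_{1},\dots,s_{d-1}\}$, $\{s_{d}\}$, matching the three parameters, so $p_{\bullet}$ is a conjugacy invariant. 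Carefully tracking this correspondence, including the interchange of the roles of $q_{0}$ and $q_{1}$ at the special nodes $0$ and $d$, is the crux; granting it, the coefficient comparison yields the asserted constancy of $a_{g'}$ on double cosets and finishes the proof.
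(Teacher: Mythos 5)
Your proposal is correct and is precisely the standard endomorphism-algebra argument (Dipper--James/Du) that the paper itself invokes by citing \cite{Du92} without giving details: identify $\mathrm{Hom}_{\mathbb{H}}(x_{\mu}\mathbb{H},x_{\lambda}\mathbb{H})$ with the right $W_{\mu}$-eigenspace inside $x_{\lambda}\mathbb{H}$ via the annihilator of $x_{\mu}$, and show the double-coset sums $T_{W_{\lambda}gW_{\mu}}$ form a basis of that eigenspace. You also correctly isolate the only genuinely three-parameter subtlety, namely that in the case $g's_i=s_jg'$ one needs $p_j=p_i$, which holds because the eigenvalues are constant on the three conjugacy classes $\{s_0\}$, $\{s_1,\dots,s_{d-1}\}$, $\{s_d\}$ of simple reflections (equivalently, by well-definedness of $q_w$, since $q_{s_i}=q_{s_j}$ follows from comparing $q_{g's_i}=q_{g'}q_{s_i}$ with $q_{s_jg'}=q_{s_j}q_{g'}$).
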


\subsection{Matrix description}
Denote
\begin{equation*}
\Theta_n:=\{A=(a_{ij})\in\mathrm{Mat}_{\mathbb{Z}\times\mathbb{Z}}(\mathbb{N})~|~a_{ij}=a_{i+n,j+n},~\forall i,j\in\mathbb{Z}\}
\end{equation*}
and its subset
\begin{equation*}
\Xi_n:=\bigcup\limits_{d\in\mathbb{N}}\Xi_{n,d},
\end{equation*} where
\begin{align*}
\Xi_{n,d}:=\{A=(a_{ij})\in\mathrm{Mat}_{\mathbb{Z}\times\mathbb{Z}}(\mathbb{N})~|~&a_{-i,-j}=a_{ij}=a_{i+n,j+n},~\forall i,j\in\mathbb{Z};\\
&a_{00},a_{r+1,r+1}~\text{are odd};~\sum_{1\leqslant i\leqslant n}\sum_{j\in\mathbb{Z}}a_{ij}=D\}.
\end{align*}
For $A=(a_{ij})\in\Xi_{n,d}$, we shall always denote 
\begin{equation*}
a'_{ij}=\left\{
\begin{array}{ll}
\frac{1}{2}(a_{ii}-1), &\mbox{if $i=j\equiv 0,r+1\ (\mathrm{mod}\ n)$},\\
a_{ij}, &\mbox{otherwise}.
\end{array}
\right.
\end{equation*}
It was shown in \cite[Lemma~2.7]{FLLLW23} that there is a bijection
$$\kappa: \bigsqcup_{\lambda,\mu\in\Lambda}\{\lambda\}\times\mathscr{D}_{\lambda\mu}\times\{\mu\} \rightarrow\Xi_{n,d},\qquad (\lambda,g,\mu)\mapsto A=\kappa(\lambda,g,\mu),$$
where $$\lambda=\mathrm{row}_{\mathfrak{c}}(A):=(\mathrm{row}_{\mathfrak{c}}(A)_0,\ldots,\mathrm{row}_{\mathfrak{c}}(A)_{r+1}),~ \mu=\mathrm{col}_{\mathfrak{c}}(A):=(\mathrm{col}_{\mathfrak{c}}(A)_0,\ldots,\mathrm{col}_{\mathfrak{c}}(A)_{r+1})$$
with \begin{align*}
\mathrm{row}_{\mathfrak{c}}(A)_k&=\left\{
\begin{aligned}
&a'_{00}+\sum_{j\geqslant1}a_{0j},&&\text{if}~k=0,\\
&a'_{r+1,r+1}+\sum_{j\leqslant r}a_{r+1,j},&&\text{if}~k=r+1,\\
&\sum_{j\in\mathbb{Z}}t_{kj},&&\text{if}~1\leqslant k\leqslant r
\end{aligned}
\right.\quad \mbox{and}\\ 
\mathrm{col}_{\mathfrak{c}}(A)_k&=\left\{
\begin{aligned}
&a'_{00}+\sum_{i\geqslant1}a_{i0},&&\text{if}~k=0,\\
&a'_{r+1,r+1}+\sum_{i\leqslant r}a_{i,r+1},&&\text{if}~k=r+1,\\
&\sum_{i\in\mathbb{Z}}t_{ik},&&\text{if}~1\leqslant k\leqslant r,
\end{aligned}
\right.
\end{align*}
and $g$ is determined by $A$ via a ``column reading'' (cf. \cite[\S2.4]{FLLLW23}).
%
%
%
For each $A=\kappa(\lambda,g,\mu)\in\Xi_{n,d}$, we denote
\begin{equation*}
e_A=\phi_{\lambda\mu}^g.
\end{equation*}
Then Lemma~\ref{lem:basis} is rewritten as 
\begin{lemma}
The set $\{e_A~|~A\in\Xi_{n,d}\}$ forms an $\mathbb{A}$-basis of $\mathbb{S}_{n,d}^{\mathfrak{c}}$.   
\end{lemma}

Let $A\in\Xi_{n,d}$. We choose $k_j\geqslant0$ for $0\leqslant j\leqslant r+1$ so that $a_{ij}=0$ unless $|i-j|\leqslant k_j$. We define a weak composition
\begin{equation*}
\delta(A)\in\Lambda_{\tau,d},
\end{equation*}
with $\tau=k_0+\sum_{j=1}^r(2k_j+1)+k_{r+1}$ as follows. The composition $\delta(A)$ starts with the entries $(k_0+1)$ in the $0$-th column, $a'_{00}$, $a_{10}$, $a_{20}$, $\ldots$, $a_{k_00}$, followed by the entries $(2k_j+1)$ in the $j$-th column when $i$ increases the interval $[j-k_j..j+k_j]$ for $j=1,\ldots,r$, and is finally followed by the entries $(k_{r+1}+1)$ in the $(r+1)$-th column, $a_{r+1-k_{r+1},r+1}$, $\ldots$, $a_{r,r+1}$, $a'_{r+1,r+1}$.
This $\delta(A)$ satisfies Proposition{2.2} (a), i.e. 
$$W_{\delta(A)}=g^{-1}W_\lambda g\cap W_\mu,\quad \mbox{for $A=\kappa(\lambda,g,\nu)\in\Xi_{n,d}$}.$$

Set
\begin{equation*}
I^+=(\{0\}\times\mathbb{N})\sqcup([1..r]\times\mathbb{Z})\sqcup(\{r+1\}\times\mathbb{Z}_{\leqslant r+1}).
\end{equation*} 
Each matrix $A=(a_{ij})\in \Xi_n$ is determined by the entries $a_{ij}$, $((i,j)\in I^+)$. Moreover, we denote
\begin{equation*}
I_{\mathfrak{a}}^+=I^+\backslash\{(0,0),(r+1,r+1)\}.
\end{equation*}

\subsection{Quantum combinatorics}

For $m\in\mathbb{Z}$, $n\in\mathbb{N}$, denote
\begin{equation*}
[m]=\frac{q^{-2m}-1}{q^{-2}-1},\quad [n]^!=[n][n-1]\cdots[1],\quad [0]^!=1,
\end{equation*}
\begin{equation*}
\left[\begin{array}{c}
m\\
n
\end{array}\right]
=\frac{[m][m-1]\cdots[m-n+1]}{[n]^!}.
\end{equation*}

For $T=(t_{ij})\in\Theta_n$, we define
\begin{equation*}
[T]^!=\prod\limits_{i=1}^{n}\prod\limits_{j\in\mathbb{Z}}[t_{ij}]^!.
\end{equation*}
For $A=(a_{ij})\in\Xi_n$, define
\begin{equation*}
[A]_{\mathfrak{c}}^!=[a'_{00}]_{\mathfrak{c}_0}^![a'_{r+1,r+1}]_{\mathfrak{c}_1}^!\prod_{(i,j)\in I_a^+}[a_{ij}]^! \quad\mbox{where}\quad [m]_{\mathfrak{c}_l}^!=\prod_{k=1}^m[2k]_{\mathfrak{c}_l}
\end{equation*}
with
\begin{equation*}
[2k]_{\mathfrak{c}_0}=[k](1+q_0^{-1} q_1^{-1} q^{-2(k-1)})\quad\mbox{and}\quad [2k]_{\mathfrak{c}_1}=[k](1+q_0 q_1^{-1} q^{-2(k-1)}).
\end{equation*}

\begin{lemma}\label{1.4}
For any $A\in\Xi_n$, we have 
\begin{equation*}
[A]_{\mathfrak{c}}^!
=\sum_{w\in W_{\delta(A)}}q_0^{-\ell_{\mathfrak{c}_0}(w)+\ell_{\mathfrak{c}_d}(w)}q_1^{-\ell_{\mathfrak{c}_0}(w)-\ell_{\mathfrak{c}_d}(w)}q^{-2\ell_{\mathfrak{a}}(w)}.
\end{equation*}
\end{lemma}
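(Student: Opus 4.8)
The plan is to exploit the direct-product decomposition of the parabolic subgroup $W_{\delta(A)}$ and reduce the asserted identity to a per-block Poincaré computation. Since $\delta(A)\in\Lambda_{\tau,d}$ is a finite composition whose parts index connected, pairwise non-adjacent intervals of nodes in the Dynkin diagram of $\widetilde{C}_d$, the (finite) group $W_{\delta(A)}$ is a direct product of irreducible finite Coxeter groups. Concretely, the leading part $a'_{00}$ produces a factor $W^{(0)}=\langle s_0,s_1,\ldots,s_{a'_{00}-1}\rangle$ of type $B/C_{a'_{00}}$ (the only factor containing $s_0$), the final part $a'_{r+1,r+1}$ produces a factor $W^{(r+1)}=\langle s_{d-a'_{r+1,r+1}+1},\ldots,s_d\rangle$ of type $B/C_{a'_{r+1,r+1}}$ (the only factor containing $s_d$, since the two ends of the diagram are never joined inside a finite parabolic), and every interior part $a_{ij}$ with $(i,j)\in I_{\mathfrak a}^+$ gives a symmetric group factor $\mathfrak S_{a_{ij}}$ of type $A$. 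First I would note that a reduced expression of $w=\prod_c w_c$ in a product of commuting parabolic factors is a shuffle of reduced expressions of its components, so that $\ell,\ell_{\mathfrak c_0},\ell_{\mathfrak c_d},\ell_{\mathfrak a}$ are all additive along the product, with $\ell_{\mathfrak c_0}$ supported on $W^{(0)}$ and $\ell_{\mathfrak c_d}$ supported on $W^{(r+1)}$. Consequently the right-hand sum factorizes into a product of the analogous sums over the individual blocks.

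It then remains to evaluate three kinds of block sums. For a type-$A$ block $\mathfrak S_m$ (with $m=a_{ij}$) only $\ell_{\mathfrak a}=\ell$ contributes, and the classical Poincaré identity $\sum_{w\in\mathfrak S_m}t^{\ell(w)}=[m]_t^!$ with $t=q^{-2}$ gives $\sum_{w\in\mathfrak S_m}q^{-2\ell(w)}=[m]^!$. For the block $W^{(0)}$ of type $B/C_m$ (with $m=a'_{00}$), where $\ell_{\mathfrak c_d}=0$, I must establish
\begin{equation*}
\sum_{w\in W^{(0)}}q_0^{-\ell_{\mathfrak c_0}(w)}q_1^{-\ell_{\mathfrak c_0}(w)}q^{-2\ell_{\mathfrak a}(w)}=\prod_{k=1}^{m}[2k]_{\mathfrak c_0}=[m]_{\mathfrak c_0}^!,
\end{equation*}
and, symmetrically, for $W^{(r+1)}$ of type $B/C_m$ (with $m=a'_{r+1,r+1}$), where $\ell_{\mathfrak c_0}=0$ and each $s_d$ contributes $q_0^{+1}q_1^{-1}$,
\begin{equation*}
\sum_{w\in W^{(r+1)}}q_0^{\ell_{\mathfrak c_d}(w)}q_1^{-\ell_{\mathfrak c_d}(w)}q^{-2\ell_{\mathfrak a}(w)}=\prod_{k=1}^{m}[2k]_{\mathfrak c_1}=[m]_{\mathfrak c_1}^!.
\end{equation*}
Multiplying these three families of block sums reassembles exactly $[A]_{\mathfrak c}^!$.

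The heart of the argument, and the step I expect to be most delicate, is the weighted type-$B/C$ Poincaré computation, which I would prove by induction on the rank $m$. Writing $W^{(0)}_m=\langle s_0,\ldots,s_{m-1}\rangle$ and $W^{(0)}_{m-1}=\langle s_0,\ldots,s_{m-2}\rangle$, the coset space $W^{(0)}_{m-1}\backslash W^{(0)}_m$ has exactly $2m$ minimal-length representatives, forming the explicit chain
\begin{equation*}
e,\ s_{m-1},\ s_{m-1}s_{m-2},\ \ldots,\ s_{m-1}\cdots s_1,\ s_{m-1}\cdots s_1 s_0,\ s_{m-1}\cdots s_1 s_0 s_1,\ \ldots,\ s_{m-1}\cdots s_1 s_0 s_1\cdots s_{m-1},
\end{equation*}
of lengths $0,1,\ldots,2m-1$, where the first $m$ contain no $s_0$ and the last $m$ contain exactly one $s_0$. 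Summing the weight $q_0^{-\ell_{\mathfrak c_0}}q_1^{-\ell_{\mathfrak c_0}}q^{-2\ell_{\mathfrak a}}$ over this chain yields $[m]+q_0^{-1}q_1^{-1}q^{-2(m-1)}[m]=[m](1+q_0^{-1}q_1^{-1}q^{-2(m-1)})=[2m]_{\mathfrak c_0}$. Combined with the standard length-additive factorization $w=w'd$ (with $w'\in W^{(0)}_{m-1}$ and $d$ a minimal coset representative), along which $\ell_{\mathfrak c_0}$ and $\ell_{\mathfrak a}$ also add, this gives the induction step $P_m=P_{m-1}\,[2m]_{\mathfrak c_0}$ and hence $P_m=[m]_{\mathfrak c_0}^!$; the base case $P_1=[2]_{\mathfrak c_0}=1+q_0^{-1}q_1^{-1}$ comes from $W^{(0)}_1=\{e,s_0\}$. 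The argument for $W^{(r+1)}$ is identical after replacing $s_0$ by $s_d$, which turns each factor $[2k]_{\mathfrak c_0}$ into $[2k]_{\mathfrak c_1}$. The main obstacle is therefore purely combinatorial: verifying that the displayed chain really is the complete set of minimal coset representatives with the claimed length and $s_0$-count statistics, and that all four length functions add across the coset factorization so that the generating functions multiply cleanly.
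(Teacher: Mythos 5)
Your proof is correct and takes essentially the same approach as the paper's: both factor $W_{\delta(A)}$ into its type $A$ blocks (handled by the classical Poincar\'e identity $\sum_{w\in\mathfrak S_m}q^{-2\ell(w)}=[m]^!$) and its two type $C$ blocks containing $s_0$ and $s_d$, and then establish the weighted type $C$ identity $\sum_{w\in W_{C_m}}q_0^{-\ell_{\mathfrak c_0}(w)}q_1^{-\ell_{\mathfrak c_0}(w)}q^{-2\ell_{\mathfrak a}(w)}=[m]_{\mathfrak c_0}^!$ (and its $\mathfrak c_1$ analogue) by induction on the rank through the parabolic coset decomposition $W_{C_m}=W_{C_{m-1}}\cdot(\text{minimal coset representatives})$, whose coset sum equals $[2m]_{\mathfrak c_0}$. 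The only difference is presentational: you exhibit the $2m$ minimal-length representatives and their length/$s_0$-count statistics explicitly, whereas the paper asserts the value of this coset sum directly.
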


\begin{proof}
Denote the Weyl group of type $A_{m-1}$ (resp. $C_m$) by $\mathfrak{S}_m$ (resp. $W_{C_m}$). We have $W_{\delta(A)}\simeq W_{C_{\delta_0}}\times \mathfrak{S}_{\delta_1}\times \mathfrak{S}_{\delta_2}\times\cdots\times \mathfrak{S}_{\delta_{\tau}}\times W_{C_{\delta_{\tau+1}}}$. 

For each $w\in \mathfrak{S}_{a_{ij}}$ with $(i,j)\in I_{\mathfrak{a}}^+$, we have $\ell_{\mathfrak{c}_0}(w)=\ell_{\mathfrak{c}_d}(w)=0$ and $\ell_{\mathfrak{a}}(w)=\ell(w)$. Hence
\begin{equation*}
\sum_{w\in \mathfrak{S}_{a_{ij}}}q_0^{-\ell_{\mathfrak{c}_0}(w)+\ell_{\mathfrak{c}_d}(w)}q_1^{-\ell_{\mathfrak{c}_0}(w)-\ell_{\mathfrak{c}_d}(w)}q^{-2\ell_{\mathfrak{a}}(w)}
=\sum_{w\in \mathfrak{S}_{a_{ij}}}q^{-2\ell_{\mathfrak{a}}(w)}
=[a_{ij}]^!.
\end{equation*}

For each $w\in W_{C_{\delta_0}}$, the length $\ell_{\mathfrak{c}_d}(w)=0$ while $\ell_{\mathfrak{c}_0}(w)=0$ for each $w\in W_{C_{\delta_1}}$. 
Thus,
\begin{align*}
&\sum_{w\in W_{\delta(A)}}q_0^{-\ell_{\mathfrak{c}_0}(w)+\ell_{\mathfrak{c}_d}(w)}q_1^{-\ell_{\mathfrak{c}_0}(w)-\ell_{\mathfrak{c}_d}(w)}q^{-2\ell_{\mathfrak{a}}(w)}\\
=&\sum_{w\in W_{C_{\delta_0}}}q_0^{-\ell_{\mathfrak{c}_0}(w)}q_1^{-\ell_{\mathfrak{c}_0}(w)}q^{-2\ell_{\mathfrak{a}}(w)}\sum_{w\in W_{C_{\delta_{\tau+1}}}}q_0^{\ell_{\mathfrak{c}_d}(w)}q_1^{-\ell_{\mathfrak{c}_d}(w)}q^{-2\ell_{\mathfrak{a}}(w)}\prod_{(i,j)\in I_{\mathfrak{a}}^+}[a_{ij}]^!.
\end{align*}

It suffices to show 
\begin{align*}
\sum_{w\in W_{C_d}}q_0^{-\ell_{\mathfrak{c}_0}(w)}q_1^{-\ell_{\mathfrak{c}_0}(w)}q^{-2\ell_{\mathfrak{a}}(w)}
=[d]_{\mathfrak{c}_0}^!\quad\mbox{and}\quad
\sum_{w\in W_{C_d}}q_0^{\ell_{\mathfrak{c}_d}(w)}q_1^{-\ell_{\mathfrak{c}_d}(w)}q^{-2\ell_{\mathfrak{a}}(w)}
=[d]_{\mathfrak{c}_1}^!.
\end{align*}
In the following, we only prove the first equation, while the other one is similar. 

Let $\lambda=(d-1,1,0,\ldots,0)\in\Lambda_{r,d}$. We have $W_{\lambda}\simeq W_{C_{d-1}}$, and hence
\begin{align*}
&\sum_{w\in W_{C_d}}q_0^{-\ell_{\mathfrak{c}_0}(w)}q_1^{-\ell_{\mathfrak{c}_0}(w)}q^{-2\ell_{\mathfrak{a}}(w)}\\
=&\sum_{w\in W_{C_{d-1}}}q_0^{-\ell_{\mathfrak{c}_0}(w)}q_1^{-\ell_{\mathfrak{c}_0}(w)}q^{-2\ell_{\mathfrak{a}}(w)}\sum_{w\in\mathscr{D}_{\lambda}}q_0^{-\ell_{\mathfrak{c}_0}(w)}q_1^{-\ell_{\mathfrak{c}_0}(w)}q^{-2\ell_{\mathfrak{a}}(w)}\\
=&\sum_{w\in W_{C_{d-1}}}q_0^{-\ell_{\mathfrak{c}_0}(w)}q_1^{-\ell_{\mathfrak{c}_0}(w)}q^{-2\ell_{\mathfrak{a}}(w)}\cdot [d](1+q_0^{-1} q_1^{-1} q^{-2(d-1)})\\
=&\sum_{w\in W_{C_{d-1}}}q_0^{-\ell_{\mathfrak{c}_0}(w)}q_1^{-\ell_{\mathfrak{c}_0}(w)}q^{-2\ell_{\mathfrak{a}}(w)}\cdot [2d]_{\mathfrak{c}_0}
=[d]_{\mathfrak{c}_0}^!.
\end{align*} This completes the proof.
\end{proof}

\section{Multiplication formula} \label{sec:3}
\subsection{A multiplication formula for Hecke algebras}

\begin{lemma}\label{1.5}
For $A=\kappa(\lambda,g,\mu)\in\Xi_{n,d}$, we have $$q_g^{-1}x_{\lambda}T_gx_{\mu}=[A]_{\mathfrak{c}}^!e_A(x_{\mu}).$$
\end{lemma}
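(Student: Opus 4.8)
The plan is to unwind the definition of $e_A$ and reduce the asserted identity to a single scalar computation. Since $e_A=\phi_{\lambda\mu}^g$, the defining formula gives $e_A(x_\mu)=q_g^{-1}x_\lambda T_gT_{\mathscr{D}_\delta\cap W_\mu}$, where $\delta=\delta(A)$ is the composition with $W_\delta=g^{-1}W_\lambda g\cap W_\mu$ from Proposition~\ref{2.2}. On the other hand, Proposition~\ref{2.2}(c) supplies a length-additive bijection $W_\delta\times(\mathscr{D}_\delta\cap W_\mu)\to W_\mu$, $(x,y)\mapsto xy$, which factors the generator as $x_\mu=T_{W_\mu}=T_{W_\delta}T_{\mathscr{D}_\delta\cap W_\mu}$. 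Substituting this into the left-hand side yields $q_g^{-1}x_\lambda T_g x_\mu=q_g^{-1}x_\lambda T_g T_{W_\delta}T_{\mathscr{D}_\delta\cap W_\mu}$, so comparing with $e_A(x_\mu)$ reduces the lemma to the scalar identity
\[
x_\lambda T_g T_{W_\delta}=[A]_{\mathfrak{c}}^!\,x_\lambda T_g .
\]

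To evaluate the left-hand side I would first record a conjugation rule in $\mathbb{H}$: for each $x\in W_\delta$ one has $gxg^{-1}\in W_\lambda$, $\ell(gxg^{-1})=\ell(x)$, and $T_gT_x=T_{gxg^{-1}}T_g$. Indeed $g\in\mathscr{D}_\mu^{-1}$ gives $\ell(gx)=\ell(g)+\ell(x)$ for $x\in W_\mu\supseteq W_\delta$, while $g\in\mathscr{D}_\lambda$ together with $gxg^{-1}\in W_\lambda$ gives $\ell\big((gxg^{-1})g\big)=\ell(gxg^{-1})+\ell(g)$; since $gx=(gxg^{-1})g$, these two additivities force $\ell(gxg^{-1})=\ell(x)$ and the factorizations $T_{gx}=T_gT_x=T_{gxg^{-1}}T_g$. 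Iterating Lemma~\ref{2.5} along a reduced word, for any $w\in W_\lambda$ one obtains $x_\lambda T_w=q_0^{-\ell_{\mathfrak{c}_0}(w)}q^{-\ell_{\mathfrak{a}}(w)}q_1^{-\ell_{\mathfrak{c}_d}(w)}x_\lambda$. Combining these with $q_x^{-1}=q_1^{-\ell_{\mathfrak{c}_0}(x)}q_0^{\ell_{\mathfrak{c}_d}(x)}q^{-\ell_{\mathfrak{a}}(x)}$ (read off from the definition of $q_{s_i}$), I get
\[
x_\lambda T_g T_{W_\delta}=\Big(\sum_{x\in W_\delta}q_x^{-1}\,q_0^{-\ell_{\mathfrak{c}_0}(gxg^{-1})}q^{-\ell_{\mathfrak{a}}(gxg^{-1})}q_1^{-\ell_{\mathfrak{c}_d}(gxg^{-1})}\Big)x_\lambda T_g .
\]

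The crux, and the step I expect to be the main obstacle, is to show that conjugation by $g$ preserves each colored length separately: $\ell_{\mathfrak{c}_0}(gxg^{-1})=\ell_{\mathfrak{c}_0}(x)$, $\ell_{\mathfrak{c}_d}(gxg^{-1})=\ell_{\mathfrak{c}_d}(x)$, and consequently $\ell_{\mathfrak{a}}(gxg^{-1})=\ell_{\mathfrak{a}}(x)$. In the single-parameter setting only the total length is needed and this is automatic, but here the finer bookkeeping is essential. I would argue it from the permutation model together with the minimality of $g$. The statistic $\ell_{\mathfrak{c}_0}(x)$ depends only on the type-$C$ component of $x$ near $0$ and equals a sign-change count, by Lemma~\ref{length}. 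Because $g\in\mathscr{D}_\mu^{-1}$, the reflections $s_0,s_1,\dots$ generating the type-$C$ block of $W_\mu$ are not right descents of $g$, which forces $0<g(1)<g(2)<\cdots$ on that block; hence $g$ is order- and sign-preserving on the support of $W_{C_{\delta_0}}$. Using $g(-i)=-g(i)$ and $g(i+D)=g(i)+D$, conjugation by $g$ then carries $\mathfrak{c}_0$-type transpositions $(a,-a)_{\mathfrak{c}}$ to $(g(a),-g(a))_{\mathfrak{c}}$, so the sign-change count is unchanged, giving $\ell_{\mathfrak{c}_0}(gxg^{-1})=\ell_{\mathfrak{c}_0}(x)$; the same argument at the wall $d+1$ (where $g(d+1)=d+1$ and $g$ commutes with $i\mapsto D-i$) gives the $\mathfrak{c}_d$-statement.

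Granting the colored-length preservation, each summand collapses to $q_0^{-\ell_{\mathfrak{c}_0}(x)+\ell_{\mathfrak{c}_d}(x)}q_1^{-\ell_{\mathfrak{c}_0}(x)-\ell_{\mathfrak{c}_d}(x)}q^{-2\ell_{\mathfrak{a}}(x)}$, and summing over $x\in W_\delta=W_{\delta(A)}$ gives precisely $[A]_{\mathfrak{c}}^!$ by Lemma~\ref{1.4}. This establishes the displayed scalar identity $x_\lambda T_g T_{W_\delta}=[A]_{\mathfrak{c}}^!\,x_\lambda T_g$, and feeding it back into the factorization of $q_g^{-1}x_\lambda T_g x_\mu$ completes the proof.
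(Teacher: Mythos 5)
Your reduction is sound and it follows a genuinely different route from the paper's. The paper never commutes $T_g$ past $x_{\delta}$: it factors $x_{\mu}$ with $x_{\delta}$ on the \emph{right}, writing $x_{\mu}=T_{\mathscr{D}_{\delta}\cap W_{\mu}}x_{\delta}$, so that $q_g^{-1}x_{\lambda}T_gx_{\mu}=e_A(x_{\mu})x_{\delta}$, and then finishes in one line using right-$\mathbb{H}$-linearity of $e_A$ together with the absorption identity $x_{\mu}x_{\delta}=[A]_{\mathfrak{c}}^!x_{\mu}$ (Lemma~\ref{2.5} plus Lemma~\ref{1.4}); no conjugation and no colored-length bookkeeping appear at all. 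Note, however, that the factorization the paper invokes is in the opposite order to the one Proposition~\ref{2.2}(c) actually produces, namely $x_{\mu}=x_{\delta}T_{\mathscr{D}_{\delta}\cap W_{\mu}}$, and the two orders genuinely differ: already for $W_{\delta}=\langle s_1\rangle$ and $W_{\mu}=\langle s_1,s_2\rangle\cong\mathfrak{S}_3$ one has $\mathscr{D}_{\delta}\cap W_{\mu}=\{e,s_2,s_2s_1\}$, and $T_{s_2s_1}T_{s_1}$ is not length-additive, so $T_{\mathscr{D}_{\delta}\cap W_{\mu}}x_{\delta}\neq x_{\delta}T_{\mathscr{D}_{\delta}\cap W_{\mu}}$. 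Your route works with the correct order and pays for it by conjugating $T_g$ through $x_{\delta}$; your derivation of $\ell(gxg^{-1})=\ell(x)$ and $T_gT_x=T_{gxg^{-1}}T_g$ from the two length-additivities is correct, as is the final summation via Lemma~\ref{1.4}.

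The step you yourself call the crux — invariance of the three colored lengths under conjugation — is where there is a genuine gap: your argument for it uses only $g\in\mathscr{D}_{\mu}^{-1}$ (order and sign preservation on the $\mu$-blocks), and at that level of generality the claim is false. Sign preservation on a block does not prevent $g$ from spreading the block across a period of $D$: for $d=2$, $D=6$, take $g\in W$ acting as the identity on residues with $g(1)=1$, $g(2)=8=D+2$; then $g\in\mathscr{D}_{\mu}^{-1}$ for $W_{\mu}=\langle s_0,s_1\rangle$, $g$ is order- and sign-preserving on $[-2..2]$, yet $gs_1g^{-1}=(1,8)_{\mathfrak{c}}$ satisfies $\ell_{\mathfrak{c}_0}(gs_1g^{-1})=2\neq0=\ell_{\mathfrak{c}_0}(s_1)$. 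What rules this out in the actual setting is a fact your argument never uses: if $s_j\in W_{\delta}$ then $t:=gs_jg^{-1}\in W_{\lambda}$, and comparing $\ell(tg)=\ell(t)+\ell(g)$ (from $g\in\mathscr{D}_{\lambda}$, $t\in W_{\lambda}$) with $\ell(gs_j)=\ell(g)+1$ (from $g\in\mathscr{D}_{\mu}^{-1}$, $s_j\in W_{\mu}$) in $tg=gs_j$ forces $\ell(t)=1$, i.e.\ $g$ conjugates every simple generator of $W_{\delta}$ to a \emph{simple} generator of $W$ (the Kilmoyer property of distinguished double-coset representatives; note $(1,8)_{\mathfrak{c}}$ above is not simple, so that $g$ can never occur with $s_1\in W_{\delta}$). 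Type is then preserved automatically: $(g(j),g(j+1))_{\mathfrak{c}}$ can be a sign-change reflection of either wall only if $g(j)+g(j+1)\equiv0\pmod D$, which would force $2j+1\equiv0\pmod D$, impossible since $D$ is even; and $s_0$, $s_d$, being alone in their conjugacy classes among simple reflections, must go to themselves. Granting this, reduced words of elements of $W_{\delta}$ are carried to reduced words of the same multiset of types, all three colored lengths are preserved, and the rest of your proof goes through verbatim.
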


\begin{proof}
By Proposition~\ref{2.2} we have
\begin{equation*}
x_{\mu}
=\sum_{x\in W_{\mu}}q_x^{-1}T_x
=\sum_{\scalebox{0.7}{$\substack{x\in W_{\lambda} \\ y\in\mathscr{D}_{\delta}\cap W_{\mu}}$}}q_{wy}^{-1}T_{wy}
=\sum_{w\in\mathscr{D}_{\delta}\cap W_{\mu}}q_{w}^{-1}T_{w}\sum_{y\in W_{\delta}}q_{y}^{-1}T_{y}
=T_{\mathscr{D}_{\delta}\cap W_{\mu}}x_{\delta},
\end{equation*}
while by Lemma \ref{1.4} we have 
\begin{equation*}
x_{\mu}x_{\delta}
=\sum_{w\in W_{\delta}}q_w^{-1}x_{\mu}T_w
=\sum_{w\in W_{\delta}}q_0^{-\ell_{\mathfrak{c}_0}(w)+\ell_{\mathfrak{c}_d}(w)}q_1^{-\ell_{\mathfrak{c}_0}(w)-\ell_{\mathfrak{c}_d}(w)}q^{-2\ell_{\mathfrak{a}}(w)}x_{\mu}
=[A]_{\mathfrak{c}}^!x_{\mu}.
\end{equation*}
Hence
$q_g^{-1}x_{\lambda}T_g x_{\mu}
=q_g^{-1}x_{\lambda}T_g T_{\mathscr{D}_{\delta}\cap W_{\mu}}x_{\delta}
=e_A(x_{\mu})x_{\delta}
=e_A(x_{\mu}x_{\delta})
=[A]_{\mathfrak{c}}^!e_A(x_{\mu})$.
\end{proof}

\subsection{A rough multiplication formula}
In this section, we always take $B=\kappa(\lambda,g_1,\mu)$, $A=\kappa(\mu,g_2,\nu)\in\Xi_{n,d}$ and write $\delta=\delta(B)$.
\begin{lemma}\label{1.6}
 We have
\begin{equation*}
e_Be_A(x_{\nu})=\frac{1}{[A]_{\mathfrak{c}}^!}q_{g_1}^{-1}x_{\lambda}T_{g_1}T_{(\mathscr{D}_{\delta}\cap W_{\mu})g_2}x_{\nu}.
\end{equation*}
\end{lemma}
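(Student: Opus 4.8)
The plan is to reduce everything to Lemma~\ref{1.5} together with the right $\mathbb{H}$-linearity of $e_B$, postponing denominators until the final step. First I would apply Lemma~\ref{1.5} to the matrix $A=\kappa(\mu,g_2,\nu)$, which yields the clean identity
$$[A]_{\mathfrak{c}}^!\,e_A(x_{\nu})=q_{g_2}^{-1}x_{\mu}T_{g_2}x_{\nu}.$$
The right-hand side visibly lies in $x_{\mu}\mathbb{H}$, being of the form $x_{\mu}\cdot h$ with $h=q_{g_2}^{-1}T_{g_2}x_{\nu}$, so $e_B$ may legitimately be applied to it.

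Next I would use that $e_B=\phi_{\lambda\mu}^{g_1}\in\mathrm{Hom}_{\mathbb{H}}(x_{\mu}\mathbb{H},x_{\lambda}\mathbb{H})$ is a homomorphism of \emph{right} $\mathbb{H}$-modules. Applying $e_B$ to both sides of the displayed identity and pulling the factor $q_{g_2}^{-1}T_{g_2}x_{\nu}$ out on the right gives
$$[A]_{\mathfrak{c}}^!\,e_Be_A(x_{\nu})=e_B(x_{\mu})\cdot q_{g_2}^{-1}T_{g_2}x_{\nu}.$$
By the defining formula for $\phi_{\lambda\mu}^{g_1}$, with $\delta=\delta(B)$, we have $e_B(x_{\mu})=q_{g_1}^{-1}x_{\lambda}T_{g_1}T_{\mathscr{D}_{\delta}\cap W_{\mu}}$, so the right-hand side becomes $q_{g_1}^{-1}x_{\lambda}T_{g_1}\,T_{\mathscr{D}_{\delta}\cap W_{\mu}}\,q_{g_2}^{-1}T_{g_2}\,x_{\nu}$.

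The crux is then the identity $T_{\mathscr{D}_{\delta}\cap W_{\mu}}\cdot q_{g_2}^{-1}T_{g_2}=T_{(\mathscr{D}_{\delta}\cap W_{\mu})g_2}$. This follows from length additivity: since $A=\kappa(\mu,g_2,\nu)$ forces $g_2\in\mathscr{D}_{\mu\nu}\subseteq\mathscr{D}_{\mu}$, we have $\ell(wg_2)=\ell(w)+\ell(g_2)$ for every $w\in W_{\mu}$, and in particular for every $w\in\mathscr{D}_{\delta}\cap W_{\mu}$. Length additivity yields $T_wT_{g_2}=T_{wg_2}$ together with the multiplicativity $q_{wg_2}=q_wq_{g_2}$ of the weights, so that $q_w^{-1}T_w\cdot q_{g_2}^{-1}T_{g_2}=q_{wg_2}^{-1}T_{wg_2}$; summing over $w\in\mathscr{D}_{\delta}\cap W_{\mu}$ gives the claimed identity by the definition of $T_X$. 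Substituting this back and dividing by $[A]_{\mathfrak{c}}^!$ produces the asserted formula.

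I expect the last step to be the main obstacle, namely verifying that the weights $q_w$ multiply correctly under concatenation of reduced expressions. This rests on the independence of $q_w$ from the chosen reduced form and on the minimal-length property of $g_2$ as a right $W_{\mu}$-coset representative; everything else is a formal manipulation of the $\mathbb{H}$-linear maps $e_A$ and $e_B$.
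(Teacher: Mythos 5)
Your proof is correct and follows essentially the same route as the paper: apply Lemma~\ref{1.5} to write $[A]_{\mathfrak{c}}^!\,e_A(x_\nu)=q_{g_2}^{-1}x_\mu T_{g_2}x_\nu$, use the right $\mathbb{H}$-linearity of $e_B$ together with $e_B(x_\mu)=q_{g_1}^{-1}x_\lambda T_{g_1}T_{\mathscr{D}_\delta\cap W_\mu}$, and merge $T_{\mathscr{D}_\delta\cap W_\mu}\,q_{g_2}^{-1}T_{g_2}=T_{(\mathscr{D}_\delta\cap W_\mu)g_2}$. The only difference is that you spell out the length-additivity justification (via $g_2\in\mathscr{D}_{\mu\nu}\subseteq\mathscr{D}_\mu$) for this last identity, which the paper leaves implicit.
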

\begin{proof}
By Lemma \ref{1.5}, we have
\begin{align*}
e_Be_A(x_{\nu})
=\frac{1}{[A]_{\mathfrak{c}}^!}q_{g_2}^{-1}e_B(x_{\mu}T_{g_2}x_{\nu})
=\frac{1}{[A]_{\mathfrak{c}}^!}q_{g_2}^{-1}e_B(x_{\mu})T_{g_2}x_{\nu}
=\frac{1}{[A]_{\mathfrak{c}}^!}q_{g_1}^{-1}x_{\lambda}T_{g_1}T_{(\mathscr{D}_{\delta}\cap W_{\mu})g_2}x_{\nu}.
\end{align*}    
\end{proof}

For $w\in\mathscr{D}_{\delta}\cap W_{\mu}$, we write
\begin{equation*}
T_{g_1}T_{wg_2}=\sum_{\sigma\in\Delta(w)}c^{(w,\sigma)}T_{g_1\sigma wg_2},
\end{equation*}
where $c^{(w,\sigma)}\in\mathbb{A}=\mathbb{Z}[q^{\pm\frac{1}{2}},q_0^{\pm\frac{1}{2}},q_1^{\pm\frac{1}{2}}]$ and $\Delta(w)$ is a finite subset of $W$.
For $\sigma\in\Delta(w)$, we have
\begin{equation*}
T_{g_1\sigma wg_2}=T_{w_{\lambda}^{(\sigma)}}T_{y^{(w,\sigma)}}T_{w_{\nu}^{(\sigma)}}
\end{equation*}
if we write
$g_1\sigma wg_2=w_{\lambda}^{(\sigma)}y^{(w,\sigma)}w_{\nu}^{(\sigma)}$ for some $y^{(w,\sigma)}\in\mathscr{D}_{\lambda\nu}, w_{\lambda}^{(\sigma)}\in W_{\lambda}, w_{\nu}^{(\sigma)}\in W_{\nu}$.
We further denote
\begin{equation*}
A^{(w,\sigma)}=(a_{ij}^{(w,\sigma)})=\kappa(\lambda,y^{(w,\sigma)},\nu).
\end{equation*}

\begin{proposition}\label{3.3}
Keep $\delta=\delta(B)$. We have
\begin{equation*}
e_Be_A
=\sum_{\scalebox{0.7}{$\substack{w\in\mathscr{D}_{\delta}\cap W_{\mu}\\ \sigma\in\Delta(w)}$}}c^{(w,\sigma)}q_0^{L_0} q_1^{L_1} q^{L}\frac{[A^{(w,\sigma)}]_\mathfrak{c}^!}{[A]_\mathfrak{c}^!}e_{A^{(w,\sigma)}},\quad \mbox{where}
\end{equation*}
\begin{align*}
L_0&=-\ell_{\mathfrak{c}_0}(g_1\sigma wg_2)+\ell_{\mathfrak{c}_d}(w)+\ell_{\mathfrak{c}_d}(g_2)+\ell_{\mathfrak{c}_0}(y^{(w,\sigma)})-\ell_{\mathfrak{c}_d}(y^{(w,\sigma)}),\\
L_1&=-\ell_{\mathfrak{c}_d}(g_1\sigma wg_2)-\ell_{\mathfrak{c}_0}(w)-\ell_{\mathfrak{c}_0}(g_2)+\ell_{\mathfrak{c}_0}(y^{(w,\sigma)})+\ell_{\mathfrak{c}_d}(y^{(w,\sigma)}),\\
L&=-\ell_{\mathfrak{a}}(g_1\sigma wg_2)-\ell_{\mathfrak{a}}(g_1)-\ell_{\mathfrak{a}}(w)-\ell_{\mathfrak{a}}(g_2)+2\ell_{\mathfrak{a}}(y^{(w,\sigma)}).
\end{align*}
\end{proposition}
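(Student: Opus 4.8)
The plan is to start from the rough formula in Lemma~\ref{1.6}, namely
\[
e_Be_A(x_{\nu})=\frac{1}{[A]_{\mathfrak{c}}^!}q_{g_1}^{-1}x_{\lambda}T_{g_1}T_{(\mathscr{D}_{\delta}\cap W_{\mu})g_2}x_{\nu},
\]
and rewrite the middle factor $T_{(\mathscr{D}_{\delta}\cap W_{\mu})g_2}$ by summing over $w\in\mathscr{D}_{\delta}\cap W_{\mu}$, weighting each term by $q_{wg_2}^{-1}$, and then inserting the expansion $T_{g_1}T_{wg_2}=\sum_{\sigma\in\Delta(w)}c^{(w,\sigma)}T_{g_1\sigma wg_2}$ that precedes the statement. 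This converts the product into a sum of terms of the form $c^{(w,\sigma)}q_{g_1}^{-1}q_{wg_2}^{-1}\,x_\lambda T_{g_1\sigma w g_2}x_\nu$, each indexed by a pair $(w,\sigma)$.

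The key step is then to identify each term $x_\lambda T_{g_1\sigma wg_2}x_\nu$ with a multiple of the basis element $e_{A^{(w,\sigma)}}$. Using the factorization $g_1\sigma wg_2=w_\lambda^{(\sigma)}y^{(w,\sigma)}w_\nu^{(\sigma)}$ with $y^{(w,\sigma)}\in\mathscr{D}_{\lambda\nu}$ (so that $A^{(w,\sigma)}=\kappa(\lambda,y^{(w,\sigma)},\nu)$), I would apply Lemma~\ref{2.5} to absorb the parabolic factors: $x_\lambda T_{w_\lambda^{(\sigma)}}$ contributes a scalar depending on the $\mathfrak{c}_0$-, $\mathfrak{c}_d$-, and $\mathfrak{a}$-lengths of $w_\lambda^{(\sigma)}$, and similarly $T_{w_\nu^{(\sigma)}}x_\nu$ on the right. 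What survives is $x_\lambda T_{y^{(w,\sigma)}}x_\nu$, which by Lemma~\ref{1.5} equals $q_{y^{(w,\sigma)}}\,[A^{(w,\sigma)}]_{\mathfrak{c}}^!\,e_{A^{(w,\sigma)}}(x_\nu)$. Collecting the scalar contributions then yields the coefficient $q_0^{L_0}q_1^{L_1}q^{L}\,[A^{(w,\sigma)}]_{\mathfrak{c}}^!/[A]_{\mathfrak{c}}^!$, and since the identity holds on $x_\nu$ and both sides vanish on $x_{\nu'}$ for $\nu'\neq\nu$, it holds as an identity of homomorphisms.

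The main obstacle, and the place demanding genuine bookkeeping, is the precise accounting of the three exponents $L_0$, $L_1$, $L$. Each $q_{s_i}^{-1}$ appearing in $q_{g_1}^{-1}$, $q_{wg_2}^{-1}$, and $q_{y^{(w,\sigma)}}$ unpacks into powers of $q_0$, $q_1$, $q$ according to the three-parameter rule $q_{s_0}=q_1$, $q_{s_d}=q_0^{-1}$, $q_{s_i}=q$; this is exactly where the three-parameter case diverges sharply from the equal-parameter one, since the $s_0$- and $s_d$-contributions must be tracked by the separate length functions $\ell_{\mathfrak{c}_0}$ and $\ell_{\mathfrak{c}_d}$ rather than a single length. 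The critical input is the additivity $\ell_\star(g_1\sigma wg_2)=\ell_\star(w_\lambda^{(\sigma)})+\ell_\star(y^{(w,\sigma)})+\ell_\star(w_\nu^{(\sigma)})$ for each of $\star\in\{\mathfrak{a},\mathfrak{c}_0,\mathfrak{c}_d\}$, which follows from Proposition~\ref{2.2}(b) applied to the double coset $W_\lambda y^{(w,\sigma)}W_\nu$; this lets me solve for the parabolic lengths $\ell_\star(w_\lambda^{(\sigma)})+\ell_\star(w_\nu^{(\sigma)})$ as $\ell_\star(g_1\sigma wg_2)-\ell_\star(y^{(w,\sigma)})$ and substitute. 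Carefully matching the resulting exponents against the stated $L_0$, $L_1$, $L$—in particular verifying the asymmetric signs, such as the $+\ell_{\mathfrak{c}_d}(y^{(w,\sigma)})$ in $L_1$ versus $-\ell_{\mathfrak{c}_d}(y^{(w,\sigma)})$ in $L_0$, which trace back to $q_{s_d}=q_0^{-1}$ carrying a $q_0$ but no $q_1$—is the heart of the computation.
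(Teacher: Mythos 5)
Your proposal is correct and follows essentially the same route as the paper's proof: both start from Lemma~\ref{1.6}, expand $T_{(\mathscr{D}_{\delta}\cap W_{\mu})g_2}$ term by term and insert $T_{g_1}T_{wg_2}=\sum_{\sigma}c^{(w,\sigma)}T_{g_1\sigma wg_2}$, absorb the parabolic factors of $g_1\sigma wg_2=w_{\lambda}^{(\sigma)}y^{(w,\sigma)}w_{\nu}^{(\sigma)}$ into $x_{\lambda}$ and $x_{\nu}$ via Lemma~\ref{2.5}, convert $x_{\lambda}T_{y^{(w,\sigma)}}x_{\nu}$ into $q_{y^{(w,\sigma)}}[A^{(w,\sigma)}]_{\mathfrak{c}}^!\,e_{A^{(w,\sigma)}}(x_{\nu})$ by Lemma~\ref{1.5}, and then do the three-parameter exponent bookkeeping using the additivity of $\ell_{\mathfrak{c}_0}$, $\ell_{\mathfrak{c}_d}$, $\ell_{\mathfrak{a}}$ along the length-additive factorization. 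The key inputs you isolate (the rule $q_{s_0}=q_1$, $q_{s_d}=q_0^{-1}$, $q_{s_i}=q$, and solving for $\ell_{\star}(w_{\lambda}^{(\sigma)})+\ell_{\star}(w_{\nu}^{(\sigma)})=\ell_{\star}(g_1\sigma wg_2)-\ell_{\star}(y^{(w,\sigma)})$) are precisely what the paper's displayed computation carries out.
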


\begin{proof}
By Lemmas \ref{1.5} \& \ref{1.6}, we have
\begin{align*}
e_Be_A(x_\nu)
=&\frac{1}{[A]_\mathfrak{c}^!}q_{g_1}^{-1}q_{g_2}^{-1}\sum_{\scalebox{0.7}{$\substack{w\in\mathscr{D}_{\delta}\cap W_{\mu}\\ \sigma\in\Delta(w)}$}}c^{(w,\sigma)}q_{w}^{-1}x_{\lambda}T_{g_1\sigma wg_2}x_{\nu}\\
=&\sum_{\scalebox{0.7}{$\substack{w\in\mathscr{D}_{\delta}\cap W_{\mu}\\ \sigma\in\Delta(w)}$}}c^{(w,\sigma)}\frac{1}{[A]_\mathfrak{c}^!}q_{g_1}^{-1}q_{g_2}^{-1}q_{w}^{-1}q_0^{-\ell_{\mathfrak{c}_0}(w_{\lambda}^{(\sigma)})-\ell_{\mathfrak{c}_0}(w_{\nu}^{(\sigma)})}q_1^{-\ell_{\mathfrak{c}_d}(w_{\lambda}^{(\sigma)})-\ell_{\mathfrak{c}_d}(w_{\nu}^{(\sigma)})}\\
&\times q^{-\ell_{\mathfrak{a}}(w_{\lambda}^{(\sigma)})-\ell_{\mathfrak{a}}(w_{\nu}^{(\sigma)})}x_{\lambda}T_{y^{(w,\sigma)}}x_{\nu}\\
=&\sum_{\scalebox{0.7}{$\substack{w\in\mathscr{D}_{\delta}\cap W_{\mu}\\ \sigma\in\Delta(w)}$}}c^{(w,\sigma)}q_0^{-\ell_{\mathfrak{c}_0}(g_1\sigma wg_2)+\ell_{\mathfrak{c}_d}(w)+\ell_{\mathfrak{c}_d}(g_2)+\ell_{\mathfrak{c}_0}(y^{(w,\sigma)})-\ell_{\mathfrak{c}_d}(y^{(w,\sigma)})}\\
&\times q_1^{-\ell_{\mathfrak{c}_d}(g_1\sigma wg_2)-\ell_{\mathfrak{c}_0}(w)-\ell_{\mathfrak{c}_0}(g_2)+\ell_{\mathfrak{c}_0}(y^{(w,\sigma)})+\ell_{\mathfrak{c}_d}(y^{(w,\sigma)})}\\
&\times q^{-\ell_{\mathfrak{a}}(g_1\sigma wg_2)-\ell_{\mathfrak{a}}(g_1)-\ell_{\mathfrak{a}}(w)-\ell_{\mathfrak{a}}(g_2)+2\ell_{\mathfrak{a}}(y^{(w,\sigma)})}\frac{[A^{(w,\sigma)}]_\mathfrak{c}^!}{[A]_\mathfrak{c}^!}e_{A^{(w,\sigma)}}(x_\nu).
\end{align*} So the statement holds.   
\end{proof}

\subsection{Multiplication formulas for semisimple generators}
We assume that $B=(b_{ij})=\kappa(\lambda,g_1,\mu)$ is tridiagonal, i.e. $b_{ij}=0$ unless $|i-j|\leqslant1$. In this case, we can take
\begin{align*}
\delta=\delta(B)=(b'_{00},b_{10};b_{01},b_{11},b_{21};\ldots;b_{r-1,r},b_{r,r},b_{r+1,r};b_{r,r+1},b'_{r+1,r+1})\in\Lambda_{3r+2,d}.
\end{align*}
For any $1\leqslant i\leqslant r+1$, $w\in\mathscr{D}_{\delta}\cap W_{\mu}$ and $g_2\in\mathscr{D}_{\mu\nu}$,  we introduce the following subset of $W$:
\begin{align*}
K_w^{(i)}=\{\text{products of disjoint}~&\text{transpositions of the form}~(j,k)_{\mathfrak{c}}~|\\
&j\in R_{3i-2}^{\delta},~k\in R_{3i-1}^{\delta},~(wg_2)^{-1}(k)<(wg_2)^{-1}(j)\}.
\end{align*}
We then denote
\begin{equation*}
K_w:=\{\prod_{i=1}^{r+1}\sigma^{(i)}\big|~\sigma^{(i)}\in K_w^{(i)}\}.
\end{equation*}
For $w\in\mathscr{D}_{\delta}\cap W_{\mu}$ and $\sigma\in K_w$ we denote
\begin{equation*}
n(\sigma)=\text{the number of disjoint transpositions appeared in}~\sigma.
\end{equation*}
Finally, we set
\begin{align*}
h(w,\sigma&=|H(w,\sigma)|, \quad\mbox{where}
\\
H(w,\sigma)&=\bigcup_{i=1}^{r+1}\left\{(j,k)\in R_{3i-2}^{\delta}\times R_{3i-1}^{\delta}~\middle|~
\begin{aligned}
&(wg_2)^{-1}\sigma(j)>(wg_2)^{-1}(k),\\
&(wg_2)^{-1}(j)>(wg_2)^{-1}\sigma(k)
\end{aligned}\right\}.
\end{align*}

\begin{thm}
Let $B=\kappa(\lambda,g_1,\mu)$ be a tridiagonal mqatrix. For any $g_2\in\mathscr{D}_{\mu\nu}$ and $w\in\mathscr{D}_{\delta(B)}\cap W_{\mu}$, we have
\begin{equation*}
T_{g_1}T_{wg_2}=\sum_{\sigma\in K_w}(q^{-1}-q)^{n(\sigma)}T_{g_1\sigma wg_2}.
\end{equation*}
\end{thm}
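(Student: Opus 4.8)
The plan is to prove the multiplication formula $T_{g_1}T_{wg_2}=\sum_{\sigma\in K_w}(q^{-1}-q)^{n(\sigma)}T_{g_1\sigma wg_2}$ by repeatedly applying the basic Hecke-algebra relations. The key observation is that since $B$ is tridiagonal, the element $g_1$ is (up to the parabolic factor $W_\lambda$) a product of commuting ``block transposition'' words: reading off $\delta=\delta(B)$, the only mixing $g_1$ induces happens between the residue blocks $R_{3i-2}^\delta$ and $R_{3i-1}^\delta$ for $1\le i\le r+1$. First I would reduce the problem to a single such block, i.e.\ fix one $i$ and analyze how left-multiplication by the corresponding piece of $T_{g_1}$ acts on $T_{wg_2}$; the full formula then follows by taking the product over the $r+1$ independent blocks (they commute because the transpositions in different $K_w^{(i)}$ are disjoint), which explains both the product structure $K_w=\prod_i K_w^{(i)}$ and the additivity $n(\sigma)=\sum_i n(\sigma^{(i)})$ of the exponent on $(q^{-1}-q)$.

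Within a single block the computation is governed by the standard rule for multiplying a generator $T_s$ ($s$ a simple reflection of type $\mathfrak{a}$, so with quadratic relation $(T_s-q^{-1})(T_s+q)=0$) into a basis element $T_x$: one has $T_sT_x=T_{sx}$ if $\ell(sx)>\ell(x)$, and $T_sT_x=T_{sx}+(q^{-1}-q)T_x$ if $\ell(sx)<\ell(x)$. The second step is therefore to set up an induction on $\ell(g_1)$ (or on the number of simple reflections in a reduced word for the block piece), where at each stage the two outcomes correspond exactly to whether the relevant pair $(j,k)$ gets ``switched.'' I would track the length condition through the permutation model of $W$: the condition $(wg_2)^{-1}(k)<(wg_2)^{-1}(j)$ appearing in the definition of $K_w^{(i)}$ is precisely the combinatorial criterion (via Lemma~\ref{length}) that inserting the transposition $(j,k)_\mathfrak{c}$ lengthens rather than shortens, so that a factor $(q^{-1}-q)$ is produced. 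The set $H(w,\sigma)$ and its cardinality $h(w,\sigma)$ are the bookkeeping device ensuring that when several transpositions in a block are applied, the length additivity $\ell(g_1\sigma wg_2)=\ell(g_1)+\ell(\sigma)+\ell(wg_2)-2h(w,\sigma)$ (or the analogous reduced-word count) holds, and I would verify this length identity as the technical core.

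The main obstacle I expect is controlling the cross-terms when a block contributes more than one transposition: a priori, multiplying in the block piece of $T_{g_1}$ could generate intermediate basis elements $T_{g_1\sigma' wg_2}$ with $\sigma'$ not of the required disjoint-transposition form, and one must show these either vanish or recombine so that only genuine $\sigma\in K_w^{(i)}$ survive, each with coefficient exactly $(q^{-1}-q)^{n(\sigma)}$ and no extra powers of $q$. Here the fact that $B$ is tridiagonal is essential: it forces $g_1$'s block to act like a product of reflections within a two-row strip, where the relevant parabolic combinatorics degenerates so that only transpositions (not longer cycles) can appear, and the disjointness is automatic. I would handle this by choosing a favorable reduced expression for the block piece of $g_1$ and performing the induction so that at each step exactly one candidate pair $(j,k)$ is resolved, invoking Proposition~\ref{2.2}(b) to guarantee the length-additive factorizations $g_1\sigma wg_2=w_\lambda^{(\sigma)}y^{(w,\sigma)}w_\nu^{(\sigma)}$ are clean.

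\emph{Remark on parameters.} Since every simple reflection involved in a single block $R_{3i-2}^\delta$–$R_{3i-1}^\delta$ is of type $\mathfrak{a}$ (the boundary generators $s_0,s_d$ do not enter these interior blocks), only the relation $(T_s-q^{-1})(T_s+q)=0$ is used, which is why the coefficient is the single-parameter quantity $(q^{-1}-q)$ rather than something involving $q_0,q_1$; this is consistent with the appearance of the three parameters only later, in Proposition~\ref{3.3}, through the length functions $\ell_{\mathfrak{c}_0},\ell_{\mathfrak{c}_d}$.
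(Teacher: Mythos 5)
Your proposal is correct and follows essentially the same route as the paper: the paper's proof likewise writes $g_1=\prod_{i=1}^{r+1}g_1^{(i)}$ as a product of block pieces acting on $R_{3i-2}^{\delta}\cup R_{3i-1}^{\delta}$, notes that $s_0$ and $s_d$ cannot appear in a reduced expression of $g_1$ (so only the relation $(T_i-q^{-1})(T_i+q)=0$, hence only the parameter $q$, enters the expansion), and then defers the remaining reduced-word induction and crossing bookkeeping verbatim to \cite[Theorem 3.3]{FLLLW23} --- precisely the argument you sketch. The only slips are cosmetic: the factor $(q^{-1}-q)$ arises in the \emph{shortening} case of the quadratic relation (the condition $(wg_2)^{-1}(k)<(wg_2)^{-1}(j)$ means the transposition shortens $wg_2$), and the precise length identity is $\ell_{\mathfrak{a}}(g_1)+\ell_{\mathfrak{a}}(w)+\ell_{\mathfrak{a}}(g_2)=\ell_{\mathfrak{a}}(g_1\sigma wg_2)+n(\sigma)+2h(w,\sigma)$ rather than the version you wrote, but neither point affects the validity of your approach.
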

\begin{proof}
We can write $g_1=\prod_{i=1}^{r+1}g_1^{(i)}$, where $g_1^{(i)}\in W$ is specified by the following recipe:
\begin{equation*}
g_1^{(i)}(x)=\left\{
\begin{aligned}
&x+b_{i-1,i},&&\text{if}~x\in R_{3i-2}^{\delta}\subset R_{i-1}^{\mu},\\
&x-b_{i,i-1},&&\text{if}~x\in R_{3i-1}^{\delta}\subset R_{i}^{\mu},\\
&x,&&\text{if}~x\in[1..d]\backslash(R_{3i-2}^{\delta}\cup R_{3i-1}^{\delta}).
\end{aligned}
\right.    
\end{equation*}
Note that $s_0$ and $s_d$ do not appear in $g_1$ since $0\in R_0^{\delta}$ and $d+1\in R_{3r+3}^{\delta}$. Hence, only the parameter $q$ appears in the terms of $T_{g_1}T_{wg_2}$ when expanded. We omit the following argument here since it is totally the same as those for \cite[Theorem 3.3]{FLLLW23}.  
\end{proof}

\begin{corollary}[{\cite[Corollary 3.5]{FLLLW23}}]
For any $g_1\in\mathscr{D}_{\lambda\mu}$, $g_2\in\mathscr{D}_{\mu\nu}$, $w\in\mathscr{D}_{\delta}\cap W_{\mu}$ and $\sigma\in K_w$, we have
\begin{equation*}
\ell_{\mathfrak{a}}(g_1)+\ell_{\mathfrak{a}}(w)+\ell_{\mathfrak{a}}(g_2)=\ell_{\mathfrak{a}}(g_1\sigma wg_2)+n(\sigma)+2h(w,\sigma).
\end{equation*}
\end{corollary}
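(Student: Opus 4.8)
The plan is to prove the length identity
\[
\ell_{\mathfrak{a}}(g_1)+\ell_{\mathfrak{a}}(w)+\ell_{\mathfrak{a}}(g_2)=\ell_{\mathfrak{a}}(g_1\sigma wg_2)+n(\sigma)+2h(w,\sigma)
\]
by directly counting the relevant inversions using the combinatorial length formulas from Lemma~\ref{length}. The key observation is that $s_0$ and $s_d$ never occur in $g_1$ (as noted in the proof of the preceding theorem, since $0\in R_0^\delta$ and $d+1\in R_{3r+3}^\delta$), and that $\sigma$ is a product of $\mathfrak{c}$-transpositions of the form $(j,k)_\mathfrak{c}$ with $j\in R_{3i-2}^\delta$, $k\in R_{3i-1}^\delta$; these likewise involve only interior indices in $[1..d]$ and so contribute only to $\ell_{\mathfrak{a}}$, not to $\ell_{\mathfrak{c}_0}$ or $\ell_{\mathfrak{c}_d}$. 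Thus the statement is purely about the $\mathfrak{a}$-part of the length, and I expect it to reduce formally to the equal-parameter identity of \cite[Corollary 3.5]{FLLLW23}.

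First I would recall the definition $\ell_{\mathfrak{a}}=\ell-\ell_{\mathfrak{c}_0}-\ell_{\mathfrak{c}_d}$ and interpret $\ell_{\mathfrak{a}}(g)$ via Lemma~\ref{length} as (half) the number of inversions $(i,j)\in[1..d]\times\mathbb{Z}$ of $g$, i.e.\ exactly the inversions that do not involve the two fixed indices $0$ and $d+1$ (modulo $D$). Next I would show additivity under the relevant factorizations: because $g_1$ acts by the explicit shift recipe $g_1^{(i)}$ given in the previous theorem and $\sigma$ permutes within paired blocks $R_{3i-2}^\delta$ and $R_{3i-1}^\delta$, the products $g_1\sigma$, $\sigma w$, etc., have lengths that add up block-by-block with controlled overlap. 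The quantity $n(\sigma)$ records the number of disjoint transpositions in $\sigma$, hence $\ell_{\mathfrak{a}}(\sigma)=n(\sigma)$ for these $\mathfrak{c}$-transpositions, and $h(w,\sigma)$ counts precisely the pairs $(j,k)$ whose relative order is preserved after conjugating by $\sigma$, accounting for the ``doubled'' cancellation $2h(w,\sigma)$.

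The main step is to track how inversions are created or destroyed when passing from $g_1 w g_2$-type products to $g_1\sigma w g_2$. I would split the inversion set of $g_1\sigma w g_2$ according to whether each inverted pair arises from $g_1$, from $\sigma$, from $wg_2$, or from interaction between adjacent blocks, and match these against the terms on the left-hand side. The bookkeeping defining $H(w,\sigma)$ — the pairs $(j,k)$ with both $(wg_2)^{-1}\sigma(j)>(wg_2)^{-1}(k)$ and $(wg_2)^{-1}(j)>(wg_2)^{-1}\sigma(k)$ — is exactly designed so that each such pair is counted twice in the discrepancy, which is the source of the factor $2$. Since all indices in play lie in the interior $[1..d]$ and none touches $\{0,d+1\}$, the $\mathfrak{c}_0$- and $\mathfrak{c}_d$-contributions vanish identically, so the identity for $\ell_{\mathfrak{a}}$ is literally the same computation as in the single-parameter case.

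The hard part will be verifying the additivity of $\ell_{\mathfrak{a}}$ across the factorization and the precise inversion bookkeeping for $h(w,\sigma)$; however, since the result is quoted as \cite[Corollary 3.5]{FLLLW23} and all the new phenomena (the parameters $q_0,q_1$ attached to $s_0,s_d$) are invisible to $\ell_{\mathfrak{a}}$, the cleanest route is simply to observe that the $\mathfrak{a}$-length identity is insensitive to the three-parameter refinement and so follows verbatim from the equal-parameter argument in \emph{loc.\ cit.}
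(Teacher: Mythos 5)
Your ultimate route --- observe that $g_1$ and $\sigma$ involve no $s_0$ or $s_d$, then quote the equal-parameter identity of \cite[Corollary 3.5]{FLLLW23} --- is in fact exactly what the paper does: it offers no independent proof of this corollary, just the citation, resting on the remark (made in the proof of the preceding theorem) that $0\in R_0^{\delta}$ and $d+1\in R_{3r+3}^{\delta}$ force $s_0,s_d$ to be absent from $g_1$, hence from $\sigma$ as well. So your conclusion and the paper's agree.

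However, one load-bearing step in your justification is wrong as written. The identity in \emph{loc.\ cit.} is for the full length $\ell$, while the statement here is for $\ell_{\mathfrak{a}}=\ell-\ell_{\mathfrak{c}_0}-\ell_{\mathfrak{c}_d}$, and the two are equivalent only if the $\mathfrak{c}_0$- and $\mathfrak{c}_d$-parts are \emph{additive} across the product, i.e.
\begin{equation*}
\ell_{\mathfrak{c}_0}(g_1)+\ell_{\mathfrak{c}_0}(w)+\ell_{\mathfrak{c}_0}(g_2)=\ell_{\mathfrak{c}_0}(g_1\sigma wg_2),
\qquad
\ell_{\mathfrak{c}_d}(g_1)+\ell_{\mathfrak{c}_d}(w)+\ell_{\mathfrak{c}_d}(g_2)=\ell_{\mathfrak{c}_d}(g_1\sigma wg_2).
\end{equation*}
Your claim that ``the $\mathfrak{c}_0$- and $\mathfrak{c}_d$-contributions vanish identically'' is false: only $g_1$ and $\sigma$ avoid $s_0,s_d$, whereas $w\in\mathscr{D}_{\delta}\cap W_{\mu}$, $g_2\in\mathscr{D}_{\mu\nu}$ and the product $g_1\sigma wg_2$ may well contain both. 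What is true, and what you actually need, is cancellation rather than vanishing, and it takes a short argument: (i) $g_1$ and $\sigma$ restrict to permutations of the window $[1..d]$ (extended by $g(-i)=-g(i)$ and periodicity, with every element of $W$ fixing $0$ and $d+1$), so $g_1\sigma$ never maps a positive integer to a negative one, nor moves an integer across $d+1+D\mathbb{Z}$; by the counting descriptions of $\ell_{\mathfrak{c}_0}$ and $\ell_{\mathfrak{c}_d}$ in (the proof of) Lemma~\ref{length}, left multiplication by $g_1\sigma$ therefore changes neither count, giving $\ell_{\mathfrak{c}_0}(g_1\sigma wg_2)=\ell_{\mathfrak{c}_0}(wg_2)$ and $\ell_{\mathfrak{c}_d}(g_1\sigma wg_2)=\ell_{\mathfrak{c}_d}(wg_2)$; (ii) since $w\in W_{\mu}$ and $g_2\in\mathscr{D}_{\mu}$ we have $\ell(wg_2)=\ell(w)+\ell(g_2)$, so reduced expressions concatenate and both $\ell_{\mathfrak{c}_0}$ and $\ell_{\mathfrak{c}_d}$ are additive on $wg_2$. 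Subtracting these two equalities from the $\ell$-identity of \cite[Corollary 3.5]{FLLLW23} yields precisely the claimed $\ell_{\mathfrak{a}}$-identity. With this replacement your argument is complete, and the inversion-counting outline in your first three paragraphs becomes unnecessary.
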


For any $T=(t_{ij})\in\Theta_n$, set
\begin{equation*}
T_{\theta}=(t_{ij}+t_{-i,-j})\in\Theta_n.
\end{equation*}
For $A$ and $B=(b_{ij})\in\Xi_n$, we set
\begin{equation*}
\Theta_{B,A}=\left\{T\in\Theta_n|\ T_{\theta}\leqslant_e A,~\mathrm{row}_{\mathfrak{a}}(T)_i=b_{i-1,i}~\text{for all}~i\right\},
\end{equation*}
where $\leqslant_e$ is an entry-wise partial order on $\Theta_n$ defined by
\begin{equation*}
(a_{ij})\leqslant_e(b_{ij})\quad  \xLeftrightarrow{\hspace{0.5em}\text{def.}\hspace{0.5em}} \quad a_{ij}\leqslant b_{ij},~(\forall i,j).
\end{equation*}

For $w\in W$, we define a map
\begin{equation*}
\phi: \mathscr{D}_{\delta}\cap W_{\mu}\rightarrow\Theta_{B,A},
\quad 
\phi(w)_{ij}=|R_{3i-1}^{\delta}\cap wg_2R_j^{\nu}|.
\end{equation*}
For any $T=(t_{ij})\in\Theta_{B,A}$, let $w_{A,T}\in\phi^{-1}(T)$ be the minimal length element in $\phi^{-1}(T)$, which can be constructed as follows. For all $i$, $j$, we set
\begin{align*}
\mathcal{T}_{ij}^{-}=&\text{subset of}~(A^{\mathcal{P}})_{ij}~\text{consisting of the smallest}~t_{ij}~\text{elements},\\
\mathcal{T}_{ij}^{+}=&\text{subset of}~(A^{\mathcal{P}})_{ij}~\text{consisting of the largest}~t_{-i,-j}~\text{elements},
\end{align*}
where 
$$
(A^{\mathcal{P}})_{ij}=\left\{
\begin{aligned}
&[-a'_{00}..a'_{00}], && (i,j)=(0,0), \\ 
&[d+1-a'_{r+1,r+1}..d+1+a'_{r+1,r+1}], && (i,j)=(r+1,r+1),\\
& \big(\sum_{l=0}^{i-1}\mathrm{row}_{\mathfrak{c}}(A)_l..\sum_{l=0}^{i-1}\mathrm{row}_{\mathfrak{c}}(A)_l+\sum_{k\leqslant j}a_{ik}\big], && (i,j)\in I_{\mathfrak{a}}^+.
\end{aligned}
\right.
$$
Then $w_{A,T}=\prod_{i=0}^{r+1}w_{A,T}^{(i)}\in\mathscr{D}_{\delta}\cap W_{\mu}$, where $w_{A,T}^{(i)}$ is a permutation of $R_i^{\mu}$ determined by 
\begin{equation*}
w_{A,T}^{(i)}(x)\in\left\{
\begin{aligned}
&R_{3i-1}^{\delta},&&\text{if}~x\in\bigcup_{j}\mathcal{T}_{ij}^{-},\\
&R_{3i+1}^{\delta},&&\text{if}~x\in\bigcup_{j}\mathcal{T}_{ij}^{+},\\
&R_{3i}^{\delta},&&\text{otherwise}.\\
\end{aligned}
\right.
\end{equation*}

\begin{lemma}
For $T\in\Theta_{B,A}$, we have
\begin{align*}
\ell_{\mathfrak{c}_0}(w_{A,T})
=&\sum_{j>0}t_{0j},\quad 
\ell_{\mathfrak{c}_d}(w_{A,T})
=\sum_{j>r+1}t_{r+1,j},\\
\ell_{\mathfrak{a}}(w_{A,T})
=&\sum_{\scalebox{0.7}{$\substack{1\leqslant i\leqslant r\\ j\in\mathbb{Z}}$}}(t_{ij}\sum_{k<j}(A-T)_{ik}+t_{-i,-j}\sum_{k>j}(A-T_{\theta})_{ik})+\sum_{\scalebox{0.7}{$\substack{k<j\leqslant 0 \text{~or}\\ -k\geqslant j>0}$}}t_{0j}(A-T)_{0k}\\
&+\sum_{|k|<j}t_{0j}(A-T_{\theta}-1)_{0k}-\sum_{j>0}\binom{t_{0j}+1}{2}+\sum_{\scalebox{0.7}{$\substack{k<j\leqslant r+1 \text{~or}\\ -k\geqslant j>r+1}$}}t_{r+1,j}(A-T)_{r+1,k}\\
&+\sum_{\scalebox{0.7}{$\substack{j>r+1\\ |k|<j-r-1}$}}t_{r+1,j}(A-T_{\theta}-1)_{r+1,k}-\sum_{j>r+1}\binom{t_{r+1,j}+1}{2}.
\end{align*}
\end{lemma}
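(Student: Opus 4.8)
The plan is to compute the three length quantities $\ell_{\mathfrak{c}_0}(w_{A,T})$, $\ell_{\mathfrak{c}_d}(w_{A,T})$, and $\ell_{\mathfrak{a}}(w_{A,T})$ directly from the explicit permutation $w_{A,T}=\prod_{i=0}^{r+1}w_{A,T}^{(i)}$ using the counting formulas of Lemma~\ref{length}. Recall that each $w_{A,T}^{(i)}$ is the minimal length permutation of $R_i^\mu$ sending the chosen subsets $\mathcal{T}_{ij}^-$, $\mathcal{T}_{ij}^+$ into the prescribed blocks $R_{3i-1}^\delta$, $R_{3i+1}^\delta$, $R_{3i}^\delta$ of the refined composition $\delta=\delta(B)$; this is precisely a block sorting, so its inversions are completely governed by how the sets $\mathcal{T}_{ij}^{\pm}$ interleave inside each row block. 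First I would record, for each fixed $i$, the intervals $(A^{\mathcal{P}})_{ij}$ and their cardinalities $|\mathcal{T}_{ij}^-|=t_{ij}$, $|\mathcal{T}_{ij}^+|=t_{-i,-j}$, and set up the bookkeeping that translates inversions of $w_{A,T}^{(i)}$ into sums of products of entries of $T$ and $A-T$ (or $A-T_\theta$).

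The computation naturally splits according to the three index regimes that appear in the statement: the generic middle rows $1\leqslant i\leqslant r$, the $0$-th row, and the $(r+1)$-th row. For the $\ell_{\mathfrak{c}_0}$ and $\ell_{\mathfrak{c}_d}$ formulas, I would apply the second and third displays of Lemma~\ref{length}, which count (half of) the sign changes at positions $0$ and $d+1$; since $w_{A,T}^{(0)}$ moves the smallest $t_{0j}$ elements of the symmetric interval $[-a'_{00}..a'_{00}]$ across into the $R_{3\cdot 0-1}^\delta$ blocks, the only contribution is $\sum_{j>0}t_{0j}$, and symmetrically $\sum_{j>r+1}t_{r+1,j}$ for position $d+1$. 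These two should be the quick part. For the $\ell_{\mathfrak{a}}$ formula, I would use $\ell_{\mathfrak{a}}=\ell-\ell_{\mathfrak{c}_0}-\ell_{\mathfrak{c}_d}$ together with the first display of Lemma~\ref{length}: count all inversions of $w_{A,T}$ within $[1..d]\times\mathbb{Z}$ and then subtract the two boundary contributions. The generic middle-row terms $\sum_{1\leqslant i\leqslant r,\,j}(t_{ij}\sum_{k<j}(A-T)_{ik}+t_{-i,-j}\sum_{k>j}(A-T_\theta)_{ik})$ come from the standard ``sorting across blocks'' count: an element routed into the lower block $R_{3i-1}^\delta$ forms an inversion with every element to its left that is routed higher, and dually for the $\mathcal{T}^+$ elements.

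The main obstacle will be the boundary rows $i=0$ and $i=r+1$, where the folding symmetry $a_{-i,-j}=a_{ij}$ and the fact that $a_{00},a_{r+1,r+1}$ are odd (so $a'_{00}=\tfrac12(a_{00}-1)$) force a careful separate treatment of the fixed central index and of pairs straddling the origin. This is where the asymmetric-looking sums $\sum_{k<j\leqslant 0\text{ or }-k\geqslant j>0}t_{0j}(A-T)_{0k}$, the diagonal-correction sum $\sum_{|k|<j}t_{0j}(A-T_\theta-1)_{0k}$, and the binomial term $-\sum_{j>0}\binom{t_{0j}+1}{2}$ originate; the binomial accounts for inversions \emph{among} the $t_{0j}$ relocated elements themselves together with the half-counting convention of Lemma~\ref{length} at position $0$, while the $-1$ shift inside $A-T_\theta-1$ removes the overcounting of the fixed central element. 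I would handle this by carefully partitioning the index pairs $(i,j)$ with $i=0$ (and then $i=r+1$) according to the sign of $j$ and whether the partner index $k$ lies on the same side of the origin, reconciling the count obtained from the permutation picture against the half-sum in Lemma~\ref{length}. I expect the middle-row and the two $\ell_{\mathfrak{c}}$ contributions to follow routinely once the block-inversion dictionary is fixed, with essentially all the genuine difficulty concentrated in matching the boundary-row inversions to the stated binomial and shifted-entry corrections.
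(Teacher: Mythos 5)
Your proposal is correct and is essentially the paper's proof: both arguments compute $\ell_{\mathfrak{c}_0}(w_{A,T})=\sum_{j>0}t_{0j}$ and $\ell_{\mathfrak{c}_d}(w_{A,T})=\sum_{j>r+1}t_{r+1,j}$ by counting sign changes arising from swapped pairs in $\mathcal{T}_{0j}^{-}$ and $\mathcal{T}_{0,-j}^{+}$ (via Lemma~\ref{length}), and both then extract $\ell_{\mathfrak{a}}(w_{A,T})$ from the identity $\ell_{\mathfrak{a}}=\ell-\ell_{\mathfrak{c}_0}-\ell_{\mathfrak{c}_d}$. The one divergence is how the total length is handled: since $w_{A,T}$ and $\ell$ do not depend on the parameters $(q,q_0,q_1)$, the paper simply cites the formula for $\ell(w_{A,T})$ from \cite[Lemma 4.2]{FLLLW23}, whereas you plan to re-derive it by the block-sorting inversion count, including the delicate boundary rows $i=0,r+1$ with the folding symmetry, the binomial corrections, and the $-1$ shifts --- which is precisely the content of that cited lemma. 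Your route is self-contained but duplicates a known computation whose difficulty you correctly locate in the boundary rows; the paper's route buys brevity by observing that the hard combinatorial input is parameter-independent and already available in the equal-parameter literature.
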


\begin{proof}
Since $w_{A,T}$ is the minimal length element in $\phi^{-1}(T)$, it shifts elements in $\bigcup_{j}\mathscr{T}_{ij}^{-}$ to the front of the elements $\bigcup_{j}(A^{\mathcal{P}})_{ij}\backslash\mathscr{T}_{ij}^{-}$, and shifts elements in $\bigcup_{j}\mathscr{T}_{ij}^{+}$ to the front of the elements $\bigcup_{j}(A^{\mathcal{P}})_{ij}\backslash\mathscr{T}_{ij}^{+}$. Note that $|\mathscr{T}_{ij}^-|=t_{ij}$ and $|\mathscr{T}_{ij}^+|=t_{-i,-j}$.

When a pair of corresponding elements in $\mathcal{T}_{0j}^-$ and $\mathcal{T}_{0,-j}^+$ are swapped, it means that a pair of numbers changes sign through the action of $w_{A,T}$. So $\ell_{\mathfrak{c}_0}(w_{A,T})
=\sum_{j>0}t_{0j}$. Similarly, we obtain $\ell_{\mathfrak{c}_d}(w_{A,T})
=\sum_{j>r+1}t_{r+1,j}$. The formula of $\ell(w_{A,T})$ is given in \cite[Lemma 4.2]{FLLLW23}. Then there comes $\ell_{\mathfrak{a}}(w_{A,T})=\ell(w_{A,T})-\ell_{\mathfrak{c}_0}(w_{A,T})-\ell_{\mathfrak{c}_d}(w_{A,T})$.
\end{proof}

\begin{lemma}
Let $T\in\Theta_{B,A}$, we have
\begin{align*}
&\sum_{w\in\phi^{-1}(T)}q_0^{-\ell_{\mathfrak{c}_0}(w)+\ell_{\mathfrak{c}_d}(w)}q_1^{-\ell_{\mathfrak{c}_0}(w)-\ell_{\mathfrak{c}_d}(w)}q^{-2\ell_{\mathfrak{a}}(w)} \\
=&q_0^{-\ell_{\mathfrak{c}_0}(w_{A,T})+\ell_{\mathfrak{c}_d}(w_{A,T})}q_1^{-\ell_{\mathfrak{c}_d}(w_{A,T})-\ell_{\mathfrak{c}_d}(w_{A,T})}q^{-2\ell_{\mathfrak{a}}(w_{A,T})}\frac{[A]_{\mathfrak{c}}^!}{[A-T_{\theta}]_\mathfrak{c}^![T]^!}.
\end{align*}
\end{lemma}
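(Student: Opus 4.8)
The plan is to reduce the sum over the fiber $\phi^{-1}(T)$ to a product over the local pieces indexed by $i$, and then recognize each local factor as a Gaussian-binomial type quantity that assembles into the ratio $\frac{[A]_{\mathfrak{c}}^!}{[A-T_{\theta}]_\mathfrak{c}^![T]^!}$. The key structural input is that by Proposition~\ref{2.2} the fiber $\phi^{-1}(T)$ decomposes as a coset-type product: every $w\in\phi^{-1}(T)$ factors uniquely relative to the minimal representative $w_{A,T}$, so that $w = x\,w_{A,T}$ where $x$ ranges over a parabolic subgroup governed by the block structure of $A-T_\theta$ and $T$. First I would make this decomposition precise, using the construction of $w_{A,T}$ via the sets $\mathcal{T}_{ij}^{\pm}$ inside the column sets $(A^{\mathcal{P}})_{ij}$: the freedom in choosing $w\in\phi^{-1}(T)$ beyond the minimal element amounts to independently permuting, within each row-block $R_i^\mu$, the elements sent into $R_{3i-1}^\delta$, those sent into $R_{3i+1}^\delta$, and those sent into $R_{3i}^\delta$.

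Next I would extract the length behaviour across the fiber. The relation $\ell(w) = \ell(x) + \ell(w_{A,T})$ (and its $\mathfrak{c}_0$-, $\mathfrak{c}_d$-, $\mathfrak{a}$-graded refinements) must hold for the factorization $w = x\,w_{A,T}$, since $w_{A,T}$ is the minimal element. Crucially, the extra factor $x$ lives in a Young-subgroup of type $A$ (a product of symmetric groups $\mathfrak{S}$), so it contributes only to $\ell_{\mathfrak{a}}$ and not to $\ell_{\mathfrak{c}_0}$ or $\ell_{\mathfrak{c}_d}$; this is exactly why $\ell_{\mathfrak{c}_0}(w)=\ell_{\mathfrak{c}_0}(w_{A,T})$ and $\ell_{\mathfrak{c}_d}(w)=\ell_{\mathfrak{c}_d}(w_{A,T})$ for all $w$ in the fiber, allowing the $q_0$- and $q_1$-powers to factor out of the sum as claimed (note the typo-level $-\ell_{\mathfrak{c}_d}(w_{A,T})-\ell_{\mathfrak{c}_d}(w_{A,T})$ in the statement should read $+\ell_{\mathfrak{c}_d}(w_{A,T})-\ell_{\mathfrak{c}_d}(w_{A,T})$ matching the $\mathfrak{c}_d$ grading). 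With the $q_0,q_1$-weights pulled out, the sum over the fiber collapses to
\begin{equation*}
\sum_{w\in\phi^{-1}(T)} q^{-2\ell_{\mathfrak{a}}(w)} = q^{-2\ell_{\mathfrak{a}}(w_{A,T})}\sum_{x} q^{-2\ell(x)},
\end{equation*}
and the residual sum $\sum_x q^{-2\ell(x)}$ is a product of Poincaré polynomials of symmetric groups.

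Then I would identify $\sum_x q^{-2\ell(x)}$ combinatorially. The permutations $x$ shuffle three disjoint blocks inside each $R_i^\mu$ of sizes corresponding (in column-block $j$) to $t_{ij}$, $t_{-i,-j}$, and the leftover $(A-T_\theta)_{ij}$; summing $q^{-2\ell}$ over all such shuffles within $\mathfrak{S}_{\mathrm{row}_{\mathfrak{c}}(A)_i}$ yields precisely the quotient of factorials $[\,\mathrm{row}\,]^!\big/([T]^! [A-T_\theta]^!)$-type expression, which over all blocks multiplies to $\frac{[A]_{\mathfrak{c}}^!}{[A-T_{\theta}]_\mathfrak{c}^![T]^!}$. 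Here Lemma~\ref{1.4} is the right tool, since it already expresses $[A]_{\mathfrak{c}}^!$ as a length-generating sum over $W_{\delta(A)}$, and the $\mathfrak{c}_0/\mathfrak{c}_1$-factorials $[\,\cdot\,]_{\mathfrak{c}_0}^!,[\,\cdot\,]_{\mathfrak{c}_1}^!$ account for the type-$C$ blocks at rows $0$ and $r+1$. The main obstacle I anticipate is bookkeeping at these two distinguished rows: the type-$C$ blocks involve the halving convention $a'_{ij}$ and sign-changing generators, so the generating sum is not a pure symmetric-group Poincaré polynomial but carries the $(1+q_0^{\mp1}q_1^{-1}q^{-2(k-1)})$ corrections of Lemma~\ref{1.4}. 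I would handle these two blocks separately, checking that the contributions of the sign-changes are already absorbed into the $\ell_{\mathfrak{c}_0},\ell_{\mathfrak{c}_d}$ weights of $w_{A,T}$ (which is why those exponents are constant on the fiber) and that what remains in the $\mathfrak{a}$-graded sum is the honest Gaussian factorial, so that the full product reconstitutes $[A]_{\mathfrak{c}}^!$ divided by $[A-T_\theta]_{\mathfrak{c}}^![T]^!$ exactly.
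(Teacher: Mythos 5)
Your overall skeleton (parametrize the fiber $\phi^{-1}(T)$ by block-local choices, pull out the weight of the minimal element $w_{A,T}$, evaluate each block by $q$-combinatorics) matches the paper's, and your treatment of the type-A blocks $(i,j)\in I_{\mathfrak{a}}^+$ is correct: there the choices are weighted only by powers of $q$ and sum to the Gaussian multinomial $\frac{[a_{ij}]^!}{[t_{ij}]^![t_{-i,-j}]^![a_{ij}-(t_\theta)_{ij}]^!}$. But your key claim for the corner blocks --- that $\ell_{\mathfrak{c}_0}$ and $\ell_{\mathfrak{c}_d}$ are constant on the fiber, so the $q_0,q_1$-powers factor out and ``what remains is the honest Gaussian factorial'' --- is false, and it is exactly where the content of the lemma sits. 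In the block $(A^{\mathcal{P}})_{00}=[-a'_{00}..a'_{00}]$ an element moved to the left can either cross zero (a sign change, contributing $1$ to $\ell_{\mathfrak{c}_0}(w)$ and a factor $q_0^{-1}q_1^{-1}$) or not, and by the constraint $w(-i)=-w(i)$ each of the $t_{00}$ moved elements carries this binary choice independently: one chooses $x$ symmetric pairs whose positive member goes left and $y=t_{00}-x$ pairs whose negative member goes left. So at $q=q_0=q_1=1$ this part of the fiber has $2^{t_{00}}\binom{a'_{00}}{t_{00}}$ elements rather than the $\binom{a'_{00}}{t_{00}}$ your model predicts, and $\ell_{\mathfrak{c}_0}$ takes every value in $[\ell_{\mathfrak{c}_0}(w_{A,T})..\,\ell_{\mathfrak{c}_0}(w_{A,T})+t_{00}]$. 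You can also see the failure from the target formula itself: if the $q_0,q_1$-exponents were constant on the fiber, the right-hand side would be a single monomial in $q_0,q_1$ times a polynomial in $q$ alone, whereas $\frac{[A]^!_{\mathfrak{c}}}{[A-T_\theta]^!_{\mathfrak{c}}[T]^!}$ genuinely depends on $q_0,q_1$ through $[2k]_{\mathfrak{c}_0}=[k](1+q_0^{-1}q_1^{-1}q^{-2(k-1)})$ and $[2k]_{\mathfrak{c}_1}=[k](1+q_0q_1^{-1}q^{-2(k-1)})$.

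The paper's proof does precisely the bookkeeping you try to avoid: it records the number $x$ of sign-changing moves in the corner block and sums
\begin{equation*}
\sum_{x+y=t_{00}}
\left[\begin{array}{c}a'_{00}\\ x\end{array}\right]
\left[\begin{array}{c}a'_{00}-x\\ y\end{array}\right]
q_0^{-x}q_1^{-x}(q^{-2})^{\frac{x(x-1)}{2}+x(a'_{00}-t_{00})}
\end{equation*}
via the quantum binomial theorem to get $\frac{[a'_{00}]^!_{\mathfrak{c}_0}}{[a'_{00}-t_{00}]^!_{\mathfrak{c}_0}[t_{00}]^!}$, and similarly at $(r+1,r+1)$ with weight $q_0^{x}q_1^{-x}$ producing the $\mathfrak{c}_d$-factorials. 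Note that $\ell_{\mathfrak{c}_0}(w_{A,T})=\sum_{j>0}t_{0j}$ records only the sign changes forced by the off-diagonal entries of $T$, so the diagonal-entry sign changes are not ``absorbed'' into the prefactor, contrary to your final paragraph; they are the very source of the $\mathfrak{c}$-factorials. (Also, your proposed typo fix is itself off: the $q_1$-exponent in the statement should read $-\ell_{\mathfrak{c}_0}(w_{A,T})-\ell_{\mathfrak{c}_d}(w_{A,T})$, not $+\ell_{\mathfrak{c}_d}(w_{A,T})-\ell_{\mathfrak{c}_d}(w_{A,T})=0$.)
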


\begin{proof}
The permutation $w\in\phi^{-1}(T)$ is determined by which $t_{ij}$ and $t_{-i,-j}$ elements in $(A^{\mathcal{P}})_{ij}$ are moved to the left and right for all $(i,j)\in I^+$, respectively. So, we have
\begin{align*}
&\sum_{w\in\phi^{-1}(T)}q_0^{-\ell_{\mathfrak{c}_0}(w)+\ell_{\mathfrak{c}_d}(w)}q_1^{-\ell_{\mathfrak{c}_0}(w)-\ell_{\mathfrak{c}_d}(w)}q^{-2\ell_{\mathfrak{a}}(w)}\\
=&q_0^{-\ell_{\mathfrak{c}_0}(w_{A,T})+\ell_{\mathfrak{c}_d}(w_{A,T})}q_1^{-\ell_{\mathfrak{c}_d}(w_{A,T})-\ell_{\mathfrak{c}_d}(w_{A,T})}q^{-2\ell_{\mathfrak{a}}(w_{A,T})}\\
&\times\sum_{x+y=t_{00}}
\left[\begin{array}{c}
a_{00}^\prime\\
x
\end{array}\right]
\left[\begin{array}{c}
a_{00}^\prime-x\\
y
\end{array}\right]
q_0^{-x}q_1^{-x}(q^{-2})^{\frac{x(x-1)}{2}+x(a_{00}^\prime -t_{00})}\\
&\times
\sum_{x+y=t_{r+1,r+1}}
\left[\begin{array}{c}
a_{r+1,r+1}^\prime\\
x
\end{array}\right]
\left[\begin{array}{c}
a_{r+1,r+1}^\prime-x\\
y
\end{array}\right]
q_0^{x}q_1^{-x}(q^{-2})^{\frac{x(x-1)}{2}+x(a_{r+1,r+1}^\prime -t_{r+1,r+1})}\\
&\times\prod\limits_{(i,j)\in I_{\mathfrak{a}}^+}\frac{[a_{ij}]^!}{[t_{ij}]^![t_{-i,-j}]^![a_{ij}-(t_{\theta})_{ij}]^!}.
\end{align*}
Note that
\begin{align*}
&\sum_{x+y=t_{00}}
\left[\begin{array}{c}
a_{00}^\prime\\
x
\end{array}\right]
\left[\begin{array}{c}
a_{00}^\prime-x\\
y
\end{array}\right]
q_0^{-x}q_1^{-x}(q^{-2})^{\frac{x(x-1)}{2}+x(a_{00}^\prime -t_{00})}\\
&=
\left[\begin{array}{c}
a_{00}^\prime\\
t_{00}
\end{array}\right]
\sum_{x=0}^{t_{00}}
\left[\begin{array}{c}
t_{00}\\
x
\end{array}\right]
(q^{-1})^{x(x-1)}(q_0^{-1} q_1^{-1} q^{-2(a_{00}^\prime -t_{00})})^x\\
&\overset{(\Diamond)}{=}
\left[\begin{array}{c}
a_{00}^\prime\\
t_{00}
\end{array}\right]
\prod\limits_{i=1}^{t_{00}}(1+q_0^{-1}q_1^{-1}q^{-2(a_{00}^\prime -t_{00}+i-1)})=\frac{[a_{00}^\prime]_{\mathfrak{c}_0}^!}{[a_{00}^\prime -t_{00}]_{\mathfrak{c}_0}^![t_{00}]^!},
\end{align*}
where $(\Diamond)$ is due to the quantum binomial theorem 
\begin{equation*}
\sum_{x=0}^m
\left[\begin{array}{c}
m\\
x
\end{array}\right]
v^{x(x-1)}z^x=\prod\limits_{x=1}^{m-1}(1+v^{2i}z).
\end{equation*}
Similarly,
\begin{align*}
&\sum_{x+y=t_{r+1,r+1}}
\left[\begin{array}{c}
a_{r+1,r+1}^\prime\\
x
\end{array}\right]
\left[\begin{array}{c}
a_{r+1,r+1}^\prime-x\\
y
\end{array}\right]
q_0^{x}q_1^{-x}(q^{-2})^{\frac{x(x-1)}{2}+x(a_{r+1,r+1}^\prime -t_{r+1,r+1})}\\
=&\frac{[a_{r+1,r+1}^\prime]_{\mathfrak{c}_d}^!}{[a_{r+1,r+1}^\prime -t_{r+1,r+1}]_{\mathfrak{c}_d}^![t_{r+1,r+1}]^!}.    
\end{align*}
Therefore,
\begin{align*}
&\sum_{w\in\phi^{-1}(T)}q_0^{-\ell_{\mathfrak{c}_0}(w)+\ell_{\mathfrak{c}_d}(w)}q_1^{-\ell_{\mathfrak{c}_0}(w)-\ell_{\mathfrak{c}_d}(w)}q^{-2\ell_{\mathfrak{a}}(w)} \\
=&q_0^{-\ell_{\mathfrak{c}_0}(w_{A,T})+\ell_{\mathfrak{c}_d}(w_{A,T})}q_1^{-\ell_{\mathfrak{c}_d}(w_{A,T})-\ell_{\mathfrak{c}_d}(w_{A,T})}q^{-2\ell_{\mathfrak{a}}(w_{A,T})}\\
&\times\frac{[a_{00}^\prime]_{\mathfrak{c}_0}^!}{[a_{00}^\prime -t_{00}]_{\mathfrak{c}_0}^![t_{00}]^!}\frac{[a_{r+1,r+1}^\prime]_{\mathfrak{c}_d}^!}{[a_{r+1,r+1}^\prime -t_{r+1,r+1}]_{\mathfrak{c}_d}^![t_{r+1,r+1}]^!}\prod\limits_{(i,j)\in I_{\mathfrak{a}}^+}\frac{[a_{ij}]^!}{[t_{ij}]^![t_{-i,-j}]^![a_{ij}-(t_{\theta})_{ij}]^!}\\
=&q_0^{-\ell_{\mathfrak{c}_0}(w_{A,T})+\ell_{\mathfrak{c}_d}(w_{A,T})}q_1^{-\ell_{\mathfrak{c}_d}(w_{A,T})-\ell_{\mathfrak{c}_d}(w_{A,T})}q^{-2\ell_{\mathfrak{a}}(w_{A,T})}\frac{[A]_{\mathfrak{c}}^!}{[A-T_{\theta}]_\mathfrak{c}^![T]^!}.
\end{align*}
\end{proof}

For $T\in\Theta_{B,A}$, $w\in\phi^{-1}(T)$, $\sigma\in K_w$, we can define
\begin{equation*}
h(T,\sigma)=h(w,\sigma).
\end{equation*}
Let us fix the product $\sigma^{(i)}=\prod_{l=1}^{s_i}(j_l^{(i)},k_l^{(i)})_{\mathfrak{c}}$ by requiring
\begin{equation*}
j_1^{(i)}<j_2^{(i)}<\cdots<j_{s_i}^{(i)}.
\end{equation*}
We further define $s_{-i}=s_{i+1}$ for $0\leqslant i\leqslant r$ and
$j_l^{(-i)}=k_{s_{i+1}}^{(i+1)},~k_l^{(-i)}=j_{s_{i+1}-2l+1}^{(i+1)}$ for $0\leqslant i\leqslant r,~1\leqslant l\leqslant s_i$.

For $w\in\mathscr{D}_{\delta}\cap W_{\mu}$, we define a map
\begin{equation*}
\psi_w: K_w\rightarrow\Theta_n,\quad 
\psi_w(\sigma)_{ij}=|R_{3i-1}^{\delta}\cap\sigma(R_{3i-2}^{\delta})\cap wg_2R_j^{\nu}|.
\end{equation*}

For any $\mathbb{Z}\times\mathbb{Z}$ matrix $S=(s_{ij})$, we set 
\begin{equation*}
\widehat{S}=(\hat{s}_{ij}) ~ \mbox{with}~
\hat{s}_{ij}:=s_{i+1,j},
\quad\mbox{and}\quad S^{\dag}=(s_{ij}^{\dag}) ~\mbox{with}~
s_{ij}^{\dag}:=s_{1-i,-j}=\hat{s}_{-i,-j}.
\end{equation*}
For $T\in\Theta_{B,A}$, we set
\begin{equation*}
\Gamma_{T}=\left\{S\in\Theta_n~|~\ S\leqslant_e T,~\mathrm{row}_{\mathfrak{a}}(S)=\mathrm{row}_{\mathfrak{a}}(S^{\dag})\right\}.
\end{equation*}
For $T\in\Theta_{B,A}$ and $S\in\Gamma_{T}$, we set
\begin{equation*}
A^{(T-S)}=A-(T-S)_{\theta}+(\widehat{T-S})_{\theta}.\quad \mbox{Particularly,}\ A^{(T)}=A^{(T-0)}.
\end{equation*}

\begin{lemma}[{\cite[Lemma 4.6]{FLLLW23}}]
For $w\in\mathscr{D}_{\delta}\cap W_{\mu}$ and $\sigma\in K_w$, we have
\begin{equation*}
A^{(w,\sigma)}=A^{(T-S)},
\end{equation*}
where $T=\phi(w)$ and $S=\psi_w(\sigma)$.
\end{lemma}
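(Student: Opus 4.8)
The plan is to verify the claimed matrix identity entrywise, reading off both $A^{(w,\sigma)}$ and $A^{(T-S)}$ as \emph{folded block-intersection numbers} and matching them. The framing observation is that the statement is purely set-theoretic: neither side involves the parameters $q,q_0,q_1$, which enter only through the coefficients and length functions of Proposition~\ref{3.3}. Consequently the role of three parameters is irrelevant here, and once the combinatorial mechanism is identified the computation coincides verbatim with the single-parameter case \cite[Lemma 4.6]{FLLLW23}.

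First I would record a workable description of $A^{(w,\sigma)}$. Since $g_1\sigma wg_2=w_\lambda^{(\sigma)}y^{(w,\sigma)}w_\nu^{(\sigma)}$ with $w_\lambda^{(\sigma)}\in W_\lambda$ and $w_\nu^{(\sigma)}\in W_\nu$, and block-intersection cardinalities are invariant under left/right multiplication by the corresponding parabolic subgroups, the entries of $A^{(w,\sigma)}=\kappa(\lambda,y^{(w,\sigma)},\nu)$ count the (type-C folded) cardinalities $|R_i^\lambda\cap g_1\sigma wg_2R_j^\nu|$, with the usual halving correction $a'=(a-1)/2$ on the two diagonal rows $0$ and $r+1$. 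It therefore suffices to track how the composite permutation $g_1\sigma w$ redistributes each set $wg_2R_j^\nu$ among the $\lambda$-blocks, starting from the distribution among the $\mu$-blocks recorded by $A=\kappa(\mu,g_2,\nu)$.

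The core of the argument is to split this redistribution into three stages and to read off $T$ and $S$ at the intermediate stages. The element $w\in\mathscr{D}_\delta\cap W_\mu$ permutes within each $\mu$-block and sorts its elements into the three $\delta$-sub-blocks $R_{3i-2}^\delta,R_{3i-1}^\delta,R_{3i}^\delta$; the number of elements of $wg_2R_j^\nu$ landing in the diagonal sub-block $R_{3i-1}^\delta$ is exactly $T_{ij}=\phi(w)_{ij}$. Next, $\sigma\in K_w$ is a product of disjoint transpositions exchanging elements of $R_{3i-2}^\delta$ and $R_{3i-1}^\delta$, and $S_{ij}=\psi_w(\sigma)_{ij}$ counts precisely those elements that land in the diagonal sub-block, originate in $R_{3i-2}^\delta$ via $\sigma$, and come from $wg_2R_j^\nu$. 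Finally, because $B$ is tridiagonal, $g_1$ admits the explicit factorization $g_1=\prod_ig_1^{(i)}$ recorded above, which relocates the off-diagonal sub-blocks by one block index while leaving the diagonal sub-block inside its $\lambda$-block. Comparing the resulting distribution with $A$, the elements indexed by $T-S$ are exactly those relocated by one block: this subtracts $(T-S)_\theta$ from the rows from which mass leaves and reinstates it, row-shifted, as $(\widehat{T-S})_\theta$, giving $A^{(T-S)}=A-(T-S)_\theta+(\widehat{T-S})_\theta$. The symmetrization $(\cdot)_\theta$ and the shift $\widehat{(\cdot)}$ appear because the permutations in $W$ satisfy $g(-i)=-g(i)$, so each move is mirrored by a move of the negative index and the two must be counted together.

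I expect the main obstacle to lie in the boundary rows indexed by $0$ and $r+1$, where the diagonal entries $a_{00},a_{r+1,r+1}$ are odd, the corrected entries $a'_{00},a'_{r+1,r+1}$ intervene, and the fixed points $g(0)=0$, $g(d+1)=d+1$ constrain the permutations; there one must check separately that halving and folding remain compatible with the definitions of $T$ and $S$. A secondary, purely bookkeeping difficulty is matching the direction of the one-block relocation effected by $g_1$ with the row-shift convention $\widehat{s}_{ij}=s_{i+1,j}$ and the sub-block placement $R_{3i-2}^\delta\subset R_{i-1}^\mu$, $R_{3i-1}^\delta\subset R_i^\mu$. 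Since none of these steps involves $(q,q_0,q_1)$, they reproduce \cite[Lemma 4.6]{FLLLW23} verbatim, which completes the proof.
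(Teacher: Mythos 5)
Your proposal is correct and matches the paper's treatment: the paper gives no independent proof of this lemma but simply cites \cite[Lemma 4.6]{FLLLW23}, on exactly the grounds you identify --- the identity $A^{(w,\sigma)}=A^{(T-S)}$ is purely combinatorial (block-intersection counting for $W$, $\phi$, $\psi_w$), involves none of the parameters $q,q_0,q_1$, and therefore carries over verbatim from the equal-parameter case. Your additional sketch of the three-stage redistribution argument is a faithful reconstruction of that cited proof, but it is extra detail rather than a departure from the paper's route.
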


We define an element
\begin{equation*}
\sigma_{w,S}=\prod_{i=1}^{r+1}\prod_{l=1}^{s_i}(j_l^{(i)},k_l^{(i)})_{\mathfrak{c}}\in\psi_w^{-1}(S)
\end{equation*}
satisfying Conditions (S1)-(S2) below:
\begin{itemize}
\item[(S1)] $k_1^{(i)}<k_2^{(i)}<\cdots<k_{s_i}^{(i)}$, $\forall i$;
\item[(S2)] $w^{-1}(\left\{k_l^{i}\right\}_l)\cap g_2R_j^{\nu}$ consists of the largest $s_{ij}$ elements in $w^{-1}R_{3i-1}^{\delta}\cap g_2R_j^{\nu}$, $\forall i$.
\end{itemize}
For $S\in\Gamma_{T}$, set
\begin{equation*}
\llbracket  S\rrbracket =\prod\limits_{i=1}^{r+1}\llbracket  S\rrbracket _i
\qquad\mbox{with}\quad
\llbracket  S\rrbracket _i:=\prod_{j\in\mathbb{Z}}
\left[\begin{array}{c}
\sum_{k\leqslant j}(S-S^{\dag})_{ik}\\
s_{i,j+1}^{\dag}
\end{array}\right]
[s_{i,j+1}^{\dag}]^!.
\end{equation*}

\begin{lemma}{\cite[Lemma 4.8, Lemma 4.10]{FLLLW23}}
Let $T\in\Theta_{B,A}$ and $S\in\Gamma_{T}$. For any $w\in \phi^{-1}(T)$, we have
\begin{equation*}
\sum_{\sigma\in\psi_w^{-1}(S)}q^{h(w,\sigma)}=q^{h(w,\sigma_{w,S})}
\left[\begin{array}{c}
T\\
S
\end{array}\right]
\llbracket  S\rrbracket,
\qquad 
n(\sigma_{w,s})=\sum_{i=1}^{r+1}\mathrm{row}_{\mathfrak{a}}(S)_i, \qquad\mbox{and}
\end{equation*}
\begin{align*}
h(T,\sigma_{w,s})
=&\sum_{i=1}^{r+1}\sum_{j=-\infty}^{\infty}s_{ij}(\sum_{k=-\infty}^{j}t_{ik}-\frac{s_{ij}+1}{2})\\
&+\sum_{i=1}^{r+1}\sum_{j=-\infty}^{\infty}(t_{1-i,-j}-s_{1-i,-j})(\sum_{k=-\infty}^{j-1}t_{ik}+\sum_{k=j}^{\infty}s_{ik}-\sum_{k=j+1}^{\infty}s_{1-i,-k}).
\end{align*}
\end{lemma}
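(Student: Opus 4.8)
The plan is to reduce the three assertions to the equal-parameter statements \cite[Lemmas 4.8, 4.10]{FLLLW23}. The crucial observation is that every object occurring here---the matrices $T=\phi(w)$ and $S=\psi_w(\sigma)$, the canonical element $\sigma_{w,S}$, and the functions $n(\cdot)$ and $h(w,\cdot)=|H(w,\cdot)|$---is defined purely through the underlying permutation $wg_2$ and the relative order of integers, with no reference to $q_0$ or $q_1$. In particular $h(w,\sigma)$ records ordinary (type-$\mathfrak{a}$) crossings, so $q$ is the only variable in sight. Hence the identities are insensitive to the passage from one to three parameters, and it suffices to replay the single-parameter combinatorics verbatim; the three-parameter refinement enters only later, when these quantities are weighted against $\ell_{\mathfrak{c}_0},\ell_{\mathfrak{c}_d}$ elsewhere.

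For the count $n(\sigma_{w,S})=\sum_{i=1}^{r+1}\mathrm{row}_{\mathfrak{a}}(S)_i$ I would argue directly. By construction $\sigma_{w,S}=\prod_{i=1}^{r+1}\prod_{l=1}^{s_i}(j_l^{(i)},k_l^{(i)})_{\mathfrak{c}}$ is a product of $\sum_i s_i$ disjoint transpositions, and the defining relation $\psi_w(\sigma_{w,S})=S$ together with $\psi_w(\sigma)_{ij}=|R_{3i-1}^{\delta}\cap\sigma(R_{3i-2}^{\delta})\cap wg_2R_j^{\nu}|$ shows that the transpositions of the $i$-th factor are enumerated, after summing over $j$, by $\sum_j s_{ij}=\mathrm{row}_{\mathfrak{a}}(S)_i$. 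Summing over $i$ gives the claim. For the generating function $\sum_{\sigma\in\psi_w^{-1}(S)}q^{h(w,\sigma)}$ I would parametrise $\psi_w^{-1}(S)$ by the choices of which source elements in each cell $R_{3i-1}^{\delta}\cap wg_2R_j^{\nu}$ are matched, the canonical $\sigma_{w,S}$ (pinned down by (S1)--(S2)) realising the minimum of $h(w,\cdot)$. Each deviation from $\sigma_{w,S}$ introduces a controlled number of extra crossings, so that $h(w,\sigma)-h(w,\sigma_{w,S})$ sweeps out the inversion statistic of the admissible selections; its $q$-generating function is the Gaussian binomial $\left[\begin{smallmatrix}T\\S\end{smallmatrix}\right]$ cell by cell, while the remaining freedom in distributing targets inside each block $R_{3i-1}^{\delta}$ (respecting (S1) and the type-C folding) produces the factor $\llbracket S\rrbracket$. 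Assembling the per-cell contributions yields the stated product.

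The substantive part, and the step I expect to be the main obstacle, is the closed formula for $h(T,\sigma_{w,S})=|H(w,\sigma_{w,S})|$. Here one must count, for each $i$, the pairs $(j,k)\in R_{3i-2}^{\delta}\times R_{3i-1}^{\delta}$ meeting the two inequalities defining $H(w,\sigma)$, using the explicit positions of the transposed elements fixed by (S1)--(S2). The difficulty is entirely caused by the type-C folding: a generator $(j,k)_{\mathfrak{c}}$ simultaneously permutes the four families $kD\pm j$ and $kD\pm k$, so the crossings split into those among positive indices---yielding the within-row term $\sum_{i,j}s_{ij}\big(\sum_{k\le j}t_{ik}-\tfrac{s_{ij}+1}{2}\big)$---and those between an index and its negative, governed by the $\dag$-operation and yielding the cross-conjugate term involving $t_{1-i,-j}-s_{1-i,-j}$. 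Carefully separating these two regimes, and verifying that no pair is double-counted at the boundary $j=0$ or near the fixed points $0$ and $d+1$, is the delicate bookkeeping. Once it is organised exactly as in \cite[Lemma 4.8]{FLLLW23}, the formula drops out, with the three-parameter setting contributing no new terms.
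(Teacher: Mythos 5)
Your proposal is correct and matches the paper's treatment: the paper gives no proof of this lemma at all, simply citing \cite[Lemmas 4.8, 4.10]{FLLLW23}, and your central observation --- that $T$, $S$, $\sigma_{w,S}$, $n(\cdot)$ and $h(w,\cdot)$ are defined purely through the permutation combinatorics of $wg_2$ with $q$ as the only parameter in sight, so the equal-parameter statements carry over verbatim --- is precisely the justification implicit in that citation. Your additional sketches of the internal arguments of the cited lemmas go beyond what the paper records, but the core approach is the same.
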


We write $n(S)=n(\sigma_{w,s})$ and $h(T,S)=h(T,\sigma_{w,s})$ if $T\in\Theta$ and $S\in\Gamma_{T}$.
Denote $\ell(C)=\ell(g)$ if $C=\kappa(\lambda,g,\mu)$, $g\in\mathscr{D}_{\lambda\mu}$.

\begin{thm}\label{1.15}
Let $A$, $B\in\Xi_{n,d}$ with $B$ tridiagonal and $\mathrm{row}_{\mathfrak{c}}(A)=\mathrm{col}_{\mathfrak{c}}(B)$. Then
\begin{equation*}
e_{B}e_{A}
=\sum_{\scalebox{0.7}{$\substack{T\in\Theta_{B,A} \\ S\in \Gamma_{T}}$}}(q^{-2}-1)^{n(S)}q_0^{\alpha_0}q_1^{\alpha_1}q^{\alpha}\frac{[A^{(T-S)}]_\mathfrak{c}^!}{[A-T_{\theta}]_\mathfrak{c}^![S]^![T-S]^!}\llbracket  S\rrbracket e_{A^{(T-S)}},
\end{equation*}
where
\begin{align*}
\alpha_0&=-\ell_{\mathfrak{c}_0}(w_{A,T})+\ell_{\mathfrak{c}_d}(w_{A,T})-\ell_{\mathfrak{c}_0}(A)+\ell_{\mathfrak{c}_d}(A)+\ell_{\mathfrak{c}_0}(A^{(T-S)})-\ell_{\mathfrak{c}_d}(A^{(T-S)}),\\
\alpha_1&=-\ell_{\mathfrak{c}_0}(w_{A,T})-\ell_{\mathfrak{c}_d}(w_{A,T})-\ell_{\mathfrak{c}_0}(A)-\ell_{\mathfrak{c}_d}(A)+\ell_{\mathfrak{c}_0}(A^{(T-S)})+\ell_{\mathfrak{c}_d}(A^{(T-S)}),\\
\alpha&=2(-\ell_{\mathfrak{a}}(B)-\ell_{\mathfrak{a}} (w_{A,T})-\ell_{\mathfrak{a}}(A)+\ell_{\mathfrak{a}} (A^{(T-S)})+n(S)+h(S,T)).
\end{align*}
\end{thm}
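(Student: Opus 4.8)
The plan is to derive the formula by feeding the tridiagonal Hecke-algebra multiplication formula into the rough formula of Proposition~\ref{3.3} and then reorganizing the resulting double sum, indexed by pairs $(w,\sigma)$ with $w\in\mathscr{D}_{\delta}\cap W_{\mu}$ and $\sigma\in K_w$, into a sum indexed by pairs $(T,S)$ with $T\in\Theta_{B,A}$ and $S\in\Gamma_T$. All the ingredients have been prepared: the coefficients $c^{(w,\sigma)}$, the maps $\phi$ and $\psi_w$, the identity $A^{(w,\sigma)}=A^{(T-S)}$, and the two fibre-sum lemmas.

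First I would specialize. Since $B$ is tridiagonal, the Hecke-algebra formula above gives $\Delta(w)=K_w$ and $c^{(w,\sigma)}=(q^{-1}-q)^{n(\sigma)}$; moreover $s_0$ and $s_d$ do not occur in $g_1$, so $\ell_{\mathfrak{c}_0}(g_1)=\ell_{\mathfrak{c}_d}(g_1)=0$. I would rewrite $(q^{-1}-q)^{n(\sigma)}=q^{n(\sigma)}(q^{-2}-1)^{n(\sigma)}$ and record that $n(\sigma)=n(S)$ is constant on each fibre $\psi_w^{-1}(S)$. By the lemma identifying $A^{(w,\sigma)}=A^{(T-S)}$ (with $T=\phi(w)$, $S=\psi_w(\sigma)$), the basis element $e_{A^{(T-S)}}$ and the ratio $[A^{(T-S)}]_{\mathfrak{c}}^!/[A]_{\mathfrak{c}}^!$ are constant along fibres, so the sum regroups as $\sum_{T}\sum_{S}\sum_{w\in\phi^{-1}(T)}\sum_{\sigma\in\psi_w^{-1}(S)}$.

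The heart of the argument is the exponent bookkeeping, and this is where I expect the main difficulty. The key combinatorial fact is that $g_1$ and every correction $\sigma\in K_w$ are built from type-$\mathfrak{a}$ generators only — equivalently, $g_1\sigma$ preserves signs on $\mathbb{Z}$ — so that $\ell_{\mathfrak{c}_0}(g_1\sigma wg_2)=\ell_{\mathfrak{c}_0}(wg_2)$ and $\ell_{\mathfrak{c}_d}(g_1\sigma wg_2)=\ell_{\mathfrak{c}_d}(wg_2)$; combined with $\ell(wg_2)=\ell(w)+\ell(g_2)$ (as $g_2\in\mathscr{D}_{\mu}$) this yields $\ell_{\mathfrak{c}_0}(g_1\sigma wg_2)=\ell_{\mathfrak{c}_0}(w)+\ell_{\mathfrak{c}_0}(g_2)$ and likewise for $\mathfrak{c}_d$. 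Substituting these together with $\ell_{\mathfrak{c}_0}(g_2)=\ell_{\mathfrak{c}_0}(A)$, $\ell_{\mathfrak{c}_0}(y^{(w,\sigma)})=\ell_{\mathfrak{c}_0}(A^{(T-S)})$ (and their $\mathfrak{c}_d$-analogues) into the exponents $L_0,L_1$ of Proposition~\ref{3.3}, I would find that $L_0$ splits as $(-\ell_{\mathfrak{c}_0}(w)+\ell_{\mathfrak{c}_d}(w))$ plus a $w$-independent constant equal to the non-$w_{A,T}$ part of $\alpha_0$, and similarly $L_1$ splits off $(-\ell_{\mathfrak{c}_0}(w)-\ell_{\mathfrak{c}_d}(w))$. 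For the $q$-exponent I would invoke the length-additivity corollary to replace $\ell_{\mathfrak{a}}(g_1\sigma wg_2)$ by $\ell_{\mathfrak{a}}(g_1)+\ell_{\mathfrak{a}}(w)+\ell_{\mathfrak{a}}(g_2)-n(\sigma)-2h(w,\sigma)$, so that, after absorbing the extracted $q^{n(\sigma)}$, the $q$-power becomes $q^{2h(w,\sigma)}q^{-2\ell_{\mathfrak{a}}(w)}$ times the $w,\sigma$-independent factor $q^{2n(S)-2\ell_{\mathfrak{a}}(B)-2\ell_{\mathfrak{a}}(A)+2\ell_{\mathfrak{a}}(A^{(T-S)})}$.

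It then remains to evaluate the two fibre sums in the correct order. I would carry out the inner sum over $\sigma\in\psi_w^{-1}(S)$ first: by the lemma above it produces $\bigl[\begin{smallmatrix}T\\S\end{smallmatrix}\bigr]\llbracket S\rrbracket$ together with the power $q^{2h(S,T)}$, and the decisive point is that $h(S,T)$ depends only on $(S,T)$ and not on the chosen representative $w\in\phi^{-1}(T)$, so this whole quantity pulls out of the $w$-sum. The remaining sum $\sum_{w\in\phi^{-1}(T)}q_0^{-\ell_{\mathfrak{c}_0}(w)+\ell_{\mathfrak{c}_d}(w)}q_1^{-\ell_{\mathfrak{c}_0}(w)-\ell_{\mathfrak{c}_d}(w)}q^{-2\ell_{\mathfrak{a}}(w)}$ is exactly the one evaluated above, yielding the factor $[A]_{\mathfrak{c}}^!/([A-T_{\theta}]_{\mathfrak{c}}^![T]^!)$ and replacing each length of $w$ by the corresponding length of $w_{A,T}$. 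Collecting the three exponents now gives precisely $\alpha_0$, $\alpha_1$, $\alpha$, while the quantum factorials telescope, using $\bigl[\begin{smallmatrix}T\\S\end{smallmatrix}\bigr]=[T]^!/([S]^![T-S]^!)$, from $\tfrac{[A^{(T-S)}]_{\mathfrak{c}}^!}{[A]_{\mathfrak{c}}^!}\cdot\tfrac{[A]_{\mathfrak{c}}^!}{[A-T_{\theta}]_{\mathfrak{c}}^![T]^!}\cdot\bigl[\begin{smallmatrix}T\\S\end{smallmatrix}\bigr]$ to $\tfrac{[A^{(T-S)}]_{\mathfrak{c}}^!}{[A-T_{\theta}]_{\mathfrak{c}}^![S]^![T-S]^!}$, which is the claimed coefficient. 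The main obstacle throughout is this three-parameter length bookkeeping: verifying that $g_1$ and the corrections $\sigma$ leave the $\mathfrak{c}_0$- and $\mathfrak{c}_d$-lengths untouched, and confirming that the two fibre sums decouple so that the inner $\sigma$-sum is genuinely $w$-independent and may be extracted before the $w$-sum is taken.
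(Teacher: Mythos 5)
Your proposal is correct and is essentially the paper's own proof: the paper likewise feeds the tridiagonal Hecke-algebra formula (with $\Delta(w)=K_w$ and $c^{(w,\sigma)}=(q^{-1}-q)^{n(\sigma)}$) into Proposition~\ref{3.3}, rewrites the exponents exactly as you do (sign-preservation of $g_1\sigma$ for the $\mathfrak{c}_0$- and $\mathfrak{c}_d$-lengths so that $\ell_{\mathfrak{c}_0}(g_1\sigma wg_2)=\ell_{\mathfrak{c}_0}(w)+\ell_{\mathfrak{c}_0}(g_2)$, and the length corollary to eliminate $\ell_{\mathfrak{a}}(g_1\sigma wg_2)$), and then regroups the $(w,\sigma)$-sum into a $(T,S)$-sum via $A^{(w,\sigma)}=A^{(T-S)}$, the two fibre-sum lemmas over $\phi^{-1}(T)$ and $\psi_w^{-1}(S)$, and the same factorial telescoping. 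No gaps beyond those already implicit in the paper's "straightforward calculation."
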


\begin{proof}
It is a straightforward calculation that 
\begin{align*}
e_{B}e_{A}
=&\sum_{\scalebox{0.7}{$\substack{w \in \mathscr{D}_{\delta} \cap W_{\mu} \\ \sigma \in K_w}$}}(q^{-1}-q)^{n(\sigma)}q_0^{-\ell_{\mathfrak{c}_0}(g_1\sigma wg_2)+\ell_{\mathfrak{c}_d}(w)+\ell_{\mathfrak{c}_d}(g_2)+\ell_{\mathfrak{c}_0}(y^{(w,\sigma)})-\ell_{\mathfrak{c}_d}(y^{(w,\sigma)})}\\
&\times q_1^{-\ell_{\mathfrak{c}_d}(g_1\sigma wg_2)-\ell_{\mathfrak{c}_0}(w)-\ell_{\mathfrak{c}_0}(g_2)+\ell_{\mathfrak{c}_0}(y^{(w,\sigma)})+\ell_{\mathfrak{c}_d}(y^{(w,\sigma)})}\\
&\times q^{-\ell_{\mathfrak{a}}(g_1\sigma wg_2)-\ell_{\mathfrak{a}}(g_1)-\ell_{\mathfrak{a}}(w)-\ell_{\mathfrak{a}}(g_2)+2\ell_{\mathfrak{a}}(y^{(w,\sigma)})}\frac{[A^{(w,\sigma)}]_\mathfrak{c}^!}{[A]_\mathfrak{c}^!}e_{A^{(w,\sigma)}}\\
=&\sum_{\scalebox{0.7}{$\substack{w \in \mathscr{D}_{\delta} \cap W_{\mu} \\ \sigma \in K_w}$}}(q^{-1}-q)^{n(\sigma)}q_0^{-\ell_{\mathfrak{c}_0}(w)-\ell_{\mathfrak{c}_0}(g_2)+\ell_{\mathfrak{c}_d}(w)+\ell_{\mathfrak{c}_d}(g_2)+\ell_{\mathfrak{c}_0}(y^{(w,\sigma)})-\ell_{\mathfrak{c}_d}(y^{(w,\sigma)})}\\
&\times q_1^{-\ell_{\mathfrak{c}_d}(w)-\ell_{\mathfrak{c}_d}(g_2)-\ell_{\mathfrak{c}_0}(w)-\ell_{\mathfrak{c}_0}(g_2)+\ell_{\mathfrak{c}_0}(y^{(w,\sigma)})+\ell_{\mathfrak{c}_d}(y^{(w,\sigma)})}\\
&\times q^{-2\ell_{\mathfrak{a}}(g_1)-2\ell_{\mathfrak{a}}(w)-2\ell_{\mathfrak{a}}(g_2)+n(\sigma)+2h(w,\sigma)+2\ell_{\mathfrak{a}}(y^{(w,\sigma)})}\frac{[A^{(w,\sigma)}]_\mathfrak{c}^!}{[A]_\mathfrak{c}^!}e_{A^{(w,\sigma)}}\\
=&\sum_{\scalebox{0.7}{$\substack{T\in\Theta_{B,A} \\ S\in \Gamma_{T}}$}}(q^{-2}-1)^{n(S)}q_0^{-(\ell_{\mathfrak{c}_0}(w_{A,T})-\ell_{\mathfrak{c}_d}(w_{A,T})+\ell_{\mathfrak{c}_0}(A)-\ell_{\mathfrak{c}_d}(A)-\ell_{\mathfrak{c}_0}(A^{(T-S)})+\ell_{\mathfrak{c}_d}(A^{(T-S)}))}\\
&\times q_1^{-(\ell_{\mathfrak{c}_0}(w_{A,T})+\ell_{\mathfrak{c}_d}(w_{A,T})+\ell_{\mathfrak{c}_0}(A)+\ell_{\mathfrak{c}_d}(A)-\ell_{\mathfrak{c}_0}(A^{(T-S)})-\ell_{\mathfrak{c}_d}(A^{(T-S)}))}\\
&\times q^{-2(\ell_{\mathfrak{a}}(B)+\ell_{\mathfrak{a}} (w_{A,T})+\ell_{\mathfrak{a}}(A)-\ell_{\mathfrak{a}} (A^{(T-S)})-n(S)-h(S,T))}\\
&\times\frac{[A^{(T-S)}]_\mathfrak{c}^!}{[A-T_{\theta}]_\mathfrak{c}^![S]^![T-S]^!}\llbracket  S\rrbracket e_{A^{(T-S)}}.
\end{align*}
\end{proof}

The Following is a special case of these multiplication formulas for Chevalley generators $e_A$, where $A-RE^{i,i+1}_\theta$ is diagonal for some $i\in[-r..r]$ and $R\in\mathbb{N}$. Here $E^{i,i+1}_\theta$ is the $\mathbb{Z}\times \mathbb{Z}$ matrix with $(i+kn,i+1+kn)$-th and $(-i+kn,-i-1+kn)$-th ($k\in\mathbb{Z}$) entries are $1$ and otherwise $0$. 
\begin{thm}\label{special}
Let $A$, $B$, $C\in\Xi_{n,d}$, and $R\in \mathbb{N}$.
\item[(1)] If $\mathrm{row}_{\mathfrak{c}}(A)=\mathrm{col}_{\mathfrak{c}}(B)$ and $B-RE_{\theta}^{i,i+1}$ is diagonal and $i\in [1..r]$, then we have 
\begin{equation*}
e_{B}e_{A}
=\sum_{t}q^{-2\sum_{j>u}a_{ij}t_u} \prod\limits_{u\in\mathbb{Z}}
\left[\begin{array}{c}
a_{iu}+t_u\\
t_u
\end{array}\right]
e_{A_{ti}},
\end{equation*}    
where $t=(t_u)_{u\in\mathbb{Z}}\in\mathbb{N^Z}$ runs over the elements satisfying $\sum_{u\in\mathbb{Z}}t_u=R$ and
\begin{equation*}
\left\{
\begin{aligned}
&t_u\leqslant a_{i+1,u},&&\text{if}~i<r,\\
&t_u+t_{-u}\leqslant a_{i+1,u},&&\text{if}~i=r,\\
\end{aligned}
\right.
\end{equation*}
and 
\begin{equation*}
A_{ti}=A+\sum_{u\in\mathbb{Z}}t_u(E_{\theta}^{iu}-E_{\theta}^{i+1,u}). 
\end{equation*} 

\item[(2)] If $\mathrm{row}_{\mathfrak{c}}(A)=\mathrm{col}_{\mathfrak{c}}(B)$ and $B-RE_{\theta}^{0,1}$ is diagonal, then we have 
\begin{align*}
e_{B}e_{A}
=&\sum_{t}q_0^{-\sum_{u<0}t_{u}}q_1^{-\sum_{u<0}t_{u}}q^{-2\sum_{j>u}a_{0j}t_u-2\sum_{u<j<-u}t_ut_j-\sum_{u<0}t_u(t_u-3)}\\
&\times\frac{[a_{00}'+t_0]_{\mathfrak{c}_0}^!}{[a_{00}']_{\mathfrak{c}_0}^![t_0]^!}\prod\limits_{u>0}\frac{[a_{0u}+t_u+t_{-u}]^!}{[a_{0u}]^![t_u]^![t_{-u}]^!}
e_{A_{t0}},   
\end{align*}
where $t=(t_u)_{u\in\mathbb{Z}}\in\mathbb{N^Z}$ runs over the elements satisfying $\sum\limits_{u\in\mathbb{Z}}t_u=R$ and $t_u\leqslant a_{1u}$.

\item[(3)] If $\mathrm{row}_{\mathfrak{c}}(A)=\mathrm{col}_{\mathfrak{c}}(C)$ and $C-RE_{\theta}^{i+1,i}$ is diagonal and $i\in [0..r-1]$, then we have 
\begin{equation*}
e_{C}e_{A}
=\sum_{t}q^{-2\sum_{j<u}a_{i+1,j}t_u} \prod\limits_{u\in\mathbb{Z}}
\left[\begin{array}{c}
a_{i+1,u}+t_u\\
t_u
\end{array}\right]
e_{\widehat{A}_{ti}},
\end{equation*} 
where $t=(t_u)_{u\in\mathbb{Z}}\in\mathbb{N^Z}$ runs over the elements satisfying $\sum_{u\in\mathbb{Z}}t_u=R$ and
\begin{equation*}
\left\{
\begin{aligned}
&t_u\leqslant a_{iu},&&\text{if}~i>0,\\
&t_u+t_{-u}\leqslant a_{iu},&&\text{if}~i=0,\\
\end{aligned}
\right.
\end{equation*}
and 
\begin{equation*}
\widehat{A}_{ti}=A-\sum_{u\in\mathbb{Z}}t_u(E_{\theta}^{iu}-E_{\theta}^{i+1,u}). 
\end{equation*}  

\item[(4)]  If $\mathrm{row}_{\mathfrak{c}}(A)=\mathrm{col}_{\mathfrak{c}}(C)$ and $C-RE_{\theta}^{r+1,r}$ is diagonal, then we have 
\begin{align*}
e_{C}e_{A}
=&\sum_{t}q_0^{\sum_{u>r+1}t_{u}}q_1^{-\sum_{u>r+1}t_{u}}q^{-2\sum_{j<u}a_{r+1,j}t_u-2\sum_{n-u<j<u}t_ut_j-\sum_{u>r+1}t_u(t_u-3)}\\
&\times\frac{[a_{r+1,r+1}'+t_{r+1}]_{\mathfrak{c}_d}^!}{[a_{r+1,r+1}']_{\mathfrak{c}_d}^![t_{r+1}]^!}\prod\limits_{u>0}\frac{[a_{r+1,u}+t_u+t_{-u}]^!}{[a_{r+1,u}]^![t_u]^![t_{-u}]^!}
e_{\widehat{A}_{t,r}},
\end{align*} 
where $t=(t_u)_{u\in\mathbb{Z}}\in\mathbb{N^Z}$ runs over the elements satisfying $\sum_{u\in\mathbb{Z}}t_u=R$ and $t_u\leqslant a_{ru}$.
\end{thm}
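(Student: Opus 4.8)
The plan is to obtain all four identities as specializations of the general multiplication formula in Theorem~\ref{1.15}, organized according to where the Chevalley generator sits, i.e.\ the node $i\in[-r..r]$ with $B-RE_\theta^{i,i+1}$ diagonal. The key observation that unifies the four cases is the $\theta$-symmetry $a_{-i,-j}=a_{ij}$ built into $\Xi_{n,d}$: the subdiagonal generators of parts (3)--(4) are exactly the members of the same family with negative index, since $E_\theta^{i+1,i}=E_\theta^{-(i+1),-i}=E_\theta^{m,m+1}$ for $m=-(i+1)<0$. Thus no separate machinery is needed, and the split into four cases merely records whether the generator is a raising ($i\geqslant 0$) or lowering one and whether it touches a special node.

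First I would simplify the index sets. Since $B-RE_\theta^{i,i+1}$ is diagonal, its only superdiagonal entry (up to periodicity) is $b_{i,i+1}=R$, so the defining condition $\mathrm{row}_{\mathfrak{a}}(T)_j=b_{j-1,j}$ of $\Theta_{B,A}$ forces $T$ to be supported on the single row $i+1$; writing its entries as $t=(t_u)_u$, the set $\Theta_{B,A}$ becomes the family with $\sum_u t_u=R$, and $T_\theta\leqslant_e A$ translates into $t_u\leqslant a_{i+1,u}$ away from a special node and into the folded inequality $t_u+t_{-u}\leqslant a_{i+1,u}$ when row $i+1$ is $\theta$-self-dual (the case $i=r$, and its lowering mirror). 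The next step is to show that $\Gamma_T$ collapses to $\{0\}$: any $S\in\Gamma_T$ is again supported on a single row $k$, while $S^\dag$ is supported on row $1-k$, and since $2k-1\not\equiv 0\pmod{n}$ (odd versus even $n=2r+2$) the requirement $\mathrm{row}_{\mathfrak{a}}(S)=\mathrm{row}_{\mathfrak{a}}(S^\dag)$ can only hold for $S=0$. Consequently $n(S)=0$, $[S]^!=\llbracket S\rrbracket=1$ and $T-S=T$, so the double sum of Theorem~\ref{1.15} degenerates to a single sum over $t$, with $A^{(T-S)}=A-T_\theta+\widehat{T}_\theta$ equal to the target matrix $A_{ti}$ (resp.\ $\widehat A_{ti}$).

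With the sum reduced, the heart of the computation is the coefficient. Here I would exploit that $\widehat{T}_\theta$ shifts the supported row $i+1$ into the receiving row $i$, so that in the ratio $[A^{(T)}]_\mathfrak{c}^!/([A-T_\theta]_\mathfrak{c}^![T]^!)$ every row except the receiving one cancels between $A^{(T)}$ and $A-T_\theta$. This localizes the whole simplification to the receiving row, and the distinction between the interior cases (1),(3) and the end cases (2),(4) becomes transparent: when the receiving row is interior the surviving factors are ordinary $[m]^!$ and the ratio telescopes to $\prod_u\binom{a_{iu}+t_u}{t_u}$, whereas when it is the special node $0$ (resp.\ $r+1$) the diagonal entry grows and the $\mathfrak{c}_0$- (resp.\ $\mathfrak{c}_1$-)factorials of Lemma~\ref{1.4} produce $[a'_{00}+t_0]_{\mathfrak{c}_0}^!/([a'_{00}]_{\mathfrak{c}_0}^![t_0]^!)$ together with the $\theta$-folded products $\prod_{u>0}[a_{0u}+t_u+t_{-u}]^!/([a_{0u}]^![t_u]^![t_{-u}]^!)$. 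The exponents $\alpha_0,\alpha_1,\alpha$ are then evaluated by inserting the explicit values of $\ell_{\mathfrak{c}_0}(w_{A,T})$, $\ell_{\mathfrak{c}_d}(w_{A,T})$, $\ell_{\mathfrak{a}}(w_{A,T})$ from the length lemma for $w_{A,T}$, along with $\ell_{\mathfrak{a}}(B)$ and the lengths of $A$ and $A^{(T)}$. For interior $i$ I expect $\alpha_0=\alpha_1=0$ and a single quadratic term $-2\sum_{j>u}a_{ij}t_u$ in (1) (with $>$ replaced by $<$ and $a_{ij}$ by $a_{i+1,j}$ in (3)); at the special nodes the non-vanishing of $\ell_{\mathfrak{c}_0}$ on $w_{A,T}$ yields the prefactor $q_0^{-\sum_{u<0}t_u}q_1^{-\sum_{u<0}t_u}$ of (2), and $\ell_{\mathfrak{c}_d}$ yields $q_0^{\sum_{u>r+1}t_u}q_1^{-\sum_{u>r+1}t_u}$ of (4), while the folding contributes the cross term $-2\sum_{u<j<-u}t_ut_j$ and the diagonal correction $-\sum_{u<0}t_u(t_u-3)$.

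The main obstacle, exactly as in the single-parameter treatment, is this last bookkeeping near the two special nodes: keeping the three length functions $\ell_{\mathfrak{c}_0},\ell_{\mathfrak{c}_d},\ell_{\mathfrak{a}}$ and the two families of quantum factorials $[\,\cdot\,]_{\mathfrak{c}_0}^!$ and $[\,\cdot\,]_{\mathfrak{c}_1}^!$ separate, and verifying that they combine with the $\theta$-folding to reproduce precisely the stated $q_0,q_1$ prefactors and the quadratic $q$-exponent. The interior parts (1) and (3) reduce to the type-A computation and should be immediate once the collapse $\Gamma_T=\{0\}$ and the row-cancellation are in place; the delicate point is solely that in (2) and (4) the folded inequalities, the $q_0,q_1$ powers, and the crossed $q$-exponent all emerge consistently from the single specialization of Theorem~\ref{1.15}.
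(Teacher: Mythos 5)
Your proposal is correct and takes essentially the same route as the paper, which presents Theorem~\ref{special} as a direct specialization of Theorem~\ref{1.15} with no separate written proof: the two collapses you identify (every $T\in\Theta_{B,A}$ is supported on the single row $i+1$, and $\Gamma_T=\{0\}$ because $S$ would be supported on row $i+1$ while $S^{\dag}$ sits on row $-i$, which can never coincide mod the even period $n=2r+2$) are exactly what reduce the double sum to the stated single sum, and the factorial ratio then localizes to the receiving row as you describe. One bookkeeping misattribution worth flagging: in cases (2) and (4) the matrix $T$ is supported on rows $1$ and $r+2$ respectively, so $\ell_{\mathfrak{c}_0}(w_{A,T})$ and $\ell_{\mathfrak{c}_d}(w_{A,T})$ in fact vanish there, and the prefactors $q_0^{-\sum_{u<0}t_u}q_1^{-\sum_{u<0}t_u}$ and $q_0^{\sum_{u>r+1}t_u}q_1^{-\sum_{u>r+1}t_u}$ arise instead from the differences $\ell_{\mathfrak{c}_0}(A^{(T)})-\ell_{\mathfrak{c}_0}(A)$ and $\ell_{\mathfrak{c}_d}(A^{(T)})-\ell_{\mathfrak{c}_d}(A)$ inside $\alpha_0,\alpha_1$ (conversely, in case (1) with $i=r$ the term $\ell_{\mathfrak{c}_d}(w_{A,T})$ is nonzero but cancels against that difference) — your procedure still produces the right answer because it evaluates all terms of $\alpha_0,\alpha_1,\alpha$, but the cancellation pattern is not the one your prose predicts.
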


\section{Bases of affine Schur algebras}

\subsection{Bar involution}
There is an $\mathbb{A}$-algebra involution $\bar{\quad}$: $\mathbb{H}\rightarrow\mathbb{H}$, sending $q_i\mapsto q_i^{-1}$, $T_w\mapsto T_{w^{-1}}^{-1}$, for all $w\in W$. Particularly,
\begin{align*}
\overline{T}_0=q_0q_1^{-1}T_i+q_0-q_1^{-1};\quad
\overline{T}_i=T_i+q-q^{-1}, i\in [1..d-1];\quad
\overline{T}_d=q_0q_1T_i+q_1-q_0.
\end{align*}

For $\lambda,\mu\in\Lambda$ and $g\in\mathscr{D}_{\lambda\mu}$, let $g_{\lambda\mu}^+$ be the longest element in the double coset $W_{\lambda}gW_{\mu}$. Write $w_{\circ}^{\mu}=\mathbbm{1}_{\mu\mu}^+$ as the longest element in the parabolic subgroup $W_{\mu}=W_{\mu}\mathbbm{1}W_{\mu}$. denote by $\leqslant$ the (strong) Bruhat order on $W$. 



%

It is shown in \cite{Lu03} that, for any weight function $\mathbf{L}:W\to\mathbb{N}$, there exists a bar-invariant basis $\{C_w^{\mathbf{L}}\}$ at the specialization $q=\boldsymbol{v}^{-\mathbf{L}(s_1)}$, $q_0=\boldsymbol{v}^{-\mathbf{L}(s_0)+\mathbf{L}(s_d)}$, $q_1=\boldsymbol{v}^{-\mathbf{L}(s_0)-\mathbf{L}(s_d)}$, given by
\begin{align*}
&C_w^{\bf{L}}=q_0^{\frac{\ell_{\mathfrak{c}_0}(w)-\ell_{\mathfrak{c}_d}(w)}{2}}q_1^{\frac{\ell_{\mathfrak{c}_0}(w)+\ell_{\mathfrak{c}_d}(w)}{2}}q^{\ell_{\mathfrak{a}}(w)}\sum_{y\leqslant w}P_{y,w}(\boldsymbol{v})q_y^{-1}T_y|_{*} ,    
\end{align*}
where $P_{y,w}(\boldsymbol{v})$ is an analogue of the Kazhdan-Lusztig polynomial and $|_*$ means the condition $(q,q_0,q_1)=(\boldsymbol{v}^{-\mathbf{L}(s_1)},\boldsymbol{v}^{-\mathbf{L}(s_0)+\mathbf{L}(s_d)},\boldsymbol{v}^{-\mathbf{L}(s_0)-\mathbf{L}(s_d)})$. For $\lambda,\mu\in\Lambda$, let $\mathbb{H}_{\lambda\mu}$ be the $\mathbb{Z}[q^{\pm1},q_0^{\pm1},q_1^{\pm1}]$-submodule of $\mathbb{H}$ with a basis $\{T_{\lambda\mu}^g\}_{g\in\mathscr{D}_{\lambda\mu}}$. It follows from \cite[Lemma 2.10]{CNR72} and Lemma~\ref{2.5} that $\mathbb{H}_{\lambda\mu}$ can be characterized as
\begin{equation*}
\mathbb{H}_{\lambda\mu}=\left\{h\in\mathbb{H}~\middle|~
\begin{aligned}
&T_wh=q_0^{-\ell_{\mathfrak{c}_0}(w)}q_1^{-\ell_{\mathfrak{c}_d}(w)}q^{-\ell_{\mathfrak{a}}(w)}h,~\forall w\in W_{\lambda};\\
&hT_{w'}=q_0^{-\ell_{\mathfrak{c}_0}(w')}q_1^{-\ell_{\mathfrak{c}_d}(w')}q^{-\ell_{\mathfrak{a}}(w')}h,~\forall w'\in W_{\mu}.
\end{aligned}\right\}.
\end{equation*}

In the following, we show that the bar involution is closed on $\mathbb{H}_{\lambda\mu}$ although it lacks a bar-invariant basis.

\begin{proposition}\label{bar}
Let $A=\kappa(\lambda,g,\mu)\in\Xi_{n,d}$. Then $\overline{T_{\lambda\mu}^g}\in\mathbb{H}_{\lambda\mu}$. Precisely,
\begin{equation*}
\overline{T_{\lambda\mu}^g}\in q_0^{\ell_{\mathfrak{c}_0}(g_{\lambda\mu}^+)-\ell_{\mathfrak{c}_d}(g_{\lambda\mu}^+)}q_1^{\ell_{\mathfrak{c}_0}(g_{\lambda\mu}^+)+\ell_{\mathfrak{c}_d}(g_{\lambda\mu}^+)}q^{2\ell_{\mathfrak{a}}(g_{\lambda\mu}^+)}T_{\lambda\mu}^g+\sum_{\scalebox{0.7}{$\substack{y\in\mathscr{D}_{\lambda\mu}\\ y<g}$}}\mathbb{Z}[q^{\pm1},q_0^{\pm1},q_1^{\pm1}]T_{\lambda\mu}^y.
\end{equation*}
Moreover, $q_0^{\frac{\ell_{\mathfrak{c}_0}(w_{\circ}^{\mu})-\ell_{\mathfrak{c}_d}(w_{\circ}^{\mu})}{2}}q_1^{\frac{\ell_{\mathfrak{c}_0}(w_{\circ}^{\mu})+\ell_{\mathfrak{c}_d}(w_{\circ}^{\mu})}{2}}q^{\ell_{\mathfrak{a}}(w_{\circ}^{\mu})}x_{\mu}$ is bar-invariant.
\end{proposition}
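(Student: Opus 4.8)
The plan is to carry the whole computation out in the standard basis $\{T_w\}_{w\in W}$ and to read off the coordinates of $\overline{T^g_{\lambda\mu}}$ in the basis $\{T^y_{\lambda\mu}\}_{y\in\mathscr{D}_{\lambda\mu}}$ by one carefully chosen coefficient extraction. Recall that $T^g_{\lambda\mu}=\sum_{w\in W_\lambda gW_\mu}q_w^{-1}T_w$, so that $\overline{T^g_{\lambda\mu}}=\sum_{w\in W_\lambda gW_\mu}q_w\,\overline{T_w}$. The first preliminary step is to record the shape of $\overline{T_w}$: from the generator formulas $\overline{T}_0=q_0q_1^{-1}T_0+(q_0-q_1^{-1})$, $\overline{T}_i=T_i+(q-q^{-1})$ and $\overline{T}_d=q_0q_1T_d+(q_1-q_0)$, together with the fact that $\overline{\phantom{x}}$ is a ring homomorphism, expanding $\overline{T_w}=\prod_j\overline{T}_{i_j}$ along a reduced word $w=s_{i_1}\cdots s_{i_l}$ shows
\[
\overline{T_w}=\gamma_w T_w+\sum_{v<w}r_{v,w}T_v,\qquad \gamma_w=(q_0q_1^{-1})^{\ell_{\mathfrak c_0}(w)}(q_0q_1)^{\ell_{\mathfrak c_d}(w)},
\]
with all $r_{v,w}\in\mathbb Z[q^{\pm1},q_0^{\pm1},q_1^{\pm1}]$; in particular $\overline{T_w}$ is supported on $\{v\le w\}$ (each lower term is a subword of the reduced expression).

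Next I would verify that $\overline{\phantom{x}}$ preserves $\mathbb H_{\lambda\mu}$. Applying the ring involution to the defining eigenvector relations $T_wT^g_{\lambda\mu}=q_0^{-\ell_{\mathfrak c_0}(w)}q_1^{-\ell_{\mathfrak c_d}(w)}q^{-\ell_{\mathfrak a}(w)}T^g_{\lambda\mu}$ (and their right-handed analogues), using $\overline{T_w}=T_{w^{-1}}^{-1}$ and the invariance of $\ell_{\mathfrak c_0},\ell_{\mathfrak c_d},\ell_{\mathfrak a}$ under $w\mapsto w^{-1}$, one finds that $\overline{T^g_{\lambda\mu}}$ satisfies exactly the same eigenvector relations. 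Hence $\overline{T^g_{\lambda\mu}}\in\mathbb H_{\lambda\mu}$ and can be written $\sum_{y\in\mathscr D_{\lambda\mu}}c_y\,T^y_{\lambda\mu}$.

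The crux — and, I expect, the precise place where the finite-type argument of \cite{LL21} breaks down — is to extract the $c_y$ cleanly. The correct device is to read them off at the \emph{longest} element $y^+_{\lambda\mu}$ of each double coset rather than at the minimal representative. Since each $w\in W$ lies in a unique $W_\lambda$–$W_\mu$ double coset, $T_{y^+_{\lambda\mu}}$ occurs in $T^{y'}_{\lambda\mu}$ only for $y'=y$, with coefficient $q_{y^+_{\lambda\mu}}^{-1}$; therefore $c_y=q_{y^+_{\lambda\mu}}\,[T_{y^+_{\lambda\mu}}]\,\overline{T^g_{\lambda\mu}}$. Because $\overline{T^g_{\lambda\mu}}$ is supported on $\{v\le g^+_{\lambda\mu}\}$, nonvanishing of $c_y$ forces $y^+_{\lambda\mu}\le g^+_{\lambda\mu}$, equivalently $y\le g$ via the order-isomorphism between minimal and maximal double-coset representatives; this yields the triangular shape $\sum_{y<g}$. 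By maximality only $w=g^+_{\lambda\mu}$ contributes to $[T_{g^+_{\lambda\mu}}]\overline{T^g_{\lambda\mu}}$, so $c_g=q_{g^+_{\lambda\mu}}^2\gamma_{g^+_{\lambda\mu}}$; substituting $q_{g^+_{\lambda\mu}}=q_1^{\ell_{\mathfrak c_0}(g^+_{\lambda\mu})}q_0^{-\ell_{\mathfrak c_d}(g^+_{\lambda\mu})}q^{\ell_{\mathfrak a}(g^+_{\lambda\mu})}$ gives exactly $c_g=q_0^{\ell_{\mathfrak c_0}(g^+_{\lambda\mu})-\ell_{\mathfrak c_d}(g^+_{\lambda\mu})}q_1^{\ell_{\mathfrak c_0}(g^+_{\lambda\mu})+\ell_{\mathfrak c_d}(g^+_{\lambda\mu})}q^{2\ell_{\mathfrak a}(g^+_{\lambda\mu})}$. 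As each $c_y$ is a product of elements of $\mathbb Z[q^{\pm1},q_0^{\pm1},q_1^{\pm1}]$, the integrality of the coefficients also drops out, completing the first assertion.

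Finally, the ``moreover'' is just the specialization $\lambda=\mu$, $g=1$: then $T^1_{\mu\mu}=x_\mu$, the element $1$ is Bruhat-minimal in $\mathscr D_{\mu\mu}$ so the triangular sum is empty, and the first part gives $\overline{x_\mu}=\gamma\,x_\mu$ with $\gamma=q_0^{\ell_{\mathfrak c_0}(w_\circ^\mu)-\ell_{\mathfrak c_d}(w_\circ^\mu)}q_1^{\ell_{\mathfrak c_0}(w_\circ^\mu)+\ell_{\mathfrak c_d}(w_\circ^\mu)}q^{2\ell_{\mathfrak a}(w_\circ^\mu)}=c_\mu^2$, where $c_\mu$ is the stated scalar. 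Since $\overline{c_\mu}=c_\mu^{-1}$, one concludes $\overline{c_\mu x_\mu}=c_\mu^{-1}\gamma\,x_\mu=c_\mu x_\mu$. The hard part of the whole argument is the middle step: justifying that the coordinates must be extracted at $g^+_{\lambda\mu}$ and invoking the equivalence $y^+_{\lambda\mu}\le g^+_{\lambda\mu}\Leftrightarrow y\le g$, which is what makes both the triangular shape and the integrality transparent and which must replace the flawed extraction at the minimal representative in \emph{loc.\ cit}.
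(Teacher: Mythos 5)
Your proposal is correct, and it takes a genuinely different route from the paper's proof. The paper establishes the key identity $\overline{x_{\nu}}=q_0^{\ell_{\mathfrak{c}_0}(w_{\circ}^{\nu})-\ell_{\mathfrak{c}_d}(w_{\circ}^{\nu})}q_1^{\ell_{\mathfrak{c}_0}(w_{\circ}^{\nu})+\ell_{\mathfrak{c}_d}(w_{\circ}^{\nu})}q^{2\ell_{\mathfrak{a}}(w_{\circ}^{\nu})}x_{\nu}$ by invoking Lusztig's unequal-parameter elements $C^{\mathbf{L}}_{w_\circ^\nu}$ at \emph{every} weight-function specialization and comparing coefficients; it then passes through Lemma~\ref{1.5}, which introduces the factor $\overline{[A]_{\mathfrak{c}}^!}^{\,-1}$ and hence a priori rational coefficients, and the real burden of the paper's proof is the algorithmic argument on matrices $A^X$ showing that these rational coefficients are in fact Laurent polynomials. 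You bypass both ingredients: triangularity of $\overline{T_w}$ with leading coefficient $\gamma_w=(q_0q_1^{-1})^{\ell_{\mathfrak{c}_0}(w)}(q_0q_1)^{\ell_{\mathfrak{c}_d}(w)}$ is elementary; bar-stability of the eigenspace characterization of $\mathbb{H}_{\lambda\mu}$ follows by applying the involution to the defining relations (using invariance of $\ell_{\mathfrak{c}_0},\ell_{\mathfrak{c}_d},\ell_{\mathfrak{a}}$ under inversion); and extracting the coordinate $c_y$ at the longest representative $y^+_{\lambda\mu}$ makes integrality immediate, since the only operations involved are Hecke structure constants and multiplication by the monomial $q_{y^+_{\lambda\mu}}$ --- no division ever occurs. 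Your one-term computation $c_g=q_{g^+_{\lambda\mu}}^2\gamma_{g^+_{\lambda\mu}}=q_0^{\ell_{\mathfrak{c}_0}(g^+_{\lambda\mu})-\ell_{\mathfrak{c}_d}(g^+_{\lambda\mu})}q_1^{\ell_{\mathfrak{c}_0}(g^+_{\lambda\mu})+\ell_{\mathfrak{c}_d}(g^+_{\lambda\mu})}q^{2\ell_{\mathfrak{a}}(g^+_{\lambda\mu})}$ matches the statement exactly (and is consistent with the ``moreover'' part, confirming that the exponent of $q$ must be $2\ell_{\mathfrak{a}}$, whereas the corresponding display inside the paper's proof carries a typo). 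What the paper's route buys is the machinery of $C^{\mathbf{L}}_w$ that it reuses later (\S\ref{sec:4.4}); what your route buys is self-containedness and a transparent explanation of why no denominators can appear.

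The one step you should justify rather than merely invoke is the order-compatibility $y^+_{\lambda\mu}\leqslant g^+_{\lambda\mu}\Leftrightarrow y\leqslant g$ for $y,g\in\mathscr{D}_{\lambda\mu}$. The fact is true, and the direction you actually use has a short proof from Proposition~\ref{2.2}(b) and the subword property. Indeed, by Proposition~\ref{2.2}(b) a reduced word of $g^+_{\lambda\mu}$ can be taken as a concatenation of reduced words of $w_\circ^\lambda$, of $g$, and of an element of $W_\mu$; if $y\leqslant g^+_{\lambda\mu}$, a subword representing $y$ then splits as $y=u_1vu_2$ with $u_1\in W_\lambda$, $v\leqslant g$, $u_2\in W_\mu$, so $y\in W_\lambda vW_\mu$, and since the minimal representative of $W_\lambda vW_\mu$ lies below $v$ while $y$ is itself that minimal representative, one gets $y\leqslant v\leqslant g$. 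Note this argument even yields the stronger implication $y\leqslant g^+_{\lambda\mu}\Rightarrow y\leqslant g$ directly, so extraction at the longest element is really only needed to make the leading coefficient a one-term computation; triangularity and integrality would survive extraction at the minimal element as well. With this patch supplied, your argument is complete.
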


\begin{proof}
First, we show that $\overline{x_{\nu}}\in\mathbb{A}x_{\nu}$ for all $\nu\in\Lambda_{r,d}$ via bar-invariant basis $C_w^{\bf{L}}$. Let $\mathbb{H}_{\lambda\mu}^{\bf{L}}$ be the specialization of $\mathbb{H}_{\lambda\mu}$ in $q=\boldsymbol{v}^{-\mathbf{L}(s_1)},q_0=\boldsymbol{v}^{-\mathbf{L}(s_0)+\mathbf{L}(s_d)},q_1=\boldsymbol{v}^{-\mathbf{L}(s_0)-\mathbf{L}(s_d)}$. A direct computation shows that $C_{w_{\circ}^{\nu}}^{\bf{L}}\in\mathbb{H}_{\nu\nu}^{\bf{L}}$, and hence
\begin{align*}
C_{w_{\circ}^{\nu}}^{\bf{L}}=
&q_0^{\frac{\hat\ell_{\mathfrak{c}_0}(w_{\circ}^{\nu})-\hat\ell_{\mathfrak{c}_d}(w_{\circ}^{\nu})}{2}}q_1^{\frac{\hat\ell_{\mathfrak{c}_0}(w_{\circ}^{\nu})+\hat\ell_{\mathfrak{c}_d}(w_{\circ}^{\nu})}{2}}q^{\hat\ell_{\mathfrak{a}}(w_{\circ}^{\nu})}\sum_{y\leqslant w_{\circ}^{\mu}}P_{y,w_{\circ}^{\nu}}q_y^{-1}T_y\bigg|_*\\
\in&\sum_{g\in\mathscr{D}_{\nu\nu}}\mathbb{Z}(\boldsymbol{v}^{\pm\mathbf{L}(s_1)},\boldsymbol{v}^{\pm\mathbf{L}(s_0)\mp\mathbf{L}(s_d)},\boldsymbol{v}^{\pm\mathbf{L}(s_0)\pm\mathbf{L}(s_d)})T_{\nu\nu}^g\bigg|_*.
\end{align*}
Upon comparing coefficients, we obtain
\begin{equation*}
C_{w_{\circ}^{\nu}}^{\bf{L}}=q_0^{\frac{\hat\ell_{\mathfrak{c}_0}(w_{\circ}^{\nu})-\hat\ell_{\mathfrak{c}_d}(w_{\circ}^{\nu})}{2}}q_1^{\frac{\hat\ell_{\mathfrak{c}_0}(w_{\circ}^{\nu})+\hat\ell_{\mathfrak{c}_d}(w_{\circ}^{\nu})}{2}}q^{\hat\ell_{\mathfrak{a}}(w_{\circ}^{\nu})}T_{\nu\nu}^{\mathbbm{1}}\bigg|_*.   
\end{equation*}
Note that $x_{\nu}=T_{\nu\nu}^{\mathbbm{1}}$. Hence, for any weight function $\bf{L}$, we have
\begin{equation*}
(\overline{x_{\nu}}-q_0^{\ell_{\mathfrak{c}_0}(w_{\circ}^{\nu})-\ell_{\mathfrak{c}_d}(w_{\circ}^{\nu})}q_1^{\ell_{\mathfrak{c}_0}(w_{\circ}^{\nu})+\ell_{\mathfrak{c}_d}(w_{\circ}^{\nu})}q^{\ell_{\mathfrak{a}}(w_{\circ}^{\nu})}x_{\nu})|_*=0.    
\end{equation*}
Therefore, $\overline{x_{\nu}}=q_0^{\ell_{\mathfrak{c}_0}(w_{\circ}^{\nu})-\ell_{\mathfrak{c}_d}(w_{\circ}^{\nu})}q_1^{\ell_{\mathfrak{c}_0}(w_{\circ}^{\nu})+\ell_{\mathfrak{c}_d}(w_{\circ}^{\nu})}q^{\ell_{\mathfrak{a}}(w_{\circ}^{\nu})}x_{\nu}$. By Lemma \ref{1.5}, we claim that $$\overline{T_{\lambda\mu}^g}\in\sum_{z\leqslant g}\mathbb{Z}[q^{\pm1},q_0^{\pm1},q_1^{\pm1}]T_{\lambda\mu}^z\subseteq\mathbb{H}_{\lambda\mu}.$$ 

For each $A\in(a_{ij})\in\Xi_{n,d}$, we can obtain a terms of matrix $A^{X}$ (with $X$ is a tuple belongs to $\{((i_p,j_p))_{p=1}^q~|~q\in\mathbb{N}^*,i_p,j_p\in\mathbb{Z}\}$ ), which are lower than $A$ in the Bruhat order by the following algorithm. Furthermore, denote $0$-tuple by $()$ and $A^{()}=A$. We introduce an order for pairs such that $(i,j)\succ(k,l)$ if and only if $l<j$ or `$l=j, k>i$'.
\item[(1)]
Initialization:
Set $X:=()$, $s:=\min\{x~|~a_{x,r+1}>0\}$ and $t:=r+1$.

\item[(2)] 
Process all pairs $(i,j)$ such that $i\in[s..r]$ and $j\in[t..i+1]$ in lexicographic order as follows:

If $a_{ij}=0$, output $A^{(X,(i,j))}=0$;

If $a_{ij}>0$, we choose $(k,l)=\max\{(k,l)~|~(i,j-1)\succ(k,l)\neq(-i,0),a_{kl}>0\}$ and output $A^{(X,(i,j))}=A^{X}-E^{ij}_{\theta}+E^{il}_{\theta}-E^{kl}_{\theta}+E^{kj}_{\theta}$.

\item[(3)]
If all $A^{(X,(i,j))}=0$, the algorithm terminates. Otherwise, set $X:=(X,(i,j))$, $Y:=\emptyset$, $s:=i$ and $t:=j$ respectively; then go to Step (2).

We can see that the coefficient change from $A^{X}$ to $A^{(X,(i,j))}$ in Step (2) belongs to $\mathbb{Z}[q^{\pm1},q_0^{\pm1},q_1^{\pm1}]$. 
It is sufficient to consider a $2\times2$-matrix and we obmit the subscript $\mathfrak{c}$ in the following. 
\begin{equation*}
A=\begin{pmatrix}
a & b \\
c & d
\end{pmatrix}
\to
A'=\begin{pmatrix}
a+x & b-x \\
c-x & d+x
\end{pmatrix},
~a,b,c,d,x\in\mathbb{N},~x\leqslant\mathrm{min}\{b,c\}.
\end{equation*}

It is obvious to see $g(A)>g(A')$. We denote the coefficient of $T_{g(A')}$ by $r(A')$. When we shift $x$ elements from the position $b$ to the position $a$ and from the position $c$ to the position $d$, which means
\begin{equation*}
\left[\begin{array}{c}
b\\
x
\end{array}\right]
\left[\begin{array}{c}
c\\
x
\end{array}\right]
[x]^!\bigg|r(A'),
\end{equation*}
we have
\begin{align*}
&\frac{[A']^!}{[A]^!}\cdot\frac{[b]^!}{[b-x]![x]^!}\cdot\frac{[c]^!}{[c-x]![x]^!}\cdot[x]^!\\
=&\frac{[a+x]^![b-x]^![c-x]^![d+x]^!}{[a]^![b]^![c]^![d]^!}\cdot\frac{[b]^!}{[b-x]![x]^!}\cdot\frac{[c]^!}{[c-x]![x]^!}\cdot[x]^!\\
=&\frac{[a+x]^![d+x]^!}{[a]^![d]^![x]^!}\in\mathbb{Z}[q^{\pm1},q_0^{\pm1},q_1^{\pm1}].
\end{align*}
Therefore, $r(A')\in\mathbb{Z}[q^{\pm1},q_0^{\pm1},q_1^{\pm1}]$.

The leading coefficient is obtained by a direct computation.
\end{proof}

Thanks to the above proposition, the bar involution $\bar{\quad}$ on $\mathbb{S}_{n,d}^{\mathfrak{c}}$ can be defined as follows. For each $f\in\mathrm{Hom}_{\mathbb{H}}(x_{\mu}\mathbb{H},x_{\lambda}\mathbb{H})$, let $\overline{f}\in\mathrm{Hom}_{\mathbb{H}}(x_{\mu}\mathbb{H},x_{\lambda}\mathbb{H})$ be the $\mathbb{H}$-linear map that sends $x_{\mu}$ to $\overline{f(\overline{x_{\mu}})}$.

\subsection{Standard basis}

The following proposition is obtained directly from Lemma~\ref{length}.
\begin{proposition}
For $A\in\Xi_n$, we have  
\begin{align*}
\ell{(A)}&=\frac{1}{2}\bigg(\sum_{(i,j)\in I^+}\bigg(\sum_{\scalebox{0.7}{$\substack{x<i\\ y>j}$}}+\sum_{\scalebox{0.7}{$\substack{x>i\\ y<j}$}}\bigg)a_{ij}'a_{xy}\bigg),\qquad
\ell_{\mathfrak{c}_0}{(A)}=\frac{1}{2}\bigg(\sum_{\scalebox{0.7}{$\substack{x<0\\ y>0}$}}+\sum_{\scalebox{0.7}{$\substack{x>0\\ y<0}$}}\bigg)a_{xy},\\
\ell_{\mathfrak{c}_d}{(A)}&=\frac{1}{2}\bigg(\sum_{\scalebox{0.7}{$\substack{x<r+1\\ y>r+1}$}}+\sum_{\scalebox{0.7}{$\substack{x>r+1\\ y<r+1}$}}\bigg)a_{xy},\qquad
\ell_{\mathfrak{a}}{(A)}=\frac{1}{2}\bigg(\sum_{(i,j)\in I^+}\bigg(\sum_{\scalebox{0.7}{$\substack{x<i\\ y>j}$}}+\sum_{\scalebox{0.7}{$\substack{x>i\\ y<j}$}}\bigg)a_{ij}''a_{xy}\bigg),
\end{align*}
where $a_{ii}''=a_{ii}'-1=\frac{1}{2}(a_{ii}-3)$ if $i=0,r+1$ and $a_{ij}''=a_{ij}$ if $i\in I_{\mathfrak{a}}^+$.
\end{proposition}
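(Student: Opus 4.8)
The plan is to translate each of the four length formulas of Lemma~\ref{length} into matrix form through the bijection $\kappa$. Write $A=\kappa(\lambda,g,\mu)$, and recall that $g$ is the minimal-length representative of its double coset: since $g\in\mathscr{D}_\mu^{-1}$ the permutation $g$ is increasing on every column block $R_y^\mu$, and since $g\in\mathscr{D}_\lambda$ its inverse $g^{-1}$ is increasing on every row block $R_x^\lambda$, while the entry $a_{xy}$ is the incidence number $|R_x^\lambda\cap gR_y^\mu|$, i.e. the number of $m\in R_y^\mu$ with $g(m)\in R_x^\lambda$. The observation used throughout is that these two monotonicity properties force every inversion of $g$ to occur between elements lying in \emph{distinct} column blocks whose images lie in \emph{distinct} row blocks: if two elements share a column block, $g$ preserves their order, and if their images share a row block, $g^{-1}$ preserves their order. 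Consequently, a pair of crossing cells $(x,y)$ and $(x',y')$ with $x<x'$ and $y>y'$ contributes exactly $a_{xy}a_{x'y'}$ inversions.

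I would first dispatch $\ell_{\mathfrak{c}_0}$ and $\ell_{\mathfrak{c}_d}$, which need no diagonal correction. By Lemma~\ref{length}, $\ell_{\mathfrak{c}_0}(g)=\#\{i>0\mid g(i)<0\}$. As $g$ is increasing on the symmetric block $R_0^\mu$ and $g(0)=0$, every positive element of $R_0^\mu$ has positive image; hence a sign change $i>0$, $g(i)<0$ can only arise when $i$ lies in a column block of positive index and $g(i)$ in a row block of negative index, and these are counted precisely by $\sum_{x<0,\,y>0}a_{xy}$. The involution $i\mapsto -i$, using $g(-i)=-g(i)$, matches this set bijectively with $\{i<0\mid g(i)>0\}$, counted by $\sum_{x>0,\,y<0}a_{xy}$, which yields the stated symmetric expression with its factor $\tfrac12$. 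The computation for $\ell_{\mathfrak{c}_d}$ is identical after replacing the fixed point $0$ by $d+1$ and the index $0$ by $r+1$.

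Next I would establish the formula for $\ell(g)$. Applying Lemma~\ref{length}, one counts crossings over the fundamental window $[1..d]\times\mathbb{Z}$; by the monotonicity reduction above each crossing is governed by a pair of distinct row- and column-blocks, so summing the incidence products against the outer cell produces $\tfrac12\sum_{(i,j)\in I^+}a'_{ij}(\sum_{x<i,y>j}+\sum_{x>i,y<j})a_{xy}$. Here $I^+$ serves as a fundamental domain for the $\mathbb{Z}$-periodicity and the antisymmetry $a_{-i,-j}=a_{ij}$, while the replacement of $a_{ij}$ by $a'_{ij}$ at the two diagonal cells $(0,0)$ and $(r+1,r+1)$ reflects that the window $[1..d]$ retains only the positive half of the symmetric block $R_0^\mu$ and the lower half of $R_{r+1}^\mu$, with the fixed points $0$ and $d+1$ deleted; this is the combinatorial content already recorded in the equal-parameter case in \cite[Lemma~2.1]{FLLLW23}. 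I expect this diagonal bookkeeping---tracking the odd entries $a_{00},a_{r+1,r+1}$, the removed fixed points, and the halving of the two self-paired blocks---to be the main technical point, everything else being a mechanical re-indexing of an inversion count.

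Finally, the formula for $\ell_{\mathfrak{a}}$ follows formally from $\ell_{\mathfrak{a}}=\ell-\ell_{\mathfrak{c}_0}-\ell_{\mathfrak{c}_d}$ together with the three identities just proved. Since $a'_{ij}-a''_{ij}$ equals $1$ exactly when $i=j\equiv 0,r+1\ (\mathrm{mod}\ n)$ and vanishes otherwise, the difference $\ell(A)-\ell_{\mathfrak{a}}(A)$ collapses onto the two diagonal cells $(0,0)$ and $(r+1,r+1)$ of $I^+$, giving $\tfrac12(\sum_{x<0,y>0}+\sum_{x>0,y<0})a_{xy}+\tfrac12(\sum_{x<r+1,y>r+1}+\sum_{x>r+1,y<r+1})a_{xy}$, which is precisely $\ell_{\mathfrak{c}_0}(A)+\ell_{\mathfrak{c}_d}(A)$. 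This confirms the $a''$-formula and closes the argument.
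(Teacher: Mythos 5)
Your proposal is correct and is essentially the paper's own (implicit) argument: the paper states this proposition without proof, saying only that it is ``obtained directly from Lemma~\ref{length}'', and your derivation—transporting the inversion counts of Lemma~\ref{length} through $\kappa$ via the block-monotonicity of minimal double-coset representatives, treating $I^+$ as a fundamental domain with the $a'$-halving at the two symmetric cells $(0,0)$ and $(r+1,r+1)$, and obtaining the $\ell_{\mathfrak{a}}$ formula formally from $\ell_{\mathfrak{a}}=\ell-\ell_{\mathfrak{c}_0}-\ell_{\mathfrak{c}_d}$—is exactly that derivation written out. The only soft spots are expository rather than substantive: ruling out $g(i)\in R_0^\lambda$ in the $\ell_{\mathfrak{c}_0}$ count needs the monotonicity of $g^{-1}$ on $R_0^\lambda$ (which you state in your preamble but do not invoke at that point), and the window/orbit bookkeeping for $\ell(A)$ is sketched and deferred to the equal-parameter case in \cite{FLLLW23} rather than executed, though the mechanism you name is the right one.
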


For $A\in\Xi_n$, we define
\begin{align*}
\hat\ell{(A)}&=\frac{1}{2}\bigg(\sum_{(i,j)\in I^+}\bigg(\sum_{\scalebox{0.7}{$\substack{x\leqslant i\\ y>j}$}}+\sum_{\scalebox{0.7}{$\substack{x\geqslant i\\ y<j}$}}\bigg)a_{ij}'a_{xy}\bigg),\qquad
\hat\ell_{\mathfrak{c}_0}{(A)}=\frac{1}{2}\bigg(\sum_{\scalebox{0.7}{$\substack{x\leqslant 0\\ y>0}$}}+\sum_{\scalebox{0.7}{$\substack{x\geqslant 0\\ y<0}$}}\bigg)a_{xy},\\
\hat\ell_{\mathfrak{c}_d}{(A)}&=\frac{1}{2}\bigg(\sum_{\scalebox{0.7}{$\substack{x\leqslant r+1\\ y>r+1}$}}+\sum_{\scalebox{0.7}{$\substack{x\geqslant r+1\\ y<r+1}$}}\bigg)a_{xy},\qquad
\hat\ell_{\mathfrak{a}}{(A)}=\frac{1}{2}\bigg(\sum_{(i,j)\in I^+}\bigg(\sum_{\scalebox{0.7}{$\substack{x\leqslant i\\ y>j}$}}+\sum_{\scalebox{0.7}{$\substack{x\geqslant i\\ y<j}$}}\bigg)a_{ij}''a_{xy}\bigg).
\end{align*}

Set
\begin{equation*}
[A]=q_0^{\frac{\hat\ell_{\mathfrak{c}_0}(A)-\hat\ell_{\mathfrak{c}_d}(A)}{2}}q_1^{\frac{\hat\ell_{\mathfrak{c}_0}(A)+\hat\ell_{\mathfrak{c}_d}(A)}{2}}q^{\hat\ell_{\mathfrak{a}}(A)}e_A.   
\end{equation*}
The set $\{[A]~|~A\in\Xi_{n,d}\}$ forms an $\mathbb{A}$-basis of $\mathbb{S}^{\mathfrak{c}}_{n,d}$, which we call the standard basis.

For $A\in\Xi_{n}$, we denote 
\begin{equation*}
\sigma_{ij}(A)=\sum_{x\leqslant i,y\geqslant j}a_{xy}.    
\end{equation*}
Now we define a partial order $\leqslant_{\mathrm{alg}}$ on $\Xi_n$ by letting, for $A,B\in\Xi_n$,
\begin{equation*}
A\leqslant_{\mathrm{alg}}B\Leftrightarrow\mathrm{row}_{\mathfrak{c}}(A)=\mathrm{row}_{\mathfrak{c}}(B),~\mathrm{col}_{\mathfrak{c}}(A)=\mathrm{col}_{\mathfrak{c}}(B),~\text{and}~\sigma_{ij}(A)\leqslant\sigma_{ij}(B),~\forall i<j. 
\end{equation*}
We denote $A<_{\mathrm{alg}}B$ if $A\leqslant_{\mathrm{alg}}B$ and $A\neq B$.

\begin{proposition}\label{invariant}
Let $A=\kappa(\lambda,g,\mu)\in\Xi_{n,d}$. Then we have $\overline{[A]}\in[A]+\sum_{B<_{\mathrm{alg}}A}\mathbb{A}[B]$.    
\end{proposition}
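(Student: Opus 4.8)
The plan is to transport the Bruhat-order triangularity of $\overline{T_{\lambda\mu}^g}$ furnished by Proposition~\ref{bar} to the standard basis, and then to replace the Bruhat order on $\mathscr{D}_{\lambda\mu}$ by $\leqslant_{\mathrm{alg}}$ on $\Xi_{n,d}$.

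First I would unwind the bar involution on $\mathbb{S}_{n,d}^{\mathfrak{c}}$. Writing $[A]=\eta_A e_A$ with $\eta_A=q_0^{\frac{\hat\ell_{\mathfrak{c}_0}(A)-\hat\ell_{\mathfrak{c}_d}(A)}{2}}q_1^{\frac{\hat\ell_{\mathfrak{c}_0}(A)+\hat\ell_{\mathfrak{c}_d}(A)}{2}}q^{\hat\ell_{\mathfrak{a}}(A)}$, we have $\overline{[A]}=\eta_A^{-1}\overline{e_A}$. By definition $\overline{e_A}$ sends $x_\mu\mapsto\overline{e_A(\overline{x_\mu})}$, and Proposition~\ref{bar} gives $\overline{x_\mu}=\gamma_\mu x_\mu$ with $\gamma_\mu=q_0^{\ell_{\mathfrak{c}_0}(w_\circ^\mu)-\ell_{\mathfrak{c}_d}(w_\circ^\mu)}q_1^{\ell_{\mathfrak{c}_0}(w_\circ^\mu)+\ell_{\mathfrak{c}_d}(w_\circ^\mu)}q^{2\ell_{\mathfrak{a}}(w_\circ^\mu)}$, the square of the bar-invariant normalisation of $x_\mu$. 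Since $e_A$ is $\mathbb{A}$-linear and $\overline{\gamma_\mu}=\gamma_\mu^{-1}$, this yields $\overline{e_A}(x_\mu)=\gamma_\mu^{-1}\overline{e_A(x_\mu)}$. As $e_A(x_\mu)=T_{W_\lambda gW_\mu}$ is the basis vector $T_{\lambda\mu}^g$ of $\mathbb{H}_{\lambda\mu}$, Proposition~\ref{bar} expands $\overline{e_A(x_\mu)}=\beta_g\,e_A(x_\mu)+\sum_{y<g}c_y\,e_{B_y}(x_\mu)$, where $B_y:=\kappa(\lambda,y,\mu)$, the sum runs over $y\in\mathscr{D}_{\lambda\mu}$ with $y<g$, $c_y\in\mathbb{A}$, and $\beta_g=q_0^{\ell_{\mathfrak{c}_0}(g_{\lambda\mu}^+)-\ell_{\mathfrak{c}_d}(g_{\lambda\mu}^+)}q_1^{\ell_{\mathfrak{c}_0}(g_{\lambda\mu}^+)+\ell_{\mathfrak{c}_d}(g_{\lambda\mu}^+)}q^{2\ell_{\mathfrak{a}}(g_{\lambda\mu}^+)}$. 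Re-expressing in the standard basis gives $\overline{[A]}=\eta_A^{-2}\gamma_\mu^{-1}\beta_g\,[A]+\sum_{y<g}(\ast)[B_y]$, so it remains to control the support and to show the leading coefficient is $1$.

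For the support I must check that $y<g$ in the Bruhat order forces $B_y<_{\mathrm{alg}}A$. This is the compatibility between the Bruhat order on minimal double-coset representatives and the order recorded by the partial sums $\sigma_{ij}$: passing to a Bruhat-smaller $y$ moves matrix mass toward the diagonal, which weakly decreases every $\sigma_{ij}$ with $i<j$ while fixing $\mathrm{row}_{\mathfrak{c}}$ and $\mathrm{col}_{\mathfrak{c}}$. I would deduce this from the column-reading description of $\kappa$, exactly as in the type~A case; since $\kappa$ is a bijection, $y\neq g$ gives $B_y\neq A$, hence $B_y<_{\mathrm{alg}}A$.

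The decisive step is the leading-coefficient identity $\eta_A^{-2}\gamma_\mu^{-1}\beta_g=1$. Comparing the exponents of $q$, $q_0$, $q_1$ separately, this is equivalent to the three identities
\begin{equation*}
\ell_*(g_{\lambda\mu}^+)=\hat\ell_*(A)+\ell_*(w_\circ^\mu),\qquad *\in\{\mathfrak{a},\mathfrak{c}_0,\mathfrak{c}_d\}.
\end{equation*}
I would prove these from the factorisation $g_{\lambda\mu}^+=w_\circ^\lambda\,g\,w_\circ^\delta w_\circ^\mu$ with $W_\delta=g^{-1}W_\lambda g\cap W_\mu$, which is length-additive by Proposition~\ref{2.2}(b); as each refined length function merely counts the occurrences of a fixed generator in a reduced word, it is additive along this decomposition, giving $\ell_*(g_{\lambda\mu}^+)=\ell_*(w_\circ^\lambda)+\ell_*(g)+\ell_*(w_\circ^\mu)-\ell_*(w_\circ^\delta)$. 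It then suffices to identify $\hat\ell_*(A)=\ell_*(g)+\ell_*(w_\circ^\lambda)-\ell_*(w_\circ^\delta)$, i.e. that the within-row terms separating $\hat\ell_*$ from $\ell_*$ contribute exactly $\ell_*(w_\circ^\lambda)-\ell_*(w_\circ^\delta)$. This I would verify directly from the matrix formulas for $\ell_*(A)$ and $\hat\ell_*(A)$ in the preceding Proposition, together with the values of $\ell_*(w_\circ^\lambda)$ and $\ell_*(w_\circ^\delta)$ in terms of the parts of $\lambda$ and the entries $a_{ij}$. The main obstacle is precisely this bookkeeping for the signed functions $\ell_{\mathfrak{c}_0}$ and $\ell_{\mathfrak{c}_d}$, where the contributions of the two special generators $s_0,s_d$ and the diagonal $a'_{ij}$-corrections must be tracked with care; the $\ell_{\mathfrak{a}}$-identity reduces to the familiar type~A computation.
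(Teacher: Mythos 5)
Your proposal is correct and follows essentially the same route as the paper's proof: transport the Bruhat-order triangularity of Proposition~\ref{bar} to the standard basis, identify the leading coefficient through the identities $\hat\ell_{*}(A)=\ell_{*}(g_{\lambda\mu}^{+})-\ell_{*}(w_{\circ}^{\mu})$ for $*\in\{\mathfrak{a},\mathfrak{c}_0,\mathfrak{c}_d\}$, and then pass from the Bruhat order on $\mathscr{D}_{\lambda\mu}$ to $<_{\mathrm{alg}}$ on $\Xi_{n,d}$. The only divergence is that the paper simply cites \cite[Proposition 5.3]{FLLLW23} for these length identities and \cite[Lemma 5.4]{FLLLW23} for the implication $y<g\Rightarrow\kappa(\lambda,y,\mu)<_{\mathrm{alg}}A$, whereas you sketch self-contained proofs of the same two ingredients (via the length-additive factorization $g_{\lambda\mu}^{+}=w_{\circ}^{\lambda}\,g\,w_{\circ}^{\delta}w_{\circ}^{\mu}$ and the column-reading description of $\kappa$), which is a sound way to fill them in.
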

\begin{proof}
It has been shown in \cite[Proposition 5.3]{FLLLW23} that
\begin{align*}
\hat\ell_{\mathfrak{c}_0}{(A)}=\ell_{\mathfrak{c}_0}{(g_{\lambda\mu}^+)}-\ell_{\mathfrak{c}_0}{(w_{\circ}^{\mu})},\quad
\hat\ell_{\mathfrak{c}_d}{(A)}=\ell_{\mathfrak{c}_d}{(g_{\lambda\mu}^+)}-\ell_{\mathfrak{c}_d}{(w_{\circ}^{\mu})},\quad
\hat\ell_{\mathfrak{a}}{(A)}=\ell_{\mathfrak{a}}{(g_{\lambda\mu}^+)}-\ell_{\mathfrak{a}}{(w_{\circ}^{\mu})}.        
\end{align*}
Hence, 
\begin{align*}
&[A](q_0^{\frac{\hat\ell_{\mathfrak{c}_0}(w_{\circ}^{\mu})-\hat\ell_{\mathfrak{c}_d}(w_{\circ}^{\mu})}{2}}q_1^{\frac{\hat\ell_{\mathfrak{c}_0}(w_{\circ}^{\mu})+\hat\ell_{\mathfrak{c}_d}(w_{\circ}^{\mu})}{2}}q^{\hat\ell_{\mathfrak{a}}(w_{\circ}^{\mu})}x_{\mu})\\
=&q_0^{\frac{\hat\ell_{\mathfrak{c}_0}(g_{\lambda\mu}^+)-\hat\ell_{\mathfrak{c}_d}(g_{\lambda\mu}^+)}{2}}q_1^{\frac{\hat\ell_{\mathfrak{c}_0}(g_{\lambda\mu}^+)+\hat\ell_{\mathfrak{c}_d}(g_{\lambda\mu}^+)}{2}}q^{\hat\ell_{\mathfrak{a}}(g_{\lambda\mu}^+)}T_{\lambda\mu}^g.    
\end{align*}
Thus, by Proposition~\ref{bar}, the map $\overline{[A]}$ is determined by 
\begin{align*}
&\overline{[A]}(q_0^{\frac{\hat\ell_{\mathfrak{c}_0}(w_{\circ}^{\mu})-\hat\ell_{\mathfrak{c}_d}(w_{\circ}^{\mu})}{2}}q_1^{\frac{\hat\ell_{\mathfrak{c}_0}(w_{\circ}^{\mu})+\hat\ell_{\mathfrak{c}_d}(w_{\circ}^{\mu})}{2}}q^{\hat\ell_{\mathfrak{a}}(w_{\circ}^{\mu})}x_{\mu})\\
=&q_0^{-\frac{\hat\ell_{\mathfrak{c}_0}(g_{\lambda\mu}^+)-\hat\ell_{\mathfrak{c}_d}(g_{\lambda\mu}^+)}{2}}q_1^{-\frac{\hat\ell_{\mathfrak{c}_0}(g_{\lambda\mu}^+)+\hat\ell_{\mathfrak{c}_d}(g_{\lambda\mu}^+)}{2}}q^{-\hat\ell_{\mathfrak{a}}(g_{\lambda\mu}^+)}\overline{T_{\lambda\mu}^g}\\
\in&\ q_0^{\frac{\hat\ell_{\mathfrak{c}_0}(g_{\lambda\mu}^+)-\hat\ell_{\mathfrak{c}_d}(g_{\lambda\mu}^+)}{2}}q_1^{\frac{\hat\ell_{\mathfrak{c}_0}(g_{\lambda\mu}^+)+\hat\ell_{\mathfrak{c}_d}(g_{\lambda\mu}^+)}{2}}q^{\hat\ell_{\mathfrak{a}}(g_{\lambda\mu}^+)}T_{\lambda\mu}^g+\sum_{y<g}\mathbb{A}T_{\lambda\mu}^y.
\end{align*}
We note that $[\kappa(\lambda,y,\mu)](x_{\mu})\in\mathbb{A}T_{\lambda\mu}^y$. An induction on $\ell(g)$ shows that 
\begin{equation*}
\overline{[A]}\in[A]+\sum_{\scalebox{0.7}{$\substack{y\in\mathscr{D}_{\lambda\mu} \\ y<g}$}}\mathbb{A}[\kappa(\lambda,y,\mu)].    
\end{equation*}
It was shown in \cite[Lemma 5.4]{FLLLW23} that $\kappa(\lambda,y,\mu)<_{\mathrm{alg}}A$ if $y<g$. We conclude the statement.
\end{proof}

For $A$, $B\in\Xi_{n,d}$ with $B$ tridiagonal, $T\in\Theta_{B,A}$ and $S\in\Gamma_T$, let $\gamma_0(A,S,T)$, $\gamma_1(A,S,T)$ and $\gamma(A,S,T)$ be the integers such that
\begin{equation*}
\overline{\llbracket A;S;T\rrbracket}=q_0^{\gamma_0(A,S,T)}q_1^{\gamma_1(A,S,T)}q^{\gamma(A,S,T)}\llbracket A;S;T\rrbracket.      
\end{equation*}
We provide their explicit expressions:
\begin{align*}
\gamma_0(A,S,T)=&\big(s_{00}+\widehat{(T-S)}_{00}\big)-\big(s_{r+1,r+1}+\widehat{(T-S)}_{r+1,r+1}\big),\\
\gamma_1(A,S,T)=&\big(s_{00}+\widehat{(T-S)}_{00}\big)+\big(s_{r+1,r+1}+\widehat{(T-S)}_{r+1,r+1}\big),\\
\gamma(A,S,T)=&\sum_{(i,j)\in I_{\mathfrak{a}}^+}\big(s_{\theta,ij}+\widehat{(T-S)}_{\theta,ij}\big)\big(s_{\theta,ij}+\widehat{(T-S)}_{\theta,ij}+2a_{ij}-2t_{\theta,ij}-1\big)\\
&+\sum_{k\in\{0,r+1\}}\big(a_{kk}-2-t_{kk}-(T-S)_{kk}+\widehat{(T-S)}_{kk}\big)\\
&-2\sum_{i=1}^n\sum_{j\in\mathbb{Z}}\bigg(\binom{(T-S)}{2}+\binom{s_{ij}}{2}\bigg)\\
&-2\sum_{i=1}^{r+1}\sum_{j\in\mathbb{Z}}s_{i,j+1}^{\dag}\big(s_{i,j+1}^{\dag}-\sum_{k\leqslant j}(S-S^{\dag})_{ik}\big)-2\binom{s_{i,j+1}^{\dag}}{2},
\end{align*}
which are derived by a direct computation that
\begin{align*}
\overline{[A]^!}&=q^{2\sum_{i=1}^n\sum_{j\in\mathbb{Z}}\binom{a_{ij}}{2}}[A]^!,\\
\overline{[A]_{\mathfrak{c}}^!}&=q_0^{a_{00}'-a_{r+1,r+1}'}q_1^{a_{00}'+a_{r+1,r+1}'}q^{4\binom{a_{00}'}{2}+4\binom{a_{r+1,r+1}'}{2}+\sum_{(i,j)\in I_{\mathfrak{a}}^+}2\binom{a_{ij}'}{2}}[A]_{\mathfrak{c}}^!,\\
\overline{\llbracket S\rrbracket}&=q^{2\sum_{i=1}^{r+1}\sum_{j\in\mathbb{Z}}s_{i,j+1}^{\dag}(s_{i,j+1}^{\dag}-\sum_{k\leqslant j}(S-S^{\dag})_{ik})-2\binom{s_{i,j+1}^{\dag}}{2}}\llbracket S\rrbracket.
\end{align*}

The multiplication formula for $\mathbb{S}_{n,d}^{\mathfrak{c}}$ in Theorem \ref{1.15} can be reformulated in terms of the standard basis.

\begin{thm}\label{standard}
Let $A$, $B\in\Xi_{n,d}$ with $B$ tridiagonal and $\mathrm{row}_{\mathfrak{c}}(A)=\mathrm{col}_{\mathfrak{c}}(B)$. Then
\begin{equation*}
[B][A]
=\sum_{\scalebox{0.7}{$\substack{T\in\Theta_{B,A} \\ S\in \Gamma_{T}}$}}(q^{-2}-1)^{n(S)}q_0^{\beta_0(A,S,T)}q_1^{\beta_1(A,S,T)}q^{\beta(A,S,T)}\overline{\llbracket  A;S;T\rrbracket} [A^{(T-S)}],\quad \mbox{where}
\end{equation*}
\begin{align*}
\beta_0(A,S,T)=&\alpha_0(A,S,T)-\gamma_0(A,S,T)+\frac{1}{2}\big(\hat\ell_{\mathfrak{c}_0}(A)+\hat\ell_{\mathfrak{c}_0}(B)-\hat\ell_{\mathfrak{c}_0}(A^{(T-S)})\big)\\
&-\frac{1}{2}\big(\hat\ell_{\mathfrak{c}_d}(A)+\hat\ell_{\mathfrak{c}_d}(B)-\hat\ell_{\mathfrak{c}_d}(A^{(T-S)})\big),\\
\beta_1(A,S,T)=&\alpha_1(A,S,T)-\gamma_1(A,S,T)+\frac{1}{2}\big(\hat\ell_{\mathfrak{c}_0}(A)+\hat\ell_{\mathfrak{c}_0}(B)-\hat\ell_{\mathfrak{c}_0}(A^{(T-S)})\big)\\&+\frac{1}{2}\big(\hat\ell_{\mathfrak{c}_d}(A)+\hat\ell_{\mathfrak{c}_d}(B)-\hat\ell_{\mathfrak{c}_d}(A^{(T-S)})\big),\\
\beta(A,S,T)=&\alpha(A,S,T)-\gamma(A,S,T)+\hat\ell_{\mathfrak{a}}(A)+\hat\ell_{\mathfrak{a}}(B)-\hat\ell_{\mathfrak{a}}(A^{(T-S)}).
\end{align*}
\end{thm}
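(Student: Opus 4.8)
The plan is to read Theorem~\ref{standard} off from Theorem~\ref{1.15} through the change of basis $[C]=d_C\,e_C$, where $d_C:=q_0^{(\hat\ell_{\mathfrak{c}_0}(C)-\hat\ell_{\mathfrak{c}_d}(C))/2}q_1^{(\hat\ell_{\mathfrak{c}_0}(C)+\hat\ell_{\mathfrak{c}_d}(C))/2}q^{\hat\ell_{\mathfrak{a}}(C)}$; no genuinely new algebra is required, only a disciplined comparison of monomial exponents. First I would abbreviate the coefficient appearing in Theorem~\ref{1.15} as
\[
\llbracket A;S;T\rrbracket:=\frac{[A^{(T-S)}]_\mathfrak{c}^!}{[A-T_\theta]_\mathfrak{c}^![S]^![T-S]^!}\llbracket S\rrbracket,
\]
so that that theorem reads $e_Be_A=\sum_{T,S}(q^{-2}-1)^{n(S)}q_0^{\alpha_0}q_1^{\alpha_1}q^{\alpha}\llbracket A;S;T\rrbracket\,e_{A^{(T-S)}}$.

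Next I would substitute $[B][A]=d_Bd_A\,e_Be_A$ together with $e_{A^{(T-S)}}=d_{A^{(T-S)}}^{-1}[A^{(T-S)}]$, which places the monomial $d_Bd_Ad_{A^{(T-S)}}^{-1}$ in front of each summand, and then exchange $\llbracket A;S;T\rrbracket$ for its bar via $\llbracket A;S;T\rrbracket=q_0^{-\gamma_0}q_1^{-\gamma_1}q^{-\gamma}\overline{\llbracket A;S;T\rrbracket}$. Collecting all monomial factors standing before $\overline{\llbracket A;S;T\rrbracket}[A^{(T-S)}]$, the exponent of $q_0$ becomes $\alpha_0-\gamma_0$ augmented by the $q_0$-part of $d_Bd_Ad_{A^{(T-S)}}^{-1}$, namely $\tfrac12(\hat\ell_{\mathfrak{c}_0}(B)+\hat\ell_{\mathfrak{c}_0}(A)-\hat\ell_{\mathfrak{c}_0}(A^{(T-S)}))-\tfrac12(\hat\ell_{\mathfrak{c}_d}(B)+\hat\ell_{\mathfrak{c}_d}(A)-\hat\ell_{\mathfrak{c}_d}(A^{(T-S)}))$, which is exactly $\beta_0(A,S,T)$; the identical bookkeeping for $q_1$ and $q$ returns $\beta_1$ and $\beta$. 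This passage is purely formal once $\gamma_0,\gamma_1,\gamma$ are in hand.

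Hence the substantive content is the evaluation of the bar exponents $\gamma_0,\gamma_1,\gamma$, obtained by applying the three auxiliary identities for $\overline{[A]^!}$, $\overline{[A]_\mathfrak{c}^!}$ and $\overline{\llbracket S\rrbracket}$ to the numerator $[A^{(T-S)}]_\mathfrak{c}^!$, to the denominator $[A-T_\theta]_\mathfrak{c}^![S]^![T-S]^!$, and to $\llbracket S\rrbracket$. I expect the bar of the three-parameter factorial $[A]_\mathfrak{c}^!$ to be the delicate point: whereas $\overline{[m]^!}=q^{2\binom{m}{2}}[m]^!$ is a pure power of $q$, the factors $[2k]_{\mathfrak{c}_0}=[k](1+q_0^{-1}q_1^{-1}q^{-2(k-1)})$ and $[2k]_{\mathfrak{c}_1}=[k](1+q_0q_1^{-1}q^{-2(k-1)})$ do not bar to a scalar multiple of themselves without genuine powers of $q_0$ and $q_1$, and these powers are precisely the source of $\gamma_0$ and $\gamma_1$. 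Propagating them through the identity $A^{(T-S)}=A-(T-S)_\theta+(\widehat{T-S})_\theta$ and simplifying the entries $a'_{00}$ and $a'_{r+1,r+1}$ is where the closed forms in $s_{00}$, $\widehat{(T-S)}_{00}$ and their $(r+1,r+1)$-counterparts arise; this is the one place demanding care rather than transcription.
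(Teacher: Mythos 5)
Your proposal is correct and coincides with the paper's own (largely implicit) proof: the paper likewise obtains Theorem~\ref{standard} by rescaling Theorem~\ref{1.15} through $[C]=q_0^{(\hat\ell_{\mathfrak{c}_0}(C)-\hat\ell_{\mathfrak{c}_d}(C))/2}q_1^{(\hat\ell_{\mathfrak{c}_0}(C)+\hat\ell_{\mathfrak{c}_d}(C))/2}q^{\hat\ell_{\mathfrak{a}}(C)}e_C$ and trading $\llbracket A;S;T\rrbracket$ for $\overline{\llbracket A;S;T\rrbracket}$ via the exponents $\gamma_0,\gamma_1,\gamma$ computed from the three bar identities for $\overline{[A]^!}$, $\overline{[A]_\mathfrak{c}^!}$ and $\overline{\llbracket S\rrbracket}$. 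Your exponent bookkeeping reproduces $\beta_0,\beta_1,\beta$ exactly, and you correctly locate the only delicate point in the barring of the $\mathfrak{c}$-factorials, which is where the nontrivial $q_0,q_1$ powers arise.
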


We rewrite the multiplication formulas for Chevalley generators in Theorem \ref{special} in terms of the standard basis in the following.
\begin{thm}\label{standard formula}
Let $A$, $B$, $C\in\Xi_{n,d}$, and $R\in \mathbb{N}$.
\item[(1)] If $\mathrm{row}_{\mathfrak{c}}(A)=\mathrm{col}_{\mathfrak{c}}(B)$ and $B-RE_{\theta}^{i,i+1}$ is diagonal and $i\in [1..r]$, then we have 
\begin{align*}
[B][A]
=&\sum_{t}(q_0q_1)^{\frac{1}{2}\delta_{ir}\sum_{u<r+1}t_u}q^{\Delta_1(t)}\prod\limits_{u\in\mathbb{Z}}
\overline{\left[\begin{array}{c}
a_{iu}+t_u\\
t_u
\end{array}\right]}
[A_{t,i}],
\end{align*}    
where 
\begin{equation*}
\Delta_1(t)=-\sum_{j\geqslant u}t_ua_{ij}+\sum_{j>u}t_u(a_{i+1,j}-t_j)-\delta_{ir}(\sum_{u<j<n-u}t_jt_u+\sum_{u<r+1}\frac{1}{2}t_u(t_u+3)).    
\end{equation*}

\item[(2)] If $\mathrm{row}_{\mathfrak{c}}(A)=\mathrm{col}_{\mathfrak{c}}(B)$ and $B-RE_{\theta}^{0,1}$ is diagonal, then we have 
\begin{align*}
[B][A]
=\sum_{t}(q_0q_1)^{-\frac{1}{2}\sum_{u\leqslant0}t_{u}}q^{\Delta_2(t)}\overline{\bigg(\frac{[a_{00}'+t_0]_{\mathfrak{c}_0}^!}{[a_{00}']_{\mathfrak{c}_0}^![t_0]^!}\prod_{u>0}\frac{[a_{0u}+t_u+t_{-u}]^!}{[a_{0u}]^![t_u]^![t_{-u}]^!}\bigg)}
[A_{t,0}], 
\end{align*}
where 
\begin{equation*}
\Delta_2(t)=-\sum_{j\geqslant u}t_ua_{0j}+\sum_{j>u}t_u(a_{1j}-t_j)-\sum_{u<j\leqslant-u}t_ut_j-\sum_{u\leqslant0}\frac{1}{2}t_u(t_u-3).    
\end{equation*}

\item[(3)] If $\mathrm{row}_{\mathfrak{c}}(A)=\mathrm{col}_{\mathfrak{c}}(C)$ and $C-RE_{\theta}^{i+1,i}$ is diagonal and $i\in [0..r-1]$, then we have 
\begin{align*}
[C][A]
=&\sum_{t}(q_0^{-1}q_1)^{\frac{1}{2}\delta_{i0}\sum_{u>0}t_u}q^{\Delta_3(t)}\prod_{u\in\mathbb{Z}}
\overline{\left[\begin{array}{c}
a_{i+1,u}+t_u\\
t_u
\end{array}\right]}
[\widehat{A}_{t,i}],
\end{align*}
where 
\begin{equation*}
\Delta_3(t)=-\sum_{j\leqslant u}t_ua_{i+1,j}+\sum_{j<u}t_u(a_{ij}-t_j)-\delta_{ir}(\sum_{-u<j<u}t_jt_u+\sum_{u>0}\frac{1}{2}t_u(t_u+3)).    
\end{equation*}

\item[(4)]  If $\mathrm{row}_{\mathfrak{c}}(A)=\mathrm{col}_{\mathfrak{c}}(C)$ and $B-RE_{\theta}^{r+1,r}$ is diagonal, then we have 
\begin{align*}
[C][A]
=&\sum_{t}(q_0q_1^{-1})^{\frac{1}{2}\sum_{u\geqslant r+1}t_{u}}q^{\Delta_4(t)}\overline{\bigg(\frac{[a_{r+1,r+1}'+t_{r+1}]_{\mathfrak{c}_d}^!}{[a_{r+1,r+1}']_{\mathfrak{c}_d}^![t_{r+1}]^!}\prod_{u>0}\frac{[a_{r+1,u}+t_u+t_{-u}]^!}{[a_{r+1,u}]^![t_u]^![t_{-u}]^!}\bigg)}
[\widehat{A}_{t,r}],
\end{align*} 
where 
\begin{equation*}
\Delta_4(t)=-\sum_{j\leqslant u}t_ua_{r+1,j}+\sum_{j<u}t_u(a_{rj}-t_j)-\sum_{n-u\leqslant j<u}t_ut_j-\sum_{u\geqslant r+1}\frac{1}{2}t_u(t_u-3).    
\end{equation*}
\end{thm}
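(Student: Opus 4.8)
The plan is to deduce Theorem~\ref{standard formula} from Theorem~\ref{special} purely by the change of basis $e_A\leftrightarrow[A]$, together with the bar-action formulas recorded just before Theorem~\ref{standard}. Writing
\begin{equation*}
\epsilon(A)=q_0^{\frac{\hat\ell_{\mathfrak{c}_0}(A)-\hat\ell_{\mathfrak{c}_d}(A)}{2}}q_1^{\frac{\hat\ell_{\mathfrak{c}_0}(A)+\hat\ell_{\mathfrak{c}_d}(A)}{2}}q^{\hat\ell_{\mathfrak{a}}(A)},
\end{equation*}
so that $[A]=\epsilon(A)\,e_A$ by definition of the standard basis, each identity $e_Be_A=\sum_t c_t\,e_{C_t}$ of Theorem~\ref{special} (with $C_t=A_{t,i}$ or $\widehat{A}_{t,i}$) becomes
\begin{equation*}
[B][A]=\sum_t c_t\,\frac{\epsilon(B)\,\epsilon(A)}{\epsilon(C_t)}\,[C_t].
\end{equation*}
Thus the theorem splits into two independent bookkeeping tasks: rewriting the scalar $c_t$ as a monomial times the bar-conjugate of the displayed binomial/quantum-factorial coefficient, and evaluating the monomial $\epsilon(B)\epsilon(A)/\epsilon(C_t)$.

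For the first task I would use the elementary identity $\overline{\left[\begin{smallmatrix}m\\n\end{smallmatrix}\right]}=q^{2n(m-n)}\left[\begin{smallmatrix}m\\n\end{smallmatrix}\right]$, which follows from $\overline{[m]^!}=q^{2\binom{m}{2}}[m]^!$ and the identity $\binom{m}{2}-\binom{n}{2}-\binom{m-n}{2}=n(m-n)$, together with the $\mathfrak{c}_l$-factorial versions $\overline{[m]_{\mathfrak{c}_0}^!}=q_0^mq_1^mq^{4\binom{m}{2}}[m]_{\mathfrak{c}_0}^!$ and $\overline{[m]_{\mathfrak{c}_1}^!}=q_0^{-m}q_1^mq^{4\binom{m}{2}}[m]_{\mathfrak{c}_1}^!$, obtained by isolating the $\mathfrak{c}_0$- and $\mathfrak{c}_1$-parts of the formula for $\overline{[A]_{\mathfrak{c}}^!}$ recorded above. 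Inverting these turns $\prod_u\left[\begin{smallmatrix}a_{iu}+t_u\\t_u\end{smallmatrix}\right]$ into $q^{-2\sum_u t_ua_{iu}}\prod_u\overline{\left[\begin{smallmatrix}a_{iu}+t_u\\t_u\end{smallmatrix}\right]}$ in case~(1), and similarly in the other cases; in the affine-boundary cases (2) and (4) the $\mathfrak{c}_0$/$\mathfrak{c}_1$-factorial identities additionally contribute the $q_0,q_1$-parts that, combined with the $q_0,q_1$-powers already present in Theorem~\ref{special}, will be half of the claimed prefactors.

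For the second task I would substitute the explicit quadratic expressions for $\hat\ell_{\mathfrak{c}_0}$, $\hat\ell_{\mathfrak{c}_d}$, $\hat\ell_{\mathfrak{a}}$. Since $C_t$ differs from $A$ only by transferring the masses $t_u$ between rows $i$ and $i+1$ (equivalently rows $\pm i,\pm(i+1)$ after the folding $a_{-i,-j}=a_{ij}$), the differences $\hat\ell_\bullet(A)-\hat\ell_\bullet(C_t)$ telescope into sums linear in the entries $a_{i+1,j}$ and quadratic in the $t_u$; adding the value of $\hat\ell_\bullet(B)$, which is immediate because $B-RE_\theta^{i,i+1}$ is diagonal, produces the stated $q_0,q_1$-prefactors (vanishing for interior rows, surviving as $(q_0q_1)^{\frac12\delta_{ir}\sum t_u}$ etc.\ at the boundary). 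Finally I would combine the $q$-contribution $\hat\ell_{\mathfrak{a}}(B)+\hat\ell_{\mathfrak{a}}(A)-\hat\ell_{\mathfrak{a}}(C_t)$ with the $-2\sum_u t_ua_{iu}$ from the bar-conversion and check equality with $\Delta_i(t)$.

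The main obstacle I expect is precisely this exponent matching for $\hat\ell_{\mathfrak{a}}$ in the two affine-boundary cases (2) and (4) and at the special rows $i=0,r$. There the half-integer corrections $a'_{ii}=\tfrac12(a_{ii}-1)$, the $\binom{t_u}{2}$ self-terms and the quadratic cross terms $\sum_{u<j\le -u}t_ut_j$ in $\Delta_2$ (resp.\ $\sum_{n-u\le j<u}t_ut_j$ in $\Delta_4$), and the sign splitting between $\ell_{\mathfrak{c}_0}$ and $\ell_{\mathfrak{c}_d}$ interact, so the difference of quadratic forms must be organised with the folding $a_{-i,-j}=a_{ij}$ tracked carefully rather than treating $t_u$ and $t_{-u}$ as independent. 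Once the double sums defining $\hat\ell_{\mathfrak{a}}$ are split into the regions $y<j$, $y=j$, $y>j$, the required identity reduces to elementary summation manipulations, and the verification becomes routine if tedious.
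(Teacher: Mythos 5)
Your proposal is correct and is essentially the paper's own (implicit) argument: the paper presents Theorem~\ref{standard formula} as a direct rewriting of Theorem~\ref{special} in the standard basis, using precisely the prefactor $q_0^{\frac{\hat\ell_{\mathfrak{c}_0}(A)-\hat\ell_{\mathfrak{c}_d}(A)}{2}}q_1^{\frac{\hat\ell_{\mathfrak{c}_0}(A)+\hat\ell_{\mathfrak{c}_d}(A)}{2}}q^{\hat\ell_{\mathfrak{a}}(A)}$ relating $[A]$ to $e_A$ together with the bar-conjugation identities you invoke, and supplies no further details. Your outline (including the exponent matching for $\hat\ell_{\mathfrak{a}}$ and the boundary-row subtleties in cases (2), (4) and at $i=0,r$) is the same bookkeeping the paper leaves to the reader.
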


\subsection{Canonical basis at the specialization}\label{sec:4.4}

For any weight function $\mathbf{L}$, let $\boldsymbol{c}=\mathrm{gcd}(|\boldsymbol{L(s_0)}-\boldsymbol{L(s_d)}|,\boldsymbol{L(s_0)}+\boldsymbol{L(s_d)},\boldsymbol{L(s_1)})$. Denote by $\mathbb{H}^{\mathbf{L}}$ (resp. $\mathbb{S}_{n,d}^{\mathfrak{c},\mathbf{L}}$) the specialization of $\mathbb{H}$ (resp. $\mathbb{S}_{n,d}^\mathfrak{c}$) at $q=\boldsymbol{v}^{-\mathbf{L}(s_1)}$, $q_0=\boldsymbol{v}^{-\mathbf{L}(s_0)+\mathbf{L}(s_d)}$, $q_1=\boldsymbol{v}^{-\mathbf{L}(s_0)-\mathbf{L}(s_d)}$. We show that $\mathbb{S}_{n,d}^{\mathfrak{c},\mathbf{L}}$ admits canonical basis with respect to $\boldsymbol{v}^{-c}$. For $A\in\Xi_{n,d}$, let $[A]^{\mathbf{L}}$ be the standard basis of $\mathbb{S}_{n,d}^{\mathfrak{c},\mathbf{L}}$. 

For any $A\in\Xi_{n,d}$, following \cite{Du92} we define 
\begin{equation*}
\{A\}^{\mathbf{L}}\in\mathrm{Hom}_{\mathbb{H}^{\mathbf{L}}}(x_{\mu}\mathbb{H}^{\mathbf{L}},x_{\lambda}\mathbb{H}^{\mathbf{L}})~\text{by requiring}~\{A\}^{\mathbf{L}}(C_{w_0^{\mu}}^{\mathbf{L}})=C_{g_{\lambda\mu}^+}^{\mathbf{L}}, 
\end{equation*}
and hence $\{A\}^{\mathbf{L}}\in\mathbb{S}_{n,d}^{\mathfrak{c},\mathbf{L}}$. Note that $\overline{\{A\}^{\mathbf{L}}}(C_{w_0^{\mu}}^{\mathbf{L}})=C_{g_{\lambda\mu}^+}^{\mathbf{L}}$, and hence $\overline{\{A\}^{\mathbf{L}}}=\{A\}^{\mathbf{L}}$.

Following \cite[(2.c),Lemma 3.8]{Du92}, we have
\begin{equation}\label{can}
\{A\}^{\mathbf{L}}\in[A]^{\mathbf{L}}+\sum_{B<_{\mathrm{alg}}A}\boldsymbol{v}^c\mathbb{Z}[\boldsymbol{v}^c][B]^{\mathbf{L}}    
\end{equation}

\begin{thm}\label{thm:canonicalatspe}
There exists a canonical basis $\mathfrak{B}_{n,d}^{\mathfrak{c}}=\{\{A\}^{\mathbf{L}}~|~A\in\Xi_{n,d}\}$ for $\mathbb{S}_{n,d}^{\mathfrak{c},\mathbf{L}}$ which is characterized by the property (\ref{can}).    
\end{thm}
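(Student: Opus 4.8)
The plan is to deduce the theorem from the standard Kazhdan--Lusztig existence-and-uniqueness mechanism for canonical bases, using the two structural facts already assembled in \S\ref{sec:4.4}: the bar involution $\overline{\phantom{x}}$ on $\mathbb{S}_{n,d}^{\mathfrak{c},\mathbf{L}}$ together with the standard basis $\{[A]^{\mathbf{L}}\}$, and the bar-unitriangularity supplied by the specialization of Proposition~\ref{invariant}, namely $\overline{[A]^{\mathbf{L}}}\in[A]^{\mathbf{L}}+\sum_{B<_{\mathrm{alg}}A}\mathbb{Z}[\boldsymbol{v}^{\pm c}]\,[B]^{\mathbf{L}}$. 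Since $\Xi_{n,d}$ is finite and $<_{\mathrm{alg}}$ is a partial order on it, every induction below runs over a finite poset and terminates.

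First I would record existence. The elements $\{A\}^{\mathbf{L}}$ have already been constructed via $\{A\}^{\mathbf{L}}(C_{w_0^{\mu}}^{\mathbf{L}})=C_{g_{\lambda\mu}^+}^{\mathbf{L}}$, are bar-invariant because the $C_w^{\mathbf{L}}$ are, and satisfy the membership (\ref{can}). Reading (\ref{can}) as a change-of-basis relation, the transition matrix from $\{[A]^{\mathbf{L}}\}$ to $\{\{A\}^{\mathbf{L}}\}$ is unitriangular with respect to $<_{\mathrm{alg}}$, hence invertible over $\mathbb{Z}[\boldsymbol{v}^{\pm c}]$; therefore $\mathfrak{B}_{n,d}^{\mathfrak{c}}=\{\{A\}^{\mathbf{L}}\}$ is a $\mathbb{Z}[\boldsymbol{v}^{\pm c}]$-basis of $\mathbb{S}_{n,d}^{\mathfrak{c},\mathbf{L}}$.

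The substance is the uniqueness, which is what the word \emph{characterized} encodes. I would show that bar-invariance together with (\ref{can}) pins down $\{A\}^{\mathbf{L}}$ uniquely. Suppose $y$ is bar-invariant and $y\in[A]^{\mathbf{L}}+\sum_{B<_{\mathrm{alg}}A}\boldsymbol{v}^c\mathbb{Z}[\boldsymbol{v}^c]\,[B]^{\mathbf{L}}$. Then $d:=y-\{A\}^{\mathbf{L}}$ is bar-invariant and lies in $\sum_{B<_{\mathrm{alg}}A}\boldsymbol{v}^c\mathbb{Z}[\boldsymbol{v}^c]\,[B]^{\mathbf{L}}$; writing $d=\sum_B f_B[B]^{\mathbf{L}}$ with $f_B\in\boldsymbol{v}^c\mathbb{Z}[\boldsymbol{v}^c]$, choose a $<_{\mathrm{alg}}$-maximal $B_0$ with $f_{B_0}\neq0$. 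Comparing the coefficient of $[B_0]^{\mathbf{L}}$ on the two sides of $\overline{d}=d$ and invoking the bar-unitriangularity from Proposition~\ref{invariant} (the lower terms of $\overline{[B]^{\mathbf{L}}}$ for $B$ in the support never touch the maximal index $B_0$) gives $\overline{f_{B_0}}=f_{B_0}$. But $\overline{\phantom{x}}$ sends $\boldsymbol{v}\mapsto\boldsymbol{v}^{-1}$, so the only bar-invariant element of $\boldsymbol{v}^c\mathbb{Z}[\boldsymbol{v}^c]$ is $0$, a contradiction. Hence $d=0$ and $y=\{A\}^{\mathbf{L}}$. This shows simultaneously that each $\{A\}^{\mathbf{L}}$ is the unique element with the two stated properties and that any bar-invariant basis satisfying (\ref{can}) must coincide with $\mathfrak{B}_{n,d}^{\mathfrak{c}}$.

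The genuine difficulty has already been discharged upstream: Proposition~\ref{bar} (closedness of the bar involution on $\mathbb{H}_{\lambda\mu}$, hence the existence of $\overline{\phantom{x}}$ on $\mathbb{S}_{n,d}^{\mathfrak{c}}$) and Proposition~\ref{invariant} (the bar-unitriangularity of the standard basis). Given these, the theorem is formal. The one point demanding care at this stage is that the off-diagonal coefficients in (\ref{can}) lie in the strictly positive part $\boldsymbol{v}^c\mathbb{Z}[\boldsymbol{v}^c]$ rather than merely in $\mathbb{Z}[\boldsymbol{v}^{\pm c}]$; this asymmetry is exactly what powers the uniqueness argument, and it is secured by the normalization of $[A]^{\mathbf{L}}$ and the degree bounds on the Kazhdan--Lusztig polynomials $P_{y,w}$ entering $C_w^{\mathbf{L}}$, as in \cite{Du92}.
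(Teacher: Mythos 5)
Your proposal is correct and follows essentially the same route as the paper: existence comes from the already-constructed elements $\{A\}^{\mathbf{L}}$ (defined via $\{A\}^{\mathbf{L}}(C_{w_0^{\mu}}^{\mathbf{L}})=C_{g_{\lambda\mu}^+}^{\mathbf{L}}$, bar-invariant by construction, with triangularity (\ref{can}) supplied by Du's results), and the characterization is the standard Kazhdan--Lusztig uniqueness argument, which the paper leaves implicit and you spell out correctly via a $<_{\mathrm{alg}}$-maximal element of the support and the fact that $\boldsymbol{v}^{c}\mathbb{Z}[\boldsymbol{v}^{c}]$ contains no nonzero bar-invariant element. You also correctly locate the real content upstream in Propositions~\ref{bar} and~\ref{invariant}, exactly as the paper does.
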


\subsection{Monomial basis}
For $A\in\Xi_{n,d}$, we can use \cite[Algorithm 5.15]{FLLLW23} with the fixed order therein to produce a unique family of tridiagonal matrices 
$$A^{(1)},\ldots,A^{(x)} \in \Xi_{n,d}$$ for some $x=x(A)\in\mathbb{N}$.
Denote
\begin{equation*}
m'_A=[A^{(1)}]\cdots[A^{(x)}]\in\mathbb{S}_{n,d}^\mathfrak{c}.   
\end{equation*}
Since \cite[Algorithm 5.15]{FLLLW23} produces matrices $A^{(1)},\ldots,A^{(x)}$ according to mainly the off-diagonal matrices of $A$ and then determines the diagonal entries of these $A^{(i)}$ by the row and column sums, we have $x(A)=x(A+pI)$ and $(A+pI)^{(i)}=A^{(i)}+pI$ for all $p\in2\mathbb{N}$. That is,
\begin{equation*}
m'_{A+pI}=[A^{(1)}+pI]\cdots[A^{(x)}+pI],    
\end{equation*} where $I$ is the identity matrix.

\begin{proposition}\label{monomial basis}
For $A\in\Xi_{n,d}$ the element $m'_A\in\mathbb{S}_{n,d}^\mathfrak{c}$ satisfies
\begin{equation*}
m'_A=[A]+\sum_{B<_{\mathrm{alg}}A}\mathbb{A}[B].    
\end{equation*}
Therefore, the set $\{m'_A\}_{A\in\Xi_{n,d}}$ also forms a basis of $\mathbb{S}_{n,d}^\mathfrak{c}$.
\end{proposition}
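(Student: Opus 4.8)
The plan is to prove the stated unitriangularity $m'_A=[A]+\sum_{B<_{\mathrm{alg}}A}\mathbb{A}[B]$; the basis assertion is then automatic, since the transition matrix between $\{m'_A\}$ and the standard basis $\{[A]\}$ is unitriangular with respect to any total order refining $\leqslant_{\mathrm{alg}}$, hence invertible over $\mathbb{A}$. The content therefore reduces to controlling the leading term of a product of standard basis elements of tridiagonal matrices.

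The core is a leading term lemma for left multiplication by a tridiagonal matrix: for $B$ tridiagonal and $A\in\Xi_{n,d}$ with $\mathrm{row}_{\mathfrak{c}}(A)=\mathrm{col}_{\mathfrak{c}}(B)$, I claim $[B][A]=[A^{(T_{\max})}]+\sum_{C<_{\mathrm{alg}}A^{(T_{\max})}}\mathbb{A}[C]$. To prove this I would start from Theorem~\ref{standard}, which expands $[B][A]$ as a sum over pairs $(T,S)$ with $T\in\Theta_{B,A}$ and $S\in\Gamma_T$. Since the output $A^{(T-S)}=A-(T-S)_{\theta}+(\widehat{T-S})_{\theta}$ depends only on the difference $U=T-S$, I would regroup the sum according to $U$. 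All summands share the same $\mathrm{row}_{\mathfrak{c}}$ and $\mathrm{col}_{\mathfrak{c}}$, so comparison under $\leqslant_{\mathrm{alg}}$ is governed entirely by the quantities $\sigma_{ij}$; a direct check shows that the operation $U\mapsto A^{(U)}$ moves mass from row $i$ to row $i-1$, which strictly raises the relevant $\sigma_{ij}$, so that larger $U$ yields a larger output. The $\leqslant_{\mathrm{alg}}$-maximal output is thus attained at the largest admissible $U$, namely $T=T_{\max}$ (the entrywise-largest element of $\Theta_{B,A}$ saturating $T_{\theta}\leqslant_e A$ under the fixed constraints $\mathrm{row}_{\mathfrak{a}}(T)_i=b_{i-1,i}$) with $S=0$; this is the \emph{unique} pair giving $U=T_{\max}$, since $U=T_{\max}$ forces $T\geqslant_e T_{\max}$ and hence $T=T_{\max}$, $S=0$. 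For this term $n(S)=0$, $\llbracket S\rrbracket=1$ and $[S]^!=1$, so the coefficient of $[A^{(T_{\max})}]$ is the single scalar $q_0^{\beta_0}q_1^{\beta_1}q^{\beta}\,\overline{\llbracket A;0;T_{\max}\rrbracket}$, with $\llbracket A;0;T_{\max}\rrbracket=[A^{(T_{\max})}]_{\mathfrak{c}}^!/([A-(T_{\max})_{\theta}]_{\mathfrak{c}}^![T_{\max}]^!)$; I would verify that the length exponents $\alpha_\bullet$, the bar exponents $\gamma_\bullet$, the $\hat\ell$-corrections and these factorial brackets collapse to $1$.

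Granting the lemma, I would run an induction on the number of factors $x=x(A)$ dictated by \cite[Algorithm~5.15]{FLLLW23}. Writing $P_k=[A^{(1)}]\cdots[A^{(k)}]$, the algorithm is arranged so that the $\leqslant_{\mathrm{alg}}$-leading matrix of $P_k$ is a prescribed partial assembly $B_k$ of $A$ with $B_x=A$, and so that the row/column compositions of $A^{(k+1)}$ and $B_k$ match at each stage. Applying the leading term lemma to $[A^{(k+1)}][B_k]$ gives $[A^{(k+1)}][B_k]=[B_{k+1}]+(\text{lower})$, and applying it again to each lower summand $[A^{(k+1)}][C]$ with $C<_{\mathrm{alg}}B_k$ shows, via the monotonicity of the leading output in its second argument, that all such contributions lie strictly below $B_{k+1}$. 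Hence $P_{k+1}=[B_{k+1}]+\sum_{C<_{\mathrm{alg}}B_{k+1}}\mathbb{A}[C]$, and at $k=x$ this is precisely $m'_A=[A]+\sum_{B<_{\mathrm{alg}}A}\mathbb{A}[B]$.

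The main obstacle will be the coefficient computation inside the leading term lemma: verifying that at $(T,S)=(T_{\max},0)$ the accumulated powers of $q$, $q_0$, $q_1$ together with the bracket factors and the bar factor $\overline{\llbracket A;0;T_{\max}\rrbracket}$ cancel to exactly $1$. In the three-parameter setting this is markedly more delicate than in \cite{FLLLW23}, since the $q_0$- and $q_1$-exponents (assembled from $\alpha_0,\alpha_1,\gamma_0,\gamma_1$ and the $\hat\ell_{\mathfrak{c}_0},\hat\ell_{\mathfrak{c}_d}$ terms) must vanish \emph{separately} rather than in a single combined power of $q$. The order-theoretic claims — maximality of $A^{(T_{\max})}$ and monotonicity in the second factor — are conceptually routine comparisons of the $\sigma_{ij}$, but require care at the distinguished indices $i,j\in\{0,r+1\}$, where the $\mathfrak{c}$-conventions for the diagonal entries (the passage between $a_{ii}$ and $a'_{ii}$) differ.
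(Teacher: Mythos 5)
Your overall plan (a leading-term analysis for products with tridiagonal standard basis elements, then induction along the factors produced by \cite[Algorithm 5.15]{FLLLW23}) is the same shape as the argument the paper invokes, but your key lemma is false as stated, in two ways. First, the element $T_{\max}$ on which it rests does not exist: every $T\in\Theta_{B,A}$ has the \emph{fixed} row sums $\mathrm{row}_{\mathfrak{a}}(T)_i=b_{i-1,i}$, so if $T\leqslant_e T'$ with both in $\Theta_{B,A}$ then $T'-T$ is a nonnegative matrix with zero row sums, i.e.\ $T=T'$; hence distinct elements of $\Theta_{B,A}$ are entrywise incomparable and an entrywise-largest element exists only when $\Theta_{B,A}$ is a singleton. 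The $\leqslant_{\mathrm{alg}}$-maximal output is instead singled out by a greedy column-placement argument (mass must go into the rightmost admissible columns), which your monotonicity observation cannot supply, since it only compares comparable $U$'s. Second, and more seriously, even at the correct maximal term the coefficient is \emph{not} $1$ for arbitrary tridiagonal $B$ and arbitrary $A$, so the cancellation you propose to "verify" cannot occur. This is visible in the paper's own Theorem \ref{standard formula}(1): the coefficient of $[A_{t,i}]$ is $q^{\Delta_1(t)}\prod_u\overline{\left[\begin{smallmatrix}a_{iu}+t_u\\ t_u\end{smallmatrix}\right]}$, and if, say, $R=1$, $a_{i+1,i+2}=1$, $a_{i,i+2}=1$ with row $i+1$ of $A$ vanishing beyond column $i+2$, then the unique $\leqslant_{\mathrm{alg}}$-maximal term is at $u=i+2$ and its coefficient is $q^{-1}\,\overline{[2]}=q+q^{-1}\neq 1$.

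This is not a repairable detail inside your framework: your induction applies the lemma to arbitrary products $[A^{(k+1)}][C]$, including all lower summands $C<_{\mathrm{alg}}B_k$, so it genuinely needs the (false) general statement. The coefficient-$1$ phenomenon is special to the successive partial products arising from \cite[Algorithm 5.15]{FLLLW23}: the algorithm is designed so that at each step the transported mass lands in previously empty positions (making all quantum binomials equal $1$) and the exponents of $q,q_0,q_1$ vanish. Establishing exactly this, together with the $\sigma_{ij}$-bookkeeping showing that every other term — and every term coming from a lower summand — stays strictly below the target (your "monotonicity in the second argument", which you assert without proof), is the content of the proof of \cite[Theorem 5.16]{FLLLW23} that the paper adapts, with Proposition \ref{invariant} as an ingredient. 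So the correct route is to formulate your leading-term lemma only for the algorithm's specific matrices, with hypotheses recording where $A$ vanishes, rather than for all tridiagonal $B$.
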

\begin{proof}
The proof can be pursued using Proposition~\ref{invariant}, similar to the proof of \cite[Theorem 5.16]{FLLLW23}.    
\end{proof}    

Denote $$m_A^\mathbf{L}=\{A^{(1)}\}^\mathbf{L}\{A^{(2)}\}^{\mathbf{L}}\cdots\{A^{(x)}\}^{\mathbf{L}}\in  \mathbb{S}_{n,d}^{\mathfrak{c},\mathbf{L}}.$$ By definition, we have the following statement.

\begin{proposition}
\begin{itemize}
\item[(1)] The element $m_A^\mathbf{L}$ is bar invariant. i.e. $\overline{m_A^\mathbf{L}}=m_A^\mathbf{L}$.
\item[(2)] The set $\{m_A^{\mathbf{L}}\}$ forms a basis of $\mathbb{S}_{n,d}^{\mathfrak{c},\mathbf{L}}$.
\end{itemize}
\end{proposition}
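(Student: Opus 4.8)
The first assertion reduces to two facts: that the bar map on $\mathbb{S}_{n,d}^{\mathfrak{c},\mathbf{L}}$ is an algebra homomorphism, and that each factor $\{A^{(i)}\}^{\mathbf{L}}$ is already bar-invariant; the latter was recorded in \S\ref{sec:4.4}. For the former I would first verify the pointwise identity $\overline{f}(\overline{\xi})=\overline{f(\xi)}$ for every $f\in\mathrm{Hom}_{\mathbb{H}^{\mathbf{L}}}(x_\mu\mathbb{H}^{\mathbf{L}},x_\lambda\mathbb{H}^{\mathbf{L}})$ and every $\xi\in x_\mu\mathbb{H}^{\mathbf{L}}$: since $\overline{x_\mu}$ is a scalar multiple of $x_\mu$ by Proposition~\ref{bar} (the scalar being a Laurent monomial sent to its inverse by bar), writing $\xi=x_\mu h$ and unwinding the defining rule $\overline{f}(x_\mu)=\overline{f(\overline{x_\mu})}$ gives the identity at once. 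Granting it, for composable maps $f,g$ one computes $\overline{f}\,\overline{g}(\overline{\xi})=\overline{f}(\overline{g(\xi)})=\overline{f(g(\xi))}=\overline{fg}(\overline{\xi})$, so by uniqueness of the $\mathbb{H}^{\mathbf{L}}$-linear map with this property $\overline{fg}=\overline{f}\,\overline{g}$. Consequently $\overline{m_A^{\mathbf{L}}}=\overline{\{A^{(1)}\}^{\mathbf{L}}}\cdots\overline{\{A^{(x)}\}^{\mathbf{L}}}=\{A^{(1)}\}^{\mathbf{L}}\cdots\{A^{(x)}\}^{\mathbf{L}}=m_A^{\mathbf{L}}$, which is (1).

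For (2) the plan is to establish the unitriangularity $m_A^{\mathbf{L}}=[A]^{\mathbf{L}}+\sum_{B<_{\mathrm{alg}}A}\mathbb{Z}[\boldsymbol{v}^{\pm c}]\,[B]^{\mathbf{L}}$ with respect to the standard basis; this makes the transition matrix unitriangular for any linear extension of $\leqslant_{\mathrm{alg}}$, hence invertible, so that $\{m_A^{\mathbf{L}}\}$ is a basis. Specializing Proposition~\ref{monomial basis} gives $[A^{(1)}]^{\mathbf{L}}\cdots[A^{(x)}]^{\mathbf{L}}=[A]^{\mathbf{L}}+\sum_{B<_{\mathrm{alg}}A}\mathbb{Z}[\boldsymbol{v}^{\pm c}]\,[B]^{\mathbf{L}}$. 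Substituting the expansion $\{A^{(i)}\}^{\mathbf{L}}=[A^{(i)}]^{\mathbf{L}}+\sum_{B<_{\mathrm{alg}}A^{(i)}}\boldsymbol{v}^{c}\mathbb{Z}[\boldsymbol{v}^{c}]\,[B]^{\mathbf{L}}$ from (\ref{can}) into $m_A^{\mathbf{L}}=\{A^{(1)}\}^{\mathbf{L}}\cdots\{A^{(x)}\}^{\mathbf{L}}$ and multiplying out, the term in which every factor is taken equal to $[A^{(i)}]^{\mathbf{L}}$ is precisely this specialized monomial, contributing the leading $[A]^{\mathbf{L}}$ with coefficient $1$.

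It then remains to bound the remaining terms. Each is a $\mathbb{Z}[\boldsymbol{v}^{\pm c}]$-multiple of a product of standard basis elements in which at least one tridiagonal factor $[A^{(k)}]^{\mathbf{L}}$ is replaced by some $[B]^{\mathbf{L}}$ with $B<_{\mathrm{alg}}A^{(k)}$; since $\leqslant_{\mathrm{alg}}$ preserves $\mathfrak{c}$-row and $\mathfrak{c}$-column sums, all these products are defined. Rewriting each non-tridiagonal factor $[B]^{\mathbf{L}}$ through the monomial basis as $(m'_B)^{\mathbf{L}}$ plus strictly lower terms, every such contribution becomes a $\mathbb{Z}[\boldsymbol{v}^{\pm c}]$-combination of words in tridiagonal standard generators, to which Theorem~\ref{standard} applies one generator at a time. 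The crux — and the step I expect to be the main obstacle — is to show that every such word expands in the standard basis with support contained in $\{C:C<_{\mathrm{alg}}A\}$. This is exactly the leading-term bookkeeping underlying Proposition~\ref{monomial basis}: in Theorem~\ref{standard} the matrices $A^{(T-S)}$ that occur are dominated in $\leqslant_{\mathrm{alg}}$ by the one coming from the extremal pair $(T,S)$, so replacing a tridiagonal generator by a strictly $\leqslant_{\mathrm{alg}}$-smaller matrix (re-expanded into tridiagonal monomials) can only lower the leading matrix of the successive partial products, strictly so once a single factor is strictly smaller. Making this monotonicity precise yields the desired unitriangularity and completes (2); inverting the relation $\{A\}^{\mathbf{L}}=[A]^{\mathbf{L}}+(\text{lower})$ then also expresses $m_A^{\mathbf{L}}=\{A\}^{\mathbf{L}}+\sum_{B<_{\mathrm{alg}}A}\mathbb{Z}[\boldsymbol{v}^{\pm c}]\,\{B\}^{\mathbf{L}}$.
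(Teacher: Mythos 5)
Your proposal is correct and takes essentially the same route the paper implicitly relies on (the paper offers no argument beyond ``by definition''): for (1), bar-invariance of the factors $\{A^{(i)}\}^{\mathbf{L}}$ together with multiplicativity of the bar map, which you rightly verify pointwise via Proposition~\ref{bar}; for (2), the unitriangularity $m_A^{\mathbf{L}}=[A]^{\mathbf{L}}+\sum_{B<_{\mathrm{alg}}A}(\cdots)[B]^{\mathbf{L}}$ obtained by combining Proposition~\ref{monomial basis} with (\ref{can}). The order-monotonicity bookkeeping you single out as the crux is precisely what is already encapsulated in the proof of Proposition~\ref{monomial basis} (via \cite[Theorem 5.16]{FLLLW23}), so appealing to it there is legitimate rather than a gap.
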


We call $\{m_A^{\mathbf{L}}\}$ the {\em monomial basis} of $\mathbb{S}_{n,d}^{\mathfrak{c},\mathbf{L}}$. Traditionally, the monomial basis was introduced as an intermediate step toward construction of the canonical basis. We reverse the order in which these two bases were introduced for $\mathbb{S}_{n,d}^{\mathfrak{c},\mathbf{L}}$. This monomial basis of $\mathbb{S}_{n,d}^{\mathfrak{c},\mathbf{L}}$ will be employed to construct the monomial basis (and hence the canonical basis) of the stabilization algebra (at the specialization) introduced in the next section. 

\section{Stabilization Algebra}

We denote the field $\mathbb{F}=\mathbb{Q}(q_0^{\pm\frac{1}{2}},q_1^{\pm\frac{1}{2}},q^{\pm\frac{1}{2}})$.

\subsection{Stabilization of multiplication and bar involution}\label{sec:5.1}

Let
\begin{align*}
\widetilde{\Xi}_n=\bigg\{(a_{ij})_{-n\leqslant i,j\leqslant n}\in\mathrm{Mat}_{N\times N}(\mathbb{Z})~\bigg|~&a_{-i,-j}=a_{ij}~(\forall i,j);\\
&a_{xy}\in\mathbb{N}~(\forall x\neq y);~a_{ii}\in2\mathbb{Z}+1~(\forall i\in\mathbb{Z}(r+1))\bigg\}.
\end{align*}
Noting that $\sigma_{ij}(A)$ for $i<j$ does not involve the diagonal elements of $A$, we can extend the partial ordering $\leqslant_{\mathrm{alg}}$ for $\Xi_n$ to a partial ordering $\leqslant_{\mathrm{alg}}$ on $\widetilde{\Xi}_n$ using the same notation. For each $A\in\widetilde{\Xi}_n$ and $p\in2\mathbb{N}$, we write
${}_{p}A=A+pI\in\widetilde{\Xi}_n$. Then ${}_{p}A=A+pI$ for even $p\gg 0$. Let $\pi$ and $\pi'$ be indeterminates (independent of $q_0$, $q_1$, $q$), and $\mathcal{R}_1$ be the subring of $\mathbb{F}[\pi,\pi^{-1}]$ generated by, for $a\in\frac{1}{2}\mathbb{Z}$, $k\in\mathbb{Z}_{>0}$,
\begin{equation*}
r_{a,k}^{(1)},\quad r_{a,k}^{(2)},\quad r_{a,k}^{(3)},\quad q_0^a,\quad q_1^a,\quad q^a,  
\end{equation*}
where
\begin{gather*}
r_{a,k}^{(1)}(q_0,q_1,q,\pi)=\prod\limits_{i=1}^k\frac{q^{-2(a+i)}\pi^{-2}-1}{q^{-2i}-1},\\
r_{a,k}^{(2)}(q_0,q_1,q,\pi)=\prod\limits_{i=1}^k\frac{(q_0^{-1}q_1^{-1}q^{-2(a+i-1)}\pi^{-1}+1)(q^{-2(a+i)}\pi^{-1}-1)}{q^{-2i}-1},\\
r_{a,k}^{(3)}(q_0,q_1,q,\pi)=\prod\limits_{i=1}^k\frac{(q_0q_1^{-1}q^{-2(a+i-1)}\pi^{-1}+1)(q^{-2(a-i)}\pi^{-1}-1)}{q^{2i}-1}.
\end{gather*}
Let $\mathcal{R}_2$ be the subring of $\mathbb{F}[\pi,\pi^{-1}]$ generated by, for $a\in\frac{1}{2}\mathbb{Z}$, $k\in\mathbb{Z}_{>0}$,
\begin{equation}\label{coe}
r_{a,k}^{(1)},\quad \overline{r}_{a,k}^{(1)},\quad r_{a,k}^{(2)},\quad \overline{r}_{a,k}^{(2)},\quad r_{a,k}^{(3)},\quad \overline{r}_{a,k}^{(3)},\quad q_0^a,\quad q_1^a,\quad q^a,  
\end{equation}
and $\mathcal{R}_3=\mathcal{R}_2[\pi',\pi'^{-1}]$ be the subring of $\mathbb{F}[\pi,\pi^{-1},\pi',\pi'^{-1}]$, where we extend the bar-involution to act on $\pi$ (resp. $\pi'$) by $\overline{\pi}=\pi^{-1}$ (resp. $\overline{\pi'}=\pi'^{-1}$). 

\begin{proposition}\label{5.1}
Let $A_1,\ldots,A_f\in\widetilde{\Xi}_n$ be such that $\mathrm{col}_{\mathfrak{c}}(A_i)=\mathrm{row}_{\mathfrak{c}}(A_{i+1})$ for all $i$. There exist matrices $Z_1,\ldots,Z_m\in\widetilde{\Xi}_n$ and $\zeta_i(q_0,q_1,q,\pi)\in\mathcal{R}_2$ such that for even integer $p\gg0$,
\begin{equation*}
[{}_pA_1][{}_pA_2]\cdots[{}_pA_f]=\sum_{i=1}^m\zeta_i(q_0,q_1,q,q^{-p})[{}_pZ_i].    
\end{equation*}
\end{proposition}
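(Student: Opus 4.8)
The plan is to isolate a single \emph{Key Lemma} --- that for tridiagonal $B$ and arbitrary $A\in\widetilde{\Xi}_n$ the product $[{}_pB][{}_pA]$ stabilizes --- and then to bootstrap to arbitrary products by induction on $f$ together with the monomial basis of Proposition~\ref{monomial basis}. The Key Lemma is where essentially all the work lies, and it is read off directly from the standard-basis multiplication formula of Theorem~\ref{standard} (equivalently Theorem~\ref{1.15}).

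First I would analyze $[{}_pB][{}_pA]=[(B+pI)][(A+pI)]$ via Theorem~\ref{standard}, checking three points. (i) \emph{Index set.} Since $B$ is tridiagonal, the conditions $\mathrm{row}_{\mathfrak a}(T)_i=b_{i-1,i}$ and $T_\theta\leqslant_e {}_pA$ defining $\Theta_{{}_pB,{}_pA}$ constrain only the off-diagonal data of $T$, whose total mass is bounded by the fixed off-diagonal entries of $B$; hence for $p\gg0$ the diagonal inequalities $t_{ii}+t_{-i,-i}\leqslant a_{ii}+p$ become vacuous, so $\Theta_{{}_pB,{}_pA}$, together with each $\Gamma_T$ (which depends on $T$ alone), is independent of $p$. (ii) \emph{Output matrices.} Because $A^{(T-S)}=A-(T-S)_\theta+(\widehat{T-S})_\theta$ and the correction is $p$-independent, one has $({}_pA)^{(T-S)}=A^{(T-S)}+pI={}_p\big(A^{(T-S)}\big)$, so the matrices appearing are ${}_pZ$ with $Z=A^{(T-S)}$ fixed. (iii) \emph{Coefficients.} I would first check that the prefactor exponents $\beta_0,\beta_1,\beta$ of Theorem~\ref{standard} are independent of $p$ for $p\gg0$: the length functions $\ell_{\mathfrak a},\hat\ell_{\mathfrak a},\ell_{\mathfrak a}(w_{A,T}),\ldots$ are quadratic in the matrix entries, the shift is by the diagonal $pI$ (on which these quadratic forms vanish, so there is no $p^2$-term), and the surviving linear-in-$p$ contributions cancel in the combinations defining $\beta_0,\beta_1,\beta$ because $A^{(T-S)}-A$ preserves the relevant row and column sums. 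Thus the \emph{entire} $p$-dependence is carried by the factor $\overline{\llbracket {}_pA;S;T\rrbracket}$.

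The heart of (iii), and the main obstacle, is to match this $p$-dependence with the generators of $\mathcal R_2$. Only the diagonal entries of $A$ are shifted, so in $\llbracket A;S;T\rrbracket=\frac{[A^{(T-S)}]_{\mathfrak c}^!}{[A-T_\theta]_{\mathfrak c}^![S]^![T-S]^!}\llbracket S\rrbracket$ the $p$-dependence comes only from the quantum factorials attached to the diagonal entries $a_{ii}$ $(i\in[1..r])$, $a'_{00}$ and $a'_{r+1,r+1}$, which shift by $p$, $p/2$ and $p/2$ respectively. The corresponding telescoping ratios of $[\,\cdot\,]^!$, $[\,\cdot\,]_{\mathfrak c_0}^!$ and $[\,\cdot\,]_{\mathfrak c_d}^!$, after absorbing the matching $[t]^!$-denominators coming from $[T-S]^!$ and $[S]^!$, are exactly the products $r^{(1)}_{a,k}$, $r^{(2)}_{a,k}$ and $r^{(3)}_{a,k}$ evaluated at $\pi=q^{-p}$; the overall bar on $\llbracket {}_pA;S;T\rrbracket$ accounts for the passage between $r^{(l)}$ and $\overline{r}^{(l)}$ and fixes the sign of the exponent of $q^{-p}$. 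Together with the $p$-independent prefactor, absorbed by the generators $q_0^a,q_1^a,q^a$, this exhibits each structure constant as $\zeta(q_0,q_1,q,q^{-p})$ with $\zeta\in\mathcal R_2$, proving the Key Lemma.

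Finally I would deduce the general statement. Since $\mathcal R_2$ is a ring, induction on $f$ using the Key Lemma shows that every product of \emph{tridiagonal} matrices $[{}_pB_1]\cdots[{}_pB_f]$ stabilizes with coefficients in $\mathcal R_2$ at $\pi=q^{-p}$: stabilize $[{}_pB_2]\cdots[{}_pB_f]=\sum_j\eta_j(q^{-p})[{}_pY_j]$ by induction and apply the Key Lemma to each $[{}_pB_1][{}_pY_j]$. In particular, for each $A$ the monomial $m'_{{}_pA}=[{}_pA^{(1)}]\cdots[{}_pA^{(x)}]$ --- a product of shifted tridiagonal matrices, using $(A+pI)^{(k)}=A^{(k)}+pI$ --- stabilizes; comparing with Proposition~\ref{monomial basis}, which gives $m'_{{}_pA}=[{}_pA]+\sum_{Z<_{\mathrm{alg}}A}\xi_Z(q^{-p})[{}_pZ]$ with leading coefficient $1$ and $\xi_Z\in\mathcal R_2$, and inverting this triangular system by downward induction on the finite poset $(\leqslant_{\mathrm{alg}})$, I obtain $[{}_pA]=\sum_{Z\leqslant_{\mathrm{alg}}A}\theta_Z(q^{-p})\,m'_{{}_pZ}$ with $\theta_Z\in\mathcal R_2$. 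Substituting these expansions for each factor $[{}_pA_i]$ turns $[{}_pA_1]\cdots[{}_pA_f]$ into an $\mathcal R_2$-linear combination of products of shifted tridiagonal matrices, each of which stabilizes by the tridiagonal case; collecting terms and noting that only finitely many matrices $Z_i$ occur yields the asserted expression $\sum_{i=1}^m\zeta_i(q_0,q_1,q,q^{-p})[{}_pZ_i]$ with $\zeta_i\in\mathcal R_2$, completing the proof.
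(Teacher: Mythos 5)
Your overall architecture coincides with the paper's proof of Proposition~\ref{5.1}: treat first the case $f=2$ with the tridiagonal factor on the left via Theorem~\ref{standard}, extend to products of tridiagonal matrices by induction on $f$, and then reach arbitrary $[{}_pA_i]$ through the monomial basis of Proposition~\ref{monomial basis} together with a triangularity induction with respect to $\leqslant_{\mathrm{alg}}$ (the paper phrases this last step as induction on $\psi(A)=\sum_{(i,j)\in I^+}\sigma_{ij}(A)$, following BLM). Your points (i), (ii) and the final bootstrapping agree with what the paper does, and your observation that $({}_pA)^{(T-S)}={}_p(A^{(T-S)})$ and that $\Theta_{{}_pB,{}_pA}$, $\Gamma_T$ stabilize for $p\gg 0$ is also part of the paper's argument.

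The gap is in point (iii), which is the heart of your Key Lemma. You assert that \emph{all three} exponents $\beta_0,\beta_1,\beta$ of Theorem~\ref{standard} are independent of $p$, ``because $A^{(T-S)}-A$ preserves the relevant row and column sums'', and you conclude that the entire $p$-dependence sits in $\overline{\llbracket {}_pA;S;T\rrbracket}$. The stated reason is false: one has $\mathrm{col}_{\mathfrak{c}}(A^{(T-S)})=\mathrm{col}_{\mathfrak{c}}(A)$, but $\mathrm{row}_{\mathfrak{c}}(A^{(T-S)})=\mathrm{row}_{\mathfrak{c}}(B)$, which differs from $\mathrm{row}_{\mathfrak{c}}(A)=\mathrm{col}_{\mathfrak{c}}(B)$ in general. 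Moreover the conclusion is not supported by the paper's computation: while $\beta_0({}_pA,S,T)=\beta_0(A,S,T)$ and $\beta_1({}_pA,S,T)=\beta_1(A,S,T)$ do hold, the paper computes the shifts of $\ell_{\mathfrak a}(w_{A,T})$, $\gamma$, $\ell_{\mathfrak a}(A)$, $\hat\ell_{\mathfrak a}(A)$, $\hat\ell_{\mathfrak a}(B)$ term by term and arrives at $\beta({}_pA,S,T)=\beta(A,S,T)+p\,\zeta_1(B,A,S,T)$ with a residual linear term $\zeta_1$ that is never claimed to vanish; the cancellation you invoke is verifiable in the Chevalley cases of Theorem~\ref{standard formula}, but for a general tridiagonal $B$ (where $T$ may carry diagonal entries and the quantities $w_{A,T}$, $\gamma$, $h(S,T)$ all contribute) you give no computation, and the quantities involved are exactly the delicate ones. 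As a consequence, your matching with the generators of $\mathcal{R}_2$ is incomplete: besides the telescoping ratios coming from the $p$-shifted diagonal factorials, each structure constant carries an extra factor $q^{p\zeta_1}=\pi^{-\zeta_1}$ at $\pi=q^{-p}$, and one must still show that this power of $\pi$, combined with those ratios, lies in $\mathcal{R}_2$ --- this is precisely why the ring $\mathcal{R}_2$ contains both the $r^{(l)}_{a,k}$ and their bars (for instance $\pi^{\pm2}\in\mathcal{R}_1$ arises as a difference of two $r^{(1)}$'s, and odd powers of $\pi$ can be traded into the linear factors of the $r^{(2)},r^{(3)}$-type generators at the cost of monomial units). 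Until you either prove the cancellation $\zeta_1\equiv 0$ or carry out this absorption of $\pi^{-\zeta_1}$ into $\mathcal{R}_2$, the Key Lemma, and hence the whole proof, is not established.
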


\begin{proof}
We first assume that $f=2$ and $B=A_1$ is tridiagonal. Let $A=A_2$. 

For even $p\gg0$ such that all entries in $A_i$ are in $\mathbb{N}$, we apply Theorem \ref{standard} to obtain
\begin{equation*}
[_pB][_pA]
=\sum_{\scalebox{0.7}{$\substack{T\in\Theta_{_pB,{}_pA} \\ S\in \Gamma_{T}}$}}(q^{-2}-1)^{n(S)}q_0^{\beta_0(_pA,S,T)}q_1^{\beta_1(_pA,S,T)}q^{\beta(_pA,S,T)}\overline{\llbracket  {}_pA;S;T\rrbracket} [_pA^{(T-S)}],~\mbox{where}
\end{equation*}
\begin{align*}
\beta_0({}_pA,S,T)=&\alpha_0({}_pA,S,T)-\gamma_0({}_pA,S,T)+\frac{1}{2}\big(\hat\ell_{\mathfrak{c}_0}({}_pA)+\hat\ell_{\mathfrak{c}_0}({}_pB)-\hat\ell_{\mathfrak{c}_0}({}_pA^{(T-S)})\big)\\
&-\frac{1}{2}\big(\hat\ell_{\mathfrak{c}_d}({}_pA)+\hat\ell_{\mathfrak{c}_d}({}_pB)-\hat\ell_{\mathfrak{c}_d}({}_pA^{(T-S)})\big),\\
\beta_1({}_pA,S,T)=&\alpha_1({}_pA,S,T)-\gamma_1({}_pA,S,T)+\frac{1}{2}\big(\hat\ell_{\mathfrak{c}_0}({}_pA)+\hat\ell_{\mathfrak{c}_0}({}_pB)-\hat\ell_{\mathfrak{c}_0}({}_pA^{(T-S)})\big)\\&+\frac{1}{2}\big(\hat\ell_{\mathfrak{c}_d}({}_pA)+\hat\ell_{\mathfrak{c}_d}({}_pB)-\hat\ell_{\mathfrak{c}_d}({}_pA^{(T-S)})\big),\\
\beta({}_pA,S,T)=&\alpha({}_pA,S,T)-\gamma({}_pA,S,T)+\hat\ell_{\mathfrak{a}}({}_pA)+\hat\ell_{\mathfrak{a}}({}_pB)-\hat\ell_{\mathfrak{a}}({}_pA^{(T-S)}).
\end{align*}
It is clear that $\beta_i({}_pA,S,T)=\beta_i(A,S,T)$, $i=0,1$. Hence, we compute the difference $\beta({}_pA,S,T)-\beta(A,S,T)$ term by term as follows:
\begin{align*}
\ell_{\mathfrak{a}}(w_{{}_pA,T})-\ell_{\mathfrak{a}}(w_{A,T})=&p\sum_{i=1}^{n}\sum_{j>i}t_{ij},\\
\gamma({}_pA,S,T)-\gamma(A,S,T)=&p\sum_{l\in\{0,r+1\}}(S+\widehat{(T-S)})_{\theta,ij}+2p\sum_{(i,j)\in  I^+}(S+\widehat{(T-S)})_{\theta,ll},\\
2\ell_{\mathfrak{a}}({}_pA)-2\ell_{\mathfrak{a}}(A)=&p\sum_{i=0}^{r+1}(\sum_{y<i<x}+\sum_{x<i<y})a_{xy}\\
&+p\sum_{x=1}^r\sum_{y<x-1}(x-y-1)a_{xy}+p\sum_{x=1}^r\sum_{y>x+1}(y-x-1)a_{xy}\\
&+p\sum_{y>1}(y-1)a_{0y}+p\sum_{y<r}(r-y)a_{r+1,y},\\
2\hat\ell_{\mathfrak{a}}({}_pA)-2\hat\ell_{\mathfrak{a}}(A)=&p\sum_{i=0}^{r+1}(\sum_{y<i\leqslant x}+\sum_{x\leqslant i<y})a_{xy}+p\sum_{x=1}^r|y-x|a_{xy}\\
&+p\sum_{y>0}ya_{0y}+p\sum_{y<r+1}(r+1-y)a_{r+1,y},\\
\hat\ell_{\mathfrak{a}}({}_pB)-\hat\ell_{\mathfrak{a}}(B)=&p\sum_{i=1}^nb_{i,i+1}.
\end{align*}
Combining all the above gives us
\begin{equation*}
\beta({}_pA,S,T)=\beta(A,S,T)+p\zeta_1(B,A,S,T),    
\end{equation*}
where $\zeta_1(B,A,S,T)$ depends only on the entries of $B,A,S,T$ (independent of $p$).

On the other hand, set
\begin{align*}
a_{ij}^{(1)}=&p\delta_{ij}+(A-T_{\theta})_{ij}+s_{-i,-j}+\widehat{(T-S)}_{\theta,ij},\\
a_{ij}^{(2)}=&p\delta_{ij}+(A-T_{\theta})_{ij}+\widehat{(T-S)}_{\theta,ij},\\
a_{ij}^{(3)}=&p\delta_{ij}+(A-T_{\theta})_{ij}+\widehat{(T-S)}_{-i,-j},\\
a_{ij}^{(4)}=&p\delta_{ij}+(A-T_{\theta})_{ij},\\
a_{ij}^{(5)}=&p+a_{kk}-2t_{kk}-1+2s_{kk}\in2\mathbb{Z},\\
a_{ij}^{(6)}=&p+a_{kk}-2t_{kk}-1\in2\mathbb{Z}.
\end{align*}
We have 
\begin{align*}
&\llbracket  {}_pA;S;T\rrbracket=\\
&\prod\limits_{(i,j)\in I_{\mathfrak{a}}^+}\big(\prod_{l=1}^{s_{ij}}\frac{[a_{ij}^{(1)}+l]}{[l]}\prod_{l=1}^{s_{-i,-j}}\frac{[a_{ij}^{(2)}+l]}{[l]}\prod\limits_{l=1}^{\widehat{(T-S)}_{ij}}\frac{[a_{ij}^{(3)}+l]}{[l]}\prod\limits_{l=1}^{\widehat{(T-S)}_{-i,-j}}\frac{[a_{ij}^{(4)}+l]}{[l]}\big)\\
&\cdot\prod\limits_{l=1}^{\widehat{(T-S)}_{00}}\frac{[a_{00}^{(5)}+2l]_{\mathfrak{c}_0}}{[l]}\prod\limits_{l=1}^{s_{00}}\frac{[a_{00}^{(6)}+2l]_{\mathfrak{c}_0}}{[l]}\prod_{l=1}^{\widehat{(T-S)}_{r+1,r+1}}\frac{[a_{r+1,r+1}^{(5)}+2l]_{\mathfrak{c}_1}}{[l]}\prod\limits_{l=1}^{s_{r+1,r+1}}\frac{[a_{r+1,r+1}^{(6)}+2l]_{\mathfrak{c}_1}}{[l]}\llbracket S\rrbracket.
\end{align*}
Hence $q_0^{\beta_0({}_pA,S,T)}q_1^{\beta_1({}_pA,S,T)}q^{\beta({}_pA,S,T)}\overline{\llbracket  {}_pA;S;T\rrbracket}$ is of the form $G(q_0,q_1,q,q^{-p})$ for some $G(q_0,q_1,q,\pi)\in\mathcal{R}_2$.

Using induction on $f$, we know that the proposition holds for general $f$ in the case where $A_1,\ldots,A_f$ are tridiagonal matrices. It follows from Proposition \ref{monomial basis} that for any $A\in\Xi_{n,d}$, there exists tridiagonal matrices $B_1,B_2,\ldots B_M$ such that
\begin{equation*}
[B_1][B_2]\cdots[B_M]=[A]+\text{lower terms}.    
\end{equation*}
Finally, we can prove the proposition by using induction on $\psi(A)=\sum_{(i,j)\in I^+}\sigma_{ij}(A)$ similar to the proof of \cite[Proposition 4.2]{BLM90}.
\end{proof}

Using an argument similar to \cite[Proposition 6.2]{FLLLW23}, we obtain below the stabilization of bar involution at specialization $q=\boldsymbol{v}^{-\mathbf{L}(s_1)}$, $q_0=\boldsymbol{v}^{-\mathbf{L}(s_0)+\mathbf{L}(s_d)}$, $q_1=\boldsymbol{v}^{-\mathbf{L}(s_0)-\mathbf{L}(s_d)}$, by allowing extra coefficients as seen in (\ref{coe}).
\begin{proposition}\label{prop:sta-bar}
For any $A\in\widetilde{\Xi}_n$, there exist matrices $T_1,\ldots,T_s\in\widetilde{\Xi}_n$ and \\
$\tau_i(\boldsymbol{v}^{-\mathbf{L}(s_0)+\mathbf{L}(s_d)},\boldsymbol{v}^{-\mathbf{L}(s_0)-\mathbf{L}(s_d)},\boldsymbol{v}^{-\mathbf{L}(s_1)},\pi,\pi')\in\mathcal{R}_2\big|_*$ such that for even integer $p\gg0$,
\begin{equation*}
\overline{[{}_pA]^{\mathbf{L}}}=\sum_{i=1}^s\tau_i(\boldsymbol{v}^{-\mathbf{L}(s_0)+\mathbf{L}(s_d)},\boldsymbol{v}^{-\mathbf{L}(s_0)-\mathbf{L}(s_d)},\boldsymbol{v}^{-\mathbf{L}(s_1)},\boldsymbol{v}^{p},\boldsymbol{v}^{-p^2})\overline{[{}_pT_i]^{\mathbf{L}}}.   
\end{equation*}
\end{proposition}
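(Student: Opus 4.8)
The plan is to adapt the argument of \cite[Proposition 6.2]{FLLLW23} to three parameters, the two genuinely new features being the separate bookkeeping of the $q_0$- and $q_1$-exponents and the appearance of exponents that are \emph{quadratic} in $p$, which are exactly what forces the auxiliary variable $\pi'=\boldsymbol{v}^{-p^2}$. I would argue by induction on the partial order $\leq_{\mathrm{alg}}$ on $\widetilde{\Xi}_n$ (equivalently on $\psi(A)=\sum_{(i,j)\in I^+}\sigma_{ij}(A)$). Since $\sigma_{ij}$ with $i<j$ ignores diagonal entries, $\leq_{\mathrm{alg}}$ is preserved by the shift $C\mapsto{}_pC=C+pI$, so that for $p\gg0$ the matrices below ${}_pA$ are precisely the ${}_pB$ with $B<_{\mathrm{alg}}A$; this already pins down a finite, $p$-independent index set $\{T_i\}$. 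I will also use that the bar is a ring homomorphism of $\mathbb{S}_{n,d}^{\mathfrak{c}}$: a short computation from Proposition \ref{bar} shows this, since $\overline{x_\mu}=c_\mu x_\mu$ with $c_\mu\overline{c_\mu}=1$ forces $\overline{f\circ g}=\overline{f}\circ\overline{g}$.

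First I would reduce the general case to that of the tridiagonal generators. Using the specialization of Proposition \ref{monomial basis}, write the monomial $m'_{{}_pA}=[{}_pA^{(1)}]^{\mathbf{L}}\cdots[{}_pA^{(x)}]^{\mathbf{L}}$ (the product of tridiagonal standard generators produced by \cite[Algorithm 5.15]{FLLLW23}) in unitriangular form $m'_{{}_pA}=[{}_pA]^{\mathbf{L}}+\sum_{B<_{\mathrm{alg}}A}(\ast)\,[{}_pB]^{\mathbf{L}}$, where by Proposition \ref{5.1} the coefficients $(\ast)$ are specializations of elements of $\mathcal{R}_2$ at $\pi=\boldsymbol{v}^{p}$. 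Solving for $[{}_pA]^{\mathbf{L}}$ and applying the bar gives
\begin{equation*}
\overline{[{}_pA]^{\mathbf{L}}}=\overline{[{}_pA^{(1)}]^{\mathbf{L}}}\cdots\overline{[{}_pA^{(x)}]^{\mathbf{L}}}-\sum_{B<_{\mathrm{alg}}A}\overline{(\ast)}\;\overline{[{}_pB]^{\mathbf{L}}}.
\end{equation*}
Each $\overline{[{}_pB]^{\mathbf{L}}}$ with $B<_{\mathrm{alg}}A$ is controlled by the inductive hypothesis, and re-expanding the product $\overline{[{}_pA^{(1)}]^{\mathbf{L}}}\cdots\overline{[{}_pA^{(x)}]^{\mathbf{L}}}$ of generator-bars by repeated use of Proposition \ref{5.1} adds only multiplicative coefficients in $\mathcal{R}_2$ (powers of $\pi$). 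Everything therefore rests on the base case: the bar of a single tridiagonal generator.

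For a tridiagonal generator $C=\kappa(\lambda,g,\mu)$ I would compute $\overline{[{}_pC]^{\mathbf{L}}}$ directly from Lemma \ref{1.5} and Proposition \ref{bar}. Applying the bar to $q_g^{-1}x_\lambda T_g x_\mu=[{}_pC]_{\mathfrak{c}}^!\,e_{{}_pC}(x_\mu)$ couples two quadratic-in-$p$ contributions: the factor $\overline{[{}_pC]_{\mathfrak{c}}^!}$, which by the explicit formula recorded before Theorem \ref{standard} carries the exponent $q^{4\binom{a'_{00}}{2}+4\binom{a'_{r+1,r+1}}{2}+\cdots}$ with $a'_{00},a'_{r+1,r+1}$ shifted by $p/2$, and the leading coefficient $q^{2\ell_{\mathfrak{a}}(g_{\lambda\mu}^+)}$ of $\overline{T_{\lambda\mu}^g}$ from Proposition \ref{bar}, whose length $\ell_{\mathfrak{a}}(g_{\lambda\mu}^+)$ is also quadratic in $p$. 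Because the standard normalization removes only the linear quantity $\hat\ell_{\mathfrak{a}}({}_pC)=\ell_{\mathfrak{a}}(g_{\lambda\mu}^+)-\ell_{\mathfrak{a}}(w_\circ^\mu)$, the two quadratic contributions cancel exactly on the leading term (matching the leading coefficient $1$ of Proposition \ref{invariant}), while on each lower term they leave a genuine quadratic-in-$p$ residue; the remaining $p$-dependence of the lower coefficients is made explicit by the $2\times2$ reduction in the proof of Proposition \ref{bar}, which produces quantum binomials with $p$-shifted entries, i.e.\ elements of $\mathcal{R}_2$ in $\pi=\boldsymbol{v}^{p}$. After specialization the quadratic residue becomes a power of $\pi'=\boldsymbol{v}^{-p^2}$, so the coefficients land in $\mathcal{R}_3=\mathcal{R}_2[\pi',\pi'^{-1}]$ evaluated at $\pi=\boldsymbol{v}^{p}$, $\pi'=\boldsymbol{v}^{-p^2}$.

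The main obstacle is precisely this separation of scales: one must verify that, after the cancellation of the quadratic pieces of $\overline{[{}_pC]_{\mathfrak{c}}^!}$ against those of $\overline{T_{\lambda\mu}^g}$, every surviving exponent is an honest polynomial in $p$ of degree at most two with coefficients independent of $p$, so that it is captured by a single element of $\mathcal{R}_3$. A secondary, three-parameter-specific check — as in the computation of $\beta_0,\beta_1$ in Proposition \ref{5.1} — is that the $q_0$- and $q_1$-exponents are $p$-independent, so that no quadratic dependence leaks into $q_0,q_1$ and the coefficients genuinely stay in $\mathcal{R}_3\big|_*$; the compatibility of $\leq_{\mathrm{alg}}$ with the shift then ensures that the same finite set $\{T_i\}$ works for all large $p$.
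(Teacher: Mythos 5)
Your overall skeleton is the right one, and it is essentially all the paper itself offers: the paper proves Proposition~\ref{prop:sta-bar} only by the remark that it follows ``using an argument similar to [FL$^3$W23, Proposition~6.2]'', so the intended argument is exactly the one you outline --- induct along $\leq_{\mathrm{alg}}$ (equivalently on $\psi(A)$), use that the bar is an algebra homomorphism of $\mathbb{S}^{\mathfrak{c}}_{n,d}$, reduce to the tridiagonal factors of the monomial $m'_{{}_pA}$, and control all transition coefficients by Proposition~\ref{5.1} together with the fact that $\mathcal{R}_2$ is, by design, closed under the bar (this is precisely why the paper enlarges $\mathcal{R}_1$ to $\mathcal{R}_2$). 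Your auxiliary observations are also correct: the index set $\{T_i\}$ can be chosen uniformly in $p$ because $\leq_{\mathrm{alg}}$ ignores diagonal entries and so is compatible with $C\mapsto{}_pC$, and the $q_0,q_1$-exponents are $p$-independent because $\hat\ell_{\mathfrak{c}_0},\hat\ell_{\mathfrak{c}_d}$ involve only off-diagonal entries.

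The genuine gap is your base case, and it is not a peripheral one --- it is where the entire content of the proposition sits. For a two-sided tridiagonal $C=\kappa(\lambda,g,\mu)$ one has $g\neq e$, and Proposition~\ref{bar} tells you only that $\overline{T_{\lambda\mu}^g}$ equals an explicit leading term plus $\sum_{y<g}\mathbb{Z}[q^{\pm1},q_0^{\pm1},q_1^{\pm1}]\,T_{\lambda\mu}^y$; it gives no formula for those lower coefficients. Your assertion that ``the remaining $p$-dependence of the lower coefficients is made explicit by the $2\times2$ reduction in the proof of Proposition~\ref{bar}'' is not accurate: that reduction is used there only to show integrality of certain coefficient changes along an algorithmically produced chain of matrices; it is not an expansion of $\overline{T_{\lambda\mu}^g}$, and the true lower coefficients are inverse Kazhdan--Lusztig/$R$-polynomial-type quantities attached to double cosets $W_\lambda gW_\mu$ whose lengths grow quadratically in $p$. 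That these coefficients, as functions of $p$, are given by finitely many elements of $\mathcal{R}_3$ evaluated at $\pi=\boldsymbol{v}^{p}$, $\pi'=\boldsymbol{v}^{-p^2}$ is exactly the statement to be proved; invoking it for the generators makes the argument circular at its crux.

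There is a way to close the gap inside your framework: avoid barring two-sided tridiagonal generators altogether. A one-sided semisimple matrix ($D+\sum_i t_iE^{i,i+1}_\theta$ or $D+\sum_i t_iE^{i+1,i}_\theta$) has identity double-coset representative $g=e$, so in Proposition~\ref{bar} the sum over $y<g$ is empty and, by the normalization computation in Proposition~\ref{invariant}, $[{}_pC]$ is \emph{exactly} bar-invariant for every $p$, generically and hence at every specialization $\mathbf{L}$. Moreover every two-sided tridiagonal $[B]$ is the $\leq_{\mathrm{alg}}$-leading term of $[B^{\mathrm{up}}][B^{\mathrm{down}}]$ with lower terms of strictly smaller $\psi$ and coefficients controlled by Proposition~\ref{5.1}. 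Running your induction with these bar-invariant one-sided factors eliminates the base-case computation entirely; the only operations on coefficients are multiplication and bar inside $\mathcal{R}_2$ (and $\mathcal{R}_3$ after specialization), which is where the stabilization actually lives.
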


\subsection{Stabilization algebra \texorpdfstring{$\dot{\mathbb{K}}_n^\mathfrak{c}$}{Kc}}
Let $\dot{\mathbb{K}}_n^{\mathfrak{c}}$ be the free $\mathbb{A}$-module with basis given by the symbols $[A]$ for $A\in\widetilde{\Xi}_n$ (which will be called a standard basis of $\dot{\mathbb{K}}_n^{\mathfrak{c}}$). By Proposition \ref{5.1} and applying a specialization at $\pi=1$, we have the following corollary.

\begin{corollary}\label{5.3}
There is a unique associative $\mathbb{A}$-algebra structure on $\dot{\mathbb{K}}_n^{\mathfrak{c}}$ with multiplication given by
\begin{equation*}
[A_1][A_2]\cdots[A_f]=\sum_{i=1}^m\zeta_i(q_0,q_1,q,1)[Z_i]
\end{equation*}
if $\mathrm{col}_{\mathfrak{c}}(A_i)=\mathrm{row}_{\mathfrak{c}}(A_{i+1})$ for all $i$, and $[A_1][A_2]\cdots[A_f]=0$ otherwise.
\end{corollary}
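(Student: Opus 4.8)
The plan is to follow the Beilinson--Lusztig--MacPherson strategy \cite[Proposition 4.2]{BLM90}: transport the (automatic) associativity of the finite Schur algebras $\mathbb{S}_{n,d}^{\mathfrak{c}}$ through the stabilization of Proposition~\ref{5.1} and then specialize at $\pi=1$. Uniqueness is immediate, since the stated formula prescribes the product on every pair of basis elements and the whole structure is then fixed by bilinearity; so the real content is that this prescription is well defined and associative.

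First I would settle well-definedness of the binary product. For $A_1,A_2\in\widetilde{\Xi}_n$ with $\mathrm{col}_{\mathfrak{c}}(A_1)=\mathrm{row}_{\mathfrak{c}}(A_2)$, Proposition~\ref{5.1} (with $f=2$) furnishes matrices $Z_1,\dots,Z_m$ and coefficients $\zeta_i(q_0,q_1,q,\pi)\in\mathcal{R}_2$ with $[{}_pA_1][{}_pA_2]=\sum_i\zeta_i(q_0,q_1,q,q^{-p})[{}_pZ_i]$ for all even $p\gg0$. As $\{[{}_pZ]\}$ is part of an $\mathbb{A}$-basis of $\mathbb{S}_{n,d}^{\mathfrak{c}}$ for the corresponding $d$, the $\zeta_i$ are pinned down by the structure constants of $\mathbb{S}_{n,d}^{\mathfrak{c}}$ at the infinitely many values $\pi=q^{-p}$, hence are uniquely determined in $\mathcal{R}_2\subset\mathbb{F}[\pi,\pi^{-1}]$. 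Being Laurent polynomials in $\pi$, they may be evaluated at $\pi=1$; and since the parameters $a$ occurring in the factors $r_{a,k}^{(\cdot)}$ that actually arise are integers (the matrix entries, together with the half-sums $a'_{kk}$, which are integral because the relevant diagonal entries are odd), these specializations are genuine Gaussian binomials and quantum factorials, so $\zeta_i(q_0,q_1,q,1)\in\mathbb{A}$. Extending bilinearly over $\mathbb{A}$, with the product declared zero unless $\mathrm{col}_{\mathfrak{c}}=\mathrm{row}_{\mathfrak{c}}$, yields a well-defined multiplication.

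The heart of the argument is associativity, i.e. $([A_1][A_2])[A_3]=[A_1]([A_2][A_3])$ for compatible $A_1,A_2,A_3$. I would fix an even $p$ large enough that all relevant instances of Proposition~\ref{5.1} hold simultaneously. Expanding the left bracketing step by step gives $[{}_pA_1][{}_pA_2]=\sum_i\zeta_i\,[{}_pZ_i]$ and $[{}_pZ_i][{}_pA_3]=\sum_j\eta_{ij}\,[{}_pV_{ij}]$ (all coefficients at $\pi=q^{-p}$), so $([{}_pA_1][{}_pA_2])[{}_pA_3]=\sum_{i,j}\zeta_i\eta_{ij}\,[{}_pV_{ij}]$; an identical expansion treats the right bracketing. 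Since $\mathbb{S}_{n,d}^{\mathfrak{c}}$ is associative, both expansions equal the triple product $[{}_pA_1][{}_pA_2][{}_pA_3]$, so comparing the coefficient of each fixed $[{}_pW]$ gives equalities between the two bracketings that hold for all large $p$. These are equalities of elements of $\mathcal{R}_2$ valid at infinitely many $\pi=q^{-p}$, hence identities in $\mathcal{R}_2$, and therefore survive the substitution $\pi=1$. But the $\pi=1$ values of the two sides are, by the very definition of the product in $\dot{\mathbb{K}}_n^{\mathfrak{c}}$, exactly the coefficients of $[W]$ in $([A_1][A_2])[A_3]$ and in $[A_1]([A_2][A_3])$; associativity follows.

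The main obstacle I anticipate is conceptual rather than computational: one must check that the intermediate specialization at $\pi=1$ carried out when forming a bracketed product inside $\dot{\mathbb{K}}_n^{\mathfrak{c}}$ is compatible with the specialization of the full triple-product stabilization. This is precisely what the rational-function identity secures, the crucial input being that each partial product $[{}_pZ_i][{}_pA_3]$ again stabilizes with $\mathcal{R}_2$-coefficients, so the entire left-bracketed expression is a single $\mathcal{R}_2$-valued function of $\pi$ agreeing with the triple product for all large $p$. The same reasoning, applied inductively, shows that the $f$-fold formula in the statement is consistent with iterating the binary product, so no separate verification of the general-$f$ case is needed beyond Proposition~\ref{5.1} itself.
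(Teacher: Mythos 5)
Your proposal is correct and follows essentially the same route as the paper: the paper obtains the corollary directly from Proposition~\ref{5.1} by specializing at $\pi=1$, exactly the BLM-style transfer of associativity from $\mathbb{S}_{n,d}^{\mathfrak{c}}$ through the stabilized $\mathcal{R}_2$-coefficients that you spell out. Your write-up merely makes explicit the details (uniqueness of the $\zeta_i$ via evaluation at infinitely many $\pi=q^{-p}$, the identity principle for Laurent polynomials, and integrality of the parameters $a$ ensuring $\zeta_i(q_0,q_1,q,1)\in\mathbb{A}$) that the paper leaves implicit in its one-line derivation.
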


The following multiplication formula in $\dot{\mathbb{K}}_n^{\mathfrak{c}}$ follows directly from Theorem \ref{standard formula} by the stabilization construction.

\begin{thm}
Let $A$, $B\in\widetilde{\Xi}_n$ with $B$ tridiagonal and $\mathrm{row}_{\mathfrak{c}}(A)=\mathrm{col}_{\mathfrak{c}}(B)$. Then
\begin{equation*}
[B][A]
=\sum_{\scalebox{0.7}{$\substack{T\in\widetilde{\Theta}_{B,A} \\ S\in \Gamma_{T}}$}}(q^{-2}-1)^{n(S)}q_0^{\beta_0(A,S,T)}q_1^{\beta_1(A,S,T)}q^{\beta(A,S,T)}\overline{\llbracket  A;S;T\rrbracket} [A^{(T-S)}],
\end{equation*}
where
$\widetilde{\Theta}_{B,A}=\left\{T\in\Theta_n|\ T_{\theta}\leqslant_e A~\text{unless}~i=j,~\mathrm{row}_{\mathfrak{a}}(T)_i=b_{i-1,i}~\text{for all}~i\right\}$.
\end{thm}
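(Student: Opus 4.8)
The plan is to derive the formula as the stable limit, as the even integer $p\to\infty$, of the finite-level multiplication formula of Theorem~\ref{standard}, using the stabilization machinery of Proposition~\ref{5.1} and the resulting algebra structure of Corollary~\ref{5.3}. By the very definition of the product on $\dot{\mathbb{K}}_n^{\mathfrak{c}}$, the element $[B][A]$ is read off from the expansion of $[{}_pB][{}_pA]$ in $\mathbb{S}_{n,d}^{\mathfrak{c}}$ for even $p\gg0$, by replacing each occurrence of $q^{-p}$ by $1$ (that is, specializing $\pi=1$) and each stabilized matrix ${}_pZ$ by $Z$. So I would fix an even $p$ large enough that every entry of ${}_pB=B+pI$ and ${}_pA=A+pI$ is a nonnegative integer; then ${}_pA,{}_pB\in\Xi_{n,d}$ with ${}_pB$ tridiagonal and $\mathrm{row}_{\mathfrak{c}}({}_pA)=\mathrm{col}_{\mathfrak{c}}({}_pB)$, so Theorem~\ref{standard} applies and expands $[{}_pB][{}_pA]$ as a sum over pairs $(T,S)$ with $T\in\Theta_{{}_pB,{}_pA}$ and $S\in\Gamma_T$.

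Next I would track the $p$-dependence of each summand, which is exactly the content of the proof of Proposition~\ref{5.1}. The exponents $\beta_0({}_pA,S,T)$ and $\beta_1({}_pA,S,T)$ are independent of $p$ and equal $\beta_0(A,S,T)$, $\beta_1(A,S,T)$; the exponent $\beta({}_pA,S,T)$ differs from $\beta(A,S,T)$ by $p\,\zeta_1(B,A,S,T)$, a quantity linear in $p$ depending only on the entries of $B,A,S,T$; and the bar-invariant factor $\overline{\llbracket {}_pA;S;T\rrbracket}$ equals $G(q_0,q_1,q,q^{-p})$ for a fixed $G\in\mathcal{R}_2$, obtained from the explicit product expression recorded in Proposition~\ref{5.1}. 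Hence each coefficient has the form $H(q_0,q_1,q,q^{-p})$ for a $p$-independent $H\in\mathcal{R}_2$, while the target matrix is ${}_pA^{(T-S)}={}_p\big(A^{(T-S)}\big)$, since the diagonal shift $pI$ is carried through the operation $A\mapsto A^{(T-S)}=A-(T-S)_{\theta}+(\widehat{T-S})_{\theta}$. Specializing $\pi=q^{-p}\mapsto1$ as prescribed by Corollary~\ref{5.3} then produces the coefficient $(q^{-2}-1)^{n(S)}q_0^{\beta_0(A,S,T)}q_1^{\beta_1(A,S,T)}q^{\beta(A,S,T)}\overline{\llbracket A;S;T\rrbracket}$ attached to $[A^{(T-S)}]$, where $\overline{\llbracket A;S;T\rrbracket}$ denotes the value of $G$ at $\pi=1$.

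The step I expect to require the most care is the identification of the stable index set, namely the claim that $\Theta_{{}_pB,{}_pA}$ stabilizes to $\widetilde{\Theta}_{B,A}$. The row condition $\mathrm{row}_{\mathfrak{a}}(T)_i=b_{i-1,i}$ is unaffected by the shift, because the off-diagonal entries $b_{i-1,i}$ of the tridiagonal $B$ coincide with those of ${}_pB$; moreover this condition, being independent of $p$, keeps all entries of $T$ — in particular its diagonal entries — bounded, so the candidate index set is finite. The entrywise condition $T_{\theta}\leqslant_e{}_pA$ is the crux: on positions $(i,j)$ with $i\neq j$ it reads $(T_{\theta})_{ij}\leqslant a_{ij}$ independently of $p$, whereas on the diagonal it reads $(T_{\theta})_{ii}\leqslant a_{ii}+p$, which holds automatically once $p$ is large. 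Therefore, for $p\gg0$ the admissible $T$ are precisely those satisfying $\mathrm{row}_{\mathfrak{a}}(T)_i=b_{i-1,i}$ together with $T_{\theta}\leqslant_e A$ off the diagonal, which is exactly $\widetilde{\Theta}_{B,A}$; and $\Gamma_T$, depending only on $T$, is unchanged. Combining the stabilized index set, the stabilized coefficients, and the target matrices $A^{(T-S)}$ yields the asserted multiplication formula in $\dot{\mathbb{K}}_n^{\mathfrak{c}}$.
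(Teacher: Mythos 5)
Your proposal is correct and is essentially the paper's own proof: the paper merely asserts that the formula follows from the finite-level standard-basis multiplication formula (Theorem~\ref{standard}) via the stabilization construction of Proposition~\ref{5.1} and Corollary~\ref{5.3}, which is exactly the argument you carry out. The details you supply---that $\beta_0,\beta_1$ are $p$-independent, that $\beta$ shifts by $p\,\zeta_1$, that $\overline{\llbracket {}_pA;S;T\rrbracket}$ is of the form $G(q_0,q_1,q,q^{-p})$ with $G\in\mathcal{R}_2$, that $({}_pA)^{(T-S)}={}_p\bigl(A^{(T-S)}\bigr)$, and that $\Theta_{{}_pB,{}_pA}$ stabilizes to $\widetilde{\Theta}_{B,A}$ for even $p\gg0$ because the row-sum constraint bounds all entries of $T$ while the diagonal inequalities become vacuous---are precisely the steps the paper leaves implicit.
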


The following proposition follows from Proposition \ref{monomial basis} by the stabilization construction.

\begin{proposition}
For any $A\in\widetilde{\Xi}_{n,d}$, there exist tridiagonal matrices $A^{(1)},\ldots,A^{(x)}$ in $\widetilde{\Xi}_{n,d}$ satisfying $\mathrm{row}_{\mathfrak{c}}(A^{(1)})=\mathrm{row}_{\mathfrak{c}}(A)$, $\mathrm{col}_{\mathfrak{c}}(A)=\mathrm{col}_{\mathfrak{c}}(A^{(x)})$, $\mathrm{col}_{\mathfrak{c}}(A^{(i)})=\mathrm{row}_{\mathfrak{c}}(A^{(i+1)})$ for $1\leqslant i\leqslant x-1$
\begin{equation*}
[A^{(1)}][A^{(2)}]\cdots[A^{(x)}]\in[A]+\sum_{B<_{\mathrm{alg}}A}\mathbb{A}[B]\in\dot{\mathbb{K}}_n^{\mathfrak{c}}.    
\end{equation*}    
\end{proposition}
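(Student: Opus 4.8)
The plan is to transfer the triangularity of the monomial basis from $\mathbb{S}_{n,d}^{\mathfrak{c}}$ (Proposition~\ref{monomial basis}) up to $\dot{\mathbb{K}}_n^{\mathfrak{c}}$ through the stabilization machinery of \S\ref{sec:5.1}, exploiting the compatibility of \cite[Algorithm 5.15]{FLLLW23} with the shift $A\mapsto {}_pA=A+pI$. First I fix $A\in\widetilde{\Xi}_n$ and, for even $p$, form ${}_pA=A+pI$, which lies in $\Xi_{n,d}$ for the appropriate $d$ once $p\gg0$. Running the algorithm on ${}_pA$ and invoking the relations $x({}_pA)=x(A)$ and $({}_pA)^{(i)}={}_pA^{(i)}$ recorded before Proposition~\ref{monomial basis}, the output tridiagonal matrices are exactly ${}_pA^{(i)}$ for tridiagonal $A^{(i)}\in\widetilde{\Xi}_n$ depending only on $A$; the compatibility conditions $\mathrm{row}_{\mathfrak{c}}(A^{(1)})=\mathrm{row}_{\mathfrak{c}}(A)$, $\mathrm{col}_{\mathfrak{c}}(A^{(i)})=\mathrm{row}_{\mathfrak{c}}(A^{(i+1)})$, $\mathrm{col}_{\mathfrak{c}}(A^{(x)})=\mathrm{col}_{\mathfrak{c}}(A)$ follow from their finite-$d$ analogues because adding $pI$ shifts all the relevant row/column data uniformly. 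Proposition~\ref{monomial basis} applied to ${}_pA$ then yields, in $\mathbb{S}_{n,d}^{\mathfrak{c}}$,
\begin{equation*}
[{}_pA^{(1)}]\cdots[{}_pA^{(x)}]=[{}_pA]+\sum_{B<_{\mathrm{alg}}A}c_B(p)\,[{}_pB],\qquad c_B(p)\in\mathbb{A},
\end{equation*}
where I use that $\sigma_{ij}$ for $i<j$ ignores the diagonal, so $B\mapsto{}_pB$ is an order-isomorphism between the finite set $\{B\leqslant_{\mathrm{alg}}A\}\subset\widetilde{\Xi}_n$ and $\{B'\leqslant_{\mathrm{alg}}{}_pA\}\subset\Xi_{n,d}$ for $p\gg0$.

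Next I would apply Proposition~\ref{5.1} to the \emph{same} product: there are $Z_1,\dots,Z_m\in\widetilde{\Xi}_n$ and $\zeta_i\in\mathcal{R}_2$ with $[{}_pA^{(1)}]\cdots[{}_pA^{(x)}]=\sum_i\zeta_i(q_0,q_1,q,q^{-p})[{}_pZ_i]$ for all $p\gg0$. Comparing with the displayed monomial expansion and using linear independence of $\{[{}_pZ]\}_Z$ at a fixed large $p$, the $Z_i$ carrying nonzero coefficients are precisely $A$ together with certain $B<_{\mathrm{alg}}A$, and the coefficients match: $\zeta_A(q_0,q_1,q,q^{-p})=1$ and $\zeta_B(q_0,q_1,q,q^{-p})=c_B(p)$ for all $p\gg0$. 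Since Laurent polynomials in $\pi$ that agree at the infinitely many points $\pi=q^{-p}$ coincide, these are identities in $\mathcal{R}_2$; in particular $\zeta_A=1$.

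Passing to $\dot{\mathbb{K}}_n^{\mathfrak{c}}$ is, by Corollary~\ref{5.3}, exactly the specialization $\pi=1$, giving
\begin{equation*}
[A^{(1)}]\cdots[A^{(x)}]=[A]+\sum_{B<_{\mathrm{alg}}A}\zeta_B(q_0,q_1,q,1)\,[B].
\end{equation*}
It then remains only to check that setting $\pi=1$ carries $\mathcal{R}_2$ into $\mathbb{A}$: each generator $r_{a,k}^{(j)}$ and $\overline{r}_{a,k}^{(j)}$ specializes to a product of quantum-binomial ratios (lying in $\mathbb{Z}[q^{\pm1}]$) with factors of the shape $(q_0^{\pm1}q_1^{-1}q^{\bullet}+1)\in\mathbb{A}$, while $q_0^{a},q_1^{a},q^{a}\in\mathbb{A}$ since half-integer powers are allowed. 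Hence every coefficient $\zeta_B(q_0,q_1,q,1)$ lies in $\mathbb{A}$, which is precisely the assertion.

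The only genuine point of care — bookkeeping rather than a real difficulty — is the consistency step in the middle: one must be certain that the abstract stabilized coefficients of Proposition~\ref{5.1} really do coincide with the concrete monomial coefficients $c_B(p)$, so that no matrix outside $\{B\leqslant_{\mathrm{alg}}A\}$ intrudes and the leading term has coefficient $1$, and that $\leqslant_{\mathrm{alg}}$-triangularity is preserved under $A\mapsto{}_pA$. Both follow from the single observation that $\leqslant_{\mathrm{alg}}$ (hence $\sigma_{ij}$ for $i<j$) depends only on the off-diagonal data, which is left invariant by the shift.
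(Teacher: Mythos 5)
Your proposal is correct and takes essentially the same route as the paper, whose entire proof is the remark that the statement ``follows from Proposition~\ref{monomial basis} by the stabilization construction'': you apply the finite-level triangularity of Proposition~\ref{monomial basis} to ${}_pA$ (using that the algorithm commutes with $A\mapsto A+pI$), identify the resulting coefficients with the stabilized coefficients of Proposition~\ref{5.1} via linear independence of the standard basis and rigidity of Laurent polynomials in $\pi$ evaluated at the points $q^{-p}$, and then specialize at $\pi=1$ as in Corollary~\ref{5.3}. Your write-up merely makes explicit the bookkeeping the paper leaves implicit --- the order-isomorphism $B\mapsto{}_pB$ on $\{B<_{\mathrm{alg}}A\}$, the leading coefficient being $1$, and the coefficients landing in $\mathbb{A}$ at $\pi=1$.
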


By abuse of notation, we denote the product in $\dot{\mathbb{K}}_n^{\mathfrak{c}}$ by
\begin{equation*}
m'_A=[A^{(1)}][A^{(2)}]\cdots[A^{(x)}]\in\dot{\mathbb{K}}_n^{\mathfrak{c}}.    
\end{equation*}
Hence, $\{m'_A~|~A\in\widetilde{\Xi}_n\}$ forms a basis for $\dot{\mathbb{K}}_n^{\mathfrak{c}}$, which is called the semi-monomial basis. 

Denote $\dot{\mathbb{K}}_n^{\mathfrak{c},\mathbf{L}}$ the specialization of $\dot{\mathbb{K}}_n^{\mathfrak{c}}$ at $q=\boldsymbol{v}^{-\mathbf{L}(s_1)}$, $q_0=\boldsymbol{v}^{-\mathbf{L}(s_0)+\mathbf{L}(s_d)}$, $q_1=\boldsymbol{v}^{-\mathbf{L}(s_0)-\mathbf{L}(s_d)}$. By abuse of notation, we define elements $[A]^{\mathbf{L}}$ and ${m'}_A^{\mathbf{L}}$ as the corresponding basis elements of $\dot{\mathbb{K}}_n^{\mathfrak{c},\mathbf{L}}$. 

Thanks to Proposition~\ref{prop:sta-bar}, we can introduce a bar involution $\overline{~\cdot~}: \dot{\mathbb{K}}_n^{\mathfrak{c},\mathbf{L}}\to\dot{\mathbb{K}}_n^{\mathfrak{c},\mathbf{L}}$ given by 
$$\overline{v^k[A]^\mathbf{L}}=v^{-k}\sum_{i=1}^s\tau_i(\boldsymbol{v}^{-\mathbf{L}(s_0)+\mathbf{L}(s_d)},\boldsymbol{v}^{-\mathbf{L}(s_0)-\mathbf{L}(s_d)},\boldsymbol{v}^{-\mathbf{L}(s_1)},1,1)\overline{[T_i]^{\mathbf{L}}}.$$

Similarly to Section \ref{sec:4.4}, we have the following conclusion.

\begin{thm}\label{thm:canL}
There exists a stably canonical basis $\dot{\mathfrak{B}}^{\mathfrak{c}}=\{\{A\}^{\mathbf{L}}~|~A\in\widetilde{\Xi}_n\}$ for $\dot{\mathbb{K}}_n^{\mathfrak{c},\mathbf{L}}$ such that $\overline{\{A\}^\mathbf{L}}=\{A\}^\mathbf{L}$ and $\{A\}^\mathbf{L}\in [A]^\mathbf{L}+\sum_{B\in\widetilde{\Xi}_n, B<_{\mathrm{alg}}A}v^\mathbf{c}\mathbb{Z}[v^\mathbf{c}][B]^\mathbf{L}$. 
\end{thm}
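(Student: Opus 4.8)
The plan is to reproduce, in the stabilization algebra $\dot{\mathbb{K}}_n^{\mathfrak{c},\mathbf{L}}$, the construction carried out for $\mathbb{S}_{n,d}^{\mathfrak{c},\mathbf{L}}$ in \S\ref{sec:4.4}. The abstract engine is the standard Kazhdan--Lusztig--Du lemma (cf.\ \cite[Lemma~3.8]{Du92}): if a free $\mathbb{Z}[v^{\pm\mathbf{c}}]$-module carries a basis $\{[A]^{\mathbf{L}}\}$ indexed by a poset $(\widetilde{\Xi}_n,\leqslant_{\mathrm{alg}})$ with finite intervals, together with a semilinear involution $\overline{\,\cdot\,}$ satisfying $\overline{v^{\mathbf{c}}}=v^{-\mathbf{c}}$ and the unitriangularity $\overline{[A]^{\mathbf{L}}}\in[A]^{\mathbf{L}}+\sum_{B<_{\mathrm{alg}}A}\mathbb{Z}[v^{\pm\mathbf{c}}][B]^{\mathbf{L}}$, then there is a unique bar-invariant element $\{A\}^{\mathbf{L}}$ in each coset with $\{A\}^{\mathbf{L}}\in[A]^{\mathbf{L}}+\sum_{B<_{\mathrm{alg}}A}v^{\mathbf{c}}\mathbb{Z}[v^{\mathbf{c}}][B]^{\mathbf{L}}$. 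It therefore suffices to verify the three hypotheses for $\dot{\mathbb{K}}_n^{\mathfrak{c},\mathbf{L}}$ equipped with the bar map introduced just above via Proposition~\ref{prop:sta-bar}.

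First I would record that the interval $\{B\in\widetilde{\Xi}_n\mid B<_{\mathrm{alg}}A\}$ is finite: along $\leqslant_{\mathrm{alg}}$ the data $\mathrm{row}_{\mathfrak{c}}$ and $\mathrm{col}_{\mathfrak{c}}$ are fixed and the off-diagonal entries are nonnegative integers bounded by the partial sums $\sigma_{ij}$, while the diagonal entries are then determined; hence the recursion defining $\{A\}^{\mathbf{L}}$ terminates. Next I would check that $\overline{\,\cdot\,}$ on $\dot{\mathbb{K}}_n^{\mathfrak{c},\mathbf{L}}$ is a well-defined semilinear involution. Well-definedness and semilinearity are immediate from its defining formula (the coefficients $\tau_i$ lie in $\mathcal{R}_2|_*$ and are evaluated at $\pi=\pi'=1$); the involutivity $\overline{\overline{[A]^{\mathbf{L}}}}=[A]^{\mathbf{L}}$ is to be inherited from the finite-type bar on $\mathbb{S}_{n,d}^{\mathfrak{c},\mathbf{L}}$, which squares to the identity, by transporting the relation of Proposition~\ref{prop:sta-bar} for $[{}_pA]^{\mathbf{L}}$ with even $p\gg0$ and passing to the stabilized limit.

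The core step is the unitriangularity. The crucial point is that $\leqslant_{\mathrm{alg}}$ ignores the diagonal entries, which is precisely what allows its extension from $\Xi_n$ to $\widetilde{\Xi}_n$; consequently $B<_{\mathrm{alg}}{}_pA$ holds if and only if $(B-pI)<_{\mathrm{alg}}A$, so the order is preserved under the shift $A\mapsto{}_pA=A+pI$. Proposition~\ref{invariant}, specialized at $\mathbf{L}$, furnishes for even $p\gg0$ the finite-type unitriangularity $\overline{[{}_pA]^{\mathbf{L}}}\in[{}_pA]^{\mathbf{L}}+\sum_{C<_{\mathrm{alg}}{}_pA}\mathbb{Z}[v^{\pm\mathbf{c}}][C]^{\mathbf{L}}$ with leading coefficient $1$. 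Feeding this through Proposition~\ref{prop:sta-bar} and specializing $\pi=\pi'=1$ yields $\overline{[A]^{\mathbf{L}}}\in[A]^{\mathbf{L}}+\sum_{B<_{\mathrm{alg}}A}\mathbb{Z}[v^{\pm\mathbf{c}}][B]^{\mathbf{L}}$ with leading coefficient $1$, as required. With all three hypotheses in hand, the lemma produces the family $\{A\}^{\mathbf{L}}$ together with its defining property, exactly as in the proof of Theorem~\ref{thm:canonicalatspe} and condition~(\ref{can}).

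I expect the main obstacle to be the second and third steps: one must show that the stabilized bar map is genuinely involutive and that the leading coefficient in the unitriangular expansion is exactly $1$ rather than merely a unit, even though this map is only accessible through the $p\to\infty$ limit of the finite-type involutions and the auxiliary variables $\pi,\pi'$ must be specialized consistently. Once this structural bookkeeping is settled, the existence, uniqueness, and bar-invariance of the stably canonical basis are formal consequences of the Du lemma, completing the proof.
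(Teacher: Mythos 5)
Your proposal is correct and is essentially the paper's own (largely omitted) argument: the paper justifies Theorem~\ref{thm:canL} only by the phrase ``similarly to Section~\ref{sec:4.4}'', which amounts to exactly what you spell out --- the bar involution obtained from Proposition~\ref{prop:sta-bar}, its unitriangularity with leading coefficient $1$ inherited from Proposition~\ref{invariant} via the shift-invariance of $\leqslant_{\mathrm{alg}}$ (it ignores diagonal entries, so $B<_{\mathrm{alg}}{}_pA$ iff $B-pI<_{\mathrm{alg}}A$), followed by the standard Kazhdan--Lusztig/Du lemma over $\mathbb{Z}[\boldsymbol{v}^{\pm\mathbf{c}}]$. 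The technical points you flag (involutivity of the stabilized bar map and the consistency of specializing $\pi=\pi'=1$, which rests on the rigidity of elements of $\mathcal{R}_3$ evaluated at infinitely many $\pi=\boldsymbol{v}^p$) are likewise left implicit in the paper, so your write-up matches its intended proof.
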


\begin{corollary}
there is a monomial basis $\{m_A^\mathbf{L}\}$ for $\dot{\mathbb{K}}_n^{\mathfrak{c},\mathbf{L}}$ with  $\overline{m_A^\mathbf{L}}=m_A^\mathbf{L}$, where $$m_A^\mathbf{L}:=\{A^{(1)}\}\{A^{(2)}\}\cdots\{A^{(x)}\}.$$
\end{corollary}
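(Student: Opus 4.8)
The plan is to prove the two assertions of the corollary—that each $m_A^{\mathbf{L}}$ is bar-invariant and that $\{m_A^{\mathbf{L}}\}$ is a basis—separately, in both cases reducing to the stably canonical basis produced in Theorem~\ref{thm:canL} together with the stabilized multiplication of Corollary~\ref{5.3}. For bar-invariance I would first record that the bar involution on $\dot{\mathbb{K}}_n^{\mathfrak{c},\mathbf{L}}$ is multiplicative. At the Schur-algebra level this is direct: for $f\in\mathrm{Hom}_{\mathbb{H}}(x_{\mu}\mathbb{H},x_{\lambda}\mathbb{H})$ and $g\in\mathrm{Hom}_{\mathbb{H}}(x_{\nu}\mathbb{H},x_{\mu}\mathbb{H})$ the defining rule $\overline{f}(x_{\mu})=\overline{f(\overline{x_{\mu}})}$, combined with $\overline{x_{\mu}}\in\mathbb{A}x_{\mu}$ (Proposition~\ref{bar}) and the $\mathbb{H}$-linearity of $f,g$, yields $\overline{f\circ g}=\overline{f}\circ\overline{g}$; that is, bar respects the product (composition) on $\mathbb{S}_{n,d}^{\mathfrak{c},\mathbf{L}}$. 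Since the product on $\dot{\mathbb{K}}_n^{\mathfrak{c},\mathbf{L}}$ (Proposition~\ref{5.1}, Corollary~\ref{5.3}) and its bar involution (Proposition~\ref{prop:sta-bar}) are both obtained as the $p\to\infty$ stabilizations of the corresponding Schur-algebra structures, I would pass the identity $\overline{[{}_pB]^{\mathbf{L}}[{}_pA]^{\mathbf{L}}}=\overline{[{}_pB]^{\mathbf{L}}}\,\overline{[{}_pA]^{\mathbf{L}}}$ to the limit (specializing the stabilizing parameters $\pi,\pi'\mapsto 1$) to conclude multiplicativity on $\dot{\mathbb{K}}_n^{\mathfrak{c},\mathbf{L}}$. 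Granting this, the claim is immediate, since Theorem~\ref{thm:canL} gives $\overline{\{A^{(i)}\}^{\mathbf{L}}}=\{A^{(i)}\}^{\mathbf{L}}$ for each factor, whence
\[
\overline{m_A^{\mathbf{L}}}=\overline{\{A^{(1)}\}^{\mathbf{L}}}\cdots\overline{\{A^{(x)}\}^{\mathbf{L}}}=\{A^{(1)}\}^{\mathbf{L}}\cdots\{A^{(x)}\}^{\mathbf{L}}=m_A^{\mathbf{L}}.
\]

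For the basis property I would exhibit a unitriangular change of basis to the standard basis $\{[A]^{\mathbf{L}}\}$ with respect to $<_{\mathrm{alg}}$. The stabilized form of Proposition~\ref{monomial basis}, specialized at $\mathbf{L}$, gives ${m'}_A^{\mathbf{L}}=[A^{(1)}]^{\mathbf{L}}\cdots[A^{(x)}]^{\mathbf{L}}=[A]^{\mathbf{L}}+\sum_{B<_{\mathrm{alg}}A}(\ast)[B]^{\mathbf{L}}$. Inserting the stably canonical triangularity $\{A^{(i)}\}^{\mathbf{L}}=[A^{(i)}]^{\mathbf{L}}+\sum_{C<_{\mathrm{alg}}A^{(i)}}v^{\mathbf{c}}\mathbb{Z}[v^{\mathbf{c}}]\,[C]^{\mathbf{L}}$ from Theorem~\ref{thm:canL} into each factor of $m_A^{\mathbf{L}}$ and expanding, the leading contribution is precisely ${m'}_A^{\mathbf{L}}$, while every remaining summand is a product of standard basis elements in which at least one factor is strictly $<_{\mathrm{alg}}$ its counterpart. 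Invoking that multiplication in $\dot{\mathbb{K}}_n^{\mathfrak{c}}$ respects $<_{\mathrm{alg}}$—the output matrices $A^{(T-S)}$ in the multiplication formula following Corollary~\ref{5.3} never exceed the unperturbed product—these summands fall into $\sum_{B<_{\mathrm{alg}}A}(\ast)[B]^{\mathbf{L}}$. Hence $m_A^{\mathbf{L}}=[A]^{\mathbf{L}}+\sum_{B<_{\mathrm{alg}}A}(\ast)[B]^{\mathbf{L}}$; as $<_{\mathrm{alg}}$ has finite intervals on each fibre of fixed $(\mathrm{row}_{\mathfrak{c}},\mathrm{col}_{\mathfrak{c}})$, this transition is invertible and $\{m_A^{\mathbf{L}}\}$ is a basis.

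The hard part will be the limiting step in the bar-invariance argument. The bar involution and the multiplication on $\dot{\mathbb{K}}_n^{\mathfrak{c},\mathbf{L}}$ are defined through coefficient functions in $\mathcal{R}_2$, $\mathcal{R}_3$ that match the Schur-algebra data only for $p\gg 0$, so I must verify that specializing $\pi,\pi'\mapsto 1$ in $\overline{[{}_pB]^{\mathbf{L}}[{}_pA]^{\mathbf{L}}}=\overline{[{}_pB]^{\mathbf{L}}}\,\overline{[{}_pA]^{\mathbf{L}}}$ is compatible on both sides uniformly in $p$. This is exactly the bookkeeping already carried out in Propositions~\ref{5.1} and~\ref{prop:sta-bar}, so once those are in hand the homomorphism property descends without further difficulty; the order-compatibility of multiplication used in the triangularity step is comparatively routine, being read directly off the multiplication formula.
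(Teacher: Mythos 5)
Your proposal is correct and follows the route the paper intends: the paper states this corollary without proof as an immediate consequence of Theorem~\ref{thm:canL}, precisely because each factor $\{A^{(i)}\}^{\mathbf{L}}$ is bar-invariant, the bar involution is multiplicative with respect to the stabilized product, and the product $\{A^{(1)}\}^{\mathbf{L}}\cdots\{A^{(x)}\}^{\mathbf{L}}$ is unitriangular against the standard basis by the semi-monomial basis property. Your two supplements—checking $\overline{f\circ g}=\overline{f}\circ\overline{g}$ on $\mathbb{S}^{\mathfrak{c},\mathbf{L}}_{n,d}$ and transporting it through the stabilization of Propositions~\ref{5.1} and~\ref{prop:sta-bar}, and the $<_{\mathrm{alg}}$-triangularity of the expansion—are exactly the details the paper leaves implicit, with the order-compatibility of multiplication being the standard BLM-type lemma already underlying Proposition~\ref{monomial basis}.
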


Let $\dot{\mathcal{K}}_n^{\mathfrak{c}}$ be the subalgebra of $\dot{\mathbb{K}}_n^{\mathfrak{c}}$ generated by Chevalley generators $\{[A]~|~A\in\widetilde{\Xi}_n \mbox{~such that~} A-R E^{i,i+1}_\theta \mbox{is diagonal for some $i\in [-r..r]$ and $R\in \mathbb{N}$}\}$. Furthermore, the following multiplication formula in $\dot{\mathcal{K}}_n^{\mathfrak{c}}$ follows directly from Theorem \ref{standard formula} by the stabilization construction. 

\begin{thm}\label{stabilization formula}
Let $A$, $B$, $C\in\widetilde{\Xi}_n$, and $R\in \mathbb{N}$.
\item[(1)] If $\mathrm{row}_{\mathfrak{c}}(A)=\mathrm{col}_{\mathfrak{c}}(B)$ and $B-RE_{\theta}^{i,i+1}$ is diagonal and $i\in [1..r]$, then we have 
\begin{align*}
[B][A]
=&\sum_{t}(q_0q_1)^{\frac{1}{2}\delta_{ir}\sum_{u<r+1}t_u}q^{\Delta_1(t)}\prod\limits_{u\in\mathbb{Z}}
\overline{\left[\begin{array}{c}
a_{iu}+t_u\\
t_u
\end{array}\right]}
[A_{t,i}],
\end{align*}    
where $t=(t_u|u\in\mathbb{Z})$ with $t_u\in\mathbb{N}$ and $\sum_{u\in\mathbb{Z}}t_u=R$ such that 
\begin{equation}\label{sf1}
\left\{
\begin{aligned}
&t_u\leqslant a_{i+1,u},&&\text{if}~i\neq u-1<r,\\
&t_u+t_{-u}\leqslant a_{i+1,u},&&\text{if}~i=r,u\neq r+1.\\
\end{aligned}
\right.
\end{equation}

\item[(2)] If $\mathrm{row}_{\mathfrak{c}}(A)=\mathrm{col}_{\mathfrak{c}}(B)$ and $B-RE_{\theta}^{0,1}$ is diagonal, then we have 
\begin{align*}
[B][A]
=&\sum_{t}(q_0q_1)^{-\frac{1}{2}\sum_{u\leqslant0}t_{u}}q^{\Delta_2(t)}\overline{\bigg(\frac{[a_{00}'+t_0]_{\mathfrak{c}_0}^!}{[a_{00}']_{\mathfrak{c}_0}^![t_0]^!}\prod\limits_{u>0}\frac{[a_{0u}+t_u+t_{-u}]^!}{[a_{0u}]^![t_u]^![t_{-u}]^!}\bigg)}
[A_{t,0}], 
\end{align*}
where $t=(t_u|u\in\mathbb{Z})$ with $t_u\in\mathbb{N}$ and $\sum_{u\in\mathbb{Z}}t_u=R$ such that $t_u\leqslant a_{1u}$ if $u\neq 1$.

\item[(3)] If $\mathrm{row}_{\mathfrak{c}}(A)=\mathrm{col}_{\mathfrak{c}}(C)$ and $C-RE_{\theta}^{i+1,i}$ is diagonal and $i\in [0..r-1]$, then we have 
\begin{align*}
[C][A]
=&\sum_{t}(q_0^{-1}q_1)^{\frac{1}{2}\delta_{i0}\sum_{u>0}t_u}q^{\Delta_3(t)}\prod\limits_{u\in\mathbb{Z}}
\overline{\left[\begin{array}{c}
a_{i+1,u}+t_u\\
t_u
\end{array}\right]}
[\widehat{A}_{t,i}],
\end{align*}
where $t=(t_u|u\in\mathbb{Z})$ with $t_u\in\mathbb{N}$ and $\sum_{u\in\mathbb{Z}}t_u=R$ such that 
\begin{equation}\label{sf3}
\left\{
\begin{aligned}
&t_u\leqslant a_{iu},&&\text{if}~i\neq u>0,\\
&t_u+t_{-u}\leqslant a_{iu},&&\text{if}~i=0,u\neq 0.\\
\end{aligned}
\right.
\end{equation}

\item[(4)]  If $\mathrm{row}_{\mathfrak{c}}(A)=\mathrm{col}_{\mathfrak{c}}(C)$ and $B-RE_{\theta}^{r+1,r}$ is diagonal, then we have 
\begin{align*}
[C][A]
=&\sum_{t}(q_0q_1^{-1})^{\frac{1}{2}\sum_{u\geqslant r+1}t_{u}}q^{\Delta_4(t)}\overline{\bigg(\frac{[a_{r+1,r+1}'+t_{r+1}]_{\mathfrak{c}_d}^!}{[a_{r+1,r+1}']_{\mathfrak{c}_d}^![t_{r+1}]^!}\prod_{u>0}\frac{[a_{r+1,u}+t_u+t_{-u}]^!}{[a_{r+1,u}]^![t_u]^![t_{-u}]^!}\bigg)}
[\widehat{A}_{t,r}].
\end{align*} 
where $t=(t_u|u\in\mathbb{Z})$ with $t_u\in\mathbb{N}$ and $\sum_{u\in\mathbb{Z}}t_u=R$ such that $t_u\leqslant a_{ru}$ if $u\neq r$.
\end{thm}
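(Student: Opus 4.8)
The plan is to obtain the four formulas as the specialization at $\pi=1$ of the stabilized Chevalley formulas, exactly in the manner by which Corollary~\ref{5.3} was deduced from Proposition~\ref{5.1}. Fix $A,B\in\widetilde{\Xi}_n$ with $B-RE_\theta^{i,i+1}$ diagonal and $\mathrm{row}_{\mathfrak{c}}(A)=\mathrm{col}_{\mathfrak{c}}(B)$; for even $p\gg0$ the shifted matrices ${}_pB=B+pI$ and ${}_pA=A+pI$ lie in $\Xi_{n,d}$ for a suitable $d$, so that Theorem~\ref{standard formula}(1) computes $[{}_pB][{}_pA]$ inside $\mathbb{S}^{\mathfrak{c}}_{n,d}$. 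Since the shift $pI$ commutes with the off-diagonal modification $\sum_u t_u(E_\theta^{iu}-E_\theta^{i+1,u})$, one has $({}_pA)_{t,i}={}_p(A_{t,i})$, so the target basis vectors are precisely $[{}_p(A_{t,i})]$, which stabilize to $[A_{t,i}]$ in $\dot{\mathbb{K}}_n^{\mathfrak{c}}$. Carrying out the product of $\dot{\mathbb{K}}_n^{\mathfrak{c}}$ then amounts to writing each coefficient as a function of $\pi=q^{-p}$ lying in $\mathcal{R}_2$ and evaluating at $\pi=1$.

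The first step is to track the $p$-dependence of the index set. The sum in Theorem~\ref{standard formula}(1) runs over $t$ with $\sum_u t_u=R$ subject to $t_u\leqslant({}_pA)_{i+1,u}$ when $i<r$. Off the diagonal, $({}_pA)_{i+1,u}=a_{i+1,u}$ is independent of $p$; at the diagonal position $u=i+1$ one has $({}_pA)_{i+1,i+1}=a_{i+1,i+1}+p$, so the constraint $t_{i+1}\leqslant a_{i+1,i+1}+p$ is vacuous for $p\gg0$ and drops out. This is exactly the passage to the relaxed constraint \eqref{sf1}, and likewise to \eqref{sf3} and to the deletion of $u=1$ and $u=r$ in parts (2) and (4). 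Hence the index set of the sum stabilizes to the one asserted.

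The second step is to identify the stabilized coefficients. The prefactors $(q_0q_1)^{\frac12\delta_{ir}\sum_{u<r+1}t_u}$ (and their analogues) involve no entries of $A$ and are already $p$-free. For $\Delta_1(t)$ evaluated at ${}_pA$, the only $p$-contributions come from $({}_pA)_{ii}$ and $({}_pA)_{i+1,i+1}$: a direct check shows the term $-\sum_u t_u\sum_{j\geqslant u}a_{ij}$ contributes $-p\sum_{u\leqslant i}t_u$ while $+\sum_u t_u\sum_{j>u}a_{i+1,j}$ contributes $+p\sum_{u\leqslant i}t_u$, and these cancel; thus $q^{\Delta_1(t)}$ is genuinely $p$-free and equals the stated power of $q$, and the same cancellation governs $\Delta_2,\Delta_3,\Delta_4$. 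Finally, each quantum binomial $\overline{\left[\begin{smallmatrix}({}_pA)_{iu}+t_u\\ t_u\end{smallmatrix}\right]}$ is $p$-independent for $u\neq i$ and, for $u=i$, is an $\overline{r}^{(1)}$-type element of $\mathcal{R}_2$ in $\pi=q^{-p}$ whose value at $\pi=1$ is $\overline{\left[\begin{smallmatrix}a_{ii}+t_i\\ t_i\end{smallmatrix}\right]}$; the twisted factorials in parts (2) and (4), whose diagonal argument $a_{00}'$ (resp.\ $a_{r+1,r+1}'$) is shifted by $p/2$, are handled identically through the $r^{(2)},\overline{r}^{(2)}$ generators matching $[2k]_{\mathfrak{c}_0}$ (resp.\ the $r^{(3)},\overline{r}^{(3)}$ generators matching $[2k]_{\mathfrak{c}_d}$). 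Collecting these and setting $\pi=1$ reproduces the claimed formulas.

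The main obstacle is the bookkeeping in this last step: one must confirm that after $A\mapsto{}_pA$ every quantum-combinatorial factor genuinely lands in $\mathcal{R}_2$ as a function of $\pi=q^{-p}$ with the correct $\overline{r}^{(\bullet)}$-type generator, and that its value at $\pi=1$ is the undeformed factor in the statement. This is routine but delicate for parts (2) and (4) because of the special integers $[2k]_{\mathfrak{c}_0}$, $[2k]_{\mathfrak{c}_d}$ carrying $q_0,q_1$; matching them against $r^{(2)}$ and $r^{(3)}$ requires care with the half-integer shift of $a_{00}'$ and $a_{r+1,r+1}'$. Once this identification is in place the theorem follows immediately by specializing $\pi=1$, exactly as in the derivation of Corollary~\ref{5.3}.
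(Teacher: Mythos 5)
Your proposal is correct and is exactly the paper's argument: the paper proves Theorem~\ref{stabilization formula} simply by asserting that it "follows directly from Theorem~\ref{standard formula} by the stabilization construction" of Proposition~\ref{5.1} and Corollary~\ref{5.3}, which is precisely the route you take. Your added details — the vacuity of the diagonal constraints for $p\gg 0$ (yielding \eqref{sf1}, \eqref{sf3} and the deleted indices in (2) and (4)), the cancellation of the $p$-contributions in $\Delta_1,\dots,\Delta_4$, and the identification of the diagonal binomials and $\mathfrak{c}_0$-, $\mathfrak{c}_d$-factorials with the $\overline{r}^{(1)}$-, $r^{(2)}$-, $r^{(3)}$-type generators of $\mathcal{R}_2$ before setting $\pi=1$ — are the correct bookkeeping that the paper leaves implicit.
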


\subsection{Stabilization algebra \texorpdfstring{$\dot{\mathbb{K}}_{\mathfrak{n}}^{\jmath\imath}$}{Knji}}\label{sec:5.2}

We set $\mathfrak{n}=n-1=2r+1$.

Let
\begin{gather*}
\Xi_{\mathfrak{n},d}^{\jmath\imath}=\{A\in\Xi_{n,d}~|~\mathrm{row}_{\mathfrak{c}}(A)_{r+1}=0=\mathrm{col}_{\mathfrak{c}}(A)_{r+1}\},\\
\Lambda^{\jmath\imath}=\{\lambda\in\Lambda_{r,d}~|~\lambda_{r+1}=0\}.
\end{gather*}
By a similar argument, the $\jmath\imath$-analog bijection is following below
\begin{equation*}
\kappa^{\jmath\imath}: \bigsqcup\limits_{\lambda,\mu\in\Lambda^{\jmath\imath}}\{\lambda\}\times\mathscr{D}_{\lambda\mu}\times\{\mu\} \rightarrow\Xi_{\mathfrak{n},d}^{\jmath\imath},\qquad (\lambda,g,\mu)\mapsto A=\kappa^{\jmath\imath}(\lambda,g,\mu).    
\end{equation*}

Now we define the $q$-Schur algebra of type $\jmath\imath$ as 
\begin{equation*}
\mathbb{S}_{\mathfrak{n},d}^{\jmath\imath}=\mathrm{End}_{\mathbb{H}}(\mathop{\oplus}\limits_{\lambda\in\Lambda^{\jmath\imath}}x_{\lambda}\mathbb{H})    
\end{equation*}
By definition the algebra $\mathbb{S}_{\mathfrak{n},d}^{\jmath\imath}$ is naturally a subalgebra of $\mathbb{S}_{\mathfrak{n},d}^{\mathfrak{c}}$. Moreover, both $\{e_A~|~A\in\Xi^{\jmath\imath}\}$ and $\{[A]~|~A\in\Xi^{\jmath\imath}\}$ are bases of $\mathbb{S}_{\mathfrak{n},d}^{\jmath\imath}$ as a free $\mathbb{A}$-module.

\begin{proposition}
For each $A\in\Xi_{\mathfrak{n},d}^{\jmath\imath}$, we have $m_A\in\mathbb{S}_{\mathfrak{n},d}^{\jmath\imath}$. Hence, the set $\{m_A~|~A\in\Xi^{\jmath\imath}\}$ forms a $\mathbb{A}$-basis of $\mathbb{S}_{\mathfrak{n},d}^{\jmath\imath}$. Furthermore, we have $m_A\in[A]+\sum\limits_{\Xi^{\jmath\imath}\ni B<_{\mathrm{alg}}A}\mathbb{A}[B]$.   
\end{proposition}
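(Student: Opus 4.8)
The plan is to deduce the statement from its $\mathbb{S}_{n,d}^{\mathfrak{c}}$-counterpart, Proposition~\ref{monomial basis}, the only new ingredient being that the monomial construction never leaves the subalgebra $\mathbb{S}_{\mathfrak{n},d}^{\jmath\imath}$. First I would record the shape of a matrix $A=(a_{ij})\in\Xi_{\mathfrak{n},d}^{\jmath\imath}$. Since $\mathrm{row}_{\mathfrak{c}}(A)_{r+1}=a'_{r+1,r+1}+\sum_{j\leqslant r}a_{r+1,j}=0$ and all entries are non-negative, we must have $a_{r+1,r+1}=1$ (that is $a'_{r+1,r+1}=0$) and $a_{r+1,j}=0$ for all $j\leqslant r$; the dual column condition forces $a_{i,r+1}=0$ for all $i\leqslant r$. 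Thus the $(r+1)$-slot of $A$ carries no off-diagonal mass and has diagonal entry $1$.

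Now recall that \cite[Algorithm 5.15]{FLLLW23} produces the tridiagonal matrices $A^{(1)},\ldots,A^{(x)}$ from the off-diagonal entries of $A$ alone, reconstructing the diagonal entries of each $A^{(i)}$ afterwards from the prescribed row and column sums. Because $A$ has vanishing off-diagonal mass in row and column $r+1$, none of the elementary band-moves performed by the algorithm involves the $(r+1)$-slot, so every intermediate matrix inherits $\mathrm{row}_{\mathfrak{c}}(A^{(i)})_{r+1}=0=\mathrm{col}_{\mathfrak{c}}(A^{(i)})_{r+1}$; equivalently $A^{(i)}\in\Xi_{\mathfrak{n},d}^{\jmath\imath}$ for every $i$. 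Consequently each factor $[A^{(i)}]$ is a homomorphism between summands $x_{\mu}\mathbb{H},x_{\lambda}\mathbb{H}$ with $\lambda,\mu\in\Lambda^{\jmath\imath}$, and hence $m_A=[A^{(1)}]\cdots[A^{(x)}]$ lies in $\mathbb{S}_{\mathfrak{n},d}^{\jmath\imath}$, proving the first assertion.

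For the triangularity, Proposition~\ref{monomial basis} applied in $\mathbb{S}_{n,d}^{\mathfrak{c}}$ already yields $m_A\in[A]+\sum_{B<_{\mathrm{alg}}A}\mathbb{A}[B]$. Since $\{[B]\mid B\in\Xi^{\jmath\imath}\}$ is part of the standard $\mathbb{A}$-basis of $\mathbb{S}_{n,d}^{\mathfrak{c}}$ and $\mathbb{S}_{\mathfrak{n},d}^{\jmath\imath}=\bigoplus_{B\in\Xi^{\jmath\imath}}\mathbb{A}[B]$, the membership $m_A\in\mathbb{S}_{\mathfrak{n},d}^{\jmath\imath}$ forces every coefficient of a $[B]$ with $B\notin\Xi^{\jmath\imath}$ to vanish; therefore $m_A\in[A]+\sum_{\Xi^{\jmath\imath}\ni B<_{\mathrm{alg}}A}\mathbb{A}[B]$. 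Finally the transition matrix between $\{m_A\}$ and $\{[A]\}$, both indexed by $\Xi^{\jmath\imath}$, is unitriangular with respect to $<_{\mathrm{alg}}$, hence invertible over $\mathbb{A}$, so $\{m_A\mid A\in\Xi^{\jmath\imath}\}$ is an $\mathbb{A}$-basis of $\mathbb{S}_{\mathfrak{n},d}^{\jmath\imath}$. The only genuine obstacle is the algorithmic claim in the second paragraph—verifying that \cite[Algorithm 5.15]{FLLLW23} keeps the $(r+1)$-slot untouched; everything else is a formal restriction of the already-established facts for $\mathbb{S}_{n,d}^{\mathfrak{c}}$.
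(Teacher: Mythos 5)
Your first paragraph (the shape of $A$) and your third paragraph (deducing the refined triangularity from membership, then unitriangularity) are fine, but the claim on which everything hangs in your second paragraph --- that \cite[Algorithm 5.15]{FLLLW23} never touches the $(r+1)$-slot, so that \emph{every} tridiagonal factor $A^{(i)}$ lies in $\Xi_{\mathfrak{n},d}^{\jmath\imath}$ --- is not just unverified, it is false. Membership in $\Xi_{\mathfrak{n},d}^{\jmath\imath}$ only constrains row and column $r+1$; it does not forbid entries that \emph{straddle} that slot, e.g. $a_{r,r+2}=a_{r+2,r}>0$ (neither entry sits in row $r+1$ or column $r+1$, so they contribute nothing to $\mathrm{row}_{\mathfrak{c}}(A)_{r+1}$ or $\mathrm{col}_{\mathfrak{c}}(A)_{r+1}$). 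Now look at how such an entry can arise in a product of standard basis elements attached to tridiagonal matrices: by Theorem~\ref{1.15}, multiplication by a tridiagonal $[B]$ replaces $A$ by matrices $A^{(T-S)}=A-(T-S)_{\theta}+(\widehat{T-S})_{\theta}$ with $\mathrm{row}_{\mathfrak{a}}(T)_i=b_{i-1,i}$, i.e.\ it moves mass one row at a time with columns fixed, and a transfer from position $(r+1,r+2)$ to position $(r,r+2)$ occurs only when $b_{r,r+1}\neq0$. But any tridiagonal $B$ with $b_{r,r+1}\neq 0$ has $\mathrm{col}_{\mathfrak{c}}(B)_{r+1}\geqslant b_{r,r+1}>0$, hence $B\notin\Xi_{\mathfrak{n},d}^{\jmath\imath}$. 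Indeed one checks that in a product of tridiagonal matrices all lying in $\Xi_{\mathfrak{n},d}^{\jmath\imath}$, mass can never cross row $r+1$, so no term of such a product ever acquires an entry at $(r,r+2)$. Consequently, for any $A\in\Xi_{\mathfrak{n},d}^{\jmath\imath}$ with a straddling entry, the factors produced by the algorithm \emph{cannot} all lie in $\Xi_{\mathfrak{n},d}^{\jmath\imath}$, and the route you yourself flag as ``the only genuine obstacle'' cannot be completed.

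The good news is that the obstacle is unnecessary, because nothing requires the individual factors to lie in the subalgebra --- only the expansion of the product matters. By Proposition~\ref{monomial basis}, $m_A=[A]+\sum_{B<_{\mathrm{alg}}A}\mathbb{A}[B]$, and by the very definition of $<_{\mathrm{alg}}$ every $B$ occurring there satisfies $\mathrm{row}_{\mathfrak{c}}(B)=\mathrm{row}_{\mathfrak{c}}(A)$ and $\mathrm{col}_{\mathfrak{c}}(B)=\mathrm{col}_{\mathfrak{c}}(A)$; hence $B\in\Xi_{\mathfrak{n},d}^{\jmath\imath}$ automatically whenever $A\in\Xi_{\mathfrak{n},d}^{\jmath\imath}$. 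Since $\mathbb{S}_{\mathfrak{n},d}^{\jmath\imath}=\bigoplus_{B\in\Xi^{\jmath\imath}}\mathbb{A}[B]$ inside $\mathbb{S}_{n,d}^{\mathfrak{c}}$, this single observation yields simultaneously $m_A\in\mathbb{S}_{\mathfrak{n},d}^{\jmath\imath}$ and the refined triangularity $m_A\in[A]+\sum_{\Xi^{\jmath\imath}\ni B<_{\mathrm{alg}}A}\mathbb{A}[B]$, after which your unitriangularity argument gives the basis statement. In other words, your third paragraph already contains essentially the whole proof once you notice that there are simply no terms $[B]$ with $B\notin\Xi^{\jmath\imath}$ to kill; no analysis of the internals of \cite[Algorithm 5.15]{FLLLW23} is needed, and the order of logic should be reversed: triangularity (with the row/column observation) \emph{gives} membership, rather than membership being established first.
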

\begin{proof}
It is similar to the proof of \cite[Theorem 8.6]{FLLLW20}.     
\end{proof}

\begin{thm}
There exists a canonical basis $\dot{\mathfrak{B}}_{n,d}^{\jmath\imath}=\{\{A\}^{\mathbf{L}}~|~A\in\Xi_{\mathfrak{n}}^{\jmath\imath}\}$ for $\mathbb{S}_{\mathfrak{n}}^{\jmath\imath,\mathbf{L}}$ such that $\overline{\{A\}^{\mathbf{L}}}=\{A\}^{\mathbf{L}}$ and $\overline{\{A\}^{\mathbf{L}}}\in[A]^{\mathbf{L}}+\sum\limits_{\Xi^{\jmath\imath}\ni B<_{\mathrm{alg}}A}\boldsymbol{v}^c\mathbb{Z}[\boldsymbol{v}^c][B]^{\mathbf{L}}$. Moreover, we have $\mathfrak{B}_{n,d}^{\jmath\imath}=\mathfrak{B}_{n,d}^{\mathfrak{c}}\cap\mathbb{S}_{\mathfrak{n}}^{\jmath\imath,\mathbf{L}}$.    
\end{thm}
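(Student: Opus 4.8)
The plan is to obtain the canonical basis of $\mathbb{S}_{\mathfrak{n}}^{\jmath\imath,\mathbf{L}}$ by restricting the entire framework of Section~\ref{sec:4.4} to the subalgebra, and then to deduce the compatibility statement from the uniqueness of canonical bases in both algebras. The decisive structural observation is that $\Xi^{\jmath\imath}$ is a lower set for $\leqslant_{\mathrm{alg}}$: since $A\leqslant_{\mathrm{alg}}B$ forces $\mathrm{row}_{\mathfrak{c}}(A)=\mathrm{row}_{\mathfrak{c}}(B)$ and $\mathrm{col}_{\mathfrak{c}}(A)=\mathrm{col}_{\mathfrak{c}}(B)$, the defining conditions $\mathrm{row}_{\mathfrak{c}}(A)_{r+1}=0=\mathrm{col}_{\mathfrak{c}}(A)_{r+1}$ of $\Xi^{\jmath\imath}$ are automatically inherited by every $B<_{\mathrm{alg}}A$. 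Consequently all triangular expansions occurring below will stay indexed by $\Xi^{\jmath\imath}$.

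First I would verify that the bar involution on $\mathbb{S}_{n,d}^{\mathfrak{c},\mathbf{L}}$ restricts to $\mathbb{S}_{\mathfrak{n},d}^{\jmath\imath,\mathbf{L}}$. By construction the bar map sends $\mathrm{Hom}_{\mathbb{H}}(x_{\mu}\mathbb{H},x_{\lambda}\mathbb{H})$ to itself for each pair $(\lambda,\mu)$, hence preserves the direct summand $\bigoplus_{\lambda,\mu\in\Lambda^{\jmath\imath}}\mathrm{Hom}_{\mathbb{H}}(x_{\mu}\mathbb{H},x_{\lambda}\mathbb{H})=\mathbb{S}_{\mathfrak{n},d}^{\jmath\imath,\mathbf{L}}$. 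Thus the restricted involution is well defined, and Proposition~\ref{invariant} yields
\[
\overline{[A]^{\mathbf{L}}}\in[A]^{\mathbf{L}}+\sum_{B<_{\mathrm{alg}}A}\mathbb{A}\,[B]^{\mathbf{L}}
\]
for every $A\in\Xi^{\jmath\imath}$, where by the lower-set property every $B$ on the right lies in $\Xi^{\jmath\imath}$. Hence the bar involution is unitriangular with respect to the standard basis $\{[A]^{\mathbf{L}}\mid A\in\Xi^{\jmath\imath}\}$, relative to the well-founded order $<_{\mathrm{alg}}$ restricted to $\Xi^{\jmath\imath}$.

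With these ingredients in place, the standard Kazhdan--Lusztig--Du recursion (exactly as in Theorem~\ref{thm:canonicalatspe}, following \cite{Du92}) produces a unique bar-invariant family $\{A\}^{\mathbf{L}}$ satisfying $\{A\}^{\mathbf{L}}\in[A]^{\mathbf{L}}+\sum_{\Xi^{\jmath\imath}\ni B<_{\mathrm{alg}}A}\boldsymbol{v}^{c}\mathbb{Z}[\boldsymbol{v}^{c}]\,[B]^{\mathbf{L}}$, which is the asserted basis $\dot{\mathfrak{B}}_{n,d}^{\jmath\imath}$. I would then establish the compatibility $\mathfrak{B}_{n,d}^{\jmath\imath}=\mathfrak{B}_{n,d}^{\mathfrak{c}}\cap\mathbb{S}_{\mathfrak{n}}^{\jmath\imath,\mathbf{L}}$ by playing off the two uniqueness characterizations. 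For $\subseteq$, each $\{A\}^{\mathbf{L}}\in\dot{\mathfrak{B}}_{n,d}^{\jmath\imath}$, viewed inside $\mathbb{S}_{n,d}^{\mathfrak{c},\mathbf{L}}$, remains bar-invariant (the involutions agree by the restriction above) and still satisfies the same $\boldsymbol{v}^{c}\mathbb{Z}[\boldsymbol{v}^{c}]$-unitriangularity against the ambient standard basis, since $\{[B]^{\mathbf{L}}\mid B\in\Xi^{\jmath\imath}\}\subseteq\{[B]^{\mathbf{L}}\mid B\in\Xi_{n,d}\}$; by the uniqueness in Theorem~\ref{thm:canonicalatspe} it therefore coincides with the corresponding element of $\mathfrak{B}_{n,d}^{\mathfrak{c}}$. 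For $\supseteq$, if some $\{A\}^{\mathbf{L}}\in\mathfrak{B}_{n,d}^{\mathfrak{c}}$ lies in $\mathbb{S}_{\mathfrak{n}}^{\jmath\imath,\mathbf{L}}$, then expanding it in the standard basis and comparing with the basis $\{[B]^{\mathbf{L}}\mid B\in\Xi^{\jmath\imath}\}$ of the subalgebra forces its leading term $[A]^{\mathbf{L}}$, hence $A$ itself, to lie in $\Xi^{\jmath\imath}$.

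The main obstacle is the bar-stability of the subalgebra together with the matching of the two bar involutions: everything downstream is a formal consequence of the lower-set property of $\Xi^{\jmath\imath}$ and the uniqueness of canonical bases, so the only genuine verification is that the ambient bar involution restricts to, and agrees with, the intrinsic one on $\mathbb{S}_{\mathfrak{n}}^{\jmath\imath,\mathbf{L}}$. This reduces to the block behaviour of bar on the $\mathrm{Hom}$-decomposition, which I would check directly from its definition $\overline{f}(x_{\mu})=\overline{f(\overline{x_{\mu}})}$ together with $\overline{x_{\mu}}\in\mathbb{A}x_{\mu}$ from Proposition~\ref{bar}.
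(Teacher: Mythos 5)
Your proposal is correct and follows essentially the same route as the paper, whose two-line proof defers the first half to the canonical-basis construction of \S\ref{sec:4.4} restricted to $\jmath\imath$-indices and obtains the second half from the uniqueness characterization, exactly as you do. Your write-up simply makes explicit the two facts the paper leaves implicit: that $\Xi^{\jmath\imath}$ is a lower set for $<_{\mathrm{alg}}$ (so all triangular expansions stay inside the subalgebra) and that the bar involution preserves the $\mathrm{Hom}$-block decomposition, hence restricts to $\mathbb{S}_{\mathfrak{n},d}^{\jmath\imath,\mathbf{L}}$.
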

\begin{proof}
The proof of the first half statement is similar to that in \S\ref{sec:5.1}. The second half statement follows from the uniqueness characterization of the canonical basis $\mathfrak{B}_{\mathfrak{n},d}^{\jmath\imath}$.    
\end{proof}

We define two subsets of $\widetilde{\Xi}_{n}$ as follows:
\begin{equation*}
\widetilde{\Xi}_{n}^{<_{\jmath\imath}}=\{A=(a_{ij})\in\widetilde{\Xi}_n~|~a_{r+1,r+1}<0\},\qquad \widetilde{\Xi}_{n}^{>_{\jmath\imath}}=\{A=(a_{ij})\in\widetilde{\Xi}_n~|~a_{r+1,r+1}>0\}.    
\end{equation*}
For any matrix $A\in\widetilde{\Xi}_n$ and $p\in2\mathbb{N}$, we define
\begin{equation*}
{}_{\check{p}}A=A+p(I-E^{r+1,r+1}).    
\end{equation*}

\begin{lemma}
For $A_1,\ldots,A_f\in\widetilde{\Xi}_n^{>_{\jmath\imath}}$, there exist $Z_i\in\widetilde{\Xi}_n^{>_{\jmath\imath}}$ and $\zeta_i^{\jmath\imath}(q_0,q_1,q,\pi)\in\mathcal{R}_1$ such that for all even integers $p\gg0$, we have an identity in $\mathbb{S}_{n,d}^{\jmath\imath}$ of the form
\begin{equation*}
[{}_{\check{p}}A_1][{}_{\check{p}}A_2]\cdots[{}_{\check{p}}A_f]=\sum_{i=1}^m\zeta_i(q_0,q_1,q,q^{-p})^{\jmath\imath}[{}_{\check{p}}Z_i].    
\end{equation*}    
\end{lemma}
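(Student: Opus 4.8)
The plan is to follow the proof of Proposition~\ref{5.1} almost verbatim, but carried out inside the subalgebra $\mathbb{S}_{n,d}^{\jmath\imath}\subseteq\mathbb{S}_{n,d}^{\mathfrak{c}}$ and with the uniform shift ${}_{p}A=A+pI$ replaced by the $\jmath\imath$-shift ${}_{\check{p}}A=A+p(I-E^{r+1,r+1})$, which leaves the $(r+1,r+1)$-entry untouched. First I would reduce to the case $f=2$ with $B=A_1$ tridiagonal: by the $\jmath\imath$-analogue of Proposition~\ref{monomial basis} recorded just above, each $[{}_{\check{p}}A_i]$ differs from a product of tridiagonal standard basis vectors, all lying in $\widetilde{\Xi}_n^{>_{\jmath\imath}}$, by $\leqslant_{\mathrm{alg}}$-lower terms; an induction on $\psi(A)=\sum_{(i,j)\in I^+}\sigma_{ij}(A)$ together with an induction on $f$ then reduces the whole statement to a single product $[{}_{\check{p}}B][{}_{\check{p}}A]$ with $B$ tridiagonal, exactly as at the end of the proof of Proposition~\ref{5.1}.

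For this base case I would apply the multiplication formula of Theorem~\ref{standard} and substitute the $\jmath\imath$-shift. As before, the index sets $\Theta_{{}_{\check{p}}B,{}_{\check{p}}A}$ and $\Gamma_T$ become independent of $p$ for $p\gg0$, since they are cut out by $\mathrm{row}_{\mathfrak{a}}$-conditions and entrywise inequalities involving only off-diagonal data. One then checks the closure statement $({}_{\check{p}}A)^{(T-S)}={}_{\check{p}}\big(A^{(T-S)}\big)\in\widetilde{\Xi}_n^{>_{\jmath\imath}}$, using that $A^{(T-S)}$ alters $A$ only by off-diagonal rearrangements so the frozen entry $a_{r+1,r+1}$ stays positive. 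The exponents $\beta_0,\beta_1$ are $p$-independent and $\beta$ is affine-linear in $p$, so that $q_0^{\beta_0}q_1^{\beta_1}q^{\beta}\overline{\llbracket {}_{\check{p}}A;S;T\rrbracket}$ becomes, after setting $\pi=q^{-p}$, a single rational function $\zeta_i^{\jmath\imath}(q_0,q_1,q,\pi)$; composing these expansions gives the general $f$ by induction.

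I expect the main obstacle to be verifying that each $\zeta_i^{\jmath\imath}$ lands in the smaller ring $\mathcal{R}_1$ rather than $\mathcal{R}_2$. This is precisely where the $\jmath\imath$-shift is used: since ${}_{\check{p}}$ never increases $a_{r+1,r+1}$, the $\mathfrak{c}_1$-factor $[a_{r+1,r+1}']_{\mathfrak{c}_1}^!$ attached to the node $r+1$ in $\llbracket {}_{\check{p}}A;S;T\rrbracket$ keeps a bounded argument and contributes only a fixed rational function of $q_0,q_1,q$, so no stabilizing factor of $r^{(3)}$-type (nor a barred one) is produced at that node. The remaining growth comes from the still-active $\mathfrak{c}_0$-node at position $0$ and the ordinary nodes $1,\dots,r$; after absorbing the $q^{\beta}$-monomial, whose $-\gamma$-part cancels the bar twist via $\beta+\gamma=\alpha+\hat\ell_{\mathfrak{a}}(A)+\hat\ell_{\mathfrak{a}}(B)-\hat\ell_{\mathfrak{a}}(A^{(T-S)})$, I would check that these factors stabilize to the un-barred generators $r_{a,k}^{(1)}$ and $r_{a,k}^{(2)}$ only. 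The delicate bookkeeping is thus the tracking of $\pi$-powers at the two special nodes; once it confirms that no barred generator survives, each $\zeta_i^{\jmath\imath}\in\mathcal{R}_1$ and the lemma follows.
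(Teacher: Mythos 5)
Your proposal is correct and follows essentially the same route as the paper, whose entire proof is the single remark that one repeats the argument of Proposition~\ref{5.1} with ${}_{p}A$ replaced by ${}_{\check{p}}A$; your reduction to tridiagonal factors via the monomial basis, the $p$-independence of the index sets, the identity $({}_{\check{p}}A)^{(T-S)}={}_{\check{p}}(A^{(T-S)})$, and the observation that the frozen $(r+1,r+1)$-entry prevents $p$-growth at the $\mathfrak{c}_1$-node (which is what lets the coefficients avoid the extra generators) are exactly the details that remark leaves implicit. One minor imprecision: $A^{(T-S)}$ is \emph{not} a purely off-diagonal rearrangement — it can change the $(r+1,r+1)$-entry through $(T-S)_{\theta}$ and $(\widehat{T-S})_{\theta}$ — so positivity of that entry follows instead from the constraint $T_{\theta}\leqslant_e {}_{\check{p}}A$ at that position (which bounds $2t_{r+1,r+1}$ by $a_{r+1,r+1}$, independently of $p$) together with the oddness of the diagonal entries, not from the claim you gave.
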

\begin{proof}
The proof is similar to the proof of Proposition \ref{5.1} where ${}_pA=A+pI$ is used instead of ${}_{\check{p}}A$.    
\end{proof}

Consequently, the vector space $\dot{\mathbb{K}}_{n}^{>_{\jmath\imath}}$ over $\mathbb{A}$ spanned by the symbols $[A]$, for $A\in\widetilde{\Xi}_{\mathfrak{n}}^{>_{\jmath\imath}}$, is a stabilization algebra whose multiplicative structure is given by
\begin{equation*}
[A_1][A_2]\cdots[A_f]=\sum_{i=1}^m\zeta_i^{\jmath\imath}(q_0,q_1,q,1)[Z_i],    
\end{equation*}
where $\mathrm{col}_{\mathfrak{c}}(A_i)=\mathrm{row}_{\mathfrak{c}}(A_{i+1})$ for all $i$, and 0 otherwise.
By arguments entirely analogous to those for Corollary \ref{5.3}, $\dot{\mathbb{K}}_{n}^{>_{\jmath\imath}}$ admits a semi-monomial basis $\{m_A~|~A\in\widetilde{\Xi}_{\mathfrak{n}}^{>_{\jmath\imath}}\}$. Similarly, $\dot{\mathbb{K}}_{n}^{>_{\jmath\imath},\mathbf{L}}$ admits a canonical basis $\dot{\mathfrak{B}}^{\mathfrak{c},>_{\jmath\imath}}$. Let $\dot{\mathbb{K}}_{\mathfrak{n}}^{\jmath\imath}$ be the $\mathbb{A}$-submodule of $\dot{\mathbb{K}}_{n}^{>_{\jmath\imath}}$ generated by $\{[A]~|~A\in\widetilde{\Xi}_{\mathfrak{n}}^{\imath\jmath}\}$, where 
\begin{equation*}
\widetilde{\Xi}_{\mathfrak{n}}^{\jmath\imath}=\{A\in\widetilde{\Xi}_n^{>_{\jmath\imath}}~|~\mathrm{row}_{\mathfrak{c}}(A)_{r+1}=0=\mathrm{col}_{\mathfrak{c}}(A)_{r+1}\}.    
\end{equation*}
So $\dot{\mathbb{K}}_{\mathfrak{n}}^{\jmath\imath}$ is a subalgebra of $\dot{\mathbb{K}}_{n}^{>_{\jmath\imath}}$. Since the bar-involution on $\dot{\mathbb{K}}_{n}^{>_{\jmath\imath}}$ restricts to an involution on $\dot{\mathbb{K}}_{\mathfrak{n}}^{\jmath\imath}$, we reach the following conclusion.

\begin{proposition}
The set $\dot{\mathbb{K}}_{\mathfrak{n}}^{\jmath\imath,\mathbf{L}}\cap\dot{\mathfrak{B}}^{\mathfrak{c},>_{\jmath\imath}}$ forms a stably canonical basis of $\dot{\mathbb{K}}_{\mathfrak{n}}^{\jmath\imath,\mathbf{L}}$.   
\end{proposition}

Let $\dot{\mathcal{K}}_{n}^{>_{\jmath\imath}}$ (resp. $\dot{\mathcal{K}}_{n}^{\jmath\imath}$) be the subalgebra of $\dot{\mathbb{K}}_{n}^{>_{\jmath\imath}}$ (resp. $\dot{\mathbb{K}}_{\mathfrak{n}}^{\jmath\imath}$) generated by Chevalley generators. Furthermore, the following multiplication formula in $\dot{\mathcal{K}}_{n}^{>_{\jmath\imath}}$ follows directly from Theorem \ref{standard formula} by the stabilization construction.
\begin{thm}
Let $A$, $B$, $C\in\widetilde{\Xi}_n^{>_{\jmath\imath}}$, and $R\in \mathbb{N}$. Theorem \ref{stabilization formula} holds in $\dot{\mathcal{K}}_{n}^{>_{\jmath\imath}}$ as well, except that condition $(\ref{sf3})$ is replaced by:
\begin{equation*}
\left\{
\begin{aligned}
&t_u\leqslant a_{iu},&&\text{if}~i\neq u>0,\\
&t_u+t_{-u}\leqslant a_{iu},&&\text{if}~i=0.\\
\end{aligned}
\right.
\end{equation*}
\end{thm}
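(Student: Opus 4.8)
The statement is a stabilization assertion for Chevalley generators, so the plan is to mimic, essentially verbatim, the passage from the finite formulas of Theorem~\ref{standard formula} to the $\mathfrak{c}$-type formulas of Theorem~\ref{stabilization formula}, but with the $\mathfrak{c}$-stabilization ${}_pA=A+pI$ replaced throughout by the $\jmath\imath$-stabilization ${}_{\check{p}}A=A+p(I-E^{r+1,r+1})$. Concretely, for each of the four cases I would take $A,B,C\in\widetilde{\Xi}_n^{>_{\jmath\imath}}$, choose an even $p\gg0$ so that ${}_{\check{p}}A,{}_{\check{p}}B,{}_{\check{p}}C\in\Xi_{\mathfrak{n},d}^{\jmath\imath}$ have honestly nonnegative off-diagonal entries, apply the finite-level identity of Theorem~\ref{standard formula} inside $\mathbb{S}_{\mathfrak{n},d}^{\jmath\imath}\subseteq\mathbb{S}_{n,d}^{\mathfrak{c}}$, and then let $p\to\infty$. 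By the preceding lemma the structure constants are values of elements of $\mathcal{R}_1$ at $\pi=q^{-p}$, hence they stabilize; the product in $\dot{\mathcal{K}}_n^{>_{\jmath\imath}}$ is then read off by the specialization $\pi=1$, exactly as in Corollary~\ref{5.3}. Thus the entire skeleton of the argument is already in place, and the only thing that can change relative to $\dot{\mathcal{K}}_n^{\mathfrak{c}}$ is dictated by the single difference between ${}_{\check{p}}A$ and ${}_pA$.

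The place where this difference surfaces is the range of the summation index $t=(t_u)$. In the finite formulas every constraint has the shape $t_u\leqslant a_{iu}$ or $t_u+t_{-u}\leqslant a_{iu}$, and under a stabilization such a bound becomes vacuous as $p\to\infty$ precisely when the entry $a_{iu}$ is the one shifted by $p$, i.e.\ when it sits on the stabilized diagonal, and it persists otherwise. For ${}_pA=A+pI$ \emph{every} diagonal entry is shifted, which is exactly why each diagonal bound (the term with $u=i$) drops out in Theorem~\ref{stabilization formula}, producing the side conditions $i\neq u$, $u\neq0$, $u\neq r+1$ recorded there. The $\jmath\imath$-stabilization differs in exactly one slot: the $\theta$-fixed $(r+1,r+1)$-entry singled out in ${}_{\check{p}}A=A+p(I-E^{r+1,r+1})$ is left unshifted and stays bounded. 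Consequently the lone diagonal bound attached to that frozen entry does \emph{not} become vacuous and must be retained; this surviving inequality is precisely the restriction reinstated in the replacement of condition~(\ref{sf3}). All remaining bounds are attached to shifted diagonals and disappear just as before, so condition~(\ref{sf1}) and the conditions of cases (2) and (4) are unaffected.

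Two verifications then remain, the second of which I expect to be the genuine obstacle. First, one checks that the coefficient data are untouched: the exponents $\Delta_i(t)$ together with the bar-invariant quantum binomials and factorials are the same Laurent expressions in $q_0,q_1,q$ no matter which diagonal is being grown, so they pass to the limit identically and the $\pi=1$ specialization agrees with Theorem~\ref{stabilization formula}. Second, and more delicately, because the frozen diagonal entry is held fixed rather than sent to infinity, the boundary quantum factorial attached to that node remains an honest finite quantum factorial and must \emph{not} be allowed to collapse into a generic $r_{a,k}^{(\bullet)}$-factor of $\mathcal{R}_1$. The main work is therefore the careful bookkeeping at this frozen node: confirming that the one surviving constraint is exactly the one written in the modified~(\ref{sf3}), that its companion boundary factorial is well defined and independent of $p$, and that reinstating this single bound perturbs none of the other three cases. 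Once this is done the formulas transfer to $\dot{\mathcal{K}}_n^{>_{\jmath\imath}}$ with the stated modification.
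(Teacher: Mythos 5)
Your overall plan --- apply Theorem~\ref{standard formula} to the shifted matrices ${}_{\check{p}}A=A+p(I-E^{r+1,r+1})$, use the $\jmath\imath$-analogue of Proposition~\ref{5.1} to realize the structure constants as values of elements of $\mathcal{R}_1$ at $\pi=q^{-p}$, and read off the product in $\dot{\mathcal{K}}_{n}^{>_{\jmath\imath}}$ at $\pi=1$ --- is exactly the stabilization construction the paper appeals to, and your guiding principle (a bound $t_u\leqslant a_{iu}$ survives the limit precisely when the entry $a_{iu}$ is \emph{not} shifted by $p$) is the right one. The gap is the sentence where you apply this principle: you assert that the bound attached to the frozen $(r+1,r+1)$-entry ``is precisely the restriction reinstated in the replacement of condition~(\ref{sf3})''. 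That identification is false. Condition~(\ref{sf3}) governs case (3), whose constraints involve $a_{iu}$ with row index $i\in[0..r-1]$; the only diagonal entries occurring there are $a_{00},a_{11},\ldots,a_{r-1,r-1}$, and the bound reinstated by the modified condition is $t_u+t_{-u}\leqslant a_{0u}$ at $u=0$, i.e.\ a bound on $a_{00}$. But under ${}_{\check{p}}A=A+p(I-E^{r+1,r+1})$ the entry $a_{00}$ \emph{is} sent to infinity, so by your own principle this bound becomes vacuous in the limit and must not be retained. The bound genuinely attached to the frozen entry $a_{r+1,r+1}$ occurs only in case (1): it is $t_u+t_{-u}\leqslant a_{r+1,u}$ at $i=r$, $u=r+1$, i.e.\ it sits inside condition~(\ref{sf1}), and what the $\jmath\imath$-stabilization forces is the deletion of the exclusion ``$u\neq r+1$'' there.

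Consequently your argument, carried out consistently, proves a theorem in which (\ref{sf1}) is modified and (\ref{sf3}) is left untouched --- precisely the replacement printed in the $\imath\jmath$ subsection --- and it cannot produce the statement as printed here; your closing claim that condition~(\ref{sf1}) is ``unaffected'' fails for the same reason. (The $\imath\imath$ case corroborates this pairing: there both $a_{00}$ and $a_{r+1,r+1}$ are frozen and both conditions are modified, so freezing $a_{r+1,r+1}$ goes with modifying (\ref{sf1}) and freezing $a_{00}$ with modifying (\ref{sf3}); it appears the two replacements were interchanged between the $\jmath\imath$ and $\imath\jmath$ theorems.) To repair the proposal you must swap the identification, record that the surviving inequality is $t_{r+1}+t_{-(r+1)}\leqslant a_{r+1,r+1}$ in case (1), and then confront the fact that this does not match the statement as printed; as written, the proposal papers over the single non-generic point of the whole theorem with an incorrect assertion.
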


\subsection{Stabilization algebra \texorpdfstring{$\dot{\mathbb{K}}_{\mathfrak{n}}^{\imath\jmath}$}{Knij}}

Let
\begin{align*}
\Xi_{\mathfrak{n},d}^{\imath\jmath}&=\{A\in\Xi_{n,d}~|~\mathrm{row}_{\mathfrak{c}}(A)_0=0=\mathrm{col}_{\mathfrak{c}}(A)_0\},\\
\Lambda^{\imath\jmath}&=\{\lambda\in\Lambda_{r,d}~|~\lambda_0=0\}.
\end{align*}
By a similar argument, the $\imath\jmath$-analog bijection is following below
\begin{equation*}
\kappa^{\imath\jmath}: \bigsqcup\limits_{\lambda,\mu\in\Lambda^{\imath\jmath}}\{\lambda\}\times\mathscr{D}_{\lambda\mu}\times\{\mu\} \rightarrow\Xi_{\mathfrak{n},d}^{\imath\jmath},\qquad (\lambda,g,\mu)\mapsto A=\kappa^{\imath\jmath}(\lambda,g,\mu).    
\end{equation*}

Now we define the $q$-Schur algebra of type $\imath\jmath$ as 
\begin{equation*}
\mathbb{S}_{\mathfrak{n},d}^{\imath\jmath}=\mathrm{End}_{\mathbb{H}}(\mathop{\oplus}\limits_{\lambda\in\Lambda^{\imath\jmath}}x_{\lambda}\mathbb{H}).    
\end{equation*}

By definition the algebra $\mathbb{S}_{\mathfrak{n},d}^{\imath\jmath}$ is naturally a subalgebra of $\mathbb{S}_{\mathfrak{n},d}^{\mathfrak{c}}$. Moreover, both $\{e_A~|~A\in\Xi^{\imath\jmath}\}$ and $\{[A]~|~A\in\Xi_{\mathfrak{n},d}^{\imath\jmath}\}$ are bases of $\mathbb{S}_{\mathfrak{n},d}^{\imath\jmath}$ as a free $\mathbb{A}$-module. Similarly to Section \ref{sec:5.2}, we have the following two conclusions.

\begin{proposition}
For each $A\in\Xi_{\mathfrak{n},d}^{\imath\jmath}$, we have $m_A\in\mathbb{S}_{\mathfrak{n},d}^{\imath\jmath}$. Hence, the set $\{m_A~|~A\in\Xi^{\imath\jmath}\}$ forms a $\mathbb{A}$-basis of $\mathbb{S}_{\mathfrak{n},d}^{\imath\jmath}$. Furthermore, we have $m_A\in[A]+\sum\limits_{\Xi^{\imath\jmath}\ni B<_{\mathrm{alg}}A}\mathbb{A}[B]$.   
\end{proposition}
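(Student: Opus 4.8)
The plan is to deduce all three assertions from the already-established monomial basis result Proposition~\ref{monomial basis}, together with the observation that the order $\leqslant_{\mathrm{alg}}$ preserves row and column types; the argument runs parallel to the type $\jmath\imath$ case of Section~\ref{sec:5.2} and to \cite[Theorem 8.6]{FLLLW20}. The conceptual input is that, under the block decomposition $\mathbb{S}_{\mathfrak{n},d}^{\mathfrak{c}}=\bigoplus_{\lambda,\mu\in\Lambda}\mathrm{Hom}_{\mathbb{H}}(x_{\mu}\mathbb{H},x_{\lambda}\mathbb{H})$, the subalgebra $\mathbb{S}_{\mathfrak{n},d}^{\imath\jmath}=\mathrm{End}_{\mathbb{H}}(\oplus_{\lambda\in\Lambda^{\imath\jmath}}x_{\lambda}\mathbb{H})$ is exactly the corner $\bigoplus_{\lambda,\mu\in\Lambda^{\imath\jmath}}\mathrm{Hom}_{\mathbb{H}}(x_{\mu}\mathbb{H},x_{\lambda}\mathbb{H})$, so that a standard basis vector $[C]$ lies in $\mathbb{S}_{\mathfrak{n},d}^{\imath\jmath}$ precisely when $\mathrm{row}_{\mathfrak{c}}(C),\mathrm{col}_{\mathfrak{c}}(C)\in\Lambda^{\imath\jmath}$, i.e. precisely when $C\in\Xi_{\mathfrak{n},d}^{\imath\jmath}$.

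First I would prove the triangularity statement. Proposition~\ref{monomial basis} gives $m_A=[A]+\sum_{B<_{\mathrm{alg}}A}\mathbb{A}[B]$ inside $\mathbb{S}_{\mathfrak{n},d}^{\mathfrak{c}}$, where a priori $B$ ranges over all of $\Xi_{n,d}$. By the very definition of $\leqslant_{\mathrm{alg}}$, however, $B<_{\mathrm{alg}}A$ already forces $\mathrm{row}_{\mathfrak{c}}(B)=\mathrm{row}_{\mathfrak{c}}(A)$ and $\mathrm{col}_{\mathfrak{c}}(B)=\mathrm{col}_{\mathfrak{c}}(A)$. Since $A\in\Xi_{\mathfrak{n},d}^{\imath\jmath}$ means $\mathrm{row}_{\mathfrak{c}}(A)_0=\mathrm{col}_{\mathfrak{c}}(A)_0=0$, every such $B$ has both types in $\Lambda^{\imath\jmath}$ and hence $B\in\Xi_{\mathfrak{n},d}^{\imath\jmath}$. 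This yields $m_A\in[A]+\sum_{\Xi^{\imath\jmath}\ni B<_{\mathrm{alg}}A}\mathbb{A}[B]$, which is the third assertion. In particular every term on the right lies in $\mathbb{S}_{\mathfrak{n},d}^{\imath\jmath}$ by the paragraph above, so $m_A\in\mathbb{S}_{\mathfrak{n},d}^{\imath\jmath}$, giving the first assertion.

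Finally, the expansion just obtained is unitriangular with respect to $\leqslant_{\mathrm{alg}}$ restricted to $\Xi_{\mathfrak{n},d}^{\imath\jmath}$: the coefficient of $[A]$ is $1$ and all other indices $B$ satisfy $B<_{\mathrm{alg}}A$. Since $\{[A]\mid A\in\Xi^{\imath\jmath}\}$ is already known to be an $\mathbb{A}$-basis of $\mathbb{S}_{\mathfrak{n},d}^{\imath\jmath}$, the transition from $\{m_A\mid A\in\Xi^{\imath\jmath}\}$ to it is an invertible (unitriangular) change of basis, so $\{m_A\mid A\in\Xi^{\imath\jmath}\}$ is an $\mathbb{A}$-basis as well, which is the second assertion. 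I do not anticipate a serious obstacle here: the entire substance is packaged in Proposition~\ref{monomial basis}, and the only point genuinely specific to type $\imath\jmath$ is the elementary observation that $\leqslant_{\mathrm{alg}}$ fixes the row and column types, so that the lower-order terms cannot escape $\Xi_{\mathfrak{n},d}^{\imath\jmath}$. Should one instead wish to see the product $m_A=[A^{(1)}]\cdots[A^{(x)}]$ landing in $\mathbb{S}_{\mathfrak{n},d}^{\imath\jmath}$ directly, one notes that the tridiagonal factors from \cite[Algorithm 5.15]{FLLLW23} satisfy $\mathrm{row}_{\mathfrak{c}}(A^{(1)})=\mathrm{row}_{\mathfrak{c}}(A)$ and $\mathrm{col}_{\mathfrak{c}}(A^{(x)})=\mathrm{col}_{\mathfrak{c}}(A)$, whence the composite of $\mathbb{H}$-homomorphisms is supported on the single block with these outer types, once more landing in the corner.
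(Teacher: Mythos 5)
Your proposal is correct and is essentially the paper's own argument: the paper proves this proposition by deferring to the analogous $\jmath\imath$ case (which in turn cites \cite[Theorem 8.6]{FLLLW20}), and the substance of that deferred argument is exactly your combination of Proposition~\ref{monomial basis} with the two observations that $B<_{\mathrm{alg}}A$ forces $\mathrm{row}_{\mathfrak{c}}(B)=\mathrm{row}_{\mathfrak{c}}(A)$, $\mathrm{col}_{\mathfrak{c}}(B)=\mathrm{col}_{\mathfrak{c}}(A)$ (so lower terms cannot leave $\Xi_{\mathfrak{n},d}^{\imath\jmath}$) and that $[B]$ lies in the corner subalgebra $\mathbb{S}_{\mathfrak{n},d}^{\imath\jmath}=\bigoplus_{\lambda,\mu\in\Lambda^{\imath\jmath}}\mathrm{Hom}_{\mathbb{H}}(x_{\mu}\mathbb{H},x_{\lambda}\mathbb{H})$ precisely when $B\in\Xi_{\mathfrak{n},d}^{\imath\jmath}$. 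Your write-up simply makes explicit what the paper leaves as a citation, including the unitriangular change-of-basis step, so nothing is missing.
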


\begin{thm}
There exists a canonical basis $\dot{\mathfrak{B}}_{n,d}^{\imath\jmath}=\{\{A\}^{\mathbf{L}}~|~A\in\Xi_{\mathfrak{n}}^{\imath\jmath}\}$ for $\mathbb{S}_{\mathfrak{n}}^{\imath\jmath,\mathbf{L}}$ such that $\overline{\{A\}^{\mathbf{L}}}=\{A\}^{\mathbf{L}}$ and $\overline{\{A\}^{\mathbf{L}}}\in[A]^{\mathbf{L}}+\sum\limits_{\Xi^{\imath\jmath}\ni B<_{\mathrm{alg}}A}\boldsymbol{v}^c\mathbb{Z}[\boldsymbol{v}^c][B]^{\mathbf{L}}$. Moreover, we have $\mathfrak{B}_{n,d}^{\imath\jmath}=\mathfrak{B}_{n,d}^{\mathfrak{c}}\cap\mathbb{S}_{\mathfrak{n}}^{\imath\jmath,\mathbf{L}}$.    
\end{thm}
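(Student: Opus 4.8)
The plan is to mirror, with node $0$ in place of node $r+1$, the two-step argument already used for the $\jmath\imath$-type algebra in \S\ref{sec:5.2}: first construct the canonical basis of $\mathbb{S}_{\mathfrak{n}}^{\imath\jmath,\mathbf{L}}$ by the Kazhdan--Lusztig--Du procedure of \S\ref{sec:4.4}, and then deduce the compatibility $\mathfrak{B}_{n,d}^{\imath\jmath}=\mathfrak{B}_{n,d}^{\mathfrak{c}}\cap\mathbb{S}_{\mathfrak{n}}^{\imath\jmath,\mathbf{L}}$ from the uniqueness of canonical bases.

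For the existence, the inputs to Du's lemma must be checked inside the subalgebra. The bar involution on $\mathbb{S}_{n,d}^{\mathfrak{c},\mathbf{L}}$ restricts to $\mathbb{S}_{\mathfrak{n}}^{\imath\jmath,\mathbf{L}}$: since the latter is the direct sum $\bigoplus_{\lambda,\mu\in\Lambda^{\imath\jmath}}\mathrm{Hom}_{\mathbb{H}}(x_\mu\mathbb{H},x_\lambda\mathbb{H})$ of a subcollection of the Hom-spaces on which bar is defined componentwise via $\overline{f}(x_\mu)=\overline{f(\overline{x_\mu})}$, and $\overline{x_\mu}\in\mathbb{A}x_\mu$ for $\mu\in\Lambda^{\imath\jmath}$ by Proposition~\ref{bar}, the involution preserves each such summand and hence the subalgebra. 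Next, Proposition~\ref{invariant} applied to $A\in\Xi^{\imath\jmath}$ gives the unitriangularity $\overline{[A]^{\mathbf{L}}}\in[A]^{\mathbf{L}}+\sum_{B<_{\mathrm{alg}}A}\mathbb{A}[B]^{\mathbf{L}}$. The point that makes this an identity \emph{within} the subalgebra is that $\Xi^{\imath\jmath}$ is downward closed under $\leqslant_{\mathrm{alg}}$: the relation $B\leqslant_{\mathrm{alg}}A$ forces $\mathrm{row}_{\mathfrak{c}}(B)=\mathrm{row}_{\mathfrak{c}}(A)$ and $\mathrm{col}_{\mathfrak{c}}(B)=\mathrm{col}_{\mathfrak{c}}(A)$, so the defining constraints $\mathrm{row}_{\mathfrak{c}}(A)_0=0=\mathrm{col}_{\mathfrak{c}}(A)_0$ of $\Xi^{\imath\jmath}$ are inherited by every $B\leqslant_{\mathrm{alg}}A$. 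With these two facts, Du's lemma (\cite[Lemma 3.8]{Du92}, exactly as in \S\ref{sec:4.4}) yields a unique bar-invariant basis $\{A\}^{\mathbf{L}}\in[A]^{\mathbf{L}}+\sum_{\Xi^{\imath\jmath}\ni B<_{\mathrm{alg}}A}\boldsymbol{v}^c\mathbb{Z}[\boldsymbol{v}^c][B]^{\mathbf{L}}$, proving the first half.

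For the compatibility I would argue by uniqueness in both directions. If $A\in\Xi^{\imath\jmath}$, the ambient canonical element $\{A\}^{\mathbf{L}}\in\mathfrak{B}_{n,d}^{\mathfrak{c}}$ equals $[A]^{\mathbf{L}}$ plus a $\boldsymbol{v}^c\mathbb{Z}[\boldsymbol{v}^c]$-combination of $[B]^{\mathbf{L}}$ with $B<_{\mathrm{alg}}A$; downward closure places all these $B$ in $\Xi^{\imath\jmath}$, so this element already lies in $\mathbb{S}_{\mathfrak{n}}^{\imath\jmath,\mathbf{L}}$ and satisfies the characterizing properties of the subalgebra's canonical basis, whence it coincides with the element built above. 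Conversely, any $\{A\}^{\mathbf{L}}\in\mathfrak{B}_{n,d}^{\mathfrak{c}}$ that happens to lie in $\mathbb{S}_{\mathfrak{n}}^{\imath\jmath,\mathbf{L}}$ has leading term $[A]^{\mathbf{L}}$, which must then be one of the standard vectors $\{[B]^{\mathbf{L}}\mid B\in\Xi^{\imath\jmath}\}$ spanning the subalgebra, forcing $A\in\Xi^{\imath\jmath}$. Together these give $\dot{\mathfrak{B}}_{n,d}^{\imath\jmath}=\mathfrak{B}_{n,d}^{\mathfrak{c}}\cap\mathbb{S}_{\mathfrak{n}}^{\imath\jmath,\mathbf{L}}$.

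The verification that bar preserves the subalgebra and the invocation of Du's lemma are entirely routine, being the $\imath\jmath$-mirror of \S\ref{sec:5.2}. The only step demanding a moment's attention, and the conceptual crux of the compatibility claim, is the downward-closure of $\Xi^{\imath\jmath}$ under $\leqslant_{\mathrm{alg}}$: it is precisely this that forces the triangular characterization inside the subalgebra to agree term by term with the one in the ambient algebra, so that the two uniqueness statements match. Everything else is formal.
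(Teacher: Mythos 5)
Your proposal is correct and follows essentially the same route as the paper, which proves this theorem (via the $\jmath\imath$ prototype in \S\ref{sec:5.2}) by running the canonical-basis construction of \S\ref{sec:4.4} inside the subalgebra and deducing the compatibility $\mathfrak{B}_{n,d}^{\imath\jmath}=\mathfrak{B}_{n,d}^{\mathfrak{c}}\cap\mathbb{S}_{\mathfrak{n}}^{\imath\jmath,\mathbf{L}}$ from the uniqueness characterization of canonical bases. Your write-up in fact makes explicit the two facts the paper leaves implicit --- that the bar involution preserves the subalgebra because $\overline{x_\mu}\in\mathbb{A}x_\mu$ for $\mu\in\Lambda^{\imath\jmath}$, and that $\Xi^{\imath\jmath}$ is downward closed under $\leqslant_{\mathrm{alg}}$ since the order fixes $\mathrm{row}_{\mathfrak{c}}$ and $\mathrm{col}_{\mathfrak{c}}$ --- which is exactly what is needed for both halves of the argument to go through.
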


We define two subsets of $\widetilde{\Xi}_n$ as follows:
\begin{equation*}
\widetilde{\Xi}_n^{<_{\imath\jmath}}=\{A=(a_{ij})\in\widetilde{\Xi}_n~|~a_{00}<0\},\qquad \widetilde{\Xi}_n^{>_{\imath\jmath}}=\{A=(a_{ij})\in\widetilde{\Xi}_n~|~a_{00}>0\}.    
\end{equation*}
By arguments entirely analogous to those for Section \ref{sec:5.2}, $\dot{\mathbb{K}}_{n}^{>_{\imath\jmath}}$ admits a semi-monomial basis $\{m_A~|~A\in\widetilde{\Xi}_{\mathfrak{n}}^{>_{\imath\jmath}}\}$. Similarly, $\dot{\mathbb{K}}_{n}^{>_{\imath\jmath},\mathbf{L}}$ admits a canonical basis $\dot{\mathfrak{B}}^{\mathfrak{c},>_{\imath\jmath}}$. Let $\dot{\mathbb{K}}_{\mathfrak{n}}^{\imath\jmath}$ be the $\mathbb{A}$-submodule of $\dot{\mathbb{K}}_n^{>_{\imath\jmath}}$ generated by $\{[A]~|~A\in\widetilde{\Xi}_{\mathfrak{n}}^{\imath\jmath}\}$, where 
\begin{equation*}
\widetilde{\Xi}_{\mathfrak{n}}^{\jmath\imath}=\{A\in\widetilde{\Xi}_n^{>_{\jmath\imath}}~|~\mathrm{row}_{\mathfrak{c}}(A)_0=0=\mathrm{col}_{\mathfrak{c}}(A)_0\}.    
\end{equation*}
So $\dot{\mathbb{K}}_{\mathfrak{n}}^{\imath\jmath}$ is a subalgebra of $\dot{\mathbb{K}}_{n}^{>_{\imath\jmath}}$. Furthermore, we have the following conclusion.
\begin{proposition}
The set $\dot{\mathbb{K}}_{\mathfrak{n}}^{\jmath\imath,\mathbf{L}}\cap\dot{\mathfrak{B}}^{\mathfrak{c},>_{\jmath\imath}}$ forms a stably canonical basis of $\dot{\mathbb{K}}_{\mathfrak{n}}^{\jmath\imath,\mathbf{L}}$.   
\end{proposition}

Define $\dot{\mathcal{K}}_n^{>_{\imath\jmath}}$ (resp. $\dot{\mathcal{K}}_{\mathfrak{n}}^{\imath\jmath}$) as a subalgebra of $\dot{\mathbb{K}}_n^{>_{\imath\jmath}}$ (resp. $\dot{\mathbb{K}}_{\mathfrak{n}}^{\imath\jmath}$) generated by Chevalley generators. Furthermore, the following multiplication formula in $\dot{\mathcal{K}}_n^{>_{\imath\jmath}}$ follows directly from Theorem \ref{standard formula} by the stabilization construction.
\begin{thm}
Let $A$, $B$, $C\in\widetilde{\Xi}_n^{>_{\imath\jmath}}$, and $R\in \mathbb{N}$. Theorem \ref{stabilization formula} holds in $\dot{\mathcal{K}}_{n}^{>_{\imath\jmath}}$ as well, except that condition $(\ref{sf1})$ is replaced by:
\begin{equation*}
\left\{
\begin{aligned}
&t_u\leqslant a_{i+1,u},&&\text{if}~i\neq u-1<r,\\
&t_u+t_{-u}\leqslant a_{i+1,u},&&\text{if}~i=r.\\
\end{aligned}
\right.
\end{equation*}
\end{thm}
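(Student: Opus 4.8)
The plan is to deduce the formula directly from the finite-rank Chevalley multiplication formula of Theorem~\ref{standard formula} by the $\imath\jmath$-stabilization procedure, in complete parallel with the passage from Proposition~\ref{5.1} to Theorem~\ref{stabilization formula} for $\dot{\mathcal{K}}_n^{\mathfrak{c}}$ and with the $\jmath\imath$-construction of \S\ref{sec:5.2}. The only structural difference from the fully-stabilized type $\mathfrak{c}$ is the choice of stabilization map: for type $\imath\jmath$ one holds the distinguished diagonal entry $a_{r+1,r+1}$ fixed, i.e. one uses ${}_{\check p}A = A + p(I - E^{r+1,r+1})$ (the map of \S\ref{sec:5.2} with the roles of the two distinguished nodes $0$ and $r+1$ interchanged) in place of ${}_pA = A + pI$. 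Thus every diagonal entry except $a_{r+1,r+1}$ acquires the summand $p$.

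First I would fix $A,B,C\in\widetilde{\Xi}_n^{>_{\imath\jmath}}$ and choose an even $p\gg0$ so that all entries of ${}_{\check p}A$, ${}_{\check p}B$, ${}_{\check p}C$ are nonnegative; these matrices then lie in some $\Xi_{n,d'}$, and Theorem~\ref{standard formula} evaluates $[{}_{\check p}B][{}_{\check p}A]$ and $[{}_{\check p}C][{}_{\check p}A]$ in the corresponding Schur algebra. Next, exactly as in the stabilization lemma of \S\ref{sec:5.2}, I would track the dependence on $p$ of the exponents $\Delta_k(t)$, of the half-integral powers of $q_0,q_1$, and of the bar-invariant quantum binomials and factorials, and verify that after the substitution $\pi=q^{-p}$ they assemble into elements of the ring $\mathcal{R}_1$; here the binomials sitting on the growing diagonals produce the generators $r_{a,k}^{(i)}$, while the factor attached to the held-fixed position $r+1$ carries no $p$-dependence. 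Specializing $\pi\to1$ (as in Corollary~\ref{5.3}) then yields the stated product in $\dot{\mathcal{K}}_n^{>_{\imath\jmath}}$, with coefficients literally those of Theorem~\ref{stabilization formula}.

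The heart of the argument is the behaviour of the summation range in the limit. In Theorem~\ref{standard formula}(1) the index $t=(t_u)$ satisfies $\sum_u t_u=R$ together with $t_u\leqslant a_{i+1,u}$ for $i<r$ and $t_u+t_{-u}\leqslant a_{i+1,u}$ for $i=r$, over all $u$; each such inequality records the nonnegativity of an entry in the diminished $(i{+}1)$-row of $A_{t,i}$. Under ${}_{\check p}$ an inequality becomes vacuous precisely when its right-hand side sits on a growing diagonal. For $i<r$ this is the entry $a_{i+1,i+1}$ (the term $u=i+1$), which is discarded, giving the exclusion $i\neq u-1$ of the first line of $(\ref{sf1})$ exactly as in type $\mathfrak{c}$, since these middle diagonals grow in both settings. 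For $i=r$ the unique diagonal right-hand side occurs at $u=r+1$, where $-(r+1)\equiv r+1\pmod n$ turns the inequality into $2t_{r+1}\leqslant a_{r+1,r+1}$; this involves the one entry held fixed under ${}_{\check p}$, so it survives the limit and the exclusion $u\neq r+1$ of type $\mathfrak{c}$ must be removed. This is precisely the replacement of $(\ref{sf1})$ asserted in the theorem. The coefficients of case~(1) involve only the $r$-row and the off-diagonal $(r{+}1)$-entries, hence stabilize verbatim as in type $\mathfrak{c}$; and cases~(2)--(4) with their ranges are unaffected, because in cases~(2),(3) every diagonal bound still involves a growing entry (in $\imath\jmath$ the entry $a_{00}$ grows, unlike $a_{r+1,r+1}$), while in case~(4) the held-fixed entry $a_{r+1,r+1}$ is only augmented, so no new bound appears, leaving $(\ref{sf3})$ and the remaining conditions unchanged.

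The main obstacle is the bookkeeping in this last step: one must confirm that holding $a_{r+1,r+1}$ fixed is the \emph{only} change relative to the fully-stabilized type $\mathfrak{c}$, so that exactly the single inequality at $u=r+1$ in case~(1) is reinstated while the exclusions coming from the remaining (growing) diagonals, the coefficient functions, and the output matrices $A_{t,i}$, $\widehat{A}_{t,i}$ all coincide with those of Theorem~\ref{stabilization formula}. Granting the stabilization machinery of \S\ref{sec:5.1}--\S\ref{sec:5.2}, no genuinely new estimate is required, and the proof reduces to this comparison of index sets.
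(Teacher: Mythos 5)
Your overall strategy --- specializing the finite-rank formula of Theorem~\ref{standard formula} at shifted matrices and letting $p\to\infty$ --- is exactly the route the paper intends (its own proof is the one-line assertion that the formula ``follows directly from Theorem~\ref{standard formula} by the stabilization construction''), and your analysis of which inequality survives the limit (namely, the one whose right-hand side is a diagonal entry held fixed by the shift) is sound reasoning. The gap is in your choice of stabilization map, and it is fatal for the statement as printed. The map you display, ${}_{\check p}A=A+p(I-E^{r+1,r+1})$, holds $a_{r+1,r+1}$ fixed; this is the map of \S\ref{sec:5.2} verbatim, and your parenthetical ``with the roles of the two distinguished nodes $0$ and $r+1$ interchanged'' contradicts your own formula, since interchanging the roles would give $A+p(I-E^{00})$. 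For the $\imath\jmath$ setting of the present theorem your map is not available: the hypothesis $A\in\widetilde{\Xi}_n^{>_{\imath\jmath}}$ only guarantees $a_{00}>0$, while $a_{r+1,r+1}$ is an arbitrary odd integer, possibly negative. For such $A$ no even $p$ makes all entries of your ${}_{\check p}A$ nonnegative --- the $(r+1,r+1)$ entry is never raised --- so your very first step (placing ${}_{\check p}A$, ${}_{\check p}B$, ${}_{\check p}C$ in some $\Xi_{n,d'}$ and invoking Theorem~\ref{standard formula}) fails, and the defining inequality $a_{00}>0$ of $\dot{\mathcal{K}}_n^{>_{\imath\jmath}}$ never enters your argument at all.

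If one instead runs your argument with the genuine $\imath\jmath$ analogue of \S\ref{sec:5.2}, namely ${}_{\check p}A=A+p(I-E^{00})$ (which is the map that preserves the held-fixed positive entry $a_{00}$ and makes the restriction to $\widetilde{\Xi}_n^{>_{\imath\jmath}}$ meaningful), then by your own limiting analysis the bound that survives is $2t_0\leqslant a_{00}$ in case (3): the exception should be applied to condition (\ref{sf3}) with the exclusion $u\neq 0$ removed, while in case (1) the entry $a_{r+1,r+1}$ now grows, so the exclusion $u\neq r+1$ in (\ref{sf1}) must be kept. In other words, your method, correctly instantiated, yields the conclusion that the paper prints for the $\jmath\imath$ theorem of \S\ref{sec:5.2}, not the one under review; conversely, what you wrote is in substance a proof of that $\jmath\imath$ statement. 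This mismatch in fact exposes an inconsistency in the paper itself --- the exception clauses of the $\jmath\imath$ and $\imath\jmath$ theorems appear to have been interchanged, as each is compatible only with the other's stabilization map --- but taken at face value your proposal does not prove the statement as printed: the map you use is incompatible with its hypotheses, and replacing it by the correct map changes the conclusion.
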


\subsection{Stabilization algebra \texorpdfstring{$\dot{\mathbb{K}}^{\imath\imath}_{\eta}$}{Knii}}

We set $\eta=n-2=2r$.
Let
\begin{gather*}
\Xi_{\eta,d}^{\imath\imath}=\Xi_{\mathfrak{n}}^{\imath\jmath}\cap\Xi_{\mathfrak{n}}^{\imath\imath}\quad\mbox{and}\quad
\Lambda^{\imath\imath}=\Lambda^{\jmath\imath}\cap\Lambda^{\jmath\imath}.
\end{gather*}
By a similar argument, the $\imath\imath$-analog bijection is following below
\begin{equation*}
\kappa^{\imath\imath}: \bigsqcup\limits_{\lambda,\mu\in\Lambda^{\imath\imath}}\{\lambda\}\times\mathscr{D}_{\lambda\mu}\times\{\mu\} \rightarrow\Xi_{\eta,d}^{\imath\imath},\qquad (\lambda,g,\mu)\mapsto A=\kappa^{\imath\imath}(\lambda,g,\mu).    
\end{equation*}

Now we define the $q$-Schur algebra of type $\imath\imath$ as 
\begin{equation*}
\mathbb{S}_{\eta,d}^{\imath\imath}=\mathrm{End}_{\mathbb{H}}(\mathop{\oplus}\limits_{\lambda\in\Lambda^{\imath\imath}}x_{\lambda}\mathbb{H}).    
\end{equation*}
By definition, the algebra $\mathbb{S}_{\eta,d}^{\imath\imath}$ is naturally a subalgebra of $\mathbb{S}_{\mathfrak{n},d}^{\jmath\imath}$, $\mathbb{S}_{\mathfrak{n},d}^{\imath\jmath}$ and $\mathbb{S}_{n,d}^{\mathfrak{c}}$. Moreover, both $\{e_A~|~A\in\Xi^{\imath\imath}\}$ and $\{[A]~|~A\in\Xi_{\eta,d}^{\imath\imath}\}$ are bases of $\mathbb{S}_{\eta,d}^{\imath\imath}$ as a free $\mathbb{A}$-module. Similarly to Section \ref{sec:5.2}, we have the following two conclusions.

\begin{proposition}
For each $A\in\Xi_{\eta,d}^{\imath\imath}$, we have $m_A\in\mathbb{S}_{\eta,d}^{\imath\imath}$. Hence, the set $\{m_A~|~A\in\Xi^{\imath\imath}\}$ forms a $\mathbb{A}$-basis of $\mathbb{S}_{\eta,d}^{\imath\imath}$. Furthermore, we have $m_A\in[A]+\sum\limits_{\Xi^{\imath\imath}\ni B<_{\mathrm{alg}}A}\mathbb{A}[B]$.   
\end{proposition}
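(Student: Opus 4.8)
The plan is to deduce this statement from the two cases already settled, namely the $\imath\jmath$- and $\jmath\imath$-analogues of the present proposition proved in Section~\ref{sec:5.2} and the immediately preceding subsection, by exploiting the fact that the $\imath\imath$-datum is exactly the common refinement of the other two. Concretely, by definition $\Xi_{\eta,d}^{\imath\imath}=\Xi_{\mathfrak{n},d}^{\imath\jmath}\cap\Xi_{\mathfrak{n},d}^{\jmath\imath}$ and $\Lambda^{\imath\imath}=\Lambda^{\imath\jmath}\cap\Lambda^{\jmath\imath}$, so the three Schur algebras sit inside $\mathbb{S}_{n,d}^{\mathfrak{c}}$ as spans of nested subsets of the standard basis, and the whole argument becomes a book-keeping of which basis indices survive.

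First I would record the algebra-level intersection $\mathbb{S}_{\eta,d}^{\imath\imath}=\mathbb{S}_{\mathfrak{n},d}^{\imath\jmath}\cap\mathbb{S}_{\mathfrak{n},d}^{\jmath\imath}$. This is immediate from the basis picture already noted in the text: each of $\mathbb{S}_{\mathfrak{n},d}^{\imath\jmath}$ and $\mathbb{S}_{\mathfrak{n},d}^{\jmath\imath}$ is the $\mathbb{A}$-span of exactly those standard basis vectors $[A]$ with $A\in\Xi^{\imath\jmath}$, resp. $A\in\Xi^{\jmath\imath}$. Being coordinate submodules with respect to the basis $\{[A]\mid A\in\Xi_{n,d}\}$, their intersection is the span of $\{[A]\mid A\in\Xi^{\imath\jmath}\cap\Xi^{\jmath\imath}\}=\{[A]\mid A\in\Xi^{\imath\imath}\}$, which is precisely $\mathbb{S}_{\eta,d}^{\imath\imath}$.

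Next, the key observation is that the monomial $m_A=[A^{(1)}]\cdots[A^{(x)}]$ produced by \cite[Algorithm 5.15]{FLLLW23} depends only on $A$ and not on the ambient type, so for $A\in\Xi^{\imath\imath}$ it is one fixed element of $\mathbb{S}_{n,d}^{\mathfrak{c}}$ to which both earlier propositions apply. Since $A\in\Xi^{\imath\jmath}$, the $\imath\jmath$-analogue gives $m_A\in\mathbb{S}_{\mathfrak{n},d}^{\imath\jmath}$; since $A\in\Xi^{\jmath\imath}$, the $\jmath\imath$-analogue gives $m_A\in\mathbb{S}_{\mathfrak{n},d}^{\jmath\imath}$. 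Combining with the previous step yields $m_A\in\mathbb{S}_{\eta,d}^{\imath\imath}$, which is the first assertion. Then I would feed this into Proposition~\ref{monomial basis}, which already gives $m_A=[A]+\sum_{B<_{\mathrm{alg}}A}\mathbb{A}[B]$ inside $\mathbb{S}_{n,d}^{\mathfrak{c}}$; expanding $m_A\in\mathbb{S}_{\eta,d}^{\imath\imath}$ against the basis $\{[B]\mid B\in\Xi^{\imath\imath}\}$ forces every $B$ occurring to satisfy $B\in\Xi^{\imath\imath}$, giving $m_A\in[A]+\sum_{\Xi^{\imath\imath}\ni B<_{\mathrm{alg}}A}\mathbb{A}[B]$. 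The resulting transition matrix is unitriangular for $\leqslant_{\mathrm{alg}}$, so $\{m_A\mid A\in\Xi^{\imath\imath}\}$ is an $\mathbb{A}$-basis.

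The only point genuinely needing care—the ``obstacle''—is to confirm that $m_A$ is truly type-independent, so that the two invoked propositions really speak about the same element; this hinges on the algorithm determining $A^{(1)},\dots,A^{(x)}$ from the off-diagonal data of $A$ alone. As an alternative one could argue directly, as in Section~\ref{sec:5.2}, that the algorithm applied to $A\in\Xi^{\imath\imath}$ keeps the $0$th and $(r+1)$th rows and columns trivial, whence each $A^{(i)}\in\Xi^{\imath\imath}$ and the product lands in the subalgebra termwise; but the intersection argument above is cleaner and avoids re-running the algorithm, so I would present that as the main line and relegate the direct verification to a remark.
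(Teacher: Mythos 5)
Your proof is correct, but it follows a genuinely different route from the paper's. The paper disposes of the $\imath\imath$ case by direct analogy: it declares the proof ``similar to Section~\ref{sec:5.2}'', whose $\jmath\imath$ proposition is in turn proved like \cite[Theorem 8.6]{FLLLW20} --- i.e.\ one re-runs the algorithmic/multiplication-formula analysis to check that the product $[A^{(1)}]\cdots[A^{(x)}]$ never leaves the relevant subalgebra. You instead deduce the statement formally from the two variants already established: since $\Xi_{\eta,d}^{\imath\imath}=\Xi_{\mathfrak{n},d}^{\imath\jmath}\cap\Xi_{\mathfrak{n},d}^{\jmath\imath}$ (the paper's displayed definition contains an evident typo, but this is clearly the intended reading) and all three variant algebras are coordinate $\mathbb{A}$-submodules of $\mathbb{S}_{n,d}^{\mathfrak{c}}$ spanned by the corresponding subsets of the standard basis, the intersection of the two submodules is exactly $\mathbb{S}_{\eta,d}^{\imath\imath}$; the element $m_A$ is type-independent by construction, so the $\imath\jmath$ and $\jmath\imath$ propositions place it in both subalgebras, hence in $\mathbb{S}_{\eta,d}^{\imath\imath}$, and the triangularity in $\mathbb{S}_{n,d}^{\mathfrak{c}}$ (Proposition~\ref{monomial basis}) then restricts to indices in $\Xi^{\imath\imath}$ by uniqueness of coordinates, giving both the refined triangularity and the basis claim. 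Your argument buys economy and transparency --- it needs no re-examination of the algorithm and explains why the $\imath\imath$ case is automatic once the other two are known --- at the cost of being logically dependent on the $\imath\jmath$ and $\jmath\imath$ propositions and of being special to this case (the same trick could not have proved either of those two). The paper's direct method is uniform across all variants and self-contained per type. Both are sound; your closing remark that the direct verification could be relegated to an alternative is a fair assessment.
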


\begin{thm}
There exists a canonical basis $\dot{\mathfrak{B}}_{n,d}^{\imath\imath}=\{\{A\}^{\mathbf{L}}~|~A\in\Xi_{\eta}^{\imath\imath}\}$ for $\mathbb{S}_{\eta}^{\imath\imath,\mathbf{L}}$ such that $\overline{\{A\}^{\mathbf{L}}}=\{A\}^{\mathbf{L}}$ and $\overline{\{A\}^{\mathbf{L}}}\in[A]^{\mathbf{L}}+\sum\limits_{\Xi^{\imath\imath}\ni B<_{\mathrm{alg}}A}\boldsymbol{v}^c\mathbb{Z}[\boldsymbol{v}^c][B]^{\mathbf{L}}$. Moreover, we have $\mathfrak{B}_{n,d}^{\imath\imath}=\mathfrak{B}_{n,d}^{\mathfrak{c}}\cap\mathbb{S}_{\eta}^{\imath\imath,\mathbf{L}}$.    
\end{thm}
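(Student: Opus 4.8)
The plan is to reduce the statement to two facts already in hand: the bar-triangularity of the standard basis (Proposition~\ref{invariant}) and the uniqueness characterization of canonical bases used in Theorem~\ref{thm:canonicalatspe}. The conceptual crux, which I would record first, is a downward-closure property of the index set. The subset $\Xi_\eta^{\imath\imath}$ is cut out of $\Xi_{n,d}$ solely by the conditions $\mathrm{row}_{\mathfrak{c}}(A)_0=\mathrm{col}_{\mathfrak{c}}(A)_0=0$ and $\mathrm{row}_{\mathfrak{c}}(A)_{r+1}=\mathrm{col}_{\mathfrak{c}}(A)_{r+1}=0$, which depend only on the row and column data of $A$. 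Since $B<_{\mathrm{alg}}A$ forces $\mathrm{row}_{\mathfrak{c}}(B)=\mathrm{row}_{\mathfrak{c}}(A)$ and $\mathrm{col}_{\mathfrak{c}}(B)=\mathrm{col}_{\mathfrak{c}}(A)$ by the very definition of $<_{\mathrm{alg}}$, the set $\Xi_\eta^{\imath\imath}$ is closed downward: $A\in\Xi_\eta^{\imath\imath}$ and $B<_{\mathrm{alg}}A$ together imply $B\in\Xi_\eta^{\imath\imath}$.

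Granting this, I would first establish existence. Because $\mathbb{S}_\eta^{\imath\imath,\mathbf{L}}$ is spanned by the standard elements $[A]^{\mathbf{L}}$ with $A\in\Xi_\eta^{\imath\imath}$, Proposition~\ref{invariant} applied in the ambient algebra $\mathbb{S}_{n,d}^{\mathfrak{c},\mathbf{L}}$ yields $\overline{[A]^{\mathbf{L}}}\in[A]^{\mathbf{L}}+\sum_{B<_{\mathrm{alg}}A}\mathbb{A}[B]^{\mathbf{L}}$, and the closure property guarantees that every index $B$ appearing here again lies in $\Xi_\eta^{\imath\imath}$. Thus the bar involution restricts to $\mathbb{S}_\eta^{\imath\imath,\mathbf{L}}$ and is unitriangular over the poset $(\Xi_\eta^{\imath\imath},<_{\mathrm{alg}})$. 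The standard Kazhdan--Lusztig--Du argument, exactly as in the proof of Theorem~\ref{thm:canonicalatspe}, then produces the unique bar-invariant solution $\{A\}^{\mathbf{L}}\in[A]^{\mathbf{L}}+\sum_{\Xi_\eta^{\imath\imath}\ni B<_{\mathrm{alg}}A}\boldsymbol{v}^c\mathbb{Z}[\boldsymbol{v}^c][B]^{\mathbf{L}}$, giving the canonical basis $\dot{\mathfrak{B}}_{n,d}^{\imath\imath}$.

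For the compatibility $\mathfrak{B}_{n,d}^{\imath\imath}=\mathfrak{B}_{n,d}^{\mathfrak{c}}\cap\mathbb{S}_\eta^{\imath\imath,\mathbf{L}}$, I would invoke uniqueness in the ambient algebra. For $A\in\Xi_\eta^{\imath\imath}$, the ambient canonical element $\{A\}^{\mathbf{L}}\in\mathbb{S}_{n,d}^{\mathfrak{c},\mathbf{L}}$ expands as $[A]^{\mathbf{L}}$ plus lower terms indexed by $B<_{\mathrm{alg}}A$, all of which lie in $\Xi_\eta^{\imath\imath}$ by closure; hence $\{A\}^{\mathbf{L}}$ already belongs to $\mathbb{S}_\eta^{\imath\imath,\mathbf{L}}$ and satisfies the triangular bar-invariant characterization there, so by uniqueness it agrees with the type-$\imath\imath$ canonical element. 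Conversely, any member of $\mathfrak{B}_{n,d}^{\mathfrak{c}}$ lying in $\mathbb{S}_\eta^{\imath\imath,\mathbf{L}}$ has leading term $[A]^{\mathbf{L}}$ with $A\in\Xi_\eta^{\imath\imath}$ and thus belongs to $\dot{\mathfrak{B}}_{n,d}^{\imath\imath}$. All computations are routine given the earlier results; the one step deserving genuine care is the downward-closure observation, which I regard as the main (if mild) obstacle, since it is precisely what makes both the restriction of the bar involution and the matching of canonical bases go through formally, just as in the $\jmath\imath$ and $\imath\jmath$ variants.
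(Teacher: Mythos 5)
Your proposal is correct and follows essentially the same route as the paper: existence of $\dot{\mathfrak{B}}_{n,d}^{\imath\imath}$ via the bar-unitriangularity of the standard basis (Proposition~\ref{invariant}) together with the Kazhdan--Lusztig/Du argument of Theorem~\ref{thm:canonicalatspe}, and the identity $\mathfrak{B}_{n,d}^{\imath\imath}=\mathfrak{B}_{n,d}^{\mathfrak{c}}\cap\mathbb{S}_{\eta}^{\imath\imath,\mathbf{L}}$ via the uniqueness characterization of canonical bases, exactly as the paper does for the $\jmath\imath$ variant and then invokes by analogy. Your explicit downward-closure observation (that $B<_{\mathrm{alg}}A$ forces $\mathrm{row}_{\mathfrak{c}}(B)=\mathrm{row}_{\mathfrak{c}}(A)$, $\mathrm{col}_{\mathfrak{c}}(B)=\mathrm{col}_{\mathfrak{c}}(A)$, hence $B\in\Xi_{\eta}^{\imath\imath}$) is precisely the point the paper leaves implicit, and it is stated and used correctly.
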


We define two subsets of $\widetilde{\Xi}_n$ as follows:
\begin{equation*}
\widetilde{\Xi}_n^{>_{\imath\imath}}=\{A=(a_{ij})\in\widetilde{\Xi}_n~|~a_{00}>0,a_{r+1,r+1}>0\}.    
\end{equation*}
By arguments entirely analogous to those for Section \ref{sec:5.2}, $\dot{\mathbb{K}}_{n}^{>_{\imath\imath}}$ admits a semi-monomial basis $\{m_A~|~A\in\widetilde{\Xi}_{\eta}^{>_{\imath\imath}}\}$. Similarly, $\dot{\mathbb{K}}_{n}^{>_{\imath\imath},\mathbf{L}}$ admits a canonical basis $\dot{\mathfrak{B}}^{\mathfrak{c},>_{\imath\imath}}$. Let $\dot{\mathbb{K}}_{\eta}^{\imath\imath}$ generated by $\{[A]~|~A\in\widetilde{\Xi}_{\eta}^{\imath\imath}\}$, where 
\begin{equation*}
\widetilde{\Xi}_{\eta}^{\imath\imath}=\widetilde{\Xi}_{\mathfrak{n}}^{\imath\jmath}\cap\widetilde{\Xi}_{\mathfrak{n}}^{\jmath\imath}.   
\end{equation*}
So $\dot{\mathbb{K}}_{\eta}^{\imath\imath}$ is a subalgebra of $\dot{\mathbb{K}}_{n}^{>_{\imath\imath}}$. Furthermore, we have the following conclusion.
\begin{proposition}
The set $\dot{\mathbb{K}}_{\eta}^{\imath\imath,\mathbf{L}}\cap\dot{\mathfrak{B}}^{\mathfrak{c},>_{\imath\imath}}$ forms a stably canonical basis of $\dot{\mathbb{K}}_{\eta}^{\imath\imath,\mathbf{L}}$.   
\end{proposition}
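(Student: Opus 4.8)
The plan is to follow verbatim the strategy already carried out for $\dot{\mathbb{K}}_{\mathfrak{n}}^{\jmath\imath,\mathbf{L}}$ and $\dot{\mathbb{K}}_{\mathfrak{n}}^{\imath\jmath,\mathbf{L}}$: exhibit $\dot{\mathbb{K}}_{\eta}^{\imath\imath,\mathbf{L}}$ as a bar-stable subalgebra of the ambient algebra $\dot{\mathbb{K}}_{n}^{>_{\imath\imath},\mathbf{L}}$ whose index set $\widetilde{\Xi}_{\eta}^{\imath\imath}$ is saturated downward under the partial order $\leqslant_{\mathrm{alg}}$, and then invoke the uniqueness characterization of the canonical basis $\dot{\mathfrak{B}}^{\mathfrak{c},>_{\imath\imath}}$ established for $\dot{\mathbb{K}}_{n}^{>_{\imath\imath},\mathbf{L}}$. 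Since the construction of $\dot{\mathfrak{B}}^{\mathfrak{c},>_{\imath\imath}}$ is obtained by arguments entirely analogous to those of Section~\ref{sec:5.2}, the desired conclusion reduces to two compatibility checks.

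First I would verify that the bar involution on $\dot{\mathbb{K}}_{n}^{>_{\imath\imath},\mathbf{L}}$ restricts to an involution on the subalgebra $\dot{\mathbb{K}}_{\eta}^{\imath\imath,\mathbf{L}}$. Concretely, for $A\in\widetilde{\Xi}_{\eta}^{\imath\imath}$ one must check that in the stabilized bar-expansion furnished by Proposition~\ref{prop:sta-bar} every matrix $T_i$ occurring again lies in $\widetilde{\Xi}_{\eta}^{\imath\imath}=\widetilde{\Xi}_{\mathfrak{n}}^{\imath\jmath}\cap\widetilde{\Xi}_{\mathfrak{n}}^{\jmath\imath}$. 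This is immediate from the fact that the bar map $\overline{[A]^{\mathbf{L}}}$ is defined as an $\mathbb{H}^{\mathbf{L}}$-homomorphism between the same pair of modules $x_{\mu}\mathbb{H}^{\mathbf{L}}$ and $x_{\lambda}\mathbb{H}^{\mathbf{L}}$, so that the row and column types are preserved; hence the four vanishing conditions $\mathrm{row}_{\mathfrak{c}}(A)_0=\mathrm{col}_{\mathfrak{c}}(A)_0=0$ and $\mathrm{row}_{\mathfrak{c}}(A)_{r+1}=\mathrm{col}_{\mathfrak{c}}(A)_{r+1}=0$ carry over to each $T_i$. Next I would observe that $\widetilde{\Xi}_{\eta}^{\imath\imath}$ is saturated downward: if $A\in\widetilde{\Xi}_{\eta}^{\imath\imath}$ and $B<_{\mathrm{alg}}A$, then by the very definition of $\leqslant_{\mathrm{alg}}$ one has $\mathrm{row}_{\mathfrak{c}}(B)=\mathrm{row}_{\mathfrak{c}}(A)$ and $\mathrm{col}_{\mathfrak{c}}(B)=\mathrm{col}_{\mathfrak{c}}(A)$, whence $B$ again satisfies the same four vanishing conditions and $B\in\widetilde{\Xi}_{\eta}^{\imath\imath}$.

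With these two facts in hand, the canonical basis element $\{A\}^{\mathbf{L}}\in[A]^{\mathbf{L}}+\sum_{B<_{\mathrm{alg}}A}\boldsymbol{v}^c\mathbb{Z}[\boldsymbol{v}^c][B]^{\mathbf{L}}$, for $A\in\widetilde{\Xi}_{\eta}^{\imath\imath}$, has all of its lower-order terms indexed by matrices in $\widetilde{\Xi}_{\eta}^{\imath\imath}$; since $\{[B]^{\mathbf{L}}:B\in\widetilde{\Xi}_{\eta}^{\imath\imath}\}$ spans $\dot{\mathbb{K}}_{\eta}^{\imath\imath,\mathbf{L}}$ and $\{A\}^{\mathbf{L}}$ is bar-invariant, it follows that $\{A\}^{\mathbf{L}}\in\dot{\mathbb{K}}_{\eta}^{\imath\imath,\mathbf{L}}$. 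I would then conclude by uniqueness, exactly as in the proof of Theorem~\ref{thm:canL}: the elements $\{A\}^{\mathbf{L}}$ with $A\in\widetilde{\Xi}_{\eta}^{\imath\imath}$ are bar-invariant and satisfy the required $\leqslant_{\mathrm{alg}}$-triangularity inside the free module on $\{[B]^{\mathbf{L}}:B\in\widetilde{\Xi}_{\eta}^{\imath\imath}\}$, hence they form the stably canonical basis of $\dot{\mathbb{K}}_{\eta}^{\imath\imath,\mathbf{L}}$, and comparison with the ambient basis yields precisely $\dot{\mathbb{K}}_{\eta}^{\imath\imath,\mathbf{L}}\cap\dot{\mathfrak{B}}^{\mathfrak{c},>_{\imath\imath}}=\{\{A\}^{\mathbf{L}}:A\in\widetilde{\Xi}_{\eta}^{\imath\imath}\}$. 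The only point requiring genuine care, and thus the main obstacle, is the bar-stability of the first step: the $\imath\imath$ case imposes vanishing conditions at both ends $0$ and $r+1$ simultaneously, so one must ensure that the stabilized bar-expansion (which involves the coefficient ring $\mathcal{R}_2$ and the auxiliary variables $\pi,\pi'$) never produces a matrix violating either condition. This follows uniformly once the row/column-type preservation is established, precisely as in the single-ended cases $\jmath\imath$ and $\imath\jmath$ already treated.
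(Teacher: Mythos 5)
Your proposal is correct and takes essentially the same approach as the paper: the paper's proof of this proposition is implicit, deferring to the $\jmath\imath$ case of Section~\ref{sec:5.2}, where the whole argument is that the bar involution on the ambient algebra $\dot{\mathbb{K}}_{n}^{>_{\imath\imath},\mathbf{L}}$ restricts to the subalgebra and the conclusion then follows from the uniqueness/triangularity characterization of the canonical basis $\dot{\mathfrak{B}}^{\mathfrak{c},>_{\imath\imath}}$. The details you supply — that $\leqslant_{\mathrm{alg}}$ preserves $\mathrm{row}_{\mathfrak{c}}$ and $\mathrm{col}_{\mathfrak{c}}$, so $\widetilde{\Xi}_{\eta}^{\imath\imath}$ is downward saturated inside the ambient index set, giving both bar-stability and the containment of all lower-order terms of $\{A\}^{\mathbf{L}}$ in $\dot{\mathbb{K}}_{\eta}^{\imath\imath,\mathbf{L}}$ — are precisely the points the paper leaves to the reader.
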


Define $\dot{\mathcal{K}}_n^{>_{\imath\imath}}$ (resp. $\dot{\mathcal{K}}_n^{\imath\imath}$) as the subalgebra of $\dot{\mathbb{K}}_n^{>_{\imath\imath}}$ (resp. $\dot{\mathbb{K}}_n^{\imath\imath}$) generated by Chevalley generators. Furthermore, the following multiplication formula in $\dot{\mathcal{K}}_n^{>_{\imath\imath}}$ follows directly from Theorem \ref{standard formula} by the stabilization construction.
\begin{thm}
Let $A$, $B$, $C\in\widetilde{\Xi}_n^{>_{\imath\imath}}$, and $R\in \mathbb{N}$. Theorem \ref{stabilization formula} holds in $\dot{\mathcal{K}}_{n}^{>_{\imath\imath}}$ as well, except that condition $(\ref{sf1})$ and $(\ref{sf3})$ are replaced respectively by: 
\begin{equation*}
\left\{
\begin{aligned}
&t_u\leqslant a_{i+1,u},&&\text{if}~i\neq u-1<r,\\
&t_u+t_{-u}\leqslant a_{i+1,u},&&\text{if}~i=r,\\
\end{aligned}
\right.
\quad\mathrm{and}\quad 
\left\{
\begin{aligned}
&t_u\leqslant a_{iu},&&\text{if}~i\neq u>0,\\
&t_u+t_{-u}\leqslant a_{iu},&&\text{if}~i=0.\\
\end{aligned}
\right.
\end{equation*}
\end{thm}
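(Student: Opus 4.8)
The plan is to obtain the formula by stabilizing the four finite-level Chevalley multiplication formulas of Theorem~\ref{standard formula}, in exact parallel with Corollary~\ref{5.3} and with the construction of Section~\ref{sec:5.2} and its $\imath\jmath$-analogue. The algebra structure on $\dot{\mathbb{K}}_n^{>_{\imath\imath}}$ has already been set up by the arguments entirely analogous to those for Section~\ref{sec:5.2}, and $\dot{\mathcal{K}}_n^{>_{\imath\imath}}$ is by definition its subalgebra generated by the Chevalley elements; hence it suffices to identify the stabilized shape of each of the products $[B][A]$ and $[C][A]$ with $B$, $C$ Chevalley generators of the four types. Thus the entire content of the statement reduces to determining the summation range and checking that the coefficients stabilize to those already recorded in Theorem~\ref{stabilization formula}.

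For the coefficients I would argue exactly as in Proposition~\ref{5.1}, using the stabilization shift appropriate to the $\imath\imath$-setting, namely $A\mapsto A+p(I-E^{00}-E^{r+1,r+1})$. Since $0$ and $r+1$ are the two self-paired diagonal indices, this shift preserves the $\theta$-symmetry and fixes precisely the two endpoint entries $a_{00}$ and $a_{r+1,r+1}$ at their genuine positive values (legitimate because $\widetilde{\Xi}_n^{>_{\imath\imath}}$ requires $a_{00}>0$ and $a_{r+1,r+1}>0$), while sending all other diagonal entries to $+\infty$. The coefficient functions $q_0^{\,\cdot}q_1^{\,\cdot}q^{\Delta_i(t)}$ together with the bar-invariant quantum-binomial and $\mathfrak{c}_0$-, $\mathfrak{c}_d$-factorial factors of Theorem~\ref{standard formula} depend only on off-diagonal data and on the two genuine endpoint diagonal values, all of which are preserved by the shift; they therefore stabilize to the very expressions of Theorem~\ref{stabilization formula}, in the limit encoded by the specialization $\pi\to1$.

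It remains to track the summation range, and this is where the statement differs from the ambient $\mathfrak{c}$-case. In $\dot{\mathbb{K}}_n^{\mathfrak{c}}$ the diagonal entries range over all odd integers, so the sum over $\widetilde{\Theta}_{B,A}$ imposes no upper bound at any diagonal position; this is exactly what produces the exclusions $u\neq r+1$ in the $i=r$ clause of $(\ref{sf1})$ and $u\neq 0$ in the $i=0$ clause of $(\ref{sf3})$, corresponding to the endpoint bounds $t_u+t_{-u}\leqslant a_{i+1,u}$ at $u=r+1$ and $t_u+t_{-u}\leqslant a_{iu}$ at $u=0$. Under the $\imath\imath$-shift the entries $a_{r+1,r+1}$ and $a_{00}$ are kept as genuine positive entries, so both of these endpoint inequalities become active again; one therefore drops precisely the two exclusions $u\neq r+1$ and $u\neq 0$, leaving every other constraint unchanged. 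Because $\widetilde{\Xi}_n^{>_{\imath\imath}}=\widetilde{\Xi}_n^{>_{\imath\jmath}}\cap\widetilde{\Xi}_n^{>_{\jmath\imath}}$, this is literally the conjunction of the single-endpoint modifications already established for the $\imath\jmath$- and $\jmath\imath$-theorems, which is the claimed range.

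The only point demanding care is the simultaneous bookkeeping of both endpoints: one must verify that keeping exactly the two self-paired diagonals $a_{00}$ and $a_{r+1,r+1}$ finite reinstates exactly these two inequalities and no others, and that fixing both diagonals at once introduces no new cancellation among the endpoint coefficients beyond what occurs when a single endpoint is fixed. This is a direct diagonal-entry count, completely parallel to the single-endpoint arguments of Section~\ref{sec:5.2} and its $\imath\jmath$-analogue, so no genuinely new difficulty arises; the main obstacle is purely organizational, namely handling the two endpoints together rather than one at a time.
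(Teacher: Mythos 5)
Your proposal is correct and takes essentially the same route as the paper, which offers no detailed argument but asserts the theorem follows from Theorem~\ref{standard formula} by the stabilization construction "entirely analogous to" Section~\ref{sec:5.2}, i.e.\ with the $\imath\imath$-shift $A\mapsto A+p(I-E^{00}-E^{r+1,r+1})$ fixing both self-paired diagonal entries; your bookkeeping of which endpoint constraints are reinstated (dropping exactly the exclusions $u\neq r+1$ in (\ref{sf1}) and $u\neq 0$ in (\ref{sf3})) matches the statement. One phrase is loose — the coefficients do depend on the shifted diagonal entries (e.g.\ through $\Delta_1(t)$ and the binomial factors), not only on off-diagonal and endpoint data — but this dependence enters solely through powers of $\pi=q^{-p}$ and vanishes at the specialization $\pi=1$, exactly as in Proposition~\ref{5.1}, which your argument does invoke.
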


\section{Quantum symmetric pairs}

\subsection{The quantum symmetric pair \texorpdfstring{$(\mathbb{U},\mathbb{U}^{\mathfrak{c}})$}{(U,Uc)}}

\begin{center}
\begin{tikzpicture}[scale=.4]
\node at (0,0.75) {$0$};
\node at (4,0.75) {$1$};
\node at (12,0.75) {$r-1$};
\node at (16,0.75) {$r$};     
\node at (0,-3.75) {$2r+1$};
\node at (4,-3.75) {$2r$};
\node at (12,-3.75) {$r+2$};
\node at (16,-3.75) {$r+1$}; 
       
\node at (8,0) {$\dots$};
\node at (8,-3) {$\dots$};
    
\foreach \x in {0,2,6,8} 
{\draw[thick,xshift=\x cm] (\x, 0) circle (0.3); 
\draw[thick,xshift=\x cm] (\x, -3) circle (0.3);}

\foreach \x in {0,6}
{\draw[thick,xshift=\x cm] (\x,0) ++(0.5,0) -- +(3,0);
\draw[thick,xshift=\x cm] (\x,-3) ++(0.5,0) -- +(3,0);}
   
\foreach \x in {2,4.5}
{\draw[thick,xshift=\x cm] (\x,0) ++(0.5,0) -- +(2,0);
\draw[thick,xshift=\x cm] (\x,-3) ++(0.5,0) -- +(2,0);}
    
\foreach \x in {0,8}
\draw[thick,xshift=\x cm] (\x,-0.5) -- +(0,-2);
    
\foreach \x in {0,4} 
\draw[thick,<->, blue, bend right=50] (\x+0.3,-2.5) to (\x+0.3,-0.5);
\foreach \x in {12,16} 
\draw[thick,<->, blue, bend left=50] (\x-0.3,-2.5) to (\x-0.3,-0.5);
 
\end{tikzpicture}
\end{center}

\begin{center}
Figure 1: Dynkin diagram of type $A_{2r+1}^{(1)}$ with involution of type $\jmath\jmath\equiv\mathfrak{c}$.
\end{center}

For $i,j\in[0..r]$, we denote the Cartan integers by
\begin{equation*}
c_{i,j}=2\delta_{ij}-\delta_{ij+1}-\delta_{ij-1}.    
\end{equation*}
The quantum affine $\mathfrak{sl}_n$ is the associative algebra $\mathbb{U}=\mathbb{U}(\widehat{\mathfrak{sl}}_n)$ over $\mathbb{F}$ generated by 
\begin{equation*}
E_i,\quad F_i,\quad K_{i}^{\pm1}\quad (0\leqslant i\leqslant n-1),    
\end{equation*}
subject to the following relations: for $0\leqslant i,j\leqslant n-1$,
\begin{gather*}
K_iK_i^{-1}=K_i^{-1}K_i=1,\quad K_iK_j=K_iK_j,\\
K_iE_jK_i^{-1}=q^{c_{i,j}}E_j,\quad K_iF_jK_i^{-1}=q^{c_{i,j}}F_j,\\
E_iF_j-F_jE_i=\delta_{ij}\frac{K_i-K_i^{-1}}{q-q^{-1}},\\
E_i^2E_j+E_jE_i^2=(q+q^{-1})E_iE_jE_i,\quad F_i^2F_j+F_jF_i^2=(q+q^{-1})F_iF_jF_i\quad (|i-j|\equiv1),\\
E_iE_j=E_jE_i,\quad F_iF_j=F_jF_i\quad (i\not\equiv j\pm1),
\end{gather*}
where $i\equiv j$ means $i\equiv j$ (mod $n$).

Let $\mathbb{U}^{\mathfrak{c}}=\mathbb{U}^{\mathfrak{c}}(\widehat{\mathfrak{sl}}_n)$ be the associative algebra over $\mathbb{F}$ generated by
\begin{equation*}
e_i,\quad f_i,\quad k_{i}^{\pm1}\quad (0\leqslant i\leqslant r),    
\end{equation*}
subject to the following relations: for $0\leqslant i,j\leqslant r$,
\begin{gather*}
k_ik_i^{-1}=k_i^{-1}k_i=1,\quad k_ik_j=k_jk_i,\\
k_ie_jk_i^{-1}=q^{c_{i,j}+\delta_{i0}\delta_{j,0}+\delta_{ir}\delta_{j,r}}e_j,\quad k_if_jk_i^{-1}=q^{c_{i,j}+\delta_{i0}\delta_{j,0}+\delta_{ir}\delta_{j,r}}f_j,\\
e_if_j-f_je_i=\delta_{ij}\frac{k_i-k_i^{-1}}{q-q^{-1}}\quad (i,j)\neq(0,0),(r,r),\\
e_i^2e_j+e_je_i^2=(q+q^{-1})e_ie_je_i,\quad f_i^2f_j+f_jf_i^2=(q+q^{-1})f_if_jf_i\quad (|i-j|\equiv1),\\
e_ie_j=e_je_i,\quad f_if_j=f_jf_i\quad (i\neq j\pm1),\\
e_r^2f_r+f_re_r^2=(q+q^{-1})(e_rf_re_r-q_0^{\frac{1}{2}}q_1^{-\frac{1}{2}}q^3e_rk_r-q_0^{-\frac{1}{2}}q_1^{\frac{1}{2}}q^{-3}e_rk_r^{-1}),\\
f_r^2e_r+e_rf_r^2=(q+q^{-1})(f_re_rf_r-q_0^{\frac{1}{2}}q_1^{-\frac{1}{2}}q^3k_rf_r-q_0^{-\frac{1}{2}}q_1^{\frac{1}{2}}q^{-3}k_r^{-1}f_r),\\
e_0^2f_0+f_0e_0^2=(q+q^{-1})(e_0f_0e_0-q_0^{\frac{1}{2}}q_1^{\frac{1}{2}}e_0k_0-q_0^{-\frac{1}{2}}q_1^{-\frac{1}{2}}e_0k_0^{-1}),\\
f_0^2e_0+e_0f_0^2=(q+q^{-1})(f_0e_0f_0-q_0^{\frac{1}{2}}q_1^{\frac{1}{2}}k_0f_0-q_0^{-\frac{1}{2}}q_1^{-\frac{1}{2}}k_0^{-1}f_0).
\end{gather*}

We adopt the following identification for all $i\in\mathbb{Z}$:
\begin{equation*}
E_i=E_{i+n},\quad F_i=F_{i+n},\quad K_i=K_{i+n}.    
\end{equation*}

\begin{proposition}{\cite[Proposition 2.2]{FLLLWW20}}
There are injective $\mathbb{F}$-algebra homomorphisms $\jmath\jmath$: $\mathbb{U}^{\mathfrak{c}}(\widehat{\mathfrak{sl}}_n)\to\mathbb{U}(\widehat{\mathfrak{sl}}_n)$ defined by

\item[(1)] $k_i\mapsto q^{\delta_{i0}-\delta_{ir}}K_iK_{-i-1}^{-1},\quad (0\leqslant i\leqslant r)$,

\item[(2)] $e_i\mapsto E_i+F_{-i-1}K_{i}^{-1},\quad f_i\mapsto E_{-i-1}+F_{i}K_{-i-1}^{-1},\quad (1\leqslant i\leqslant r-1)$,

\item[(3)] $e_0\mapsto E_0+q_0^{-\frac{1}{2}}q_1^{-\frac{1}{2}}F_{-1}K_{0}^{-1},\quad f_0\mapsto E_{-1}+q_0^{\frac{1}{2}}q_1^{\frac{1}{2}}q^{-1}F_{0}K_{-1}^{-1}$,

\item[(4)] $e_r\mapsto E_r+q_0^{-\frac{1}{2}}q_1^{\frac{1}{2}}q^{-1}F_{-r-1}K_{r}^{-1},\quad f_r\mapsto E_{-r-1}+q_0^{\frac{1}{2}}q_1^{-\frac{1}{2}}F_{r}K_{-r-1}^{-1}$.
\end{proposition}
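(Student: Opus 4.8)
The plan is to prove the two assertions separately: that $\jmath\jmath$ is a well-defined $\mathbb{F}$-algebra homomorphism, and that it is injective. For the first, since $\mathbb{U}^{\mathfrak{c}}$ is presented by generators and relations, it suffices to verify that the proposed images of $e_i,f_i,k_i^{\pm 1}$ satisfy every defining relation of $\mathbb{U}^{\mathfrak{c}}$ inside $\mathbb{U}$. I would organize this relation by relation, using throughout the identification $E_i=E_{i+n}$, $F_i=F_{i+n}$, $K_i=K_{i+n}$ and the defining relations of $\mathbb{U}$. The relations $k_ik_i^{-1}=1$ and $k_ik_j=k_jk_i$ are immediate, as each $k_i$ maps into the commutative invertible torus generated by the $K_j$. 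The weight relations $k_ie_jk_i^{-1}=q^{c_{i,j}+\delta_{i0}\delta_{j0}+\delta_{ir}\delta_{jr}}e_j$ and their $f$-analogues reduce to $K_iE_jK_i^{-1}=q^{c_{i,j}}E_j$; here the extra scalar $q^{\delta_{i0}-\delta_{ir}}$ in the image of $k_i$ together with the wrap-around modulo $n$ of the indices $-i-1$ account for the correction exponents $\delta_{i0}\delta_{j0}$ and $\delta_{ir}\delta_{jr}$. The mixed relations $e_if_j-f_je_i=\delta_{ij}\frac{k_i-k_i^{-1}}{q-q^{-1}}$ for $(i,j)\neq(0,0),(r,r)$, the commutativity for $i\neq j\pm 1$, and the ordinary quantum Serre relations for $|i-j|\equiv 1$ away from the distinguished nodes are checked by direct substitution; the essential bookkeeping is to track which of the four indices $i,-i-1,j,-j-1$ are adjacent in the affine Dynkin diagram of type $A_{2r+1}^{(1)}$ depicted in Figure 1.

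The main obstacle is the four deformed relations at the distinguished nodes $i=0$ and $i=r$, for instance $e_r^2f_r+f_re_r^2=(q+q^{-1})(e_rf_re_r-q_0^{\frac{1}{2}}q_1^{-\frac{1}{2}}q^3e_rk_r-q_0^{-\frac{1}{2}}q_1^{\frac{1}{2}}q^{-3}e_rk_r^{-1})$. To verify such a relation I would substitute $e_r\mapsto E_r+q_0^{-\frac{1}{2}}q_1^{\frac{1}{2}}q^{-1}F_{-r-1}K_r^{-1}$ and $f_r\mapsto E_{-r-1}+q_0^{\frac{1}{2}}q_1^{-\frac{1}{2}}F_rK_{-r-1}^{-1}$, expand both sides as $\mathbb{F}$-linear combinations of ordered monomials of $\mathbb{U}$, and compare coefficients. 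The pure $E$-terms reproduce an honest quantum Serre expression in the adjacent generators $E_r,E_{-r-1}$ and cancel; the genuinely new contributions are the cross terms pairing the $E$-part of one generator with the $K$-twisted $F$-part of another, and it is exactly these that collapse to the correction terms $e_rk_r^{\pm 1}$. The precise scalars $q_0^{\pm\frac{1}{2}}q_1^{\mp\frac{1}{2}}q^{\pm 3}$ emerge only after commuting the $K$-factors past the relevant $E$- and $F$-factors, which is why the coefficients in the definition of $\jmath\jmath$ are forced to be what they are. Since the twistings at the two special nodes differ, with $q_0$ and $q_1$ entering asymmetrically in the three-parameter setting, the computations at $i=0$ and $i=r$ must be done separately, and this is the most delicate and error-prone step.

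For injectivity, I would use the triangular decomposition $\mathbb{U}=\mathbb{U}^-\mathbb{U}^0\mathbb{U}^+$ together with a filtration on $\mathbb{U}$ that records the number of $F$-factors in a PBW monomial. Under $\jmath\jmath$ the leading term of $e_i$ is $E_i$ and that of $f_i$ is $E_{-i-1}$, with the remaining summands of strictly higher $F$-degree; equipping $\mathbb{U}^{\mathfrak{c}}$ with a spanning set of ordered monomials in $e_i,f_i,k_i^{\pm1}$ and passing to the associated graded, these monomials map to distinct ordered products of the $E_i$ and torus elements, which are linearly independent by the PBW theorem for $\mathbb{U}$. Hence any nonzero element of $\mathbb{U}^{\mathfrak{c}}$ has nonzero image, giving injectivity, and as a by-product that the chosen monomials form a basis of $\mathbb{U}^{\mathfrak{c}}$. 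Alternatively, one may invoke the general theory of quantum symmetric pairs, in which coideal subalgebra embeddings of this type are known to be injective.
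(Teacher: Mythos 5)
First, a point of comparison: the paper does not prove this proposition at all — it is quoted verbatim, with citation, from \cite{FLLLWW20}, where it ultimately rests on Kolb's theory of quantum symmetric pairs \cite{Ko14}. So your proposal has to be judged on its own merits rather than against an argument in the text. Your plan for the homomorphism part is the standard one and is correct in outline: since $\mathbb{U}^{\mathfrak{c}}$ is given by generators and relations, it suffices to verify each defining relation on the proposed images, and the only serious work is at the nodes $0$ and $r$, where the cross terms between the $E$-parts and the $K$-twisted $F$-parts must collapse to the correction terms involving $e_ik_i^{\pm1}$ and $k_i^{\pm1}f_i$. One small slip: the central scalar $q^{\delta_{i0}-\delta_{ir}}$ in the image of $k_i$ cannot influence any conjugation relation, so it does not ``account for'' the exponents $\delta_{i0}\delta_{j0}+\delta_{ir}\delta_{jr}$; those come solely from the wrap-around of the index $-i-1$ modulo $n$ in the Cartan pairing (e.g. $c_{-i-1,j}=-\delta_{i0}\delta_{j0}-\delta_{ir}\delta_{jr}$ for $i,j\in[0..r]$). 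The scalar is needed instead to normalize relations such as $e_if_i-f_ie_i=(k_i-k_i^{-1})/(q-q^{-1})$.

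The genuine gap is in your injectivity argument. You claim that distinct ordered monomials in $e_i,f_i,k_i^{\pm1}$ map, in the associated graded, to distinct ordered products of the $E_i$'s and torus elements, ``which are linearly independent by the PBW theorem for $\mathbb{U}$.'' That is false: words in the Chevalley generators $E_i$ are not linearly independent — they satisfy the quantum Serre relations, e.g. $E_1^2E_2+E_2E_1^2-(q+q^{-1})E_1E_2E_1=0$ — and a PBW basis consists of ordered products of root vectors, not of arbitrary words in the $E_i$; distinctness of words gives no independence. To repair this you would have to run the Letzter--Kolb filtration argument in full: identify the associated graded of the image subalgebra with $U^+\otimes(\text{torus part})$, prove by a straightening argument from the defining relations that $\mathbb{U}^{\mathfrak{c}}$ is spanned by a set of monomials whose associated-graded images form a basis of that algebra (lifts of a monomial or PBW basis of $U^+$, not arbitrary words), and then conclude injectivity by the filtration comparison. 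Your fallback — invoking the general theory of quantum symmetric pairs — is the legitimate route, and is essentially what \cite{FLLLWW20} and \cite{Ko14} do; but it is not free either, since one must check that the three-parameter presentation used here coincides with Kolb's presentation of the coideal subalgebra for this Satake diagram under an appropriate choice of his parameters, which is exactly the content being delegated to the citation.
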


It turns out $(\mathbb{U},\mathbb{U}^{\mathfrak{c}})$ forms a quantum symmetric pair à la Letzter and Kolb.

\subsection{Isomorphism \texorpdfstring{$\dot{\mathbb{U}}^\mathfrak{c}\simeq\dot{\mathcal{K}}_n^\mathfrak{c}$}{Uc=Knc}}

Following \cite[§6.5]{FLLLW20},we define the modified quantum algebra $\dot{\mathbb{U}}^{\mathfrak{c}}$ from $\mathbb{U}^{\mathfrak{c}}$. Let $\widetilde{\Xi}_n^{\mathrm{diag}}$ be the set of all diagonal matrices in $\widetilde{\Xi}_n$. Denote by $\lambda=\sum_{i}\lambda_iE_{ii}$ a diagonal matrix in $\widetilde{\Xi}_n^{\mathrm{diag}}$. For $\lambda,\lambda'\in\widetilde{\Xi}_n^{\mathrm{diag}}$, we set 
\begin{equation*}
{}_{\lambda}\mathbb{U}^{\mathfrak{c}}_{\lambda'}=\mathbb{U}^{\mathfrak{c}}\bigg/\bigg(\sum_{i=0}^{r}(k_i-q^{\lambda_i-\lambda_{i-1}})\mathbb{U}^{\mathfrak{c}}+\sum_{i=0}^{r}\mathbb{U}^{\mathfrak{c}}(k_i-q^{\lambda'_i-\lambda'_{i-1}})\bigg).    
\end{equation*}
The modified quantum algebra $\dot{\mathbb{U}}^\mathfrak{c}$ is defined by
\begin{equation*}
\dot{\mathbb{U}}^\mathfrak{c}=\bigoplus\limits_{\lambda,\lambda'\in\widetilde{\Xi}_n^{\mathrm{diag}}}{}_{\lambda}\mathbb{U}^{\mathfrak{c}}_{\lambda'}.    
\end{equation*}
Let $1_{\lambda}=p_{\lambda,\lambda}(1)$, where $p_{\lambda,\lambda}:~\mathbb{U}^{\mathfrak{c}}\to{}_{\lambda}\mathbb{U}^{\mathfrak{c}}_{\lambda}$ is the canonical projection. Thus, the unit of $\mathbb{U}^{\mathfrak{c}}$ is replaced by a collection of orthogonal idempotents $1_{\lambda}$ in $\dot{\mathbb{U}}^\mathfrak{c}$. It is clear that
\begin{equation*}
\dot{\mathbb{U}}^\mathfrak{c}=\sum_{\lambda\in\widetilde{\Xi}_n^{\mathrm{diag}}}\mathbb{U}^{\mathfrak{c}}1_{\lambda}=\sum_{\lambda\in\widetilde{\Xi}_n^{\mathrm{diag}}}1_{\lambda} \mathbb{U}^{\mathfrak{c}}.   
\end{equation*}
For $\lambda\in\widetilde{\Xi}_n^{\mathrm{diag}}$ and $i\in[0..r]$, we use the following short-hand notations:
\begin{equation*}
\lambda+\alpha_i=\lambda+E_{ii}^{\theta}-E_{i+1,i+1}^{\theta},\quad\lambda-\alpha_i=\lambda-E_{ii}^{\theta}+E_{i+1,i+1}^{\theta}.    
\end{equation*}
We also define, for $r\in\mathbb{N}$,
\begin{equation*}
\llbracket r\rrbracket=\frac{q^r-q^{-r}}{q-q^{-1}}.    
\end{equation*}
We have a presentation of $\dot{\mathbb{U}}^\mathfrak{c}$ as a $\mathbb{F}$-algebra generated by the symbols, for $i\in[0..r]$, $\lambda\in\widetilde{\Xi}_n^{\mathrm{diag}}$,
\begin{equation*}
1_{\lambda},\quad e_i1_{\lambda},\quad 1_{\lambda}e_i,\quad f_i1_{\lambda},\quad 1_{\lambda}f_i,     
\end{equation*}
subject to the following relations, for $i,j\in[0..r]$, $\lambda,\mu\in\widetilde{\Xi}_n^{\mathrm{diag}}$, $x,y\in\{1,e_i,e_j,f_i,f_j\}$:
\begin{gather*}
x1_{\lambda}1_{\mu}y=\delta_{\lambda,\mu}x1_{\lambda}y,\\
e_i1_{\lambda}=1_{\lambda+\alpha_i}e_i,\quad f_i1_{\lambda}=1_{\lambda-\alpha_i}f_i,\\
e_i1_{\lambda}f_j=f_j1_{\lambda+\alpha_i+\alpha_j}e_i\quad(i\neq j),\\
(e_if_i-f_ie_i)1_{\lambda}=\llbracket \lambda_i-\lambda_{i+1}\rrbracket1_{\lambda}\quad(i\neq0,r),\\
(e_i^2e_j+e_je_i^2)1_{\lambda}=\llbracket 2\rrbracket e_ie_je_i1_{\lambda},\quad (f_i^2f_j+f_jf_i^2)1_{\lambda}=\llbracket 2\rrbracket f_if_jf_i1_{\lambda}\quad (|i-j|=1),\\
e_ie_j1_{\lambda}=e_je_i1_{\lambda},\quad f_if_j1_{\lambda}=f_jf_i1_{\lambda}\quad(i\neq j\pm1),\\
(\llbracket 2\rrbracket e_rf_re_r-e_r^2f_r-f_re_r^2)1_{\lambda}=\llbracket 2\rrbracket(q_0^{-\frac{1}{2}}q_1^{\frac{1}{2}}q^{\lambda_{r+1}-\lambda_r-3}+q_0^{\frac{1}{2}}q_1^{-\frac{1}{2}}q^{-\lambda_{r+1}+\lambda_r+3})e_r1_{\lambda},\\
(\llbracket 2\rrbracket f_re_rf_r-f_r^2e_r-e_rf_r^2)1_{\lambda}=\llbracket 2\rrbracket(q_0^{-\frac{1}{2}}q_1^{\frac{1}{2}}q^{\lambda_{r+1}-\lambda_r}+q_0^{\frac{1}{2}}q_1^{-\frac{1}{2}}q^{-\lambda_{r+1}+\lambda_r})f_r1_{\lambda},\\
(\llbracket 2\rrbracket e_0f_0e_0-e_0^2f_0-f_0e_0^2)1_{\lambda}=\llbracket 2\rrbracket(q_0^{\frac{1}{2}}q_1^{\frac{1}{2}}q^{\lambda_{0}-\lambda_1}+q_0^{-\frac{1}{2}}q_1^{-\frac{1}{2}}q^{-\lambda_{0}+\lambda_1})e_01_{\lambda},\\
(\llbracket 2\rrbracket f_0e_0f_0-f_0^2e_0-e_0f_0^2)1_{\lambda}=\llbracket 2\rrbracket(q_0^{\frac{1}{2}}q_1^{\frac{1}{2}}q^{\lambda_{0}-\lambda_1-3}+q_0^{-\frac{1}{2}}q_1^{-\frac{1}{2}+3}q^{-\lambda_{0}+\lambda_1+3})f_01_{\lambda}.
\end{gather*}
Here and below, we shall always write $x_11_{\lambda^1}x_21_{\lambda^2}\cdots x_k1_{\lambda^k}=x_1x_2\cdots x_k1_{\lambda^k}$, if the product is not zero; in this case, such $\lambda^1,\lambda^2,\ldots,\lambda^{k-1}$ are all uniquely determined by $\lambda^k$.

For all $i\in[0..r]$, $\lambda\in\widetilde{\Xi}_n^{\mathrm{diag}}$, write
\begin{equation*}
\mathbf{e}_i1_{\lambda}=[\lambda-E_{i+1,i+1}^{\theta}+E_{i,i+1}^{\theta}]\in\dot{\mathcal{K}}_n^{\mathfrak{c}}\quad\text{and}\quad \mathbf{f}_i1_{\lambda}=[\lambda-E_{ii}^{\theta}+E_{i+1,i}^{\theta}]\in\dot{\mathcal{K}}_n^{\mathfrak{c}}.
\end{equation*}
Set ${}_\mathbb{F}\dot{\mathcal{K}}_n^{\mathfrak{c}}=\mathbb{F}\otimes_{\mathbb{A}}\dot{\mathcal{K}}_n^{\mathfrak{c}}$.

\begin{thm} \label{thm:isoUK}
There is an isomorphism of $\mathbb{F}$-algebras $\aleph:~\dot{\mathbb{U}^{\mathfrak{c}}}\to{}_\mathbb{F}\dot{\mathcal{K}}_n^{\mathfrak{c}}$ such that, $\forall i\in[0..r]$, $\lambda\in\widetilde{\Xi}_n^{\mathrm{diag}}$,
\begin{equation*}
e_i1_{\lambda}\mapsto\mathbf{e}_i1_{\lambda},\quad f_i1_{\lambda}\mapsto\mathbf{f}_i1_{\lambda},\quad 1_{\lambda}\mapsto[\lambda].    
\end{equation*}
\end{thm}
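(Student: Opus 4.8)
The plan is to produce $\aleph$ as a well-defined $\mathbb{F}$-algebra homomorphism, then deduce surjectivity (which is immediate) and injectivity (the main obstacle) by matching monomial bases on the two sides. First I would take the presentation of $\dot{\mathbb{U}}^{\mathfrak{c}}$ recorded above and check that the images $\mathbf{e}_i1_\lambda = [\lambda - E_{i+1,i+1}^{\theta}+E_{i,i+1}^{\theta}]$, $\mathbf{f}_i1_\lambda = [\lambda - E_{ii}^{\theta}+E_{i+1,i}^{\theta}]$, $1_\lambda \mapsto [\lambda]$ satisfy every defining relation inside ${}_\mathbb{F}\dot{\mathcal{K}}_n^{\mathfrak{c}}$. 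The idempotent relations $x1_\lambda 1_\mu y = \delta_{\lambda\mu}x1_\lambda y$ and the weight relations $e_i1_\lambda = 1_{\lambda+\alpha_i}e_i$, $f_i1_\lambda = 1_{\lambda-\alpha_i}f_i$ follow directly from how $\mathrm{row}_{\mathfrak{c}}$ and $\mathrm{col}_{\mathfrak{c}}$ transform under the multiplication formulas of Theorem~\ref{stabilization formula}. The interior commutation and quantum Serre relations (for $i\neq 0,r$) involve only the parameter $q$, so they follow from parts (1) and (3) of Theorem~\ref{stabilization formula} exactly as in the equal-parameter computation of \cite{FLLLW20}.

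The computational heart is the four $\imath$-Serre relations attached to $e_0,f_0,e_r,f_r$, which I would verify against the boundary formulas in parts (2) and (4) of Theorem~\ref{stabilization formula}, the ones carrying $q_0$ and $q_1$. Concretely I would expand $(\llbracket 2\rrbracket e_rf_re_r - e_r^2f_r - f_re_r^2)1_\lambda$ as an $\mathbb{F}$-linear combination of standard basis elements $[A]$ and check that, after collecting the half-integer powers of $q_0$ and $q_1$, the result collapses to $\llbracket 2\rrbracket(q_0^{-\frac{1}{2}}q_1^{\frac{1}{2}}q^{\lambda_{r+1}-\lambda_r-3}+q_0^{\frac{1}{2}}q_1^{-\frac{1}{2}}q^{-\lambda_{r+1}+\lambda_r+3})\mathbf{e}_r1_\lambda$, and similarly for the other three relations. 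The genuine three-parameter bookkeeping — the survival of $q_0^{\pm\frac12}q_1^{\pm\frac12}$ in these coefficients — is exactly the feature absent in the single-parameter case, and I expect this matching of boundary coefficients to be the most delicate part of the relation check. Once all relations hold, $\aleph$ is a homomorphism, and surjectivity is automatic since $\dot{\mathcal{K}}_n^{\mathfrak{c}}$ is by definition generated by the Chevalley generators $\mathbf{e}_i1_\lambda$, $\mathbf{f}_i1_\lambda$ and the idempotents $[\lambda]$, which are precisely the images of the generators of $\dot{\mathbb{U}}^{\mathfrak{c}}$.

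For injectivity I would argue with monomials. By Proposition~\ref{monomial basis} and the monomial-basis statement following Theorem~\ref{thm:canL}, each $A\in\widetilde{\Xi}_n$ gives a monomial $m_A$ in the Chevalley generators with $m_A \in [A] + \sum_{B<_{\mathrm{alg}}A}\mathbb{A}[B]$; the triangularity with respect to $\leqslant_{\mathrm{alg}}$ shows $\{m_A\}$ is a basis of ${}_\mathbb{F}\dot{\mathcal{K}}_n^{\mathfrak{c}}$. Each $m_A$ is the $\aleph$-image of the corresponding monomial $\widetilde{m}_A$ in the generators of $\dot{\mathbb{U}}^{\mathfrak{c}}$. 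If I can show that the $\widetilde{m}_A$ already span $\dot{\mathbb{U}}^{\mathfrak{c}}$, then injectivity is formal: writing any $v\in\ker\aleph$ as $v=\sum c_A\widetilde{m}_A$ and applying $\aleph$ gives $\sum c_A m_A = 0$, whence all $c_A=0$ and $v=0$.

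Thus the real obstacle is the straightening result for $\dot{\mathbb{U}}^{\mathfrak{c}}$: that an arbitrary monomial in $e_i,f_i$ applied to an idempotent $1_\lambda$ can be rewritten, using only the defining relations, as an $\mathbb{F}$-combination of the chosen $\widetilde{m}_A$. I would obtain this by the same inductive reduction as in the equal-parameter treatment \cite{FLLLW20}, the one subtlety being that the rewriting must now respect the three-parameter $\imath$-Serre relations established in the first step. Granting that straightening, $\{\widetilde{m}_A\}$ spans, its $\aleph$-image is a basis, and $\aleph$ is therefore an isomorphism of $\mathbb{F}$-algebras.
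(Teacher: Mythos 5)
Your homomorphism step (checking the presentation of $\dot{\mathbb{U}}^{\mathfrak{c}}$ against the formulas of Theorem~\ref{stabilization formula}, with the three-parameter $\imath$-Serre relations at the nodes $0$ and $r$ as the delicate part) is exactly what the paper does, and surjectivity is indeed automatic. The injectivity argument, however, has a genuine gap, and it occurs at the very first claim: the monomials $m_A$ ($A\in\widetilde{\Xi}_n$) of Proposition~\ref{monomial basis} and of the corollary following Theorem~\ref{thm:canL} are \emph{not} monomials in the Chevalley generators. Each $m_A$ is a product $[A^{(1)}]\cdots[A^{(x)}]$ of \emph{tridiagonal} (semisimple) generators, and in affine type these generate the full stabilization algebra $\dot{\mathbb{K}}_n^{\mathfrak{c}}$, of which the Chevalley subalgebra $\dot{\mathcal{K}}_n^{\mathfrak{c}}$ is a \emph{proper} subalgebra --- this is precisely Lusztig's aperiodicity phenomenon emphasized in the introduction, and the reason the paper works with semisimple rather than Chevalley generators in the first place. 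So $\{m_A\}_{A\in\widetilde{\Xi}_n}$ is a basis of $\dot{\mathbb{K}}_n^{\mathfrak{c}}$, not of ${}_\mathbb{F}\dot{\mathcal{K}}_n^{\mathfrak{c}}$; most of these elements do not lie in $\dot{\mathcal{K}}_n^{\mathfrak{c}}$ at all, and their purported preimages $\widetilde{m}_A$ in $\dot{\mathbb{U}}^{\mathfrak{c}}$ cannot even be defined. Consequently the "formal" kernel argument collapses, and the deferred straightening result (that Chevalley monomials span $\dot{\mathbb{U}}^{\mathfrak{c}}$) is not a routine adaptation --- it is essentially the whole difficulty, and it is not how the single-parameter case is settled either.

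What the paper actually does is avoid any straightening inside $\dot{\mathbb{U}}^{\mathfrak{c}}$ by a transfer argument, one weight at a time. It restricts to the idempotent $1_{\epsilon_{0,r+1}}$, identifies $\dot{\mathbb{U}}^{\mathfrak{c}}1_{\epsilon_{0,r+1}}$ with $\mathbb{U}^-D_{\epsilon_{0,r+1}}$ via the quantum-symmetric-pair structure (as in \cite[Theorem 4.7]{BKLW18}), imports the Chevalley semi-monomial basis of ${}_{\mathcal{A}}\mathbb{U}^-D_{\epsilon_{0,r+1}}$ indexed by $\widetilde{\Theta}^-$ from the type A isomorphism $\aleph^{\mathfrak{a}}$ of \cite{DF15} together with \cite{LL17}, and observes that $\aleph$ carries this basis onto the corresponding semi-monomial basis of ${}_\mathbb{Q}\dot{\mathcal{K}}^{\mathfrak{c}}_n[\epsilon_{0,r+1}]$ indexed by $\widetilde{\Xi}^-$ (here the index sets $\widetilde{\Theta}^-\leftrightarrow\widetilde{\Xi}^-$ match because column sums are fixed). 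Bijectivity on this weight space then propagates to every $\lambda\in\widetilde{\Xi}_n^{\mathrm{diag}}$ through the shift isomorphisms $\sharp_\lambda$. If you want to salvage your approach, you would need a Chevalley-monomial basis of $\dot{\mathcal{K}}_n^{\mathfrak{c}}$ with a correctly restricted index set together with a spanning statement upstairs; both are exactly what the type A plus QSP transfer supplies, so you should expect to need those external inputs rather than an internal inductive reduction.
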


\begin{proof}
Here we only show the details for
\begin{equation*}
(\llbracket 2\rrbracket f_re_rf_r-f_r^2e_r-e_rf_r^2)1_{\lambda}=\llbracket 2\rrbracket(q_0^{-\frac{1}{2}}q_1^{\frac{1}{2}}q^{\lambda_{r+1}-\lambda_r}+q_0^{\frac{1}{2}}q_1^{-\frac{1}{2}}q^{-\lambda_{r+1}+\lambda_r})f_r1_{\lambda}    
\end{equation*}
regarding $\mathbf{e}_r1_{\lambda}$ and $\mathbf{f}_r1_{\lambda}$ as follows:
\begin{align*}
\mathbf{e}_r\mathbf{f}_r^2=&q_0^{-1}q_1q^{\lambda_r+2\lambda_{r+1}-2}[2]\big(\mathbf{e}_{\lambda-2E_{rr}^{\theta}-E_{r+1,r+1}^{\theta}+E_{r,r+1}^{\theta}+2E_{r+1,r}^{\theta}}+e_{\lambda-2E_{rr}^{\theta}+E_{r,r+2}^{\theta}+E_{r+1,r}^{\theta}}\\
&+[\lambda_r-1]e_{\lambda-E_{rr}^{\theta}+E_{r+1,r}^{\theta}}\big),\\
\mathbf{f}_r^2\mathbf{e}_r=&q_0^{-1}q_1q^{\lambda_r+2\lambda_{r+1}-4}[2]\big(e_{\lambda-2E_{rr}^{\theta}-E_{r+1,r+1}^{\theta}+E_{r,r+1}^{\theta}+2E_{r+1,r}^{\theta}}\\
&+[\lambda_{r+1}-1]_{\mathfrak{c}}e_{\lambda-E_{rr}^{\theta}+E_{r+1,r}^{\theta}}\big),\\
\mathbf{f}_r\mathbf{e}_r\mathbf{f}_r=&q_0^{-1}q_1q^{\lambda_r+3\lambda_{r+1}-4}[2]\big(e_{\lambda-2E_{rr}^{\theta}+E_{r,r+2}^{\theta}+E_{r+1,r}^{\theta}}+[2]e_{\lambda-2E_{rr}^{\theta}-E_{r+1,r+1}^{\theta}+E_{r,r+1}^{\theta}+2E_{r+1,r}^{\theta}}\\
&+(q_0q_1^{-1}q^{-2\lambda_{r+1}+2}+[\lambda_r]+[\lambda_{r+1}-1]_{\mathfrak{c}}q^{-2})e_{\lambda-E_{rr}^{\theta}+E_{r+1,r}^{\theta}}\big).
\end{align*}
To sum up, we obtion
\begin{equation*}
(\llbracket 2\rrbracket \mathbf{f}_r\mathbf{e}_r\mathbf{f}_r-\mathbf{f}_r^2\mathbf{e}_r-\mathbf{e}_r\mathbf{f}_r^2)1_{\lambda}=\llbracket 2\rrbracket(q_0^{-\frac{1}{2}}q_1^{\frac{1}{2}}q^{\lambda_{r+1}-\lambda_r}+q_0^{\frac{1}{2}}q_1^{-\frac{1}{2}}q^{-\lambda_{r+1}+\lambda_r})\mathbf{f}_r1_{\lambda}.    
\end{equation*}
Therefore, $\aleph$ is an algebra homomorphism. 

It remains to show that $\aleph$ is a linear isomorphism. The argument is the same as the case of specialization at $(q,q_0,q_1)=(v^{-1},1,v^{-2})$, which will be proved in the following. Set $\epsilon_{0,r+1}=(1,0,\ldots,0,1)\in\mathbb{N}^{r+2}$, where $1$ is in the $0$st and $(r+1)$st position. Note that $\epsilon_{0,r+1}$ can be regarded as the diagonal matrix $E_{00}^{\theta}+E_{r+1,r+1}^{\theta}$ in $\widetilde{\Xi}^{\mathrm{diag}}$. Also set
\begin{align*}
\widetilde{\Theta}
&=\{A=(a_{ij})\in\mathrm{Mat}_{\mathbb{Z}\times\mathbb{Z}}(\mathbb{Z})~|~a_{ij}=a_{i+n,j+n},~\forall i,j\in\mathbb{Z};~ a_{ij}\geqslant0(i\neq j)\},\\    
\widetilde{\Theta}^-
&=\{A\in\widetilde{\Theta}~|~a_{ij}=0(i<j),~\mathrm{col}_{\mathfrak{c}}(A)=0\},\\
\widetilde{\Xi}^-
&=\{A\in\widetilde{\Xi}~|~\mathrm{col}_{\mathfrak{c}}(A)=\epsilon_0+\epsilon_{r+1}\}.
\end{align*}
The diagonal entries of a matrix $A'\in\widetilde{\Theta}^-$ (resp. $A\in\widetilde{\Xi}^-$) are completely determined by its strictly lower triangular entries. Hence, there is a natural bijection $\widetilde{\Theta}^-\leftrightarrow\widetilde{\Xi}^-$, which
sends $A'\in\widetilde{\Theta}^-$ to $A\in\widetilde{\Xi}^-$ such that the strictly lower triangular parts of $A$ and $A'$ are identical.

The quantum group $\mathbb{U}$ has a triangular decomposition $\mathbb{U}=\mathbb{U}^-\mathbb{U}^0\mathbb{U}^+$. Set $\mathcal{A}=\mathbb{Z}[v^{\pm1}]$. Denote by $\dot{\mathbb{U}}$ the modified quantum group with idempotents $D_{\lambda}$ and denote its $\mathcal{A}$-form by $_{\mathcal{A}}\dot{\mathbb{U}}$. Let $\dot{\mathbb{K}}_n$ be the $\mathcal{A}$-algebra in the type A stabilization of \cite{DF15} and ${}_\mathbb{Q}\dot{\mathbb{K}}_n=\mathbb{Q}(v)\otimes_{\mathcal{A}}\dot{\mathbb{K}}_n$. It was shown in \cite{DF15} that there exists a $\mathbb{Q}(v)$-algebra isomorphism $\aleph^{\mathfrak{a}} : \dot{\mathbb{U}}\to\dot{\mathbb{K}}_n$. Note that $\dot{\mathbb{K}}$ has a semi-monomial $\mathcal{A}$-basis $\{m^{\mathfrak{a}}_{A'}~|~A'\in\widetilde{\Theta}\}$ which is given in \cite{LL17}. Via the isomorphism $\aleph^{\mathfrak{a}}$, the semi-monomial $\mathcal{A}$-basis $\{m^{\mathfrak{a}}_{A'}~|~A'\in\widetilde{\Theta}^-\}$ for the $\mathcal{A}$-submodule $\aleph^{\mathfrak{a}}(_{\mathcal{A}}\mathbb{U}^-D_{\epsilon_{0,r+1}})$ of $\dot{\mathbb{K}}_n$ corresponds to a semi-monomial $\mathcal{A}$-basis $\{m^{\mathfrak{u}}_{A'}~|~A'\in\widetilde{\Theta}^-\}$ for the $\mathcal{A}$-submodule $_{\mathcal{A}}\mathbb{U}^-D_{\epsilon_{0,r+1}}$ of $_{\mathcal{A}}\dot{\mathbb{U}}$, where $\aleph^{\mathfrak{a}}(m^{\mathfrak{u}}_{A'})=m^{\mathfrak{a}}_{A'}$.

We also use the notation $\dot{\mathcal{K}}^{\mathfrak{c}}_n$ in this case. Similarly to \cite[Theorem 4.7]{BKLW18}, we obtain a semi-monomial $\mathbb{Q}(v)$-basis $\{\widetilde{m}_{A'}~|~A'\in\widetilde{\Xi}^-\}$ for $\dot{\mathbb{U}}^{\mathfrak{c}}1_{\epsilon_{0,r+1}}$ from the semi-monomial basis $\{m^{\mathfrak{u}}_{A'}~|~A'\in\widetilde{\Theta}^-\}$ for $\mathbb{U}^-D_{\epsilon_{0,r+1}}$. The homomorphism $\aleph:~\dot{\mathbb{U}^{\mathfrak{c}}}\to{}_\mathbb{Q}\dot{\mathcal{K}}_n^{\mathfrak{c}}$ restricts to a $\mathbb{Q}(v)$-linear map $\aleph|_{\epsilon_{0,r+1}}:~\dot{\mathbb{U}}^{\mathfrak{c}}1_{\epsilon_{0,r+1}}\to{}_\mathbb{Q}\dot{\mathcal{K}}^{\mathfrak{c}}_n[\epsilon_{0,r+1}]$, which sends $\widetilde{m}_A$ to $m_A$ for $A\in\widetilde{\Xi}^-$. Hence $\aleph|_{\epsilon_{0,r+1}}$ is a $\mathbb{Q}(v)$-linear isomorphism. This leads to a $\mathbb{Q}(v)$-linear isomorphism $\aleph|_{\lambda}:~\dot{\mathbb{U}}^{\mathfrak{c}}1_{\lambda}\to{}_\mathbb{Q}\dot{\mathcal{K}}_n^{\mathfrak{c}}[\lambda]$, for any $\lambda\in\widetilde{\Xi}^{\mathrm{diag}}$, via the following commutative diagram:
\begin{displaymath}
\xymatrix{
\dot{\mathbb{U}}^{\mathfrak{c}}1_{\epsilon_{0,r+1}} \ar[r]^{\aleph|_{\epsilon_{0,r+1}}} \ar[d]_{\sharp_{\lambda}}
&{}_\mathbb{Q}\dot{\mathcal{K}}^{\mathfrak{c}}_n[\epsilon_{0,r+1}] \ar[d]^{\sharp_{\lambda}} \\
\dot{\mathbb{U}}^{\mathfrak{c}}1_{\lambda}
\ar[r]_{\aleph|_{\lambda}}
& {}_\mathbb{Q}\dot{\mathcal{K}}^{\mathfrak{c}}_n[\lambda] 
}   
\end{displaymath}
Here $\sharp_{\lambda}:\dot{\mathcal{K}}^{\mathfrak{c}}_n[\epsilon_{0,r+1}]\to\dot{\mathcal{K}}^{\mathfrak{c}}_n[\lambda]$ is a $\mathbb{Q}(v)$-linear isomorphism which sends a semi-monomial basis element $m_{A+\epsilon_{0,r+1}}$ to $m_{A+\lambda}$, and $\sharp_{\lambda}:\dot{\mathbb{U}}^{\mathfrak{c}}1_{\epsilon_{0,r+1}}\to\dot{\mathbb{U}}^{\mathfrak{c}}1_{\lambda}$ is defined accordingly. Putting $\aleph|_{\lambda}$ together, we have shown that $\aleph:~\dot{\mathbb{U}^{\mathfrak{c}}}\to{}_\mathbb{Q}\dot{\mathcal{K}}_n^{\mathfrak{c}}$ is a $\mathbb{Q}(v)$-linear isomorphism. 
Thus, $\aleph$ is indeed an isomorphism of $\mathbb{F}$-algebras.
\end{proof}

\subsection{Quantum symmetric pairs \texorpdfstring{$(\mathbb{U},\mathbb{U}^\mathfrak{\jmath\imath})$}{(U,Uji)}}

\begin{center}
\begin{tikzpicture}[scale=.4]
\node at (0,0.75) {$0$};
\node at (4,0.75) {$1$};
\node at (12,0.75) {$r-1$};
    
\node at (0,-3.75) {$2r$};
\node at (4,-3.75) {$2r-1$};
\node at (12,-3.75) {$r+1$};
    
\node at (16,-0.75) {$r$};
       
\node at (8,0) {$\dots$};
\node at (8,-3) {$\dots$};
    
\foreach \x in {0,2,6} 
{\draw[thick,xshift=\x cm] (\x, 0) circle (0.3); 
\draw[thick,xshift=\x cm] (\x, -3) circle (0.3);}
    
\foreach \x in {8} 
\draw[thick,xshift=\x cm] (\x, -1.5) circle (0.3); 
   
\foreach \x in {0}
{\draw[thick,xshift=\x cm] (\x,0) ++(0.5,0) -- +(3,0);
\draw[thick,xshift=\x cm] (\x,-3) ++(0.5,0) -- +(3,0);}
   
\foreach \x in {6}
{\draw[thick,xshift=\x cm] (\x,0) ++(0.5,0) -- +(3,-1.2);
\draw[thick,xshift=\x cm] (\x,-3) ++(0.5,0) -- +(3,1.2);}
   
\foreach \x in {2,4.5}
{\draw[thick,xshift=\x cm] (\x,0) ++(0.5,0) -- +(2,0);
\draw[thick,xshift=\x cm] (\x,-3) ++(0.5,0) -- +(2,0);}
    
\foreach \x in {0}
\draw[thick,xshift=\x cm] (\x,-0.5) -- +(0,-2);
    
\foreach \x in {0,4} 
\draw[thick,<->, blue, bend right=50] (\x+0.3,-2.5) to (\x+0.3,-0.5);
\foreach \x in {12} 
\draw[thick,<->, blue, bend left=50] (\x-0.3,-2.5) to (\x-0.3,-0.5);
    
\foreach \x in {16} 
\draw[thick,<->, blue, bend right=100,looseness=10] (\x+0.4,-1.7) to (\x+0.4,-1.3);

\end{tikzpicture}
\end{center}

\begin{center}
Figure 2: Dynkin diagram of type $A_{2r}^{(1)}$ with involution of type $\jmath\imath$.
\end{center}

We consider an isomorphic copy of $\mathbb{U}(\widehat{\mathfrak{sl}}_\mathfrak{n})$, denoted by $\mathbb{U}('\widehat{\mathfrak{sl}}_\mathfrak{n})$, which is generated by $E_i$, $F_i$, $K_i^{\pm1}~(i\in[0..\mathfrak{n}]\backslash\{r+1\})$. 
Define $\mathbb{U}^{\jmath\imath}(\widehat{\mathfrak{sl}}_\mathfrak{n})$ as an $\mathbb{F}$-algebra generated by
\begin{equation*}
t_0,\quad e_i,\quad f_i,\quad k_i^{\pm1}\quad (0\leqslant i\leqslant r-1),    
\end{equation*}
subject to the following relations: for $0\leqslant i,j\leqslant r-1$,
\begin{gather*}
k_ik_i^{-1}=k_i^{-1}k_i=1,\quad k_ik_j=k_jk_i,\quad k_it_r=t_rk_i,\\
k_ie_jk_i^{-1}=q^{c_{i,j}+\delta_{i0}\delta_{j,0}}e_{j},\quad k_if_jk_i^{-1}=q^{-c_{i,j}-\delta_{i0}\delta_{j,0}}f_{j},\\
e_ie_j=e_je_i,\quad f_if_j=f_jf_i\quad(|i-j|>1), \\
e_if_j-f_je_i=\delta_{ij}\frac{k_i-k_i^{-1}}{q-q^{-1}}\quad (i,j)\neq(0,0),\\
e_it_r=t_re_i,\quad f_rt_r=t_rf_r\quad(i\leqslant r-2),\\
e_{r-1}^2t_r+t_re_{r-1}^2=(q+q^{-1})e_{r-1}t_re_{r-1},\\
f_{r-1}^2t_r+t_rf_{r-1}^2=(q+q^{-1})f_{r-1}t_rf_{r-1},\\
t_r^2e_{r-1}+e_{r-1}t_r^2=(q+q^{-1})t_re_{r-1}t_r+e_{r-1},\\t_r^2f_{r-1}+f_{r-1}t_r^2=(q+q^{-1})t_rf_{r-1}t_r+f_{r-1},\\
e_i^2e_j+e_je_i^2=(q+q^{-1})e_ie_je_i,\quad f_i^2f_j+f_jf_i^2=(q+q^{-1})f_if_jf_i\quad (|i-j|=1),\\
e_0^2f_0+f_0e_0^2=(q+q^{-1})(e_0f_0e_0-q_0^{\frac{1}{2}}q_1^{\frac{1}{2}}e_0k_0-q_0^{-\frac{1}{2}}q_1^{-\frac{1}{2}}e_0k_0^{-1}),\\
f_0^2e_0+e_0f_0^2=(q+q^{-1})(f_0e_0f_0-q_0^{\frac{1}{2}}q_1^{\frac{1}{2}}k_0f_0-q_0^{-\frac{1}{2}}q_1^{-\frac{1}{2}}k_0^{-1}f_0).
\end{gather*}

\begin{proposition}{\cite[Proposition 4.1]{FLLLWW20}}
There is an injective $\mathbb{F}$-algebra homomorphism $\jmath\imath$: $\mathbb{U}^{\jmath\imath}(\widehat{\mathfrak{sl}}_{\mathfrak{n}})\to\mathbb{U}('\widehat{\mathfrak{sl}}_{\mathfrak{n}})$ such that

\item[(1)] $k_a\mapsto q^{\delta_{i0}}K_iK_{-i-1}^{-1},\quad (0\leqslant i\leqslant r-1)$,

\item[(2)] $e_i\mapsto E_i+F_{-i-1}K_{i}^{-1},\quad f_i\mapsto E_{-i-1}+F_{i}K_{-i-1}^{-1},\quad (1\leqslant i\leqslant r-1)$,

\item[(3)] $e_0\mapsto E_0+q_0^{-\frac{1}{2}}q_1^{-\frac{1}{2}}F_{-1}K_{0}^{-1},\quad f_0\mapsto E_{-1}+q_0^{\frac{1}{2}}q_1^{\frac{1}{2}}q^{-1}F_{0}K_{-1}^{-1}$,

\item[(4)] $t_r\mapsto E_r+q_0^{\frac{1}{2}}q_1^{-\frac{1}{2}}qF_0K_0^{-1}+\frac{q_0^{-\frac{1}{2}}q_1^{\frac{1}{2}}-q_0^{\frac{1}{2}}q_1^{-\frac{1}{2}}}{q-q^{-1}}K_0^{-1}$.
\end{proposition}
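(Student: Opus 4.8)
The plan is to treat $\mathbb{U}^{\jmath\imath}(\widehat{\mathfrak{sl}}_{\mathfrak{n}})$ as an abstract $\mathbb{F}$-algebra presented by the listed generators and relations, so that constructing $\jmath\imath$ amounts to verifying that the proposed images of $t_r,e_i,f_i,k_i^{\pm1}$ satisfy \emph{every} defining relation inside $\mathbb{U}('\widehat{\mathfrak{sl}}_{\mathfrak{n}})$; injectivity is then handled as a separate, more conceptual step. Throughout I would compute in the target Drinfeld--Jimbo algebra using only its standard relations, the periodicity $E_i=E_{i+\mathfrak{n}}$ etc., and the fact that the images are assembled from $E_i$, $F_{-i-1}K_i^{-1}$ and Cartan factors.

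First I would dispose of the torus and locality relations. The relations $k_ik_i^{-1}=1$, $k_ik_j=k_jk_i$ and $k_it_r=t_rk_i$ are immediate since each $k_i\mapsto q^{\delta_{i0}}K_iK_{-i-1}^{-1}$ lands in the commutative torus. The $q$-commutators $k_ie_jk_i^{-1}=q^{c_{ij}+\delta_{i0}\delta_{j0}}e_j$ and their $f$-analogues reduce to weight computations: one applies $K_iK_{-i-1}^{-1}$ to $E_j+F_{-j-1}K_j^{-1}$ and reads off the exponent, the extra $\delta_{i0}\delta_{j0}$ coming exactly from the $q^{\delta_{i0}}$ prefactor. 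The commutativity relations $e_ie_j=e_je_i$ for $|i-j|>1$ and $e_it_r=t_re_i$ for $i\le r-2$ hold because the supports of the corresponding images occupy disjoint portions of the Dynkin diagram. The mixed relation $e_if_j-f_je_i=\delta_{ij}(k_i-k_i^{-1})/(q-q^{-1})$ for $(i,j)\neq(0,0)$ I would check by expanding the four cross terms and invoking the standard $E$--$F$ commutators, the coideal shape of the generators forcing the off-diagonal contributions to cancel in pairs.

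The heart of the verification, and what I expect to be the main obstacle, is the family of $\imath$Serre relations. These come in three flavours: the ordinary quantum Serre relations for $e_i^2e_j+\cdots$ with $|i-j|=1$ away from the distinguished nodes; the deformed relations at node $0$ carrying the $q_0^{1/2}q_1^{1/2}$ and $q_0^{-1/2}q_1^{-1/2}$ correction terms; and, most delicately, the branch-node relations involving $t_r$, in particular
\begin{equation*}
t_r^2e_{r-1}+e_{r-1}t_r^2=(q+q^{-1})t_re_{r-1}t_r+e_{r-1}
\end{equation*}
together with its $f$-analogue, where an \emph{extra} lower-degree term $+e_{r-1}$ appears. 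I would verify these by substituting the image of $t_r$ from (4), expanding the resulting cubic expressions, and repeatedly applying the target's Serre relations and $E$--$F$ commutators. The essential subtlety is that the three-parameter scalars $q_0^{\pm1/2}q_1^{\pm1/2}$ must conspire so that the higher-degree contributions collapse precisely onto the stated Serre-type right-hand side while the residual coefficient in front of $e_{r-1}$ equals exactly $1$; tracking the $(q,q_0,q_1)$-dependence here is the most error-prone point. This computation runs parallel to, but is heavier than, the single sample relation carried out in the proof of Theorem~\ref{thm:isoUK}.

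Finally, for injectivity I would argue that $\jmath\imath$ is a nonzero homomorphism between algebras admitting compatible triangular (PBW-type) decompositions. Concretely, one can specialize to the equal-parameter point $(q,q_0,q_1)=(q,1,q^2)$, where the image is the classical quasi-split $\imath$quantum group of affine type AIII and injectivity is supplied by the theory of quantum symmetric pairs \`a la Letzter--Kolb; since the three-parameter family is a flat deformation over $\mathbb{F}$ and the relevant PBW monomials remain linearly independent, injectivity propagates to generic parameters. Alternatively, one exhibits a triangular factorization of the image matching that of $\mathbb{U}^{\jmath\imath}$ and compares leading terms against a known basis. I expect this step to be essentially routine once the presentation is in hand, so the genuine labour remains the $\imath$Serre verification at the fixed node $r$.
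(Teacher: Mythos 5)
Note first what you are being compared against: the paper contains no proof of this proposition at all — it is quoted, with citation, from \cite{FLLLWW20} — so the only benchmark is that reference, whose strategy (check the defining relations on the proposed images, then establish injectivity) your outline does follow. The relation-checking half of your plan is sound in outline, though carried out honestly it would force you to correct a typo in the statement rather than verify it as printed: in item (4) the tail terms $F_0K_0^{-1}$ and $K_0^{-1}$ are attached to node $0$, not to the fixed node $r$, so the image of $t_r$ is supported at two far-apart nodes; for $r\geqslant 3$ these tail terms commute with $E_r$ and with the image of $e_{r-1}$, and then the combination $t_r^2e_{r-1}+e_{r-1}t_r^2-(q+q^{-1})t_re_{r-1}t_r$ leaves uncancelled mixed terms proportional to $2-q-q^{-1}$ which can never equal the lone term $e_{r-1}$. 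The verification only succeeds for the corrected image $E_r+q_0^{\frac{1}{2}}q_1^{-\frac{1}{2}}qF_rK_r^{-1}+\frac{q_0^{-\frac{1}{2}}q_1^{\frac{1}{2}}-q_0^{\frac{1}{2}}q_1^{-\frac{1}{2}}}{q-q^{-1}}K_r^{-1}$, matching the $\imath\jmath$ case where $t_0$ is built from $F_0K_0^{-1}$ and $K_0^{-1}$; your write-up implicitly assumes this but never says so.

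The genuine gap is injectivity. Your specialization-plus-flat-deformation argument is circular as stated. To specialize the abstractly presented $\mathbb{F}$-algebra $\mathbb{U}^{\jmath\imath}(\widehat{\mathfrak{sl}}_{\mathfrak{n}})$ at $(q,q_0,q_1)=(q,1,q^2)$ you need an integral form over a subring of $\mathbb{F}$ at which this point is defined, and to transfer injectivity from that special fibre back to generic parameters you need this form to be free (or at least flat) with a basis surviving specialization; without flatness the implication simply fails — a map can be injective modulo a maximal ideal while having a large kernel generically, as the projection $M\to M/\mathfrak{m}M$ already shows. But freeness of an integral form of a presented algebra of unknown size is precisely a PBW-type theorem for the three-parameter $\imath$quantum group, which is not available a priori and is in practice deduced from the very embedding you are trying to construct. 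The workable routes — and in effect what the cited source rests on — are: (a) recognize the images in (1)--(4) as Letzter--Kolb generators of the form $E_i+c_iF_{\tau(i)}K_i^{-1}+s_iK_i^{-1}$ (up to conventions) for the involution of Figure 2, with the parameters $(c_i,s_i)$ absorbing $q_0^{\pm\frac{1}{2}}q_1^{\pm\frac{1}{2}}$, so that injectivity becomes the Serre-presentation theorem for coideal subalgebras of quantum symmetric pairs \cite{Le99,Ko14}, a result one cites rather than reproves; or (b) a leading-term argument valid directly at generic parameters: extract from the relations a spanning set of monomials of the presented algebra, and show that their images are linearly independent in $\mathbb{U}('\widehat{\mathfrak{sl}}_{\mathfrak{n}})$ using its triangular decomposition, with the tails $F_{-i-1}K_i^{-1}$ and the scalar tails treated as lower order for a suitable filtration. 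Either option is substantive mathematics; declaring the step ``essentially routine'' and stopping there is exactly where your proposal falls short of a proof.
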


\subsection{Isomorphism \texorpdfstring{$\dot{\mathbb{U}}^\mathfrak{\jmath\imath}\simeq\dot{\mathcal{K}}^\mathfrak{\jmath\imath}_{\mathfrak{n}}$}{Uji=Knji}}

Let ${}^{\jmath\imath}\widetilde{\Xi}_{\mathfrak{n}}^{\mathrm{diag}}$ be the set of all diagonal matrices in $\widetilde{\Xi}_{\mathfrak{n}}^{\jmath\imath}$. Denote by $\lambda=\sum_{i}\lambda_iE_{ii}$ a diagonal matrix in ${}^{\jmath\imath}\widetilde{\Xi}_{\mathfrak{n}}^{\mathrm{diag}}$. For $\lambda,\lambda'\in\widetilde{\Xi}_{\mathfrak{n}}^{\mathrm{diag}}$ we define the modified quantum algebra $\dot{\mathbb{U}}^{\jmath\imath}$ similarly to the construction of $\dot{\mathbb{U}}^{\mathfrak{c}}$ as follows:
\begin{equation*}
\dot{\mathbb{U}}^{\jmath\imath}=\bigoplus\limits_{\lambda,\lambda'\in^{}{}^{\jmath\imath}\widetilde{\Xi}_{\mathfrak{n}}^{\mathrm{diag}}}\widetilde{\Xi}_{\mathfrak{n}}^{\mathrm{diag}}{}_{\lambda}\mathbb{U}^{\jmath\imath}_{\lambda'}=\sum_{\lambda\in{}^{\jmath\imath}\widetilde{\Xi}_{\mathfrak{n}}^{\mathrm{diag}}}\mathbb{U}^{\jmath\imath}1_{\lambda}=\sum_{\lambda\in{}^{\jmath\imath}\widetilde{\Xi}_{\mathfrak{n}}^{\mathrm{diag}}}1_{\lambda} \mathbb{U}^{\jmath\imath},    
\end{equation*}
where ${}_{\lambda}\mathbb{U}^{\jmath\imath}_{\lambda'}=\mathbb{U}^{\jmath\imath}/(\sum_{i=0}^{r-1}(k_i-q^{\lambda_i-\lambda_{i+1}})\mathbb{U}^{\jmath\imath}+\sum_{i=0}^{r-1}\mathbb{U}^{\jmath\imath}(k_i-q^{\lambda'_i-\lambda'_{i+1}}))$ and $1_{\lambda}\in{}_{\lambda}\mathbb{U}^{\jmath\imath}_{\lambda}$ is the canonical projection image of the unit of $\mathbb{U}^{\jmath\imath}$.

For $\lambda\in{}^{\jmath\imath}\widetilde{\Xi}_{\mathfrak{n}}^{\mathrm{diag}}$ and $i\in[0..r-1]$, we use the following short-hand notations:
\begin{equation*}
\lambda+\alpha_i=\lambda+E_{ii}^{\theta}-E_{i+1,i+1}^{\theta},\quad\lambda-\alpha_i=\lambda-E_{ii}^{\theta}+E_{i+1,i+1}^{\theta}.    
\end{equation*}
We have a presentation of $\dot{\mathbb{U}}^{\jmath\imath}$ as a $\mathbb{F}$-algebra generated by the symbols, for $i\in[0..r-1]$, $\lambda\in{}^{\jmath\imath}\widetilde{\Xi}_{\mathfrak{n}}^{\mathrm{diag}}$,
\begin{equation*}
1_{\lambda},\quad t_r1_{\lambda},\quad 1_{\lambda}t_r,\quad e_i1_{\lambda},\quad 1_{\lambda}e_i,\quad f_i1_{\lambda},\quad 1_{\lambda}f_i,     
\end{equation*}
subject to the following relations, for $i,j\in[1..r]$, $\lambda,\mu\in{}^{\jmath\imath}\widetilde{\Xi}_{\mathfrak{n}}^{\mathrm{diag}}$, $x,y\in\{1,e_i,e_j,f_i,f_j,t_r\}$:
\begin{gather*}
x1_{\lambda}1_{\mu}y=\delta_{\lambda,\mu}x1_{\lambda}y,\\
e_i1_{\lambda}=1_{\lambda+\alpha_i}e_i,\quad f_i1_{\lambda}=1_{\lambda-\alpha_i}f_i,\quad t_r1_{\lambda}=1_{\lambda}t_r,\\
e_i1_{\lambda}f_j=f_j1_{\lambda+\alpha_i+\alpha_j}e_i\quad(i\neq j),\\
(e_if_i-f_ie_i)1_{\lambda}=\llbracket \lambda_i-\lambda_{i+1}\rrbracket1_{\lambda}\quad(i\neq0),\\
(e_i^2e_j+e_je_i^2)1_{\lambda}=\llbracket 2\rrbracket e_ie_je_i1_{\lambda},\quad (f_i^2f_j+f_jf_i^2)1_{\lambda}=\llbracket 2\rrbracket f_if_jf_i1_{\lambda}\quad (|i-j|=1),\\
e_it_r1_{\lambda}=t_re_i1_{\lambda},\quad f_it_r1_{\lambda}=t_rf_i1_{\lambda}\quad(i\neq r-1),\\
(e_{r-1}^2t_r+t_re_{r-1}^2)1_{\lambda}=\llbracket 2\rrbracket e_{r-1}t_re_{r-1}1_{\lambda},\\
(f_{r-1}^2t_r+t_rf_{r-1}^2)1_{\lambda}=\llbracket 2\rrbracket f_{r-1}t_rf_{r-1}1_{\lambda},\\
(t_r^2e_{r-1}+e_{r-1}t_r^2)1_{\lambda}=\llbracket 2\rrbracket(t_re_{r-1}t_r+e_{r-1})1_{\lambda},\\
(t_r^2f_{r-1}+f_{r-1}t_r^2)1_{\lambda}=\llbracket 2\rrbracket (t_rf_{r-1}t_r+f_{r-1})1_{\lambda},\\
(\llbracket 2\rrbracket e_0f_0e_0-e_0^2f_0-f_0e_0^2)1_{\lambda}=\llbracket 2\rrbracket(q_0^{\frac{1}{2}}q_1^{\frac{1}{2}}q^{\lambda_{0}-\lambda_1}+q_0^{-\frac{1}{2}}q_1^{-\frac{1}{2}}q^{-\lambda_{0}+\lambda_1})e_01_{\lambda},\\
(\llbracket 2\rrbracket f_0e_0f_0-f_0^2e_0-e_0f_0^2)1_{\lambda}=\llbracket 2\rrbracket(q_0^{\frac{1}{2}}q_1^{\frac{1}{2}}q^{\lambda_{0}-\lambda_1-3}+q_0^{-\frac{1}{2}}q_1^{-\frac{1}{2}}q^{-\lambda_{0}+\lambda_1+3})f_01_{\lambda}.
\end{gather*}

For all $i\in[0..r-1]$, $\lambda\in{}^{\jmath\imath}\widetilde{\Xi}_{\mathfrak{n}}^{\mathrm{diag}}$, write
\begin{gather*}
\mathbf{e}_i1_{\lambda}=[\lambda-E_{i+1,i+1}^{\theta}+E_{i,i+1}^{\theta}],\quad \mathbf{f}_i1_{\lambda}=[\lambda-E_{ii}^{\theta}+E_{i+1,i}^{\theta}],\\
t_r1_{\lambda}=[\lambda-E_{rr}^{\theta}+E_{r,r+2}^{\theta}]+q^{\lambda_r}\frac{q_0^{-\frac{1}{2}}q_1^{\frac{1}{2}}-q_0^{\frac{1}{2}}q_1^{-\frac{1}{2}}}{q-q^{-1}}[\lambda].
\end{gather*}
Set ${}_\mathbb{F}\dot{\mathcal{K}}_{\mathfrak{n}}^{\jmath\imath}=\mathbb{F}\otimes_{\mathbb{A}}\dot{\mathcal{K}}_{\mathfrak{n}}^{\jmath\imath}$.

\begin{thm}\label{ji}
There is an isomorphism of $\mathbb{F}$-algebras $\aleph:\dot{\mathbb{U}^{\jmath\imath}}\to{}_\mathbb{F}\dot{\mathcal{K}}_{\mathfrak{n}}^{\jmath\imath}$ determined by 
\begin{equation*}
e_i1_{\lambda}\mapsto\mathbf{e}_i1_{\lambda},\quad f_i1_{\lambda}\mapsto\mathbf{f}_i1_{\lambda},\quad
t_r1_{\lambda}\mapsto\mathbf{t}_r1_{\lambda},\quad
1_{\lambda}\mapsto[\lambda]
\end{equation*} for $i\in[0..r-1]$ and $\lambda\in{}^{\jmath\imath}\widetilde{\Xi}_{\mathfrak{n}}^{\mathrm{diag}}$.
\end{thm}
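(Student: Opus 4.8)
The plan is to follow the blueprint of the proof of Theorem~\ref{thm:isoUK} almost verbatim, splitting the argument into two stages: first that $\aleph$ is a well-defined $\mathbb{F}$-algebra homomorphism, then that it is bijective. For the homomorphism step, the defining relations of $\dot{\mathbb{U}}^{\jmath\imath}$ fall into two groups. The relations not involving $t_r$ — the idempotent relations, the interchange of $e_i,f_i$ with the $1_\lambda$, the Serre relations for $|i-j|=1$ and the commutations for $|i-j|>1$, together with the $\imath$-deformed relation at the node $0$ — have exactly the same shape as the corresponding relations at the $\jmath$-type node $0$ and the interior nodes already treated in Theorem~\ref{thm:isoUK}. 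Their verification is therefore a sub-case of the computation carried out there, using the stabilized multiplication formulas of Theorem~\ref{stabilization formula} in $\dot{\mathcal{K}}_{\mathfrak{n}}^{\jmath\imath}$; I would only record them and move on.

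The genuinely new content is the family of relations attached to the fixed node $r$: the commutations $e_it_r=t_re_i$, $f_it_r=t_rf_i$ for $i\leqslant r-2$, the quadratic Serre relations $e_{r-1}^2t_r+t_re_{r-1}^2=\llbracket 2\rrbracket e_{r-1}t_re_{r-1}$ and its $f$-analogue, and the two inhomogeneous relations $t_r^2e_{r-1}+e_{r-1}t_r^2=\llbracket 2\rrbracket(t_re_{r-1}t_r+e_{r-1})$ and its $f$-analogue. Here the image $\mathbf{t}_r1_\lambda=[\lambda-E_{rr}^\theta+E_{r,r+2}^\theta]+q^{\lambda_r}\frac{q_0^{-1/2}q_1^{1/2}-q_0^{1/2}q_1^{-1/2}}{q-q^{-1}}[\lambda]$ is a two-term element, the scalar second term being the signature of the fixed point. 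I would expand $\mathbf{t}_r^2\mathbf{e}_{r-1}1_\lambda$, $\mathbf{e}_{r-1}\mathbf{t}_r^21_\lambda$ and $\mathbf{t}_r\mathbf{e}_{r-1}\mathbf{t}_r1_\lambda$ termwise by repeated application of Theorem~\ref{stabilization formula}, collect the resulting standard-basis elements $[A]$ in $\dot{\mathcal{K}}_{\mathfrak{n}}^{\jmath\imath}$, and check that the coefficients combine to give precisely the right-hand sides — in particular that the leftover $\llbracket 2\rrbracket\,\mathbf{e}_{r-1}1_\lambda$ term is produced, exactly as the explicit $e_r$-computation in Theorem~\ref{thm:isoUK} produced its deformation term.

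For bijectivity I would repeat, mutatis mutandis, the reduction used at the end of the proof of Theorem~\ref{thm:isoUK}. Namely, I would fix a distinguished diagonal weight $\epsilon$ adapted to the $\jmath\imath$-datum, invoke the triangular decomposition $\mathbb{U}('\widehat{\mathfrak{sl}}_{\mathfrak{n}})=\mathbb{U}^-\mathbb{U}^0\mathbb{U}^+$ together with the type-$A$ stabilization isomorphism $\aleph^{\mathfrak{a}}$ of Du--Fu \cite{DF15} and the semi-monomial $\mathcal{A}$-basis of \cite{LL17}, and transport it through the bijection $\widetilde{\Theta}^-\leftrightarrow\widetilde{\Xi}^-$ between the relevant matrix sets. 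By the argument of \cite[Theorem 4.7]{BKLW18} this produces a semi-monomial basis $\{\widetilde{m}_A\}$ of $\dot{\mathbb{U}}^{\jmath\imath}1_\epsilon$ mapping onto the semi-monomial basis $\{m_A\}$ of $\dot{\mathcal{K}}_{\mathfrak{n}}^{\jmath\imath}[\epsilon]$, so that $\aleph|_{\epsilon}$ is a linear isomorphism. Shifting from $\epsilon$ to an arbitrary $\lambda$ via the idempotent-translation maps $\sharp_\lambda$ on both sides, as in the commuting square of the proof of Theorem~\ref{thm:isoUK}, upgrades $\aleph|_{\epsilon}$ to a linear isomorphism $\aleph|_\lambda$ for every $\lambda$, whence $\aleph$ is bijective and so an isomorphism of $\mathbb{F}$-algebras.

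The main obstacle is the verification of the two inhomogeneous $t_r$-relations. Unlike the interior Serre relations, these carry the extra additive term $\llbracket 2\rrbracket\,\mathbf{e}_{r-1}1_\lambda$ (resp. $\llbracket 2\rrbracket\,\mathbf{f}_{r-1}1_\lambda$) on the right, which must emerge through a nontrivial cancellation among the many standard-basis terms generated when the two-term element $\mathbf{t}_r1_\lambda$ is squared and multiplied against $\mathbf{e}_{r-1}1_\lambda$; keeping the three parameter families $q_0,q_1,q$ in lockstep through these expansions is where the computation is most delicate.
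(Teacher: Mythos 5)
Your overall architecture coincides with the paper's proof of Theorem~\ref{ji}: the paper likewise verifies in detail only the delicate inhomogeneous relation at the node $r$ (it does the $f$-version, $(t_r^2f_{r-1}+f_{r-1}t_r^2)1_{\lambda}=\llbracket 2\rrbracket(t_rf_{r-1}t_r+f_{r-1})1_{\lambda}$) by explicit expansion into standard basis elements, leaves the remaining relations to computations of the kind done for Theorem~\ref{thm:isoUK}, and then disposes of bijectivity by the semi-monomial-basis transport argument (quoted there via \cite[Theorem A.15]{BKLW18} rather than redone from \cite{DF15} and \cite{LL17}). However, one concrete step of your plan fails as written: you cannot expand $\mathbf{t}_r^2\mathbf{e}_{r-1}1_{\lambda}$, $\mathbf{e}_{r-1}\mathbf{t}_r^21_{\lambda}$ and $\mathbf{t}_r\mathbf{e}_{r-1}\mathbf{t}_r1_{\lambda}$ ``by repeated application of Theorem~\ref{stabilization formula}'', because that theorem only computes products $[B][A]$ whose left factor is a Chevalley generator, i.e.\ $B-RE_{\theta}^{i,i+1}$ or $B-RE_{\theta}^{i+1,i}$ diagonal. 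The non-scalar term of $\mathbf{t}_r1_{\lambda}$ is $[\lambda-E_{rr}^{\theta}+E_{r,r+2}^{\theta}]$, whose off-diagonal entries sit at $(r,r+2)$ and $(r+2,r)$, at distance two from the diagonal (the node $r+1$ being deleted in the $\jmath\imath$-datum); this matrix is not of Chevalley form and is not even tridiagonal, so neither Theorem~\ref{stabilization formula} nor Theorem~\ref{standard} can be applied with it as left multiplicand. Your expansion therefore cannot get started.

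The missing idea --- and it is precisely the opening move of the paper's proof --- is to work inside the larger stabilization algebra $\dot{\mathcal{K}}_{n}^{>_{\jmath\imath}}$ ($n=2r+2$), where the Chevalley generators $\mathbf{e}_r,\mathbf{f}_r$ at the node $r$ are available, and to use the identity
\begin{equation*}
\mathbf{t}_r1_{\lambda}=\mathbf{e}_r\mathbf{f}_r1_{\lambda}+\frac{q_0^{-\frac{1}{2}}q_1^{\frac{1}{2}}q^{-\lambda_r}-q_0^{\frac{1}{2}}q_1^{-\frac{1}{2}}q^{\lambda_r}}{q-q^{-1}}1_{\lambda},
\end{equation*}
which follows from Theorem~\ref{stabilization formula}(1): the product $\mathbf{e}_r\mathbf{f}_r1_{\lambda}$ produces exactly $[\lambda-E_{rr}^{\theta}+E_{r,r+2}^{\theta}]$ plus a multiple of $[\lambda]$ (here one uses that $E_{\theta}^{r+1,r}=E_{\theta}^{r+1,r+2}$ as $n$-periodic matrices). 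After this substitution, each of the three products becomes a composite of genuine Chevalley multiplications, Theorem~\ref{stabilization formula} applies at every stage, and the inhomogeneous term $\llbracket 2\rrbracket\mathbf{f}_{r-1}1_{\lambda}$ (resp.\ $\llbracket 2\rrbracket\mathbf{e}_{r-1}1_{\lambda}$) emerges from the cancellation, exactly as in the displayed computation in the paper. With this repair inserted, the rest of your outline --- including the bijectivity argument via translation by the idempotents $\sharp_{\lambda}$ --- goes through and agrees with the paper's proof.
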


\begin{proof}
Here we only show the details for
\begin{equation*}
(t_r^2f_{r-1}+f_{r-1}t_r^2)1_{\lambda}=\llbracket 2\rrbracket (t_rf_{r-1}t_r+f_{r-1})1_{\lambda}   
\end{equation*}
regarding $\mathbf{f}_{r-1}1_{\lambda}$ and $\mathbf{t}_r1_{\lambda}$ as follows.
Noting
\begin{align*}
\mathbf{t}_r1_{\lambda}=&\mathbf{e}_r\mathbf{f}_r1_{\lambda} +\frac{q_0^{-\frac{1}{2}}q_1^{\frac{1}{2}}q^{-\lambda_r}-q_0^{\frac{1}{2}}q_1^{-\frac{1}{2}}q^{\lambda_r}}{q-q^{-1}}1_{\lambda},
\end{align*} we have
\begin{align*}
\mathbf{t}_r^2\mathbf{f}_{r-1}1_{\lambda}=&\llbracket 2\rrbracket q^{3\lambda_r}\frac{q_0^{-1}q_1-1}{q-q^{-1}}\big(e_{\lambda-E_{r-1,r-1}^{\theta}+E_{r,r+3}^{\theta}}+e_{\lambda-E_{rr}^{\theta}-E_{r-1,r-1}^{\theta}+E_{r,r-1}^{\theta}+E_{r,r+2}^{\theta}}\big)\\
&+q_0^{-1}q_1q^{3\lambda_r}[2]\big(e_{\lambda-E_{rr}^{\theta}-E_{r-1,r-1}^{\theta}+E_{r,r+2}^{\theta}+E_{r,r+3}^{\theta}}\\
&+e_{\lambda-2E_{rr}^{\theta}-E_{r-1,r-1}^{\theta}+E_{r,r-1}^{\theta}+2E_{r,r+2}^{\theta}}\big)\\
&+\bigg(q^{3\lambda_r}[\lambda_r+1]+q^{3\lambda_r+2}\frac{q_0^{-1}q_1+q_0q_1^{-1}-2}{(q-q^{-1})^2}\bigg)e_{\lambda-E_{r-1,r-1}+E_{r,r-1}},\\
\mathbf{f}_{r-1}\mathbf{t}_r^21_{\lambda}=&\llbracket 2\rrbracket q^{3\lambda_r-2}\frac{q_0^{-1}q_1-1}{q-q^{-1}}e_{\lambda-E_{rr}^{\theta}-E_{r-1,r-1}^{\theta}+E_{r,r-1}^{\theta}+E_{r,r+2}^{\theta}}\\
&+q_0^{-1}q_1q^{3\lambda_r-2}[2]e_{\lambda-2E_{rr}^{\theta}-E_{r-1,r-1}^{\theta}+E_{r,r-1}^{\theta}+2E_{r,r+2}^{\theta}}\\
&+\bigg(q^{3\lambda_r-2}[\lambda_r]+q^{3\lambda_r}\frac{q_0^{-1}q_1+q_0q_1^{-1}-2}{(q-q^{-1})^2}\bigg)e_{\lambda-E_{r-1,r-1}+E_{r,r-1}},\\
\mathbf{t}_r\mathbf{f}_{r-1}\mathbf{t}_r1_{\lambda}=& q^{3\lambda_r}\frac{q_0^{-1}q_1-1}{q-q^{-1}}\big([2]e_{\lambda-E_{rr}^{\theta}-E_{r-1,r-1}^{\theta}+E_{r,r-1}^{\theta}+E_{r,r+2}^{\theta}}+e_{\lambda-E_{r-1,r-1}^{\theta}+E_{r,r+3}^{\theta}}\big)\\
&+q_0^{-1}q_1q^{3\lambda_r-1}\big([2]e_{\lambda-2E_{rr}^{\theta}-E_{r-1,r-1}^{\theta}+E_{r,r-1}^{\theta}+2E_{r,r+2}^{\theta}}\\
&+e_{\lambda-E_{rr}^{\theta}-E_{r-1,r-1}^{\theta}+E_{r,r+2}^{\theta}+E_{r,r+3}^{\theta}}\big)\\
&+\bigg(q^{2\lambda_r}[\lambda_r]+q^{3\lambda_r+1}\frac{q_0^{-1}q_1+q_0q_1^{-1}-2}{(q-q^{-1})^2}\bigg)e_{\lambda-E_{r-1,r-1}+E_{r,r-1}}.
\end{align*}
To sum up, we obtion
\begin{equation*}
(\mathbf{t}_r^2\mathbf{f}_{r-1}+\mathbf{f}_{r-1}\mathbf{t}_r^2)1_{\lambda}=\llbracket 2\rrbracket (\mathbf{t}_r\mathbf{f}_{r-1}\mathbf{t}_r+\mathbf{f}_{r-1})1_{\lambda}.    
\end{equation*}
Therefore, $\aleph$ is an algebra homomorphism. Similarly to \cite[Theorem A.15]{BKLW18}, we know that $\aleph$ is a linear isomorphism. Thus, $\aleph$ is indeed an isomorphism of $\mathbb{F}$-algebras.
\end{proof}

\subsection{Quantum symmetric pairs \texorpdfstring{$(\mathbb{U},\mathbb{U}^\mathfrak{\imath\jmath})$}{(U,Uji)}}
\begin{center}
\begin{tikzpicture}[scale=.4]
\node at (4,0.75) {$1$};
\node at (12,0.75) {$r-1$};
\node at (16,0.75) {$r$};
    
\node at (4,-3.75) {$2r$};
\node at (12,-3.75) {$r+2$};
\node at (16,-3.75) {$r+1$};
    
\node at (0,-0.75) {$0$};
       
\node at (8,0) {$\dots$};
\node at (8,-3) {$\dots$};
    
\foreach \x in {2,6,8} 
{\draw[thick,xshift=\x cm] (\x, 0) circle (0.3); 
\draw[thick,xshift=\x cm] (\x, -3) circle (0.3);}
    
\foreach \x in {0} 
\draw[thick,xshift=\x cm] (\x, -1.5) circle (0.3);

\foreach \x in {0}
{\draw[thick,xshift=\x cm] (\x,-1.2) ++(0.5,0) -- +(3,1.2);
\draw[thick,xshift=\x cm] (\x,-1.8) ++(0.5,0) -- +(3,-1.2);}
   
\foreach \x in {2,4.5}
{\draw[thick,xshift=\x cm] (\x,0) ++(0.5,0) -- +(2,0);
\draw[thick,xshift=\x cm] (\x,-3) ++(0.5,0) -- +(2,0);}

\foreach \x in {6}
{\draw[thick,xshift=\x cm] (\x,0) ++(0.5,0) -- +(3,0);
\draw[thick,xshift=\x cm] (\x,-3) ++(0.5,0) -- +(3,0);}
    
\foreach \x in {8}
\draw[thick,xshift=\x cm] (\x,-0.5) -- +(0,-2);
    
\foreach \x in {4} 
\draw[thick,<->, blue, bend right=50] (\x+0.3,-2.5) to (\x+0.3,-0.5);
\foreach \x in {12,16} 
\draw[thick,<->, blue, bend left=50] (\x-0.3,-2.5) to (\x-0.3,-0.5);
    
\foreach \x in {0} 
\draw[thick,<->, blue, bend left=100,looseness=10] (\x-0.4,-1.7) to (\x-0.4,-1.3);

\end{tikzpicture}
\end{center}

\begin{center}
Figure 3: Dynkin diagram of type $A_{2r}^{(1)}$ with involution of type $\imath\jmath$.
\end{center}
\vspace{0.5cm}

Denote $\mathbb{U}(\widehat{\mathfrak{sl}}_\mathfrak{n})$ as generated by $E_i$, $F_i$, $K_i^{\pm1}~(i\in[0..\mathfrak{n}-1])$. 

Define $\mathbb{U}^{\jmath\imath}(\widehat{\mathfrak{sl}}_\mathfrak{n})$ to be an $\mathbb{F}$-algebra generated by
\begin{equation*}
t_0,\quad e_i,\quad f_i,\quad k_i^{\pm1}\quad (1\leqslant i\leqslant r),    
\end{equation*}
subject to the following relations for $1\leqslant i,j\leqslant r$:
\begin{gather*}
k_ik_i^{-1}=k_i^{-1}k_i=1,\quad k_ik_j=k_jk_i,\quad t_0k_i=k_it_0,\\
k_ie_jk_i^{-1}=q^{c_{i,j}+\delta_{ir}\delta_{j,r}}e_{j},\quad k_if_jk_i^{-1}=q^{-c_{i,j}-\delta_{ir}\delta_{j,r}}f_{j}\\
e_ie_j=e_je_i,\quad f_if_j=f_jf_i\quad(|i-j|>1), \\
e_if_j-f_je_i=\delta_{ij}\frac{k_i-k_i^{-1}}{q-q^{-1}}\quad (i,j)\neq(r,r),\\
t_0e_i=e_it_0,\quad t_0f_i=f_it_0\quad(i\geqslant2),\\
e_1^2t_0+t_0e_1^2=(q+q^{-1})e_1t_0e_1,\quad f_1^2t_0+t_0f_1^2=(q+q^{-1})f_1t_0f_1,\\
t_0^2e_1+e_1t_0^2=(q+q^{-1})t_0e_1t_0+e_1,\quad t_0^2f_1+f_1t_0^2=(q+q^{-1})t_0f_1t_0+f_1,\\
e_i^2e_j+e_je_i^2=(q+q^{-1})e_ie_je_i,\quad f_i^2f_j+f_jf_i^2=(q+q^{-1})f_if_jf_i\quad (|i-j|=1),\\
e_r^2f_r+f_re_r^2=(q+q^{-1})(e_rf_re_r-q_0^{\frac{1}{2}}q_1^{-\frac{1}{2}}q^3e_rk_r-q_0^{-\frac{1}{2}}q_1^{\frac{1}{2}}q^{-3}e_rk_r^{-1}),\\
f_r^2e_r+e_rf_r^2=(q+q^{-1})(f_re_rf_r-q_0^{\frac{1}{2}}q_1^{-\frac{1}{2}}q^3k_rf_r-q_0^{-\frac{1}{2}}q_1^{\frac{1}{2}}q^{-3}k_r^{-1}f_r).
\end{gather*}

\begin{proposition}{\cite[Proposition 4.4]{FLLLWW20}}
There are injective $\mathbb{F}$-algebra homomorphisms $\imath\jmath$: $\jmath\jmath$: $\mathbb{U}^{\mathfrak{c}}(\widehat{\mathfrak{sl}}_{\mathfrak{n}})\to\mathbb{U}(\widehat{\mathfrak{sl}}_{\mathfrak{n}})$ defined by

\item[(1)] $k_i\mapsto q^{-\delta_{ir}}K_iK_{-i-1}^{-1},\quad (1\leqslant i\leqslant r)$,

\item[(2)] $e_i\mapsto E_i+F_{-i-1}K_{i}^{-1},\quad f_i\mapsto E_{-i-1}+F_{i}K_{-i-1}^{-1},\quad (1\leqslant i\leqslant r-1)$,

\item[(3)] $e_r\mapsto E_r+q_0^{-\frac{1}{2}}q_1^{\frac{1}{2}}q^{-1}F_{-r-1}K_{r}^{-1},\quad f_r\mapsto E_{-r-1}+q_0^{\frac{1}{2}}q_1^{-\frac{1}{2}}F_{r}K_{-r-1}^{-1}$,

\item[(4)] $t_0\mapsto E_0+q_0^{-\frac{1}{2}}q_1^{\frac{1}{2}}qF_0K_0^{-1}+\frac{q_0^{\frac{1}{2}}q_1^{\frac{1}{2}}-q_0^{-\frac{1}{2}}q_1^{-\frac{1}{2}}}{q-q^{-1}}K_0^{-1}$.
\end{proposition}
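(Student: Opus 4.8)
The plan is to show that the assignment on generators in (1)--(4) preserves every defining relation of $\mathbb{U}^{\imath\jmath}(\widehat{\mathfrak{sl}}_{\mathfrak{n}})$, so that it extends to an $\mathbb{F}$-algebra homomorphism $\imath\jmath$, and then to establish injectivity. Throughout I would abbreviate the images by $B_i:=\imath\jmath(e_i)$, $B_i':=\imath\jmath(f_i)$, $\kappa_i:=\imath\jmath(k_i)$, and $B_0:=\imath\jmath(t_0)$, and compute inside $\mathbb{U}(\widehat{\mathfrak{sl}}_{\mathfrak{n}})$ using its listed relations together with the diagram involution $\tau$ ($\tau(0)=0$, swapping the mirror nodes) that governs the embedding. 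Since $\imath\jmath$ sends the Chevalley-type generators to the standard Letzter--Kolb elements $E_i+F_{\tau(i)}K_i^{-1}$ (suitably rescaled at the special nodes), the verification splits naturally into ``classical'' relations and ``exotic'' relations at the two distinguished nodes.

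First I would dispose of the relations involving only the Cartan part or indices far from the distinguished nodes. After substitution, $k_ik_i^{-1}=1$, $k_ik_j=k_jk_i$, $t_0k_i=k_it_0$, and the $\kappa_i$-conjugation of $B_j,B_j',B_0$ reduce to the $\mathbb{U}$-relations $K_iE_jK_i^{-1}=q^{c_{ij}}E_j$ and their $\tau$-twisted partners, which hold because $\tau$ is a diagram automorphism. The locality relations $e_ie_j=e_je_i$, $t_0e_i=e_it_0$ for $i\ge 2$, the mixed relation $e_if_j-f_je_i=\delta_{ij}(k_i-k_i^{-1})/(q-q^{-1})$ for $(i,j)\ne(r,r)$, and the ordinary quantum Serre relations for $|i-j|=1$ away from the special nodes, are exactly the computations already carried out in the finite-type quasi-split case; I would invoke those verbatim, observing that the $K_i^{-1}$ and $K_{-i-1}^{-1}$ twists force the cross terms to cancel in pairs.

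The genuine work, and the step I expect to be the main obstacle, is the family of exotic relations at the two special nodes: the affine rank-one relations $(t_0^2e_1+e_1t_0^2)1=\llbracket 2\rrbracket(t_0e_1t_0+e_1)1$ and $(e_1^2t_0+t_0e_1^2)1=\llbracket 2\rrbracket e_1t_0e_11$ with their $f$-analogues at the $\imath$-node $0$, and the deformed Serre relation $e_r^2f_r+f_re_r^2=\llbracket 2\rrbracket(e_rf_re_r-q_0^{1/2}q_1^{-1/2}q^3e_rk_r-q_0^{-1/2}q_1^{1/2}q^{-3}e_rk_r^{-1})$ with its mirror at the $\jmath$-node $r$. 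Verifying these amounts to substituting $B_0=E_0+q_0^{-1/2}q_1^{1/2}q\,F_0K_0^{-1}+\tfrac{q_0^{1/2}q_1^{1/2}-q_0^{-1/2}q_1^{-1/2}}{q-q^{-1}}K_0^{-1}$ (and likewise $B_r$) into the relevant cubic expressions and collecting by $\mathbb{U}$-weight. Because $B_0$ has three summands, one of which is the scalar-like term $\propto K_0^{-1}$, the cubic expansions generate many monomials, and the identity survives only because the chosen half-integral powers of $q_0,q_1$ and the integral powers of $q$ line the residual terms up with the prescribed right-hand side. I would organize this by first restricting to the rank-one affine subalgebra $\langle E_0,F_0,K_0\rangle$ (resp.\ the rank-two piece $\langle E_r,F_r,E_{-r-1},F_{-r-1}\rangle$), normal-ordering via the quantum Serre relations and $[E_0,F_0]=(K_0-K_0^{-1})/(q-q^{-1})$, and then matching coefficients; this three-parameter bookkeeping is the heaviest part, and a single misplaced half-power of $q_0$ or $q_1$ would already break the relation, so the main challenge is purely one of disciplined computation.

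Finally, for injectivity I would argue by a leading-term estimate in the triangular decomposition $\mathbb{U}=\mathbb{U}^-\mathbb{U}^0\mathbb{U}^+$: fixing an ordered PBW-type monomial basis of $\mathbb{U}^{\imath\jmath}(\widehat{\mathfrak{sl}}_{\mathfrak{n}})$ in the generators $e_i,f_i,t_0$, the images under $\imath\jmath$ have a well-defined top component (of maximal $E$-degree) which is a distinct $\mathbb{U}^+$-monomial for distinct basis elements, and linear independence in $\mathbb{U}^+$ then forces $\imath\jmath$ to be injective. Alternatively, and more conceptually, injectivity is subsumed by the realization theorem: the modified algebra $\dot{\mathbb{U}}^{\imath\jmath}$ is isomorphic to $\dot{\mathcal{K}}_{\mathfrak{n}}^{\imath\jmath}$ by the $\imath\jmath$-analogue of Theorem~\ref{ji}, which exhibits a faithful action on the affine Schur algebra $\mathbb{S}_{\mathfrak{n},d}^{\imath\jmath}$, so that no nonzero element of $\mathbb{U}^{\imath\jmath}(\widehat{\mathfrak{sl}}_{\mathfrak{n}})$ can map to zero. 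Either route completes the proof that $\imath\jmath$ is an injective $\mathbb{F}$-algebra homomorphism.
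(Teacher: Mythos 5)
First, a point of comparison: the paper does not prove this proposition at all. It is quoted, typos included (note the garbled header naming the source algebra $\mathbb{U}^{\mathfrak{c}}$ rather than $\mathbb{U}^{\imath\jmath}$), from \cite[Proposition 4.4]{FLLLWW20}, and it serves purely as an external input to the later sections on quantum symmetric pairs. So there is no in-paper argument to measure your attempt against; what you propose is a reproof of the cited result. Your first half --- verifying that the assignments (1)--(4) preserve every defining relation, with routine checks away from the distinguished nodes and a heavy three-parameter computation at the nodes $0$ and $r$ --- is indeed the strategy of the cited reference and is sound in outline, although as written it is a plan rather than a proof: essentially all of the content of the statement lives in the coefficient bookkeeping that you defer.

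The genuine gaps are in your injectivity arguments; both routes fail as stated. The leading-term argument does not work in the naive form you give: the top $E$-degree components of the images ($E_i$, $E_{-i-1}$, $E_0$) are not free generators --- they satisfy quantum Serre relations inside $\mathbb{U}^+$ --- so distinct ordered monomials in $e_i,f_i,t_0$ do \emph{not} have linearly independent top components, and ``distinct $\mathbb{U}^+$-monomials'' proves nothing. Moreover you cannot ``fix an ordered PBW-type monomial basis of $\mathbb{U}^{\imath\jmath}(\widehat{\mathfrak{sl}}_{\mathfrak{n}})$'' in advance: the relations only give a spanning set, and knowing those monomials are a basis is equivalent to the injectivity you are trying to prove. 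The correct repair is the filtered/graded machinery of Letzter and Kolb \cite{Le99,Ko14}: one selects monomials whose top terms map onto a genuine basis of (the relevant part of) $\mathbb{U}^+$, proves spanning in the abstract algebra by induction using the deformed Serre relations, and then argues on the associated graded --- a nontrivial matter in affine type, where the PBW theory of $\mathbb{U}^+$ involves imaginary root vectors. Your ``more conceptual'' alternative is worse: inside this paper it is circular, since the realization theorems (Theorem~\ref{thm:isoUK}, Theorem~\ref{ji} and its $\imath\jmath$-analogue) are proved by arguments modeled on \cite{BKLW18} and \cite{DF15} that take the quantum symmetric pair embedding --- this very proposition and its relatives --- as input when transferring (semi-)monomial bases from $\mathbb{U}^-$. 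In any case those theorems concern the modified algebra $\dot{\mathbb{U}}^{\imath\jmath}$, so to conclude injectivity of $\mathbb{U}^{\imath\jmath}\to\mathbb{U}$ from them you would additionally need the separation statement that an element vanishing in every ${}_{\lambda}\mathbb{U}^{\imath\jmath}_{\lambda'}$ is zero, which is not automatic and is not established here.
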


\subsection{Isomorphism \texorpdfstring{$\dot{\mathbb{U}}^\mathfrak{\imath\jmath}\simeq\dot{\mathcal{K}}^\mathfrak{\imath\jmath}_{\mathfrak{n}}$}{Uij=Knij}}

Let ${}^{\imath\jmath}\widetilde{\Xi}_{\mathfrak{n}}^{\mathrm{diag}}$ be the set of all diagonal matrices in $\widetilde{\Xi}_{\mathfrak{n}}^{\imath\jmath}$. Denote by $\lambda=\sum_{i}\lambda_iE_{ii}$ a diagonal matrix in ${}^{\imath\jmath}\widetilde{\Xi}_{\mathfrak{n}}^{\mathrm{diag}}$. For $\lambda,\lambda'\in\widetilde{\Xi}_{\mathfrak{n}}^{\mathrm{diag}}$ we define the modified quantum algebra $\dot{\mathbb{U}}^{\imath\jmath}$ similarly to the construction of $\dot{\mathbb{U}}^{\mathfrak{c}}$ as follows:
\begin{equation*}
\dot{\mathbb{U}}^{\imath\jmath}=\bigoplus\limits_{\lambda,\lambda'\in^{}{}^{\imath\jmath}\widetilde{\Xi}_{\mathfrak{n}}^{\mathrm{diag}}}\widetilde{\Xi}_{\mathfrak{n}}^{\mathrm{diag}}{}_{\lambda}\mathbb{U}^{\imath\jmath}_{\lambda'}=\sum_{\lambda\in{}^{\imath\jmath}\widetilde{\Xi}_{\mathfrak{n}}^{\mathrm{diag}}}\mathbb{U}^{\imath\jmath}1_{\lambda}=\sum_{\lambda\in{}^{\imath\jmath}\widetilde{\Xi}_{\mathfrak{n}}^{\mathrm{diag}}}1_{\lambda} \mathbb{U}^{\imath\jmath},    
\end{equation*}
where ${}_{\lambda}\mathbb{U}^{\imath\jmath}_{\lambda'}=\mathbb{U}^{\imath\jmath}/(\sum_{i=1}^{r}(k_i-q^{\lambda_i-\lambda_{i+1}})\mathbb{U}^{\imath\jmath}+\sum_{i=1}^{r}\mathbb{U}^{\imath\jmath}(k_i-q^{\lambda'_i-\lambda'_{i+1}}))$ and $1_{\lambda}\in{}_{\lambda}\mathbb{U}^{\imath\jmath}_{\lambda}$ is the canonical projection image of the unit of $\mathbb{U}^{\imath\jmath}$.

For $\lambda\in{}^{\imath\jmath}\widetilde{\Xi}_{\mathfrak{n}}^{\mathrm{diag}}$ and $i\in[1..r]$, we use the following short-hand notations:
\begin{equation*}
\lambda+\alpha_i=\lambda+E_{ii}^{\theta}-E_{i+1,i+1}^{\theta},\quad\lambda-\alpha_i=\lambda-E_{ii}^{\theta}+E_{i+1,i+1}^{\theta}.    
\end{equation*}
We have a presentation of $\dot{\mathbb{U}}^{\imath\jmath}$ as an $\mathbb{F}$-algebra generated by
\begin{equation*}
1_{\lambda},\quad t_01_{\lambda},\quad 1_{\lambda}t_0,\quad e_i1_{\lambda},\quad 1_{\lambda}e_i,\quad f_i1_{\lambda},\quad 1_{\lambda}f_i,    \quad(i\in[1..r-1], \lambda\in{}^{\imath\imath}\widetilde{\Xi}_{\eta}^{\mathrm{diag}}), 
\end{equation*}
subject to the following relations, for $i,j\in[1..r-1]$, $\lambda,\mu\in{}^{\imath\jmath}\widetilde{\Xi}_{\mathfrak{n}}^{\mathrm{diag}}$,\\ $x,~y\in\{1,e_i,e_j,f_i,f_j,t_0\}$:
\begin{gather*}
x1_{\lambda}1_{\mu}y=\delta_{\lambda,\mu}x1_{\lambda}y,\\
e_i1_{\lambda}=1_{\lambda+\alpha_i}e_i,\quad f_i1_{\lambda}=1_{\lambda-\alpha_i}f_i,\quad t_01_{\lambda}=1_{\lambda}t_0,\\
e_i1_{\lambda}f_j=f_j1_{\lambda+\alpha_i+\alpha_j}e_i\quad(i\neq j),\\
(e_if_i-f_ie_i)1_{\lambda}=\llbracket \lambda_i-\lambda_{i+1}\rrbracket1_{\lambda}\quad(i\neq r),\\
(e_i^2e_j+e_je_i^2)1_{\lambda}=\llbracket 2\rrbracket e_ie_je_i1_{\lambda},\quad (f_i^2f_j+f_jf_i^2)1_{\lambda}=\llbracket 2\rrbracket f_if_jf_i1_{\lambda}\quad (|i-j|=1),\\
e_it_01_{\lambda}=t_0e_i1_{\lambda},\quad f_it_01_{\lambda}=t_0f_i1_{\lambda}\quad(i\neq1),\\
(e_1^2t_0+t_0e_1^2)1_{\lambda}=\llbracket 2\rrbracket e_1t_0e_11_{\lambda},\quad (f_1^2t_0+t_0f_1^2)1_{\lambda}=\llbracket 2\rrbracket f_1t_0f_11_{\lambda},\\
(t_0^2e_1+e_1t_0^2)1_{\lambda}=\llbracket 2\rrbracket(t_0e_1t_0+e_1)1_{\lambda},\quad (t_0^2f_1+f_1t_0^2)1_{\lambda}=\llbracket 2\rrbracket (t_0f_1t_0+f_1)1_{\lambda},\\
(\llbracket 2\rrbracket e_rf_re_r-e_r^2f_r-f_re_r^2)1_{\lambda}=\llbracket 2\rrbracket(q_0^{-\frac{1}{2}}q_1^{\frac{1}{2}}q^{\lambda_{r+1}-\lambda_r-3}+q_0^{\frac{1}{2}}q_1^{-\frac{1}{2}}q^{-\lambda_{r+1}+\lambda_r+3})e_r1_{\lambda},\\
(\llbracket 2\rrbracket f_re_rf_r-f_r^2e_r-e_rf_r^2)1_{\lambda}=\llbracket 2\rrbracket(q_0^{-\frac{1}{2}}q_1^{\frac{1}{2}}q^{\lambda_{r+1}-\lambda_r}+q_0^{\frac{1}{2}}q_1^{-\frac{1}{2}}q^{-\lambda_{r+1}+\lambda_r})f_r1_{\lambda}.
\end{gather*}

For all $i\in[1..r]$, $\lambda\in{}^{\imath\jmath}\widetilde{\Xi}_{\mathfrak{n}}^{\mathrm{diag}}$, write
\begin{gather*}
\mathbf{e}_i1_{\lambda}=[\lambda-E_{i+1,i+1}^{\theta}+E_{i,i+1}^{\theta}],\quad \mathbf{f}_i1_{\lambda}=[\lambda-E_{ii}^{\theta}+E_{i+1,i}^{\theta}],\\
t_01_{\lambda}=[\lambda-E_{11}^{\theta}+E_{1,-1}^{\theta}]+q^{\lambda_1}\frac{q_0^{\frac{1}{2}}q_1^{\frac{1}{2}}-q_0^{-\frac{1}{2}}q_1^{-\frac{1}{2}}}{q-q^{-1}}[\lambda].
\end{gather*}
Set ${}_\mathbb{F}\dot{\mathcal{K}}_{\mathfrak{n}}^{\imath\jmath}=\mathbb{F}\otimes_{\mathbb{A}}\dot{\mathcal{K}}_{\mathfrak{n}}^{\imath\jmath}$.

\begin{thm}
There is an isomorphism of $\mathbb{F}$-algebras $\aleph:~\dot{\mathbb{U}^{\imath\jmath}}\to{}_\mathbb{F}\dot{\mathcal{K}}_{\mathfrak{n}}^{\imath\jmath}$ such that, $\forall i\in[1..r]$, $\lambda\in{}^{\imath\jmath}\widetilde{\Xi}_{\mathfrak{n}}^{\mathrm{diag}}$,
\begin{equation*}
e_i1_{\lambda}\mapsto\mathbf{e}_i1_{\lambda},\quad f_i1_{\lambda}\mapsto\mathbf{f}_i1_{\lambda},\quad
t_01_{\lambda}\mapsto\mathbf{t}_01_{\lambda},\quad
1_{\lambda}\mapsto[\lambda].    
\end{equation*}
\end{thm}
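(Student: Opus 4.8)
The plan is to follow the template established in the proofs of Theorems~\ref{thm:isoUK} and \ref{ji}, of which the present statement is a hybrid: the boundary node $r$ here carries the $\imath$-twist exactly as the node $r$ in type $\mathfrak{c}$, while the node $0$ is replaced by the distinguished generator $t_0$ in the same manner as $t_r$ in type $\jmath\imath$. Accordingly, I would first verify that $\aleph$ is a well-defined $\mathbb{F}$-algebra homomorphism by checking the defining relations, and then that it is a linear bijection.

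For the homomorphism part, the idempotent relations, the weight-shift relations $e_i 1_\lambda = 1_{\lambda+\alpha_i} e_i$ and $f_i 1_\lambda = 1_{\lambda-\alpha_i} f_i$, the commutation of $t_0$ with the far generators, and the ordinary quantum Serre relations for $|i-j|=1$ with $i,j\in[1..r-1]$ all follow directly by translating the defining products into the standard basis via the Chevalley multiplication formulas of Theorem~\ref{stabilization formula} (in the form valid on $\dot{\mathcal{K}}_{\mathfrak{n}}^{\imath\jmath}$); these involve only the interior parameter $q$ and reproduce verbatim the computations already performed in \cite{DF15, BKLW18}. The genuinely new relations are of two kinds: the $\imath$-Serre relations at the node $r$, namely $(\llbracket 2\rrbracket e_r f_r e_r - e_r^2 f_r - f_r e_r^2)1_\lambda = \llbracket 2\rrbracket(q_0^{-\frac{1}{2}}q_1^{\frac{1}{2}}q^{\lambda_{r+1}-\lambda_r-3} + q_0^{\frac{1}{2}}q_1^{-\frac{1}{2}}q^{-\lambda_{r+1}+\lambda_r+3})e_r 1_\lambda$ together with its $f$-analogue, and the $t_0$-Serre relations at the node $0$, namely $(t_0^2 e_1 + e_1 t_0^2)1_\lambda = \llbracket 2\rrbracket(t_0 e_1 t_0 + e_1)1_\lambda$ and the degenerate partner $(e_1^2 t_0 + t_0 e_1^2)1_\lambda = \llbracket 2\rrbracket e_1 t_0 e_1\, 1_\lambda$ (with their $f$-versions). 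For the former, I would expand $\mathbf{e}_r 1_\lambda$ and $\mathbf{f}_r 1_\lambda$ in the standard basis using Theorem~\ref{stabilization formula}(1) and (4) and collect terms exactly as in the displayed computation in the proof of Theorem~\ref{thm:isoUK}; the role played there by the boundary parameters at $r$ transfers unchanged. For the latter, the essential step is to re-express the leading standard-basis term of $\mathbf{t}_0 1_\lambda$ as a product of boundary Chevalley generators at the node $0$ — the $\imath\jmath$-counterpart of the expansion $\mathbf{t}_r 1_\lambda = \mathbf{e}_r \mathbf{f}_r 1_\lambda + (\cdots)1_\lambda$ exploited in the proof of Theorem~\ref{ji} — after which, applying Theorem~\ref{stabilization formula}(2) and (3), the $t_0$-Serre relations reduce to the same combinatorial identity among standard-basis coefficients verified in \emph{loc.\,cit.} I would display one representative computation in full (say the $f$-version of the $t_0$-Serre relation) and note that the remaining variants are entirely parallel.

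Having shown that $\aleph$ is a homomorphism, the linear isomorphism follows the route of Theorem~\ref{thm:isoUK}. Using the type A modified algebra $\dot{\mathbb{U}}$ and its stabilization $\dot{\mathbb{K}}_{\mathfrak{n}}$ together with the isomorphism $\aleph^{\mathfrak{a}}$ of \cite{DF15} and the semi-monomial basis of \cite{LL17}, I would fix the distinguished idempotent $1_\epsilon$ attached to the appropriate minimal diagonal matrix $\epsilon$ (the $\imath\jmath$-analogue of $\epsilon_{0,r+1}$), transport the semi-monomial basis of $\mathbb{U}^- D_\epsilon$ to a semi-monomial basis of $\dot{\mathbb{U}}^{\imath\jmath} 1_\epsilon$ as in \cite[Theorem~4.7]{BKLW18}, and observe that the restriction $\aleph|_\epsilon$ matches it bijectively with the semi-monomial basis $\{m_A\}$ of $\dot{\mathcal{K}}_{\mathfrak{n}}^{\imath\jmath}[\epsilon]$. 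The shift isomorphisms $\sharp_\lambda$ on both sides then spread $\aleph|_\epsilon$ across all weights $\lambda$ through the same commutative square as in Theorem~\ref{thm:isoUK}, yielding a $\mathbb{Q}(v)$-linear, hence $\mathbb{F}$-linear, bijection.

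The main obstacle I anticipate is the bookkeeping in the $t_0$-Serre computation: unlike the interior relations, the Cartan-correction term in $\mathbf{t}_0 1_\lambda$ interacts with the sign change at the node $0$, so each of the three summands $\mathbf{t}_0^2 \mathbf{f}_1$, $\mathbf{f}_1 \mathbf{t}_0^2$, and $\mathbf{t}_0 \mathbf{f}_1 \mathbf{t}_0$ produces several standard-basis terms whose coefficients must cancel in a precise pattern to leave exactly $\llbracket 2\rrbracket(\mathbf{t}_0 \mathbf{f}_1 \mathbf{t}_0 + \mathbf{f}_1)1_\lambda$. This is the analogue of the lengthiest computation in the proof of Theorem~\ref{ji}; although purely mechanical given the multiplication formulas, verifying the vanishing of the spurious quadratic terms in $q^{\lambda_1}$ is where care is needed.
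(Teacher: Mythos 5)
Your proposal is correct and takes essentially the same approach as the paper, whose entire proof is the one-line remark that the argument is similar to Theorem~\ref{ji}: your plan of verifying the node-$r$ relations as in Theorem~\ref{thm:isoUK}, handling the $t_0$-Serre relations via the expansion of $\mathbf{t}_01_{\lambda}$ through ambient Chevalley generators exactly as $\mathbf{t}_r1_{\lambda}$ is treated in the proof of Theorem~\ref{ji}, and obtaining bijectivity by transporting semi-monomial bases is precisely the content that remark compresses. The only cosmetic difference is that the paper (through Theorem~\ref{ji}) outsources bijectivity to \cite[Theorem A.15]{BKLW18}, whereas you spell out the equivalent argument given in the proof of Theorem~\ref{thm:isoUK}.
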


\begin{proof}
The proof is similar to the  Theorem \ref{ji}.   
\end{proof}

\subsection{Quantum symmetric pairs \texorpdfstring{$(\mathbb{U},\mathbb{U}^\mathfrak{\imath\imath})$}{(U,Uii)}}

\begin{center}
\begin{tikzpicture}[scale=.4]
\node at (4,0.75) {$1$};
\node at (12,0.75) {$r-1$};

\node at (4,-3.75) {$2r-1$};
\node at (12,-3.75) {$r+1$};
    
\node at (0,-0.75) {$0$};
\node at (16,-0.75) {$r$};
       
\node at (8,0) {$\dots$};
\node at (8,-3) {$\dots$};
    
\foreach \x in {2,6} 
{\draw[thick,xshift=\x cm] (\x, 0) circle (0.3); 
\draw[thick,xshift=\x cm] (\x, -3) circle (0.3);}
    
\foreach \x in {0,8} 
\draw[thick,xshift=\x cm] (\x, -1.5) circle (0.3);

 \foreach \x in {0}
 {\draw[thick,xshift=\x cm] (\x,-1.2) ++(0.5,0) -- +(3,1.2);
 \draw[thick,xshift=\x cm] (\x,-1.8) ++(0.5,0) -- +(3,-1.2);}
   
\foreach \x in {2,4.5}
{\draw[thick,xshift=\x cm] (\x,0) ++(0.5,0) -- +(2,0);
\draw[thick,xshift=\x cm] (\x,-3) ++(0.5,0) -- +(2,0);}

\foreach \x in {6}
{\draw[thick,xshift=\x cm] (\x,0) ++(0.5,0) -- +(3,-1.2);
\draw[thick,xshift=\x cm] (\x,-3) ++(0.5,0) -- +(3,1.2);}
    
\foreach \x in {4} 
\draw[thick,<->, blue, bend right=50] (\x+0.3,-2.5) to (\x+0.3,-0.5);
\foreach \x in {12} 
\draw[thick,<->, blue, bend left=50] (\x-0.3,-2.5) to (\x-0.3,-0.5);
    
\foreach \x in {0} 
\draw[thick,<->, blue, bend left=100,looseness=10] (\x-0.4,-1.7) to (\x-0.4,-1.3);
\foreach \x in {16} 
\draw[thick,<->, blue, bend right=100,looseness=10] (\x+0.4,-1.7) to (\x+0.4,-1.3);

\end{tikzpicture}
\end{center}

\begin{center}
Figure 4: Dynkin diagram of type $A_{2r-1}^{(1)}$ with involution of type $\imath\imath$.
\end{center}

Denote $\mathbb{U}(\widehat{\mathfrak{sl}}_{\eta})$ as generated by $E_i$, $F_i$, $K_i^{\pm1}~(i\in[0..\eta]/\{r+1\})$. 

Define $\mathbb{U}^{\imath\imath}(\widehat{\mathfrak{sl}}_\mathfrak{n})$ to be an $\mathbb{F}$-algebra generated by
\begin{equation*}
t_0,\quad t_r,\quad e_i,\quad f_i,\quad k_i^{\pm1}\quad (1\leqslant i\leqslant r-1),    
\end{equation*}
subject to the following relations for $1\leqslant i,j\leqslant r-1$, $a\in\{0,r\}$:
\begin{gather*}
k_1\cdots k_{r-1}=1,\quad k_ik_i^{-1}=k_i^{-1}k_i=1,\quad k_ik_j=k_jk_i,\\
t_ak_i=k_it_a,\quad t_0t_r=t_rt_0,\\
k_ie_jk_i^{-1}=q^{c_{i,j}}e_{j},\quad k_if_jk_i^{-1}=q^{-c_{i,j}}f_{j},\\
e_ie_j=e_je_i,\quad f_if_j=f_jf_i\quad(|i-j|>1), \\
e_it_a=t_ae_i,\quad f_it_a=t_af_i\quad(|i-a|>1), \\
e_if_j-f_je_i=\delta_{ij}\frac{k_i-k_i^{-1}}{q-q^{-1}}\quad (i,j)\neq(r,r),\\
e_i^2t_a+t_ae_i^2=(q+q^{-1})e_it_ae_i,\quad f_i^2t_a+t_af_i^2=(q+q^{-1})f_it_af_i\quad(|i-a|=1),\\
t_a^2e_i+e_it_a^2=(q+q^{-1})t_ae_it_a+e_i,\quad t_a^2f_i+f_it_a^2=(q+q^{-1})t_af_it_a+f_i\quad(|i-a|=1),\\
e_i^2e_j+e_je_i^2=(q+q^{-1})e_ie_je_i,\quad f_i^2f_j+f_jf_i^2=(q+q^{-1})f_if_jf_i\quad (|i-j|=1).
\end{gather*}

\begin{proposition}{\cite[Proposition 4.4]{FLLLWW20}}
There are injective $\mathbb{F}$-algebra homomorphisms $\imath\jmath$: $\jmath\jmath$: $\mathbb{U}^{\mathfrak{c}}(\widehat{\mathfrak{sl}}_{\eta})\to\mathbb{U}(\widehat{\mathfrak{sl}}_{\eta})$ defined by

\item[(1)] $k_i\mapsto K_iK_{-i-1}^{-1},\quad (1\leqslant i\leqslant r-1)$,

\item[(2)] $e_i\mapsto E_i+F_{-i-1}K_{i}^{-1},\quad f_i\mapsto E_{-i-1}+F_{i}K_{-i-1}^{-1},\quad (1\leqslant i\leqslant r-1)$,

\item[(3)] $t_0\mapsto E_0+q_0^{-\frac{1}{2}}q_1^{\frac{1}{2}}qF_0K_0^{-1}+\frac{q_0^{\frac{1}{2}}q_1^{\frac{1}{2}}-q_0^{-\frac{1}{2}}q_1^{-\frac{1}{2}}}{q-q^{-1}}K_0^{-1}$,

\item[(4)] $t_0\mapsto E_0+q_0^{-\frac{1}{2}}q_1^{\frac{1}{2}}qF_0K_0^{-1}+\frac{q_0^{\frac{1}{2}}q_1^{\frac{1}{2}}-q_0^{-\frac{1}{2}}q_1^{-\frac{1}{2}}}{q-q^{-1}}K_0^{-1}$.
\end{proposition}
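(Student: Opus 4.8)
The plan is to establish the claim in two stages: first that the assignment on generators respects all defining relations of $\mathbb{U}^{\imath\imath}(\widehat{\mathfrak{sl}}_{\eta})$, so that it extends to an $\mathbb{F}$-algebra homomorphism; and second that this homomorphism is injective. Write $\tilde{e}_i,\tilde{f}_i,\tilde{k}_i^{\pm1}$ ($1\leqslant i\leqslant r-1$) and $\tilde{t}_0,\tilde{t}_r$ for the proposed images in $\mathbb{U}(\widehat{\mathfrak{sl}}_{\eta})$, where $\tilde{e}_i=E_i+F_{-i-1}K_i^{-1}$ at a non-fixed node and $\tilde{t}_a=E_a+(\ast)F_aK_a^{-1}+c_aK_a^{-1}$ at a fixed node $a\in\{0,r\}$.

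For the first stage I would check the relations in groups of increasing difficulty. The purely Cartan relations ($k_ik_j=k_jk_i$, $k_1\cdots k_{r-1}=1$) are immediate since each $\tilde{k}_i$ is a monomial in the $K$'s, using $K_i=K_{i+\eta}$. The commutation relations $\tilde{k}_i\tilde{e}_j\tilde{k}_i^{-1}=q^{c_{i,j}}\tilde{e}_j$ and their $f$-analogues follow by a weight computation from $K_iE_jK_i^{-1}=q^{c_{i,j}}E_j$, together with the symmetry $j\leftrightarrow-j-1$ built into $\tilde{e}_j$. The commutations $\tilde{t}_a\tilde{k}_i=\tilde{k}_i\tilde{t}_a$, $\tilde{t}_0\tilde{t}_r=\tilde{t}_r\tilde{t}_0$, and the distant-commutativity relations (e.g. $\tilde{e}_i\tilde{e}_j=\tilde{e}_j\tilde{e}_i$ for $|i-j|>1$, $\tilde{e}_i\tilde{t}_a=\tilde{t}_a\tilde{e}_i$ for $|i-a|>1$) reduce to the fact that $E_i,F_i$ commute with $E_j,F_j$ when the nodes $i,-i-1$ are far from $j,-j-1$; only the boundary cases where the reflected indices become adjacent require attention. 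The mixed relation $\tilde{e}_i\tilde{f}_j-\tilde{f}_j\tilde{e}_i=\delta_{ij}\frac{\tilde{k}_i-\tilde{k}_i^{-1}}{q-q^{-1}}$ for $(i,j)\neq(r,r)$ is the central non-degenerate check: expanding both sides via $E_aF_b-F_bE_a=\delta_{ab}\frac{K_a-K_a^{-1}}{q-q^{-1}}$ produces the diagonal term along with cross terms that cancel because of the precise coefficients attached to the $F$-summands.

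The genuinely delicate part, and what I expect to be the main obstacle, is verifying the deformed ($\imath$-)Serre relations attached to the fixed nodes $a\in\{0,r\}$,
\begin{gather*}
\tilde{e}_i^2\tilde{t}_a+\tilde{t}_a\tilde{e}_i^2=(q+q^{-1})\tilde{e}_i\tilde{t}_a\tilde{e}_i,\\
\tilde{t}_a^2\tilde{e}_i+\tilde{e}_i\tilde{t}_a^2=(q+q^{-1})\tilde{t}_a\tilde{e}_i\tilde{t}_a+\tilde{e}_i\qquad(|i-a|=1),
\end{gather*}
together with their $f$-analogues. The second family is the one that forces the nonzero constant term in $\tilde{t}_a$: substituting $\tilde{t}_a=E_a+(\ast)F_aK_a^{-1}+c_aK_a^{-1}$ into the degree-three expression, the ordinary Serre relation of $\mathbb{U}$ accounts for the leading cubic contribution, but a residual term linear in $\tilde{e}_i$ survives whose coefficient is a rational function of $q,q_0,q_1$. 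I would compute this coefficient explicitly and show that it equals $1$ precisely when $c_a=\frac{q_0^{1/2}q_1^{1/2}-q_0^{-1/2}q_1^{-1/2}}{q-q^{-1}}$ (with the $a$-dependent normalization recorded in the statement); this calibration of the constant term is exactly what makes $\imath\imath$ a coideal embedding. The computation is a finite but intricate commutator bookkeeping, and constitutes the technical heart of the verification.

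For injectivity I would avoid recomputation and instead invoke a triangular-decomposition argument in the spirit of Kolb--Letzter: $\mathbb{U}^{\imath\imath}$ admits a PBW-type basis, and one shows that the images of these basis monomials remain linearly independent in $\mathbb{U}(\widehat{\mathfrak{sl}}_{\eta})$ by passing to the associated graded with respect to a filtration by the number of $F$-factors, under which $\tilde{e}_i,\tilde{t}_a$ degenerate to honest raising-type elements whose independence follows from the triangular decomposition $\mathbb{U}=\mathbb{U}^-\mathbb{U}^0\mathbb{U}^+$. Since the statement is quoted from \cite[Proposition 4.4]{FLLLWW20}, within the present paper it suffices to cite that reference; the sketch above indicates the route for a self-contained proof.
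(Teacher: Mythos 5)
The paper does not actually prove this proposition: no proof follows the statement, which is imported (typos included) from \cite[Proposition 4.4]{FLLLWW20}. So your closing remark --- that within this paper it suffices to cite that reference --- is exactly the authors' treatment, and there is no in-paper argument to compare your sketch against. The natural internal benchmarks are the proofs of the neighbouring isomorphism theorems (Theorem~\ref{thm:isoUK} and Theorem~\ref{ji}), which do what you outline: explicit verification of the deformed Serre relations at the fixed nodes, followed by a linear-independence argument via triangular decomposition and (semi-)monomial bases. In that sense your plan has the right overall shape, and your identification of the constant-term calibration in $\tilde{t}_a$ as the technical heart is accurate.

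However, taken literally your verification breaks precisely at the steps you dismiss as ``immediate'' or ``routine'', because the statement as printed is corrupted and cannot be verified as written. Concretely, for $r\geqslant 2$: (i) the defining relation $k_1\cdots k_{r-1}=1$ of $\mathbb{U}^{\imath\imath}$ is \emph{not} preserved by $k_i\mapsto K_iK_{-i-1}^{-1}$, since $\prod_{i=1}^{r-1}K_iK_{-i-1}^{-1}$ is a monomial in pairwise distinct $K_j^{\pm1}$, and the $K_j$ generate a free abelian group inside $\mathbb{U}(\widehat{\mathfrak{sl}}_{\eta})$, so this image is a nontrivial torus element; being ``a monomial in the $K$'s'' does not make it equal to $1$. (ii) The reflection $i\mapsto -i-1$ in the printed formulas is the involution of type $\jmath\jmath$ (Figure 1), not the type-$\imath\imath$ involution $i\mapsto -i$ of Figure 4; for example at $i=j=r-1$ one computes $\tilde{k}_{r-1}E_{r-1}\tilde{k}_{r-1}^{-1}=q^{3}E_{r-1}$ (because $-r\equiv r$ and $r-1$ are adjacent nodes), contradicting the relation $k_ie_jk_i^{-1}=q^{c_{i,j}}e_j$, so the weight checks fail exactly at the boundary cases you wave through. (iii) Items (3) and (4) are identical, so no formula for $t_r$ is in fact specified; your uniform treatment of $\tilde{t}_a$, $a\in\{0,r\}$, silently assumes a corrected statement. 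A genuine proof must therefore begin by restoring the correct presentation and embedding formulas from \cite{FLLLWW20} before any relation-checking can start. Beyond this, the two decisive computations --- showing that the residual term in $t_a^2e_i+e_it_a^2=(q+q^{-1})t_ae_it_a+e_i$ is exactly $e_i$ for the stated constant term, and the linear independence needed for injectivity --- are asserted in your text rather than carried out, so even for the corrected statement what you have is an outline, not a proof.
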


\subsection{Isomorphism \texorpdfstring{$\dot{\mathbb{U}}^\mathfrak{\imath\imath}\simeq\dot{\mathcal{K}}^\mathfrak{\imath\imath}_{\eta}$}{Uii=Knii}}

Let ${}^{\imath\imath}\widetilde{\Xi}_{\eta}^{\mathrm{diag}}$ be the set of all diagonal matrices in $\widetilde{\Xi}_{\eta}^{\imath\imath}$. Denote by $\lambda=\sum_{i}\lambda_iE_{ii}$ a diagonal matrix in ${}^{\imath\imath}\widetilde{\Xi}_{\eta}^{\mathrm{diag}}$. For $\lambda,\lambda'\in\widetilde{\Xi}_{\eta}^{\mathrm{diag}}$ we define the modified quantum algebra $\dot{\mathbb{U}}^{\imath\jmath}$ similarly to the construction of $\dot{\mathbb{U}}^{\mathfrak{c}}$ as follows:
\begin{equation*}
\dot{\mathbb{U}}^{\imath\imath}=\bigoplus\limits_{\lambda,\lambda'\in^{}{}^{\imath\imath}\widetilde{\Xi}_{\eta}^{\mathrm{diag}}}\widetilde{\Xi}_{\eta}^{\mathrm{diag}}{}_{\lambda}\mathbb{U}^{\imath\imath}_{\lambda'}=\sum_{\lambda\in{}^{\imath\imath}\widetilde{\Xi}_{\eta}^{\mathrm{diag}}}\mathbb{U}^{\imath\imath}1_{\lambda}=\sum_{\lambda\in{}^{\imath\imath}\widetilde{\Xi}_{\eta}^{\mathrm{diag}}}1_{\lambda} \mathbb{U}^{\imath\imath},    
\end{equation*}
where ${}_{\lambda}\mathbb{U}^{\imath\imath}_{\lambda'}=\mathbb{U}^{\imath\jmath}/(\sum_{i=1}^{r-1}(k_i-q^{\lambda_i-\lambda_{i+1}})\mathbb{U}^{\imath\imath}+\sum_{i=1}^{r}\mathbb{U}^{\imath\imath}(k_i-q^{\lambda'_i-\lambda'_{i+1}}))$ and $1_{\lambda}\in{}_{\lambda}\mathbb{U}^{\imath\imath}_{\lambda}$ is the canonical projection image of the unit of $\mathbb{U}^{\imath\imath}$.

For $\lambda\in{}^{\imath\imath}\widetilde{\Xi}_{\eta}^{\mathrm{diag}}$ and $i\in[1..r-1]$, we use the following short-hand notations:
\begin{equation*}
\lambda+\alpha_i=\lambda+E_{ii}^{\theta}-E_{i+1,i+1}^{\theta},\quad\lambda-\alpha_i=\lambda-E_{ii}^{\theta}+E_{i+1,i+1}^{\theta}.    
\end{equation*}
We have a presentation of $\dot{\mathbb{U}}^{\imath\imath}$ as a $\mathbb{F}$-algebra generated by the symbols, for $i\in[1..r]$, $\lambda\in{}^{\imath\imath}\widetilde{\Xi}_{\eta}^{\mathrm{diag}}$,
\begin{equation*}
1_{\lambda},\quad t_01_{\lambda},\quad 1_{\lambda}t_0,\quad t_r1_{\lambda},\quad 1_{\lambda}t_r,\quad e_i1_{\lambda},\quad 1_{\lambda}e_i,\quad f_i1_{\lambda},\quad 1_{\lambda}f_i,     
\end{equation*}
subject to the following relations, for $i,j\in[1..r-1]$, $a\in\{0,r\}$, $\lambda,\mu\in{}^{\imath\imath}\widetilde{\Xi}_{\eta}^{\mathrm{diag}}$, $x,y\in\{1,e_i,e_j,f_i,f_j,t_0,t_r\}$:
\begin{gather*}
x1_{\lambda}1_{\mu}y=\delta_{\lambda,\mu}x1_{\lambda}y,\\
e_i1_{\lambda}=1_{\lambda+\alpha_i}e_i,\quad f_i1_{\lambda}=1_{\lambda-\alpha_i}f_i,\quad t_01_{\lambda}=1_{\lambda}t_0,\\
e_i1_{\lambda}f_j=f_j1_{\lambda+\alpha_i+\alpha_j}e_i\quad(i\neq j),\\
(e_if_i-f_ie_i)1_{\lambda}=\llbracket \lambda_i-\lambda_{i+1}\rrbracket1_{\lambda},\\
(e_i^2e_j+e_je_i^2)1_{\lambda}=\llbracket 2\rrbracket e_ie_je_i1_{\lambda},\quad (f_i^2f_j+f_jf_i^2)1_{\lambda}=\llbracket 2\rrbracket f_if_jf_i1_{\lambda}\quad (|i-j|=1),\\
e_it_a1_{\lambda}=t_ae_i1_{\lambda},\quad f_it_a1_{\lambda}=t_af_i1_{\lambda}\quad(|i-a|>1),\\
(e_i^2t_a+t_ae_i^2)1_{\lambda}=\llbracket 2\rrbracket e_it_ae_i1_{\lambda},\quad (f_i^2t_a+t_af_i^2)1_{\lambda}=\llbracket 2\rrbracket f_it_af_i1_{\lambda}\quad (|i-a|=1),\\
(t_a^2e_i+e_it_a^2)1_{\lambda}=\llbracket 2\rrbracket(t_ae_it_a+e_i)1_{\lambda},\quad t_a^2f_i+f_it_a^2=\llbracket 2\rrbracket (t_af_it_a+f_i)1_{\lambda}\quad (|i-a|=1).
\end{gather*}

For all $i\in[1..r]$, $\lambda\in{}^{\imath\imath}\widetilde{\Xi}_{\eta}^{\mathrm{diag}}$, write
\begin{gather*}
\mathbf{e}_i1_{\lambda}=[\lambda-E_{i+1,i+1}^{\theta}+E_{i,i+1}^{\theta}],\quad\mathbf{f}_i1_{\lambda}=[\lambda-E_{ii}^{\theta}+E_{i+1,i}^{\theta}],\\
t_01_{\lambda}=[\lambda-E_{11}^{\theta}+E_{1,-1}^{\theta}]+q^{\lambda_1}\frac{q_0^{\frac{1}{2}}q_1^{\frac{1}{2}}-q_0^{-\frac{1}{2}}q_1^{-\frac{1}{2}}}{q-q^{-1}}[\lambda],\\
t_r1_{\lambda}=[\lambda-E_{rr}^{\theta}+E_{r,r+2}^{\theta}]+q^{\lambda_r}\frac{q_0^{-\frac{1}{2}}q_1^{\frac{1}{2}}-q_0^{\frac{1}{2}}q_1^{-\frac{1}{2}}}{q-q^{-1}}[\lambda]
\end{gather*}
Set ${}_\mathbb{F}\dot{\mathcal{K}}_n^{\imath\imath}=\mathbb{F}\otimes_{\mathbb{A}}\dot{\mathcal{K}}_n^{\imath\imath}$.

\begin{thm}
There is an isomorphism of $\mathbb{F}$-algebras $\aleph: \dot{\mathbb{U}^{\imath\imath}}\to{}_\mathbb{F}\dot{\mathcal{K}}_{\eta}^{\imath\imath}$ such that, for $i\in[1..r-1]$ and $\lambda\in{}^{\imath\imath}\widetilde{\Xi}_{\eta}^{\mathrm{diag}}$,
\begin{equation*}
e_i1_{\lambda}\mapsto\mathbf{e}_i1_{\lambda},\quad f_i1_{\lambda}\mapsto\mathbf{f}_i1_{\lambda},\quad
t_01_{\lambda}\mapsto\mathbf{t}_01_{\lambda},\quad
t_r1_{\lambda}\mapsto\mathbf{t}_r1_{\lambda},\quad
1_{\lambda}\mapsto[\lambda].    
\end{equation*}
\end{thm}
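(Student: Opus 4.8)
The plan is to prove the statement exactly as in the proof of Theorem~\ref{thm:isoUK}: first check that the assignment $\aleph$ respects all the defining relations of $\dot{\mathbb{U}}^{\imath\imath}$, so that it extends to a well-defined $\mathbb{F}$-algebra homomorphism, and then establish bijectivity by matching semi-monomial bases on the two sides. For the homomorphism part I would organize the defining relations into three blocks according to which generators they involve. The relations involving only $e_i,f_i$ and the idempotents $1_\lambda$ with $1\leqslant i\leqslant r-1$ are the ordinary type~A relations; their verification through the Chevalley multiplication formulas of Theorem~\ref{stabilization formula}(1),(3) is word-for-word the same as in the finite-type~A stabilization and in the proof of Theorem~\ref{thm:isoUK}, and so requires nothing new.

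The second and third blocks concern the two $\imath$-nodes. The relations governing $t_0$ — that is $e_it_0=t_0e_i$ for $i\neq1$, the two Serre-type relations of bidegree $(2,1)$ and $(1,2)$ in $e_1,t_0$, and their $f$-analogues — are identical to the $\imath\jmath$-relations, since the images $\mathbf{t}_01_\lambda$ here and in the $\imath\jmath$-case coincide; likewise the relations governing $t_r$ are identical to the $\jmath\imath$-relations verified in Theorem~\ref{ji}, because $\mathbf{t}_r1_\lambda$ agrees with its $\jmath\imath$-counterpart. Consequently these two blocks are handled by directly reusing the computations already carried out for Theorem~\ref{ji} (and its $\imath\jmath$-analogue), applying Theorem~\ref{stabilization formula}(2),(4) together with the explicit expressions for $\mathbf{t}_01_\lambda$ and $\mathbf{t}_r1_\lambda$; no genuinely new Serre calculation is needed.

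The only relation not covered by the earlier cases is the commutation $t_0t_r=t_rt_0$, along with the long-range commutations $e_it_a=t_ae_i$ and $f_it_a=t_af_i$ for $|i-a|>1$, $a\in\{0,r\}$. I would verify these by observing that the two $\imath$-nodes act on disjoint blocks of the matrices in $\widetilde{\Xi}_\eta^{\imath\imath}$: the entries altered by $\mathbf{t}_01_\lambda$ (those near position $(1,-1)$ and its $\theta$-image) are disjoint from the entries altered by $\mathbf{t}_r1_\lambda$ (those near position $(r,r+2)$), so the two stabilized products commute entrywise. This decoupling of the two ends of the folded diagram is the conceptual point of the $\imath\imath$-type, and it makes the homomorphism check an essentially mechanical combination of the $\imath\jmath$- and $\jmath\imath$-verifications. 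Finally, to see that $\aleph$ is a linear isomorphism I would transcribe the basis argument of Theorem~\ref{thm:isoUK}: choose a base weight $\epsilon$ supported on the two $\imath$-nodes, use the triangular decomposition of $\mathbb{U}$ and the type~A semi-monomial basis of Du--Fu to produce a semi-monomial $\mathbb{F}$-basis $\{\widetilde m_A\}$ of $\dot{\mathbb{U}}^{\imath\imath}1_\epsilon$ corresponding under $\aleph$ to the semi-monomial basis $\{m_A\}$ of ${}_\mathbb{F}\dot{\mathcal{K}}_\eta^{\imath\imath}[\epsilon]$, and then transport the resulting isomorphism to an arbitrary weight $\lambda$ via the shift maps $\sharp_\lambda$ and the commutative diagram used there.

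I expect the main obstacle to be purely bookkeeping rather than conceptual: confirming that the bidegree-$(2,1)$ and $(1,2)$ Serre computations for both $t_0$ and $t_r$, when run through the three-parameter formulas of Theorem~\ref{stabilization formula}(2),(4), reproduce \emph{exactly} the scalar coefficients appearing in the defining relations of $\dot{\mathbb{U}}^{\imath\imath}$ (including the delicate $q_0^{\pm1/2}q_1^{\mp1/2}$ factors). The structural ingredients — the non-interaction of the two $\imath$-nodes and the transfer of bijectivity from the type~A stabilization — are already available from the earlier subsections, so the remaining work is to assemble these verified pieces and record the scalar matches.
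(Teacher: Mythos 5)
Your proposal is correct and follows essentially the same route as the paper, whose entire proof is the statement that the argument is similar to that of Theorem~\ref{ji}: namely, verify the defining relations via the stabilized multiplication formulas of Theorem~\ref{stabilization formula} (reusing the $\imath\jmath$- and $\jmath\imath$-computations, which applies because $\mathbf{t}_01_{\lambda}$ and $\mathbf{t}_r1_{\lambda}$ are given by the same expressions as in those cases), and then obtain the linear isomorphism by the semi-monomial basis transfer used for Theorem~\ref{thm:isoUK} (equivalently, the argument of \cite[Theorem A.15]{BKLW18} cited there). Your extra observation that the only genuinely new relation, $t_0t_r=t_rt_0$, holds because the two $\imath$-nodes modify disjoint rows and columns of the matrices in $\widetilde{\Xi}_{\eta}^{\imath\imath}$ is precisely the content of the paper's implicit similarity claim, so your plan is the paper's proof written out in more detail.
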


\begin{proof}
The proof is similar to the proof of Theorem \ref{ji}.   
\end{proof}

\vspace{0.5cm}

\appendix
\section{Comparison of multiplication formulas}

\subsection{Affine type C} 
Let $q_0=1,q_1=q^2$. Rewrite $v:=q^{-1}$ and $T'_w:=q_w^{-1}T_w$. We obtain the affine Hecke algebra of type $\mathbf{C}$ with equal parameter (cf. \cite[\S3.3]{FLLLWW20}). 
In this case, Theorem \ref{1.15} is specialized to the following.
\begin{thm}{\cite[Theorem 4.11]{FLLLW23}}\label{2.1}
Let $A$, $B\in\Xi_{n,d}$ with $B$ tridiagonal and $\mathrm{row}_{\mathfrak{c}}(A)=\mathrm{col}_{\mathfrak{c}}(B)$. Then we have
\begin{equation}\label{FM1C}
e_{B}e_{A}
=\sum_{\scalebox{0.7}{$\substack{T\in\Theta_{B,A} \\ S\in \Gamma_{T}}$}}(v^{2}-1)^{n(S)}v^{2(\ell(A,B,S,T)-n(S)-h(S,T))}\llbracket  A;S;T\rrbracket e_{A^{(T-S)}},
\end{equation}
where $\ell(A,B,S,T)=\ell(A)+\ell(B)-\ell(A^{(T-S)})-\ell(w_{A,T})$. 
\end{thm}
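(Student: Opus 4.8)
The plan is to obtain Theorem~\ref{2.1} as a direct specialization of the three-parameter master formula in Theorem~\ref{1.15}, setting $q_0 = 1$ and $q_1 = q^2$ and then rewriting everything in the variable $v = q^{-1}$. No new geometric or representation-theoretic input is needed: the content is entirely a bookkeeping verification that the separate $q_0$- and $q_1$-powers in Theorem~\ref{1.15} merge with the $q$-power into the single factor $v^{2(\ell(A,B,S,T)-n(S)-h(S,T))}$ of \eqref{FM1C}, and that the three-parameter quantum factorials degenerate to the single-parameter (type C) combinatorics recorded in $\llbracket A;S;T\rrbracket$.

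First I would dispose of the scalar prefactor and the factorials, which is the clean part. Since $q^{-2} = v^2$, the prefactor $(q^{-2}-1)^{n(S)}$ is literally $(v^2-1)^{n(S)}$. For the twisted quantum integers, substituting $q_0 = 1$, $q_1 = q^2$ into the definitions gives $q_0^{-1}q_1^{-1}q^{-2(k-1)} = q_0 q_1^{-1} q^{-2(k-1)} = q^{-2k}$, so both collapse to $[2k]_{\mathfrak{c}_0} = [2k]_{\mathfrak{c}_1} = [k](1 + q^{-2k}) = \tfrac{v^{4k}-1}{v^2-1} = [2k]$, the ordinary quantum integer. Hence $[m]_{\mathfrak{c}_0}^!$ and $[m]_{\mathfrak{c}_1}^!$ each reduce to $\prod_{k=1}^m [2k]$, and the whole combinatorial coefficient $\tfrac{[A^{(T-S)}]_{\mathfrak{c}}^!}{[A-T_\theta]_{\mathfrak{c}}^![S]^![T-S]^!}\llbracket S\rrbracket$ becomes exactly the single-parameter quantity $\llbracket A;S;T\rrbracket$ of \eqref{FM1C}.

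The substantive step is the exponent reconciliation. At the specialization $q_0^{\alpha_0}q_1^{\alpha_1}q^{\alpha} = q^{2\alpha_1 + \alpha}$, because $q_0^{\alpha_0} = 1$ and $q_1^{\alpha_1} = q^{2\alpha_1}$; converting with $v = q^{-1}$ turns this into $v^{-(2\alpha_1+\alpha)}$. I would then expand $2\alpha_1 + \alpha$ using the three explicit expressions for $\alpha_0,\alpha_1,\alpha$, recombine the split lengths via $\ell = \ell_{\mathfrak{a}} + \ell_{\mathfrak{c}_0} + \ell_{\mathfrak{c}_d}$, and use that $B$ tridiagonal forces $s_0, s_d$ to be absent from its double-coset representative (as recorded in the proof of the Hecke-algebra input to Theorem~\ref{1.15}), so that $\ell_{\mathfrak{c}_0}(B) = \ell_{\mathfrak{c}_d}(B) = 0$ and $\ell(B) = \ell_{\mathfrak{a}}(B)$. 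The goal is to show that all the $\ell_{\mathfrak{c}_0}, \ell_{\mathfrak{c}_d}, \ell_{\mathfrak{a}}$ contributions attached to $A$, $B$, $w_{A,T}$ and $A^{(T-S)}$ reassemble into the full lengths, reproducing the exponent $2(\ell(A,B,S,T) - n(S) - h(S,T))$ of \eqref{FM1C} with $\ell(A,B,S,T) = \ell(A) + \ell(B) - \ell(A^{(T-S)}) - \ell(w_{A,T})$.

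The main (if modest) obstacle is precisely this exponent collapse: one must keep the sign flip induced by $v = q^{-1}$ straight, confirm that the $q_0, q_1$ powers attached to $w_{A,T}$, $A$ and $A^{(T-S)}$ combine with their $\ell_{\mathfrak{a}}$ counterparts into the correct full-length terms, and check that the odd-diagonal corrections hidden in the $\mathfrak{c}_0, \mathfrak{c}_1$ factorials (carried by the entries $a'_{00}$ and $a'_{r+1,r+1}$) leave behind no stray powers of $q$ once $q_0, q_1$ are specialized and the normalization of $\llbracket A;S;T\rrbracket$ is matched with that of \cite{FLLLW23}. Everything else is a mechanical substitution into Theorem~\ref{1.15}.
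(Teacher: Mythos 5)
Your proposal is correct and follows exactly the paper's route: the paper obtains this theorem simply by specializing Theorem~\ref{1.15} at $(q_0,q_1)=(1,q^2)$ with $v=q^{-1}$, which is precisely your plan. Your bookkeeping — the collapse $[2k]_{\mathfrak{c}_0}=[2k]_{\mathfrak{c}_1}=[2k]$, and the exponent reconciliation merging $q_1^{2\alpha_1}q^{\alpha}$ into $v^{2(\ell(A,B,S,T)-n(S)-h(S,T))}$ via $\ell=\ell_{\mathfrak{a}}+\ell_{\mathfrak{c}_0}+\ell_{\mathfrak{c}_d}$ together with $\ell(B)=\ell_{\mathfrak{a}}(B)$ for tridiagonal $B$ — supplies exactly the verifications the paper leaves implicit.
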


There is another multiplication formula with different forms provided by Fan and Li in \cite[Theorem 2.2]{FL19}. In order to state Fan-Li's formula, let us introduce some notation first.

Denote $\check{X}=(x_{i-1,j})_{i,j\in\mathbb{Z}}$ for $X=(x_{ij})_{i,j\in\mathbb{Z}}$. For $A, X, Y\in\Theta_n$, set 
\begin{equation*} 
A_{X,Y}=A+X-Y-(\check{X}-\check{Y}).
\end{equation*} 
Write $X=(x_{ij})$ and $A_{X,Y}=(a_{ij}^*)$. Denote
\begin{equation*} 
\left[\begin{array}{c}
A_{X,Y}\\
X
\end{array}\right]
=\prod\limits_{(i,j)\in\mathscr{I}}
\left[\begin{array}{c}
a_{ij}^*\\
x_{ij}
\end{array}\right]
\left[\begin{array}{c}
a_{ij}^*-x_{ij}\\
x_{-i,-j}
\end{array}\right]
\prod\limits_{\scalebox{0.7}{$\substack{i=0,-r-1\\0\leqslant k\leqslant x_{ii}-1}$}}
\frac{[a_{ii}^*-2k-1]}{[k+1]},
\end{equation*} 
where \begin{equation*} 
\mathscr{I}=(\{-r-1\}\times(-r-1,\infty))\cup([-r,-1]\times\mathbb{Z})\cup(\{0\}\times(-\infty,0))\subset\mathbb{Z}\times\mathbb{Z}.
\end{equation*} 

Given three sequences $\alpha=(\alpha_i)_{i\in\mathbb{Z}}$, $\beta=(\beta_i)_{i\in\mathbb{Z}}$, $\gamma=(\gamma_i)_{i\in\mathbb{Z}}\in\mathbb{N}^{\mathbb{Z}}$ such that their entries are all zeros except at finitely many places, we define
\begin{align*}
&n(\alpha,\gamma,\beta)\\
=&\sum_{\sigma=(\sigma_{kl})}v^{2(\sum_{k,l,p;l>p}\sigma_{kl}\alpha_p+\sum_{k\in\mathbb{Z}}\sigma_{kk}\alpha_k+\sum_{k>p,l<q}\sigma_{kl}\sigma_{pq})}\frac{\prod\limits_{k\in\mathbb{Z}}[\beta_i]^!}{\prod\limits_{k,l\in\mathbb{Z}}[\sigma_{kl}]^!}\prod\limits_{\scalebox{0.7}{$\substack{l\in\mathbb{Z}\\0\leqslant m\leqslant\gamma_j-\sigma_{jj}-1}$}}(v^{2\alpha_l}-v^{2m}),
\end{align*}
where the sum runs over all the upper triangular matrices $\sigma=(\sigma_{kl})\in\mathrm{Mat}_{\mathbb{Z}\times\mathbb{Z}}(\mathbb{N})$ such that $\mathrm{row}_{\mathfrak{c}}(\sigma)=\beta$ and $\mathrm{col}_{\mathfrak{c}}(\sigma)=\gamma$.

Given a sequence $\boldsymbol{a}=(a_i)_{i\in\mathbb{Z}}$, we define the sequence $\boldsymbol{a}^J$ whose $i$-th entry is $a_{-i}$ for all $i\in\mathbb{Z}$. We set 
\begin{equation*} 
n(X,Y)=\prod\limits_{0\leqslant i\leqslant r}n(X_i,Y_i,(X_{-i-1})^J),
\end{equation*} 
where $X_i$ and $Y_i$ are the $i$-th row vectors of $X$ and $Y$, respectively.

For $\alpha=(\alpha_i)_{i\in\mathbb{Z}}\in\mathbb{Z}^{\mathbb{Z}}$, we set  
\begin{equation*} 
\alpha^\#=(\alpha_i^\#)_{i\in\mathbb{Z}},\quad \mbox{where}\quad\alpha_i^\#=\alpha_{-i-1},~\forall i\in\mathbb{Z}.
\end{equation*} 

Given $A\in\Xi_{n,d}$ and $X,Y\in\Theta_n$, We denote
\begin{equation*} 
\xi_{A,X,Y}=\sum_{\scalebox{0.7}{$\substack{-r-1\leqslant i\leqslant r\\j>l}$}}(a_{ij}^*-x_{ij})x_{il}-(\sum_{\scalebox{0.7}{$\substack{-r\leqslant i\leqslant-1\\j>l}$}}+\sum_{\scalebox{0.7}{$\substack{i=-r-1,0\\2i-2l>j>l}$}})x_{-i,-j}x_{il}-\sum_{\scalebox{0.7}{$\substack{i=-r-1,0\\j<i}$}}\frac{x_{ij}(x_{ij}+1)}{2}.
\end{equation*} 

\begin{thm}{\cite[Theorem 2.2]{FL19}}\label{thm:2.2}
Let $\alpha=(\alpha_i)_{i\in\mathbb{Z}}\in\mathbb{N}^{\mathbb{Z}}$ be such that $\alpha_i=\alpha_{i+n}$ for all $i\in\mathbb{Z}$. If $A,B\in\Xi_{n,d}$ satisfy $\mathrm{col}_{\mathfrak{c}}(B)=\mathrm{row}_{\mathfrak{c}}(A)$ and $B-\sum_{1\leqslant i\leqslant n}\alpha_iE_{\theta}^{i,i+1}$ is diagonal, then we have
\begin{equation}\label{FM2C} 
e_{B}e_{A}=\sum_{X,Y}v^{2\xi_{A,X,Y}}n(X,Y)
\left[\begin{array}{c}
A_{X,Y}\\
X
\end{array}\right]
e_{A_{X,Y}},
\end{equation} 
where the sum runs over $X,Y\in\Theta_n$ subject to $x_{ij}+x_{-i-1,-j}=y_{ij}+y_{-i-1,-j}$, $\mathrm{row}_{\mathfrak{c}}(X)=\alpha$, $\mathrm{row}_{\mathfrak{c}}(Y)=\alpha^{\#}$, $A-Y+\check{Y}\in\Theta_n$, $A_{X,Y}\in\Xi_{n,d}$.
\end{thm}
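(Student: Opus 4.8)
The plan is to deduce Fan--Li's identity \eqref{FM2C} from the specialized Beilinson--Lusztig--MacPherson formula \eqref{FM1C} of Theorem~\ref{2.1}, which is already at our disposal. Both identities compute the same product $e_Be_A$: the hypothesis that $B-\sum_i\alpha_iE_\theta^{i,i+1}$ be diagonal says exactly that $B$ is upper bidiagonal, a special tridiagonal matrix whose superdiagonal row sums are $\mathrm{row}_{\mathfrak a}(T)_i=b_{i-1,i}=\alpha_{i-1}$, so Theorem~\ref{2.1} applies verbatim. Since $\{e_C\}$ is a basis, it suffices to show that the coefficient of each $e_C$ produced by \eqref{FM1C} agrees with that produced by \eqref{FM2C}; I would organize this as an index-wise comparison via an explicit bijection between the parameter sets $\{(T,S):T\in\Theta_{B,A},\,S\in\Gamma_T\}$ and $\{(X,Y)\}$ of the two sums.

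First I would set up the combinatorial dictionary between the two parametrizations. On the BLM side, $T$ distributes the mass $\alpha$ of $B$ and $S\leqslant_eT$ encodes the type-C folding corrections via the condition $\mathrm{row}_{\mathfrak a}(S)=\mathrm{row}_{\mathfrak a}(S^{\dag})$ defining $\Gamma_T$; on the Fan--Li side $X,Y$ play the analogous roles under $\mathrm{row}_{\mathfrak c}(X)=\alpha$, $\mathrm{row}_{\mathfrak c}(Y)=\alpha^{\#}$ and the balance condition $x_{ij}+x_{-i-1,-j}=y_{ij}+y_{-i-1,-j}$. I would read off $X$ from $T$ and $Y$ from the pair $(T,S)$ so that this balance condition becomes precisely the defining relation of $\Gamma_T$. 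The heart of this step is the matrix identity $A^{(T-S)}=A-(T-S)_\theta+(\widehat{T-S})_\theta=A+(X-\check X)-(Y-\check Y)=A_{X,Y}$, which I would verify by noting that $U\mapsto\check U$ and $U\mapsto\widehat U$ are mutually inverse row shifts with $\check{\widehat U}=U$, so that $\widehat U_\theta-U_\theta=(\widehat U-\check{\widehat U})_\theta$; the remaining discrepancy between the $\theta$-symmetry centered at $0$ and the $(-i-1)$-symmetry appearing in the balance condition is exactly absorbed by these row shifts.

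Next I would match the coefficients. The $v$-exponent of \eqref{FM1C} is $2(\ell(A,B,S,T)-n(S)-h(S,T))$ with $\ell(A,B,S,T)=\ell(A)+\ell(B)-\ell(A^{(T-S)})-\ell(w_{A,T})$; using the closed expressions for $\ell$, $\ell_{\mathfrak a}$, $n(S)$ and $h(S,T)$ recorded in Section~\ref{sec:3}, I would rewrite it entirely in terms of the entries of $A$, $X$ and $Y$ and identify it with $2\xi_{A,X,Y}$. It then remains to identify $(v^2-1)^{n(S)}\llbracket A;S;T\rrbracket$ with $n(X,Y)\left[\begin{smallmatrix}A_{X,Y}\\X\end{smallmatrix}\right]$, where the quantum-binomial block $\llbracket A;S;T\rrbracket$ supplies $\left[\begin{smallmatrix}A_{X,Y}\\X\end{smallmatrix}\right]$ together with the folding factor $\left[\begin{smallmatrix}T\\S\end{smallmatrix}\right]\llbracket S\rrbracket$, and the prefactor $(v^2-1)^{n(S)}$ combines with the products $\prod(v^{2\alpha_l}-v^{2m})$ hidden in $n(X,Y)$.

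I expect this last identification to be the main obstacle, because $n(X,Y)$ is itself defined by an auxiliary sum over upper triangular matrices $\sigma$ with prescribed $\mathrm{row}_{\mathfrak c}$ and $\mathrm{col}_{\mathfrak c}$, whereas $\llbracket A;S;T\rrbracket$ is a single product. Reconciling them amounts to recognizing the inner $\sigma$-sum as an iterated $q$-Vandermonde summation and collapsing it by the quantum binomial theorem used throughout Section~\ref{sec:3}, so that it matches $\left[\begin{smallmatrix}T\\S\end{smallmatrix}\right]\llbracket S\rrbracket$. The genuinely delicate bookkeeping occurs in the boundary rows $i=0$ and $i=r+1$, where the half-entries $a'_{ii}$ and the factors $[\,\cdot\,]_{\mathfrak c_0}$, $[\,\cdot\,]_{\mathfrak c_1}$ (with their $q_0,q_1$ specialized to $1,v^{-2}$) must be tracked; away from these rows the matching is a routine, if lengthy, manipulation of $q$-binomial coefficients.
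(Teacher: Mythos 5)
Your overall strategy is the same as the one the paper uses to establish this statement (Theorem~\ref{2.1} combined with Proposition~\ref{prop:compare}): deduce \eqref{FM2C} from the specialized formula \eqref{FM1C}, match the resulting matrices via $A^{(T-S)}=A_{X,Y}$, identify $\left[\begin{smallmatrix}A_{X,Y}\\X\end{smallmatrix}\right]$ with $[A^{(T-S)}]^!_{\mathfrak c}\big/\big([X]^![A-T_{\theta}]^!_{\mathfrak c}\big)$, and handle $n(X,Y)$ by collapsing its inner $\sigma$-sum with the quantum binomial theorem. The paper's dictionary is $Y=T_{\theta}-T$, $X=S_{\theta}-S+\widehat{T-S}$, and with it all of these ingredients do appear in the paper's proof.

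However, the combinatorial backbone of your plan has a genuine gap: the map $(T,S)\mapsto(X,Y)$ is \emph{not} a bijection, so the clean index-wise matching you propose would fail. Under the paper's dictionary, $Y$ determines $T$ (since $y_{ij}=t_{-i,-j}$), but $X$ together with $T$ determines only the differences $s_{-i,-j}-s_{i+1,j}$: from $x_{ij}=s_{-i,-j}+t_{i+1,j}-s_{i+1,j}$ one gets $x_{ij}+x_{-i-1,-j}=t_{i+1,j}+t_{-i,-j}$, which carries no information about $S$ at all; the same non-injectivity persists if you swap the roles of $X$ and $Y$ as you suggest. Consequently $n(X,Y)$ cannot collapse to a single term $\left[\begin{smallmatrix}T\\S\end{smallmatrix}\right]\llbracket S\rrbracket$: the inner sum over upper triangular matrices $\sigma$ is precisely what enumerates the fiber of $(T,S)\mapsto(X,Y)$. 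The missing idea is to stratify that $\sigma$-sum by its diagonal, setting $\sigma_{jj}=t_{1-i,-j}-s_{1-i,-j}$; each choice of diagonals singles out one $(T,S)$ in the fiber, and only the sum over the off-diagonal part $\widetilde{\sigma}$ collapses via the quantum binomial theorem, producing $\llbracket S\rrbracket_i$ together with fiber-dependent powers of $v$ and $(v^2-1)$. For the same reason, your claim that $2\big(\ell(A,B,S,T)-n(S)-h(S,T)\big)$ can be rewritten purely in terms of $A$, $X$, $Y$ and identified with $2\xi_{A,X,Y}$ cannot hold term by term — the left side genuinely depends on $S$ beyond $(X,Y)$, while $\xi_{A,X,Y}$ is constant on each fiber — and the discrepancy is only absorbed by the $S$-dependent $v$-powers extracted from the $\sigma$-sum inside $n(X,Y)$. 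Once this fiber stratification is put in place, the rest of your outline goes through and reproduces the paper's argument.
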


Intrinsically, Formulas \eqref{FM1C} and \eqref{FM2C} must certainly be identical, although this is not immediately apparent. In the following, we shall provide a direct side-by-side comparison to demonstrate their equivalence.
\begin{proposition}\label{prop:compare}
The right-hand sides of \eqref{FM1C} and \eqref{FM2C} are identical.
\end{proposition}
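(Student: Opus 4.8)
The plan is to prove the identity by an explicit, coefficient‑preserving matching of the two sums, rather than by invoking the tautology that both equal $e_Be_A$. The first step is to record the effect of the specialization $q_0=1$, $q_1=q^2$, $v=q^{-1}$: one checks $q_0^{-1}q_1^{-1}=q_0q_1^{-1}=q^{-2}$, so that the two boundary factorials collapse to a common type‑C factorial $[m]_{\mathfrak{c}_0}^!=[m]_{\mathfrak{c}_1}^!=\prod_{k=1}^m[k](1+v^{2k})$, while every $[k]$ in \eqref{FM1C} becomes $[k]=\frac{v^{2k}-1}{v^2-1}$. This bookkeeping is what makes the $\mathfrak{c}_0/\mathfrak{c}_1$‑decorated data of Theorem~\ref{2.1} directly comparable with the undecorated $q$‑factorials and the boundary products $\prod\frac{[a^*_{ii}-2k-1]}{[k+1]}$ of Theorem~\ref{thm:2.2}.

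\textbf{Step 1 (dictionary of indices).} Both sums encode the redistribution, under multiplication by the tridiagonal $B$, of the superdiagonal mass $\alpha_i=b_{i-1,i}$ across the rows of $A$, together with the type‑C reflection at the boundary rows $0$ and $r+1$. I would define mutually inverse maps $(T,S)\leftrightarrow(X,Y)$ between the outer index sets, in which $Y$ (resp. $X$) records the total (resp. reflected) moved mass $T$ (resp. $S$), read through the shift operators $\widehat{\,\cdot\,}$ and $\check{\,\cdot\,}$. One then checks that the defining constraints correspond pairwise: the condition $\mathrm{row}_{\mathfrak{a}}(T)_i=b_{i-1,i}$ defining $\Theta_{B,A}$ becomes $\mathrm{row}_{\mathfrak{c}}(X)=\alpha$, $\mathrm{row}_{\mathfrak{c}}(Y)=\alpha^{\#}$; the symmetry $\mathrm{row}_{\mathfrak{a}}(S)=\mathrm{row}_{\mathfrak{a}}(S^{\dag})$ defining $\Gamma_T$ becomes $x_{ij}+x_{-i-1,-j}=y_{ij}+y_{-i-1,-j}$; and the bounds $T_{\theta}\leqslant_e A$, $S\leqslant_e T$ become $A-Y+\check{Y}\in\Theta_n$ and $A_{X,Y}\in\Xi_{n,d}$. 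Crucially, the internal sum over upper‑triangular $\sigma$ hidden in $n(X,Y)$ does \emph{not} enter the dictionary; it is an artifact of the coefficient to be summed away in Step~3.

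\textbf{Step 2 (output matrices).} I would then verify that the two parametrizations of the output agree, namely $A^{(T-S)}=A_{X,Y}$. Since $A^{(T-S)}=A-(T-S)_{\theta}+\widehat{(T-S)}_{\theta}$ and $A_{X,Y}=A+(X-\check{X})-(Y-\check{Y})$, this is a direct computation comparing the two ``discrete‑derivative'' presentations of the moved mass via $\widehat{M}_{ij}=M_{i+1,j}$, $\check{M}_{ij}=M_{i-1,j}$ and the $\theta$‑symmetrization, with separate but routine handling of the two boundary rows.

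\textbf{Step 3 (coefficients) and the main obstacle.} It remains to match the scalar weights, which I would split into the pure power of $v$ and the $q$‑factorial part. For the $v$‑power, I would rewrite the exponent $\ell(A,B,S,T)-n(S)-h(S,T)$ of \eqref{FM1C} using the explicit formulas for $\ell$ and $\ell_{\mathfrak{a}}$ on $\Xi_n$ derived above and the length identity $\ell_{\mathfrak{a}}(g_1)+\ell_{\mathfrak{a}}(w)+\ell_{\mathfrak{a}}(g_2)=\ell_{\mathfrak{a}}(g_1\sigma wg_2)+n(\sigma)+2h(w,\sigma)$, and check that, after absorbing the $v$‑powers internal to $n(X,Y)$, it reproduces $\xi_{A,X,Y}$. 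The genuinely hard part is the factorial identity: Fan--Li's factor $n(X,Y)=\prod_{0\le i\le r}n(X_i,Y_i,(X_{-i-1})^J)$ is itself a sum over upper‑triangular $\sigma$, weighted by falling‑factorial products $\prod_m(v^{2\alpha_l}-v^{2m})=\prod_m(v^2-1)v^{2m}[\alpha_l-m]$. I expect the crux of the proof to be the evaluation of this internal $\sigma$‑summation in closed form — via a $q$‑Vandermonde / $q$‑binomial convolution — showing that the combined factor $n(X,Y)\left[\begin{smallmatrix}A_{X,Y}\\X\end{smallmatrix}\right]$ collapses, up to the already‑matched power of $v$, to $(v^2-1)^{n(S)}\llbracket A;S;T\rrbracket$, i.e. to the bracket of Theorem~\ref{2.1} times the prefactor $(v^2-1)^{n(S)}$. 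The two boundary rows $i=0,r+1$, where the odd diagonal entries and the type‑C binomials $\prod\frac{[a^*_{ii}-2k-1]}{[k+1]}$ appear, are the most delicate point, since there the $\sigma$‑sum must reproduce exactly the specialized factorials $[m]_{\mathfrak{c}}^!=\prod_k[k](1+v^{2k})$; reconciling these boundary contributions uniformly with the interior rows will be the main technical obstacle.
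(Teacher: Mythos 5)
Your proposal follows essentially the same route as the paper's proof: the same dictionary $Y=T_\theta-T$, $X=S_\theta-S+\widehat{T-S}$ between the index sets, the identification $A^{(T-S)}=A_{X,Y}$, the collapse of the Fan--Li binomial $\left[\begin{smallmatrix}A_{X,Y}\\X\end{smallmatrix}\right]$ to $\frac{[A^{(T-S)}]_\mathfrak{c}^!}{[X]^![A-T_\theta]_\mathfrak{c}^!}$, and, as you correctly single out as the crux, the closed-form evaluation of the internal $\sigma$-sum inside $n(X,Y)$ by a $q$-binomial convolution so that $n(X,Y)\left[\begin{smallmatrix}A_{X,Y}\\X\end{smallmatrix}\right]$ produces $(v^2-1)^{n(S)}\frac{\llbracket S\rrbracket}{[S]^!}\frac{[X]^!}{[T-S]^!}$ times the matched power of $v$. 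Your flagged delicate points (the boundary rows $0,r+1$ and the exponent identity $\ell(A,B,S,T)-n(S)-h(S,T)=\xi_{A,X,Y}+\cdots$) are exactly where the paper spends its computational effort, so the plan is sound and faithful to the actual argument.
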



\begin{proof}
Denote $Y=T_{\theta}-T$, $X=S_{\theta}-S+\widehat{T-S}$.
We have
\begin{align*}
\left[\begin{array}{c}
A_{X,Y}\\
X
\end{array}\right]
=&\prod\limits_{(i,j)\in\mathscr{I}}\frac{[a_{ij}^*]^!}{[a_{ij}^*-x_{ij}]^![x_{ij}]^!}\frac{[a_{ij}^*-x_{ij}]^!}{[a_{ij}^*-x_{ij}-x_{-i,-j}]^![x_{-i,-j}]^!}\prod\limits_{\scalebox{0.7}{$\substack{i=0,-r-1\\0\leqslant k\leqslant x_{ii}-1}$}}
\frac{[a_{ii}^*-2k-1]}{[k+1]}\\
=&\prod\limits_{(i,j)\in\mathscr{I}}\frac{[a_{ij}^*]^!}{[a_{ij}-y_{ij}-y_{-i,-j}]^![x_{ij}]^![x_{-i,-j}]^!}\frac{[a_{00}^*-1]\cdots[a_{00}^*-2x_{00}+1]}{[x_{00}]^!}\\
&\times\frac{[a_{-r-1,-r-1}^*-1]\cdots[a_{-r-1,-r-1}^*-2x_{-r-1,-r-1}+1]}{[x_{-r-1,-r-1}]^!}\\
=&\prod\limits_{(i,j)\in\mathscr{I}}\frac{[a_{ij}^*]^!}{[a_{ij}-y_{ij}-y_{-i,-j}]^![x_{ij}]^![x_{-i,-j}]^!}\frac{[a_{00}^*-1]^{!!}}{[a_{00}-2y_{00}-1]^{!!}[x_{00}]^{!!}}\\
&\times\frac{[a_{-r-1,-r-1}^*-1]^{!!}}{[a_{-r-1,-r-1}-2y_{-r-1,-r-1}-1]^{!!}[x_{-r-1,-r-1}]^{!!}}\\
=&\frac{[A^{(T-S)}]_\mathfrak{c}^!}{[X]^![A-T_{\theta}]_\mathfrak{c}^!}.
\end{align*}

We rewrite
\begin{equation*} 
n(X,Y)=\prod\limits_{1\leqslant i\leqslant r+1}n(X_{i-1},Y_{i-1},(X_{-i})^J),
\end{equation*}
and combine the $\sigma=(\sigma_{kl})$ which satisfied $t_{1-i,-j}-s_{1-i,-j}=\sigma_{jj}$ in the summation of the $n(X_{i-1},Y_{i-1},(X_{-i})^J)$. Let $\widetilde{\sigma}=\sigma-\mathrm{diag}(\sigma)$.

Note
\begin{align*} 
\prod\limits_{\scalebox{0.7}{$\substack{l\in\mathbb{Z}\\0\leqslant m\leqslant\gamma_j-\sigma_{jj}-1}$}}(v^{2\alpha_l}-v^{2m})
=&\prod\limits_{\scalebox{0.7}{$\substack{j\in\mathbb{Z}\\0\leqslant m\leqslant s_{1-i,-j}-1}$}}v^{2m}\prod\limits_{\scalebox{0.7}{$\substack{j\in\mathbb{Z}\\0\leqslant m\leqslant s_{1-i,-j}-1}$}}(v^{2(x_{i-1,j}-m)}-1)\\
=&\prod\limits_{\scalebox{0.7}{$\substack{j\in\mathbb{Z}\\0\leqslant m\leqslant s_{1-i,-j}-1}$}}v^{2m}\prod\limits_{\scalebox{0.7}{$\substack{j\in\mathbb{Z}\\0\leqslant m\leqslant s_{1-i,-j}-1}$}}(v^2-1)\prod\limits_{\scalebox{0.7}{$\substack{j\in\mathbb{Z}\\0\leqslant m\leqslant s_{1-i,-j}-1}$}}[x_{i-1,j}-m]\\
=&v^{2\sum_{j\in\mathbb{Z}}\binom{s_{1-i,-j}}{2}}(v^2-1)^{\sum_{j\in\mathbb{Z}}s_{1-i,-j}}\prod\limits_{j\in\mathbb{Z}}\frac{[x_{i-1,j}]^!}{[x_{i-1,j}-s_{1-i,-j}]^!}\\
=&v^{2\sum_{j\in\mathbb{Z}}\binom{s_{1-i,-j}}{2}}(v^2-1)^{\mathrm{row}(s^\dag)_i}\prod\limits_{j\in\mathbb{Z}}\frac{[x_{i-1,j}]^!}{[t_{ij}-s_{ij}]^!},
\end{align*}
\begin{equation*}
\frac{\prod\limits_{k\in\mathbb{Z}}[\beta_i]^!}{\prod\limits_{k,l\in\mathbb{Z}}[\sigma_{kl}]^!}=\frac{\prod\limits_{j\in\mathbb{Z}}[x_{-i,-j}]^!}{\prod\limits_{k,j\in\mathbb{Z}}[\sigma_{kj}]^!}=\frac{\prod\limits_{j\in\mathbb{Z}}[x_{-i,-j}]^!}{\prod\limits_{k\neq j\in\mathbb{Z}}[\sigma_{kj}]^!\prod\limits_{j\in\mathbb{Z}}[\sigma_{jj}]^!}=\frac{1}{\prod\limits_{k\neq j\in\mathbb{Z}}[\sigma_{kj}]^!}\frac{\prod\limits_{j\in\mathbb{Z}}[x_{-i,-j}]^!}{\prod\limits_{j\in\mathbb{Z}}[t_{1-i,-j}-s_{1-i,-j}]^!},
\end{equation*}
\begin{align*} 
&\sum_{\sigma=(\sigma_{kj})}v^{2\sum_{k>p,j<q}\sigma_{kj}\sigma_{pq}}\frac{1}{\prod\limits_{k\neq j\in\mathbb{Z}}[\sigma_{kj}]^!}\\
=&v^{2\sum_{l\leqslant j}\sigma_{jj}(s_{i,l-1}-s_{il}^\dag)}\sum_{\widetilde{\sigma}=(\widetilde{\sigma}_{kj})}v^{2\sum_{k>p,j<q}\widetilde{\sigma}_{kj}\widetilde{\sigma}_{pq}}\frac{\prod\limits_{j\in\mathbb{Z}}[s_{ij}]^!}{\prod\limits_{k,j\in\mathbb{Z}}[\widetilde{\sigma}_{kj}]^!}\frac{1}{\prod\limits_{j\in\mathbb{Z}}[s_{ij}]^!}\\
=&v^{2\sum_{l\leqslant j}\sigma_{jj}(s_{i,l-1}-s_{il}^\dag)}\sum_{\widetilde{\sigma}=(\widetilde{\sigma}_{kj})}v^{2\sum_{k>p,j<q}\widetilde{\sigma}_{kj}\widetilde{\sigma}_{pq}}\prod\limits_{k,j\in\mathbb{Z}}
\left[\begin{array}{c}
s_{ij}-\sum_{l<k}\widetilde{\sigma}_{lj}\\
\widetilde{\sigma}_{kj}
\end{array}\right]
\frac{1}{\prod\limits_{j\in\mathbb{Z}}[s_{ij}]^!}\\
=&v^{2\sum_{l\leqslant j}\sigma_{jj}(s_{i,l-1}-s_{il}^\dag)}\prod\limits_{j\in\mathbb{Z}}
\left[\begin{array}{c}
\sum_{k\leqslant j}(S-S^{\dag})_{ik}\\
S_{i,j+1}^{\dag}
\end{array}\right]
\frac{1}{\prod\limits_{j\in\mathbb{Z}}[s_{ij}]^!}\\
=&v^{2\sum_{l\leqslant j}\sigma_{jj}(s_{i,l-1}-s_{il}^\dag)}\frac{\llbracket  S\rrbracket _i}{\prod\limits_{j\in\mathbb{Z}}[s_{ij}]^!\prod\limits_{j\in\mathbb{Z}}[s^\dag_{i,j+1}]^!}.
\end{align*}

Hence 
\begin{align*} 
n(X,Y)
=&\prod\limits_{i=1}^{r+1}\sum_{\sigma=(\sigma_{kl})}v^{2(\sum_{k,l,p;l>p}\sigma_{kl}\alpha_p+\sum_{k\in\mathbb{Z}}\sigma_{kk}\alpha_k+\sum_{l\leqslant j}\sigma_{jj}(s_{i,l-1}-s_{il}^\dag)+\sum_{j\in\mathbb{Z}}\binom{s_{1-i,-j}}{2})}\\
&\times (v^2-1)^{\mathrm{row}(s^\dag)_i}\frac{\llbracket  S\rrbracket _i}{\prod\limits_{j\in\mathbb{Z}}[s_{ij}]^!\prod\limits_{j\in\mathbb{Z}}[s^\dag_{i,j+1}]^!}\frac{1}{\prod\limits_{k\neq j\in\mathbb{Z}}[\sigma_{kj}]^!}\\
&\times\frac{\prod\limits_{j\in\mathbb{Z}}[x_{-i,-j}]^!}{\prod\limits_{j\in\mathbb{Z}}[t_{1-i,-j}-s_{1-i,-j}]^!}\prod\limits_{j\in\mathbb{Z}}\frac{[x_{i-1,j}]^!}{[t_{ij}-s_{ij}]^!}.
\end{align*} 

Note that
\begin{equation*} 
\prod\limits_{i=1}^{r+1}(v^2-1)^{\mathrm{row}(s^\dag)_i}=\prod\limits_{i=1}^{r+1}(v^2-1)^{\mathrm{row}(s)_i}=(v^2-1)^{\sum_{i=1}^{r+1}\mathrm{row}(s)_i}=(v^2-1)^{n(S)},
\end{equation*}

\begin{equation*} 
\prod\limits_{i=1}^{r+1}\prod\limits_{j\in\mathbb{Z}}[x_{-i,-j}]^!\prod\limits_{j\in\mathbb{Z}}[x_{i-1,j}]^!=\prod\limits_{i=-r-1}^{r}\prod\limits_{j\in\mathbb{Z}}[x_{i,j}]^!=[X]^!,
\end{equation*}

\begin{equation*} 
\prod\limits_{i=1}^{r+1}\prod\limits_{j\in\mathbb{Z}}[t_{ij}-s_{ij}]^!\prod\limits_{j\in\mathbb{Z}}[t_{1-i,-j}-s_{1-i,-j}]^!=\prod\limits_{i=-r}^{r+1}\prod\limits_{j\in\mathbb{Z}}[t_{ij}-s_{ij}]^!=[T-S]^!,
\end{equation*}

\begin{equation*} 
\prod\limits_{i=1}^{r+1}\frac{\llbracket  S\rrbracket _i}{\prod\limits_{j\in\mathbb{Z}}[s_{ij}]^!\prod\limits_{j\in\mathbb{Z}}[s^\dag_{i,j+1}]^!}=\frac{\prod\limits_{i=1}^{r+1}\llbracket  S\rrbracket _i}{\prod\limits_{i=-r}^{r+1}\prod\limits_{j\in\mathbb{Z}}[s_{ij}]^!}=\frac{\llbracket  S\rrbracket }{[S]^!}.
\end{equation*}
Hence 
\begin{align*} 
\prod\limits_{i=1}^{r+1}\frac{\llbracket  S\rrbracket _i}{\prod\limits_{j\in\mathbb{Z}}[s_{ij}]^!\prod\limits_{j\in\mathbb{Z}}[s^\dag_{i,j+1}]^!}\frac{1}{\prod\limits_{k\neq j\in\mathbb{Z}}[\sigma_{kj}]^!}\frac{\prod\limits_{j\in\mathbb{Z}}[x_{-i,-j}]^!}{\prod\limits_{j\in\mathbb{Z}}[t_{1-i,-j}-s_{1-i,-j}]^!}\prod\limits_{j\in\mathbb{Z}}\frac{[x_{i-1,j}]^!}{[t_{ij}-s_{ij}]^!}=\frac{\llbracket  S\rrbracket }{[S]^!}\frac{[X]^!}{[T-S]^!}.
\end{align*} 

Observe that if $x_{ij}$ (resp. $y_{ij}$) is the first non-zero element in the $i$-th row of matrix $X$ (resp. $Y$), then $s_{i+1,-j}=0$ (resp. $s_{i+1,j}=0$). By this observation, we obtain
\begin{align*}
&\ell(A,B,S,T)-n(S)-h(S,T)\\
=&\sum_{i=1}^{r+1}\sum_{k,l;j\geqslant k}x_{-i,-j}x_{kl}
-\sum_{i=1}^{r+1}\sum_{k,l}s_{ik}x_{kl}
+\sum_{i=1}^r\sum_{j>l}x_{-i,-j}x_{il}
-\sum_{\scalebox{0.7}{$\substack{i=-r-1,0\\j<i}$}}\frac{x_{ij}(x_{ij}+1)}{2}\\
&+\sum_{i=1}^n\sum_{j>l}(A-T_{\theta})_{ij}x_{il}
+\sum_{i=1}^{r+1}\sum_{j\geqslant l}(x_{-i,-j}-s_{ij})(s_{i,l-1}-s_{1-i,-l})
+\sum_{\scalebox{0.7}{$\substack{i=0,r+1\\k\geqslant l}$}}x_{ik}x_{il}.
\end{align*}
Thanks to the above equation, we have
\begin{align*}  
e_{B}*e_{A}
=&\sum_{X,Y}v^{2\xi_{A,X,Y}}n(X,Y)
\left[\begin{array}{c}
A_{X,Y}\\
X
\end{array}\right]
e_{A_{X,Y}}\\
=&\sum_{\scalebox{0.7}{$\substack{T\in\Theta_{B,A} \\ S\in \Gamma_{T}}$}}(v^{2}-1)^{n(S)}v^{2(\ell(A,B,S,T)-n(S)-h(S,T))}
\frac{\llbracket  S\rrbracket }{[S]^!}\frac{[X]^!}{[T-S]^!}\frac{[A^{(T-S)}]_\mathfrak{c}^!}{[X]^![A-T_{\theta}]_\mathfrak{c}^!}
e_{A^{(T-S)}}\\
=&\sum_{\scalebox{0.7}{$\substack{T\in\Theta_{B,A} \\ S\in \Gamma_{T}}$}}(v^{2}-1)^{n(S)}v^{2(\ell(A,B,S,T)-n(S)-h(S,T))}\llbracket  A;S;T\rrbracket e_{A^{(T-S)}}.
\end{align*} 
    
\end{proof}

\subsection{Affine type D} 
Let $q_0=q_1=1$, rewrite $v:=q^{-1}$ and $T'_w:=q_w^{-1}T_w$. We obtain the affine Hecke algebra of type $\mathbf{D}$ with equal parameter (cf. \cite[\S3.3]{FLLLWW20}). 
In this case, Theorem~\ref{1.15} is read as follows.
\begin{corollary}\label{1.18}
Let $A$, $B\in\Xi_{n,d}$ with $B$ tridiagonal and $\mathrm{row}_{\mathfrak{c}}(A)=\mathrm{col}_{\mathfrak{c}}(B)$. We have
\begin{equation*}
e_{B}e_{A}
=\sum_{\scalebox{0.7}{$\substack{T\in\Theta_{B,A} \\ S\in \Gamma_{T}}$}}(v^2-1)^{n(S)}v^{\alpha_{\mathfrak{d}}}\frac{[A^{(T-S)}]_\mathfrak{c}^!\llbracket  S\rrbracket}{[A-T_{\theta}]_\mathfrak{c}^![S]^![T-S]^!}e_{A^{(T-S)}},
\end{equation*}
where $\alpha_{\mathfrak{d}}=2(\ell_{\mathfrak{a}}(B)+\ell_{\mathfrak{a}} (w_{A,T})+\ell_{\mathfrak{a}}(A)-\ell_{\mathfrak{a}} (A^{(T-S)})-n(S)-h(S,T))$
\end{corollary}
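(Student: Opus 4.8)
The plan is to obtain Corollary~\ref{1.18} as a direct specialization of Theorem~\ref{1.15} at $q_0=q_1=1$ and $v=q^{-1}$; no new combinatorial input is required, since all of the sets $\Theta_{B,A}$, $\Gamma_T$ and the quantities $n(S)$, $h(S,T)$, $w_{A,T}$, $A^{(T-S)}$ are defined independently of the parameter specialization.

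First I would substitute $q_0=q_1=1$ into the multiplication formula of Theorem~\ref{1.15}. The key observation is that the two prefactors $q_0^{\alpha_0}$ and $q_1^{\alpha_1}$ collapse to $1$ \emph{regardless} of the integers $\alpha_0,\alpha_1$, simply because the base equals $1$. Consequently, the entire $\mathfrak{c}_0$- and $\mathfrak{c}_d$-length bookkeeping encoded in $\alpha_0$ and $\alpha_1$ becomes irrelevant, and one need not evaluate these exponents at all.

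Next I would rewrite the surviving $q$-powers in terms of $v$. Since $q^{-2}=v^2$, the leading factor $(q^{-2}-1)^{n(S)}$ becomes $(v^2-1)^{n(S)}$. For the remaining power $q^{\alpha}$ with
\[
\alpha=2\big(-\ell_{\mathfrak{a}}(B)-\ell_{\mathfrak{a}}(w_{A,T})-\ell_{\mathfrak{a}}(A)+\ell_{\mathfrak{a}}(A^{(T-S)})+n(S)+h(S,T)\big),
\]
the identity $v=q^{-1}$ gives $q^{\alpha}=v^{-\alpha}=v^{\alpha_{\mathfrak{d}}}$, where $\alpha_{\mathfrak{d}}=-\alpha$ is exactly the exponent appearing in the statement. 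I would also note that the factorial factors $[A^{(T-S)}]_\mathfrak{c}^!$, $[A-T_{\theta}]_\mathfrak{c}^!$, $[S]^!$, $[T-S]^!$ and $\llbracket S\rrbracket$ retain their form; under $q_0=q_1=1$ the boundary factors satisfy $[2k]_{\mathfrak{c}_0}=[2k]_{\mathfrak{c}_1}=[k](1+v^{2(k-1)})$, so the two half-integer factorials coincide, consistent with the symmetry of affine type $\mathbf{D}$. Collecting all pieces reproduces the displayed formula verbatim.

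The only point requiring care is the sign convention in passing from $q$ to $v=q^{-1}$: one must track that $q^{\alpha}=v^{-\alpha}$, so that the negative signs inside $\alpha$ flip to produce $\alpha_{\mathfrak{d}}$. Beyond this purely notational bookkeeping there is no genuine obstacle, as the result is a plain specialization of the already-established formula of Theorem~\ref{1.15}.
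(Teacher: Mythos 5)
Your proposal is correct and is exactly the paper's route: the paper presents Corollary~\ref{1.18} as an immediate specialization of Theorem~\ref{1.15} at $q_0=q_1=1$, $v=q^{-1}$, with no further argument. Your bookkeeping is accurate — $q_0^{\alpha_0}q_1^{\alpha_1}$ trivializes, $(q^{-2}-1)^{n(S)}=(v^{2}-1)^{n(S)}$, $q^{\alpha}=v^{-\alpha}=v^{\alpha_{\mathfrak{d}}}$ since $\alpha_{\mathfrak{d}}=-\alpha$, and the $\mathfrak{c}$-factorials degenerate symmetrically as you note.
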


In particular, if we take $B=\mathrm{diag}(B)+E_{\theta}^{h,h+1}$ or $\mathrm{diag}(B)+E_{\theta}^{h+1,h}$, then we have the following corollary, which is just \cite[Lemma 2.6]{CF24}.
\begin{corollary}
Assume $A$, $B\in\Xi_{n,d}$ with $\mathrm{row}_{\mathfrak{c}}(A)=\mathrm{col}_{\mathfrak{c}}(B)$.

\item[(1)] If $B-E_{\theta}^{h,h+1}$ is diagonal for some $h\in [1,r-1]$, then we have 
\begin{equation*}
e_{B}e_{A}
=\sum_{\scalebox{0.7}{$\substack{p\in\mathbb{Z}\\ a_{hp}\geqslant (E_{\theta}^{h+1,p})_{h+1,p}}$}}v^{2\sum_{j>p}a_{hj}} [a_{hp}+1]e_{A+E_{\theta}^{hp}-E_{\theta}^{h+1,p}}.
\end{equation*}    

\item[(2)] If $B-E_{\theta}^{0,1}$ is diagonal, then we have 
\begin{align*}
e_{B}e_{A}
=&\sum_{\scalebox{0.7}{$\substack{p\leqslant 0\\ a_{0,p}\geqslant (E_{\theta}^{1,p})_{1,p}}$}}v^{2\sum_{j>p}a_{0,j}} [a_{hp}+1]e_{A+E_{\theta}^{0,p}-E_{\theta}^{1,p}}\\&+\sum_{\scalebox{0.7}{$\substack{p>0\\ a_{0,p}\geqslant (E_{\theta}^{1,p})_{1,p}}$}}v^{2(\sum_{j>p}a_{0,j}-1)} [a_{hp}+1]e_{A+E_{\theta}^{0,p}-E_{\theta}^{1,p}}.
\end{align*} 

\item[(3)] If $B-E_{\theta}^{r,r+1}$ is diagonal, then we have 
\begin{align*}
e_{B}e_{A}
=&\sum_{\scalebox{0.7}{$\substack{p\leqslant r\\ a_{rp}\geqslant (E_{\theta}^{rp})_{rp}}$}}v^{2\sum_{j>p}a_{r+1,j}} [a_{r+1,p}+1]e_{A+E_{\theta}^{r+1,p}-E_{\theta}^{rp}}\\&+\sum_{\scalebox{0.7}{$\substack{p<r\\ a_{rp}\geqslant (E_{\theta}^{rp})_{rp}}$}}v^{2\sum_{j>p}a_{r+1,j}} [a_{r+1,p}+1]e_{A+E_{\theta}^{r+1,p}-E_{\theta}^{rp}}.
\end{align*} 

\item[(4)] If $B-E_{\theta}^{h+1,h}$ is diagonal for some $h\in [1,r-1]$, then we have 
\begin{equation*}
e_{B}e_{A}
=\sum_{\scalebox{0.7}{$\substack{p\in\mathbb{Z}\\ a_{hp}\geqslant (E_{\theta}^{hp})_{hp}}$}}v^{2\sum_{j>p}a_{h+1,j}} [a_{h+1,p}+1]e_{A+E_{\theta}^{h+1,p}-E_{\theta}^{hp}}.
\end{equation*}    
\end{corollary}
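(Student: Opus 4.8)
The plan is to derive each of the four formulas as a direct specialization of Corollary~\ref{1.18} in the case where $B$ is tridiagonal of the very restricted shape $\mathrm{diag}(B)+E_\theta^{h,h+1}$ (or $\mathrm{diag}(B)+E_\theta^{h+1,h}$), i.e.\ where $B$ differs from a diagonal matrix by a \emph{single} off-diagonal unit. The key simplification is that for such $B$ the combinatorial data entering Corollary~\ref{1.18} collapses almost entirely: the constraint $\mathrm{row}_{\mathfrak a}(T)_i=b_{i-1,i}$ forces $T$ to have exactly one nonzero entry, equal to $1$, in the appropriate row, and then $\Gamma_T$ can only contain $T$ itself together with the zero matrix in the relevant rows. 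First I would unwind the definition of $\Theta_{B,A}$ for $B=\mathrm{diag}(B)+E_\theta^{h,h+1}$: since $\mathrm{row}_{\mathfrak a}(T)_{h+1}=b_{h,h+1}=1$ and all other $\mathrm{row}_{\mathfrak a}(T)_i=0$, the matrix $T$ is $E^{h+1,p}$ (up to the $\theta$-symmetrization forced by $T\in\Theta_n$) for a single column index $p$, subject to $T_\theta\leqslant_e A$, which is exactly the constraint $a_{hp}\geqslant(E_\theta^{h+1,p})_{h+1,p}$ appearing in the statement.

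Next I would determine $\Gamma_T$ and the resulting terms. For these minimal $T$, the condition $S\leqslant_e T$ with the $\dag$-symmetry $\mathrm{row}_{\mathfrak a}(S)=\mathrm{row}_{\mathfrak a}(S^\dag)$ leaves only $S=0$ in the off-diagonal cases $h\in[1,r-1]$ (so that $n(S)=0$, $\llbracket S\rrbracket=1$, and the factor $(v^2-1)^{n(S)}$ disappears), while near the branching nodes $h=0$ or $h=r$ one must additionally track whether the single transposition changes sign, which is what produces the splitting of the sum over $p\leqslant 0$ versus $p>0$ (resp.\ $p\leqslant r$ versus $p<r$) in parts (2) and (3). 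With $S=0$ fixed, $A^{(T-S)}=A^{(T)}=A+E_\theta^{hp}-E_\theta^{h+1,p}$ directly from the definition $A^{(T-S)}=A-(T-S)_\theta+(\widehat{T-S})_\theta$, matching the target matrix. The quantum factor $[A^{(T)}]_{\mathfrak c}^!/[A-T_\theta]_{\mathfrak c}^!$ reduces, for a single-entry $T$, to a single bracket $[a_{hp}+1]$ (the off-diagonal quantum factorials in $[A^{(T)}]_{\mathfrak c}^!$ and $[A-T_\theta]_{\mathfrak c}^!$ cancel except for the one column where the entry increased by one).

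The remaining task is the exponent of $v$. I would evaluate $\alpha_{\mathfrak d}=2(\ell_{\mathfrak a}(B)+\ell_{\mathfrak a}(w_{A,T})+\ell_{\mathfrak a}(A)-\ell_{\mathfrak a}(A^{(T-S)})-n(S)-h(S,T))$ term by term. Since $B$ has a single off-diagonal $1$, $\ell_{\mathfrak a}(B)=0$; with $S=0$ we have $n(S)=0$ and $h(S,T)=h(T,0)=0$; and $\ell_{\mathfrak a}(w_{A,T})$ for the single-entry $T=E^{h+1,p}$ reduces via the length formula in Lemma~4.5 to $\sum_{k<p}(A-T)_{hk}$, a single column-partial sum. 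Combining this with $\ell_{\mathfrak a}(A)-\ell_{\mathfrak a}(A^{(T)})$, computed from the closed formula for $\ell_{\mathfrak a}(A)$ in Proposition~4.2, should collapse to exactly $\sum_{j>p}a_{hj}$ (and to $\sum_{j>p}a_{hj}-1$ in the $p>0$ branch of part~(2), reflecting the extra self-interaction at the node $0$ captured by the $a_{ij}''=\frac12(a_{ii}-3)$ correction). The main obstacle I anticipate is precisely this bookkeeping of the length exponent near the two special nodes $0$ and $r+1$: the off-diagonal cases $h\in[1,r-1]$ are routine, but at $h=0$ and $h=r$ the matrix entries $a_{00}$ and $a_{r+1,r+1}$ are odd and the quantities $\ell_{\mathfrak c_0}$, $\ell_{\mathfrak c_d}$ (which vanish in type D after setting $q_0=q_1=1$, but whose \emph{$\ell_{\mathfrak a}$-complement} still contributes) must be tracked carefully to reproduce the $v^{2(\sum_{j>p}a_{0j}-1)}$ shift and the two-part decomposition. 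Once the two boundary nodes are handled, parts~(1) and~(4) follow by the same computation with the roles of rows $h$ and $h+1$ (resp.\ the transpose data $\widehat A_{t,i}$) interchanged, and the identification with \cite[Lemma 2.6]{CF24} is then immediate.
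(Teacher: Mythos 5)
Your proposal is correct and follows essentially the same route as the paper, which obtains this corollary as an immediate specialization (take $R=1$, set $q_0=q_1=1$ and $v=q^{-1}$) of Theorem~\ref{special}, itself the Chevalley-generator case of Corollary~\ref{1.18}. One small sharpening of your sketch: for a single-entry $T$ the set $\Gamma_T$ reduces to $\{0\}$ even at the nodes $h=0$ and $h=r$, since $S=T$ violates $\mathrm{row}_{\mathfrak a}(S)=\mathrm{row}_{\mathfrak a}(S^{\dag})$ (the rows $h+1$ and $-h$ are never congruent modulo $n=2r+2$ because $2h+1$ is odd), so the splitting between $p\leqslant 0$ and $p>0$ in parts (2) and (3) comes entirely from the length exponent through $w_{A,T}$ and the $a''_{ii}$ corrections, exactly as your final paragraph computes, and not from any extra elements of $\Gamma_T$.
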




\begin{thebibliography}{DDPW08}\frenchspacing

\bibitem[BLM90]{BLM90}
A.~Beilinson, G.~Lusztig and R.~MacPherson,
          {\em A geometric setting for the quantum deformation of $GL_n$},
Duke Math. J., {\bf 61} (1990), 655--677.

\bibitem[BKLW18]{BKLW18}
H.~Bao, J.~Kujawa, Y.~Li and W.~Wang,
{\em Geometric Schur duality of classical type}, Transform. Groups {\bf 23} (2018), no. 2, 329--389.

\bibitem[BWW18]{BWW18}
H.~Bao, W.~Wang and H.~Watanabe, {\em Multiparameter quantum Schur duality of type B}, Proc. Amer. Math. Soc. {\bf 146} (2018), no. 8, 3203--3216.

\bibitem[BW18]{BW18}
	H.~Bao and W.~Wang,
	{\em A new approach to Kazhdan-Lusztig theory of type B via quantum symmetric pairs}, Asterisque {\bf 402} (2018), vii+134. 

\bibitem[CF24]{CF24}
Q.~Chen and Z.~Fan, {\em Geometric approach to $\imath$-quantum group of affine type D}, arXiv:2403.04461.

\bibitem[CNR72]{CNR72}
Curtis.~C, N.~Iwahori and R.~Kilmoyer, {\em Hecke algebras and characters of parabolic type of finite group with $(B,N)$-pairs}, Publ. Math. {\bf 40} (1972), 81--116. 

\bibitem[Cu85]{Cu85}
Curtis.~C, {\em On Lusztig's isomorphism theorem for Hecke algebras}, J. Algebra , {\bf 92} (1985), 348--365. 

\bibitem[DDPW08]{DDPW08}
B,~Deng, J.~Du, B.~Parshall and J.~Wang,
{\em Finite Dimensional Algebras and Quantum Groups},
Mathematical Surveys and Monographs 150. Providence, RI: American Mathematical Society (2008).

\bibitem[DF15]{DF15}
J.~Du and Q.~Fu,
{\em Quantum affine $\mathfrak{gl}_n$ via Hecke algebra},
Adv. Math. {\bf 282} (2015), 23--46.

\bibitem[DG14]{DG14}
J.~Du and H.~Gu, {\em A realisation of the quantum supergroups $\mathbf{U}(\mathfrak{gl}_{m|n})$}, J. Algebra {\bf 404} (2014), 60--99.

\bibitem[DJ89]{DJ89}
R.~Dipper and G.~James, {\em The $q$-Schur algebra}, Proc. London Math. Soc. (3) {\bf 59} (1989), no. 1, 23--50.

\bibitem[DLZ25]{DLZ25}
J.~Du, Y.~Li and Z.~Zhao, {\em The $q$-Schur algebras in type D, I, fundamental multiplication formulas}, J. Algebra {\bf 676} (2025), 236--286.

\bibitem[Du92]{Du92}
J.~Du, {Kazhdan–Lusztig bases and isomorphism theorems for $q$-Schur algebras}, Contemp.
Math. {\bf 139} (1992), 121–40.

\bibitem[FL15]{FL15}
Z.~Fan and Y.~Li,
{\em Geometric Schur duality of classical type, II},
Trans. Amer. Math. Soc., Series B {\bf 2} (2015), 51--92.

\bibitem[FL19]{FL19}
Z.~Fan and Y.~Li, {\em Aﬃne flag varieties and quantum symmetric pairs, II. Multiplication formula}, J. Pure Appl. Alg. {\bf 223} (2019), 4311--4347.

\bibitem[FL$^3$W20]{FLLLW20}
Z.~Fan, C.~Lai, Y.~Li, L.~Luo and W.~Wang, {\em Affine flag varieties and quantum symmetric pairs}, Mem. Amer. Math. Soc. {\bf 265} (2020), no. 1285, v+123.

\bibitem[FL$^3$W23]{FLLLW23}
Z.~Fan, C.~Lai, Y.~Li, L.~Luo and W.~Wang, {\em Affine Hecke algebras and quantum symmetric pairs}, Mem. Amer. Math. Soc. {\bf 281} (2023), no. 1386, ix+92.

\bibitem[FL$^3$W$^2$20]{FLLLWW20}
Z.~Fan, C.~Lai, Y.~Li, L.~Luo, W.~Wang and H.~Watanabe, {\em Quantum Schur duality of affine type C with three parameters}, Math. Res. Lett. {\bf 27} (2020), no.1, 79--114.

\bibitem[IM65]{IM65}
N.~Iwahori and H.~Matsumoto,
{\em On some Bruhat decomposition and the structure of the Hecke rings of $p$-adic Chevalley groups}, Publ. Math. IHES {\bf 25} (1965), 5--48.

\bibitem[Jim86]{Jim86}
M.~Jimbo, {\em A q-analogue of $U(gl_{n+1})$, Hecke algebra and the Yang-Baxter equation}, Lett. Math. Phys. {\bf 11} (1986), 247--252. 

\bibitem[Ka09]{Ka09} S.~Kato, {\em An exotic Deligne-Langlands correspondence for symplectic groups}, Duke Math. J. {\bf 148} (2009), 305--371.

\bibitem[Ko14]{Ko14}
S.~Kolb, {\em Quantum symmetric Kac-Moody pairs}, Adv. Math. {\bf 267} (2014), 395--469.

\bibitem[KL79]{KL79}
D.~Kazhdan and G.~Lusztig, {\em Representations of Coxeter groups and Hecke algebras}, Invent. Math. {\bf 53} (1979), 165--184.

\bibitem[Le99]{Le99}
G.~Letzter, {\em Symmetric pairs for quantized enveloping algebras}, J. Algebra {\bf 220} (1999), no. 2, 729--767.

\bibitem[LL17]{LL17}
C.~Lai and L.~Luo,
{\em An elementary construction of monomial bases of modified quantum affine $\mathfrak{gl}_n$},
J. London Math. Soc. {\bf 96} (2017), 15-27.

\bibitem[LL21]{LL21}
C.~Lai and L.~Luo,
{\em Schur algebras and quantum symmetric pairs with unequal parameters},
Int. Math. Res. Not. {\bf 2021} (2021), 10207--10259.

\bibitem[Lu93]{Lu93}
G.~Lusztig, {\em Introduction to Quantum Groups}, Progress in Mathematics, vol 110, Birkhäuser Boston, Inc., Boston, MA, 1993.

\bibitem[Lu99]{Lu99}
G.~Lusztig, {\em Aperiodicity in quantum affine $\mathfrak{gl}_n$}, Asian J. Math. {\bf 3} (1999), no. 1, 147--177.

\bibitem[Lu03]{Lu03}
G.~Lusztig, {\em Hecke Algebras with Unequal Parameters}, CRM Monograph Series 18. Providence, RI:AMS. (2003)

\bibitem[LXY]{LXY} L.~Luo, Z.~Xu and Y.~Yang, {\em Equivariant K-theoretic construction of affine $q$-Schur algebras with unequal parameters}, preparing.

\end{thebibliography}
\end{document}